\newtheorem{theorem}{Theorem}[section]
\newtheorem{lemma}[theorem]{Lemma}
\newtheorem{corollary}[theorem]{Corollary}
\newtheorem{definition}[theorem]{Definition}
\theoremstyle{definition}
\newtheorem*{remark}{Remark}
\theoremstyle{plain}
\def\CS{\mathcal S}
\newcommand{\beqlbl}{\begin{equation}}
\newcommand{\eeqlbl}{\end{equation}}
\newcommand{\dyck}[1]{\text{Dyck}^{#1}}
\newcommand{\dn}{\dyck{2n}}
\newcommand{\sn}{S_n}
\newcommand{\thingone}{\tau}
\newcommand{\thingtwo}{\rho}
\newcommand{\R}{\ensuremath{\mathbb{R}} }
\newcommand{\N}{\ensuremath{\mathbb{N}} }
\newcommand{\Z}{\ensuremath{\mathbb{Z}} }
\newcommand{\D}{\ensuremath{\mathcal{D}} }
\newcommand{\A}{\mathcal{A}}
\renewcommand{\P}{\mathbb{P}}
\newcommand{\var}{\textrm{Var}}
\newcommand{\E}{\mathbb{E}}
\newcommand{\ind}{\mathbf{1}}
\newcommand{\exc}{\mathbbm{e}}
\tikzstyle{vertex}=[circle, draw, inner sep=0pt, minimum size=6pt]
\tikzstyle{Vertex}=[circle, draw, inner sep=0pt, minimum size=14pt]
\tikzstyle{Vertexc}=[circle, draw, inner sep=0pt, minimum size=14pt, fill=blue!30]
\tikzstyle{vertexc}=[circle, draw, inner sep=0pt, minimum size=6pt, fill=red!40]
\tikzstyle{vertexcg}=[circle, draw, inner sep=0pt, minimum size=6pt, fill=green!70!black]
\newenvironment{pfofthm}[1]
{\par\vskip2\parsep\noindent{\em Proof of Theorem\ #1. }}{{\hfill
$\blacksquare$}
\par\vskip2\parsep}
\newenvironment{pfofcor}[1]
{\par\vskip2\parsep\noindent{\em Proof of Corollary\ #1. }}{{\hfill
$\blacksquare$}
\par\vskip2\parsep}
\newenvironment{pfoflem}[1]
{\par\vskip2\parsep\noindent{\em Proof of Lemma\ #1. }}{{\hfill
$\blacksquare$}
\par\vskip2\parsep}
\newcommand{\beq}{\begin{equation*}}
\newcommand{\eeq}{\end{equation*}}
\newcommand{\ba}{\begin{align*}}
\newcommand{\ea}{\end{align*}}
\newcommand{\field}[1]{\mathbb{#1}}
\newcommand{\matbegin}[1]{\left (  \begin{array} {#1} }
\newcommand{\matend}{ \end{array} \right ) } 
\newcommand{\prob}{\field{P}}
\newcommand{\expect}{\field{E}}
\newcommand{\indc}{\mathbf{1}}
\newcommand{\fp}{\theta}
\newcommand{\Iint}[1]{I_{#1}^{int}}
\newcommand{\Iout}[1]{I_{#1}^{out}}
\newcommand{\cE}{\mathcal{E}}
\newcommand{\shortexc}{\text{SE}}
\title[Fixed points of pattern-avoiding permutations]{Pattern-avoiding permutations and Brownian excursion, Part II: Fixed points}
\author[Christopher~Hoffman]{ \ Christopher~Hoffman$^\star\ddagger$}
\author[Douglas~Rizzolo]{ \ Douglas~Rizzolo$^{\star\dagger}$}
\author[Erik~Slivken]{ \ Erik~Slivken$^{\triangle \circ}$}
\thanks{\thinspace ${\hspace{-.45ex}}^\star$
Department of Mathematics,
University of Washington, Seattle, WA, 98195.
\hfill \\
\thinspace ${\hspace{-.45ex}}^\triangle$
Department of Mathematics,
University of California, Davis
Davis CA, 95616.
\hfill \\
${\hspace{-.45ex}}^\dagger$ 
Supported by NSF grant DMS-1204840
\hfil \\
${\hspace{-.45ex}}^\ddagger$ 
Supported by NSF grant DMS-1308645 and NSA grant H98230-13-1-0827
\hfil \\
${\hspace{-.45ex}}^\circ$ 
Supported by NSF RTG grant 0838212
\hfil \\
Email:
\hskip.02cm
\texttt{\{hoffman,drizzolo\}@math.washington.edu; erikslivken@math.ucdavis.edu}
}
\date{\today}
\begin{document}

\maketitle

\begin{abstract}
Permutations that avoid given patterns are among the most classical objects in combinatorics and have strong connections to many fields of mathematics, computer science and biology. In this paper we study fixed points of both 123- and 231-avoiding permutations.  We find an exact description for a scaling limit of the empirical distribution of fixed points in term of Brownian excursion.  This builds on the connections between pattern-avoiding permutations and Brownian excursion developed in \cite{part1} and strengthens the recent results of Elizalde \cite{elizalde2} and Miner and Pak \cite{mp} on fixed points of pattern-avoiding permutations.
\end{abstract}

\section{Introduction}

\noindent

\smallskip

In this paper we study the asymptotic behavior of the fixed points of pattern-avoiding permutations.  The study of random pattern-avoiding permutations has drawn considerable attention in the recent literature.  A body of work has developed around studying geometric propoerties of the graph of the permutation.  A suprising result was that Brownian excursion began to appear, in various guises, in descriptions of the limiting objects, see for example the recent work of Janson \cite{Ja14}, Madras and Liu \cite{madras2010random}, Madras and Pehlivan \cite{ML} and Miner and Pak \cite{mp}.  In Part I of this series \cite{part1}, we gave a strong pathwise connection bewteen the graph of a pattern avoding permutation and Brownian excursion that explains the large scale behavior of the graph.  This result, however, does not immediately yield information about local properties, such as fixed points of the permutation.  The fixed points of pattern-avoiding permutations have drawn special attention in the literature, see for example \cite{elizalde1, elizalde2004statistics, elizalde2, elizalde3, mp}.  In this paper we show that the fixed points of pattern-avoiding permutations are related to Brownian excursion.  This result is suprising because, although Brownian excusion is related to the bulk behavior of the graph of a pattern-avoiding permutation \cite{part1}, the property of being a fixed point is a very local property. 

Our main result is the following theorem, which as far as we know is the first to give a connection between the asymptotic distribution of fixed points of pattern-avoiding permutations and Brownian excursion.  Recall that if $\pi \in \CS_k$ and $\tau\in \CS_n$, we say that $\tau$ contains the pattern $\pi$ if there exist $i_1<i_2<\cdots <i_k$ such that for all $1\leq r<s\leq k$ we have $\pi(r)<\pi(s)$ if and only if $\tau(i_r)<\tau(i_s)$.  We say $\tau$ avoids $\pi$, or is $\pi$-avoiding, if $\tau$ does not contain $\pi$.

\pagebreak
\begin{theorem}\label{intro main}
Let $(\exc_t,0\leq t\leq 1)$ be standard Brownian exursion and let $\sigma_n$ and $\rho_n$ be respectively a uniformly random $\mathbf{231}$-avoiding permutation of $[n]$ and a uniformly random $\mathbf{123}$-avoiding permutation of $[n]$.  Then
\begin{enumerate}
\item \[ \lim_{n\to\infty} \frac{1}{n^{1/4}} \sum_{i=1}^n \delta_{i/n}\ind_{\{\sigma_n(i)=i\}} =_d \frac{1}{2^{7/4}\pi^{1/2}} \exc_t^{-3/2} dt,\]
where the convergence is with respect to weak convergence of finite measures on $[0,1]$. 
\item\label{intro main 2} Let $A$ and $B$ be independent $Bernoulli(1/4)$ random variables, also jointly independent of $(\exc_t,0\leq t\leq 1)$.  Then
\[ \lim_{n\to\infty} \sum_{i=1}^n \delta_{\frac{i-\frac{n}{2}}{\sqrt{2n}}} \ind_{\{\rho_n(i)=i\}} =_d A\delta_{-\exc(1/2)/2}+ B\delta_{\exc(1/2)/2},\]
where the convergence is with respect to weak convergence of finite measures on $\R$.
\end{enumerate}
\end{theorem}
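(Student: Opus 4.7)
The plan is to combine the pathwise coupling from Part~I, which relates the full graph of the random permutation to Brownian excursion, with a \emph{local} characterization of the fixed-point event, followed by a first- and second-moment analysis of the resulting random measure.

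For Part~(1), I would use the bijection between $\mathbf{231}$-avoiding permutations and Dyck paths (as in Part~I) to recast the event $\{\sigma_n(i) = i\}$ as a local event $E_i$ on the Dyck path $W_n$ depending only on a window of $O(1)$ steps around position $2i$. Given the macroscopic value $W_n(2i) \approx \sqrt{2n}\,\exc_{i/n}$, the conditional probability of $E_i$ (given $W_n$ outside the window) should, by a simple random-walk hitting computation, be asymptotic to $c\,W_n(2i)^{-3/2}$ for an explicit constant $c$. Substituting this estimate, summing over $i$, and dividing by $n^{1/4}$ yields a Riemann sum for $\frac{1}{2^{7/4}\pi^{1/2}}\int_0^1 \exc_t^{-3/2}\,dt$; a second-moment bound (using that $E_i, E_j$ for $|i-j|$ large are approximately conditionally independent given $W_n$) upgrades this to the required weak convergence of random measures. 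The delicate part is handling times near $0$ and $1$: the integrability of $\exc_t^{-3/2}$ on $[0,1]$ (which follows from $\exc_t \sim c\sqrt{t}$ near the endpoints) must be matched by a uniform tail estimate on the fixed-point contribution from the extreme ranges $i \leq \varepsilon n$ and $i \geq (1-\varepsilon)n$.

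For Part~(2), I would exploit the canonical decomposition of every $\mathbf{123}$-avoiding permutation into two decreasing subsequences whose ``staircases'' in the plot lie on opposite sides of the antidiagonal $y = n + 1 - x$, with vertical displacement of order $\sqrt{n}$ governed by Brownian excursion (as supplied by Part~I). Under the rescaling $(i, \rho_n(i)) \mapsto ((i - n/2)/\sqrt{2n}, (\rho_n(i) - n/2)/\sqrt{2n})$, the two staircases near the center look like lines of slope $-1$ separated by a macroscopic gap determined by $\exc(1/2)$; each crosses the diagonal $y = x$ at exactly one point, namely at rescaled position $\pm\exc(1/2)/2$, and these are the only asymptotic candidates for fixed points. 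Whether either candidate is realized as a genuine fixed point is a local discrete event: on an $O(1)$ neighborhood of the crossing, the staircase behaves like a simple random walk, and the event that this walk passes through the lattice point $(i,i)$ is an integer-alignment condition with asymptotic probability $1/4$. Because the two crossings are separated by $\Theta(\sqrt{n})$, the two local events are asymptotically independent of each other and of the limiting excursion, giving the independent $\mathrm{Bernoulli}(1/4)$ factors $A$ and $B$.

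In both parts the principal obstacle is the local probability computation and its coupling to the global Brownian-excursion limit from Part~I. For Part~(1) the random-walk hitting estimate must produce both the $h^{-3/2}$ dependence on the local height and the specific constant $\frac{1}{2^{7/4}\pi^{1/2}}$; for Part~(2) one must identify the integer-alignment event precisely, verify that its probability tends to $1/4$, and decouple the two crossings from each other and from $\exc$.
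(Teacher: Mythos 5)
Your overall strategy --- encode the fixed-point event via a Dyck-path bijection, estimate local probabilities, then first/second moment for Part~(1) and a local Bernoulli~$(1/4)$ event at each diagonal crossing for Part~(2) --- is the same as the paper's. There are, however, two points worth flagging.

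First, for Part~(1) the claim that $\{\sigma_n(i)=i\}$ depends on an $O(1)$-step window of the Dyck path is wrong, and this changes the nature of the local computation. Under the bijection used here, $\sigma_\gamma(i)=i+l_i/2-h_i$, so $\sigma_\gamma(i)=i$ iff the $i$th excursion has length $l_i=2h_i$, i.e.\ the excursion beginning with the $i$th up-step must first return to level $h_i-1$ after exactly $2h_i$ steps. Since $h_i\approx\sqrt{2n}\,\exc_{i/n}$ in the bulk, this is a window of $\Theta(\sqrt{n})$ steps, not $O(1)$. The $h^{-3/2}$ dependence and the constant $\tfrac{1}{2^{7/4}\pi^{1/2}}$ still emerge, but not from a ``simple random-walk hitting computation'' on a bounded window; they come from a Catalan-number path-counting estimate (the number of excursions of length $2h$ with $h$ up-steps is $C_{h-1}\sim 4^{h-1}/(\sqrt\pi\,h^{3/2})$), carried out after conditioning on the path's approximate heights at $\approx n^{0.1}$ evenly spaced points so that the resulting sums are under control. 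Your second-moment idea is right, but the paper has to do genuine work to justify conditional independence precisely because the events live on $\Theta(\sqrt{n})$-scale windows.

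Second, for the endpoint issue in Part~(1), the paper does not prove a uniform tail estimate on $\sum_{i\le\varepsilon n}\theta_i$; instead it uses the Miner--Pak expectation $\E[\theta_{[1,n]}]/n^{1/4}\to\Gamma(1/4)/(2\sqrt\pi)$, matches it to $\E\!\int_0^1\exc_t^{-3/2}\,dt$, and upgrades the bulk convergence via a monotone-convergence and uniform-integrability argument. Your tail-estimate approach would need to be carried out from scratch near the endpoints and is likely harder than the first-moment route.

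For Part~(2) your picture is correct. The paper's version of your ``integer-alignment condition'' is the concrete event that the Dyck path $\gamma$ (image of the complemented permutation under the BJS bijection) has a local minimum at position $n$; its probability is $C_{n-1}/C_n\to 1/4$. The location of that fixed point is then $(n-\gamma(n))/2$, giving the $-\exc_{1/2}/2$ limit after rescaling; the second fixed point is handled by symmetry, and independence of the two Bernoulli factors is obtained via a coupling argument (shuffling $\gamma$ in a window of length $\approx n^{0.4}$ to the right of $n$) rather than simply ``separated by $\Theta(\sqrt n)$''. Your sketch is compatible with this but leaves the decoupling as the main unproved step.
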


This result builds on the large scale connection between pattern-avoiding permutations and Brownian excursion developed in \cite{part1}, where it was used to show that the bulk of a pattern-avoiding permutation can be asymptotically described by Brownian excursion.  Part (b) of Theorem \ref{intro main} has a nice interpretation.  A $\mathbf{123}$-avoiding permutation can have at most two fixed points, one above $n/2$ and one below it.  Part (b) of the theorem says that, asymptotically, these fixed points occur independently.  Moreover, conditionally given that both fixed points exist they are reflections of eachother across $n/2$ and the fluctuation of their distance from $n/2$ is given by the midpoint of Brownian excursion.  We emphisize that the limiting measure has the additional randomness of $(A,B)$ that is not part of the Brownian excursion.  In the proof we will see that this is a consequence of the fact that having $\sigma(i)=i$ is in a sense a local property of the permutation.  In Part (a) of  Theorem \ref{intro main}, such extra randomness is present at the discrete level, but does not appear in the limit for reasons related to the Law of Large Numbers.  

The appearance of Brownian excursion in Theorem \ref{intro main} will be explained by particular bijections between pattern-avoiding permutations and Dyck paths.  The bijection we use for $\mathbf{231}$-avoiding permutations was first used in Part I \cite{part1} and is better suited to extracting probabilistic information than more classical bijections, while the bijection we use for $\mathbf{123}$-avoiding permutations is a classical bijection from \cite{BJS}. 

The fixed points of random permutations have been well studied in both probability and combinatorics.  We will not survey the field here, but for the sake of comparrison we state the classical result of Montmort and Bernoulli  \cite{de1714essai} on the distribution of the number of fixed points in a uniformly chosen random permutation in language similar to ours.

\begin{theorem}[\cite{de1714essai}]\label{classical}
Let $\pi_n$ be a uniformly random permutation of $[n]$ and let $N$ be a Poisson random measure on $[0,1]$ with intensity equal to Lebesgue measure.  Then
\[ \lim_{n\to\infty} \sum_{i=1}^n \delta_{i/n}\ind_{\{\pi_n(i)=i\}} =_d N,\]
where the convergence is with respect to weak convergence of finite measures on $[0,1]$. 
\end{theorem}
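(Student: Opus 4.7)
The plan is to use the method of factorial moments, the classical approach to Poisson point-process convergence. Write $N_n = \sum_{i=1}^n \delta_{i/n}\ind_{\{\pi_n(i)=i\}}$; the intuition is that $\P(\pi_n(i)=i) = 1/n$ for each $i$, the events at distinct positions are asymptotically independent, and the positions $i/n$ are spread uniformly on $[0,1]$, so the locations of fixed points should asymptotically form a Poisson process with intensity equal to Lebesgue measure.

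The main step is to compute the joint factorial moments of $N_n$. Fix disjoint Borel sets $A_1,\dots,A_r\subseteq[0,1]$ with Lebesgue-null boundary and non-negative integers $k_1,\dots,k_r$, and set $K=\sum_j k_j$ and $(x)_k=x(x-1)\cdots(x-k+1)$. Then
\[ \E\!\Big[\prod_{j=1}^{r} (N_n(A_j))_{k_j}\Big] \;=\; \sum_{(i^{(j)}_\ell)} \P\!\big(\pi_n(i^{(j)}_\ell)=i^{(j)}_\ell \ \forall\,j,\ell\big), \]
where the sum runs over tuples of $K$ distinct indices with $i^{(j)}_\ell/n\in A_j$. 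By the uniform distribution of $\pi_n$, each probability equals $(n-K)!/n!$, and the number of valid tuples is $\prod_j (\lfloor n|A_j|\rfloor)_{k_j}(1+o(1))$. Multiplying through gives the limit $\prod_j |A_j|^{k_j}$, which matches the joint factorial moments of independent $\mathrm{Poisson}(|A_j|)$ random variables.

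Once the joint factorial moments converge, the method of moments for $\mathbb{Z}_{\geq 0}$-valued random variables yields convergence in distribution of $(N_n(A_j))_{j=1}^{r}$ to independent Poisson variables with parameters $|A_j|$. Since this holds for any finite collection of disjoint Borel sets, standard theory for random counting measures (e.g.\ Kallenberg's criterion that a sequence of simple point processes converges weakly to a Poisson process iff all finite-dimensional count distributions converge to the corresponding Poisson distributions) gives the asserted weak convergence of $N_n$ to the Poisson point process $N$ with intensity Lebesgue measure. The only subtlety I anticipate is extending the disjoint-set computation above to arbitrary test sets, which reduces to the disjoint case by additive decomposition on a fine partition of $[0,1]$; tightness of $\{N_n\}$ is automatic since $\E[N_n([0,1])]=1$ for every $n$.
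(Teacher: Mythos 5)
Your factorial-moment proof is correct and self-contained, and it takes a genuinely different route from what the paper offers. In fact, the paper gives no proof of Theorem~\ref{classical} at all: it attributes the one-dimensional Poisson limit for the \emph{number} of fixed points to Montmort, remarks that passing to convergence of the empirical random measure is ``straight-forward,'' and points to \cite[Theorem~11]{CDM05}, which establishes a quantitative Poisson \emph{process} approximation for the fixed-point process via Stein's method with exchangeable pairs. Your computation is the right one: for disjoint continuity sets $A_1,\dots,A_r$, the product of falling factorials $\prod_j (N_n(A_j))_{k_j}$ counts ordered $K$-tuples of distinct fixed positions with prescribed block membership; each such tuple has probability $(n-K)!/n! = n^{-K}(1+o(1))$; and the number of admissible tuples is $\prod_j\bigl(\#\{i : i/n\in A_j\}\bigr)_{k_j} = \prod_j (n|A_j|)^{k_j}(1+o(1))$, yielding the limiting factorial moments $\prod_j |A_j|^{k_j}$ of independent Poissons. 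Since Poisson laws are moment-determinate, the finite-dimensional count distributions converge, and with $\E[N_n([0,1])]=1$ giving tightness, Kallenberg's convergence criterion for simple point processes on a compact space yields the stated weak convergence of finite measures. The trade-off between the two approaches: yours is elementary and requires no external machinery beyond the moment method and a standard random-measure convergence criterion; the Stein's-method argument that the paper cites produces explicit nonasymptotic total-variation bounds between the process and its Poisson limit, which is strictly more information than the limit theorem. A small cosmetic point: $\lfloor n|A_j|\rfloor$ is not literally the number of lattice points $i$ with $i/n\in A_j$ for a general Borel continuity set; writing $\#\{i : i/n\in A_j\}$ and invoking $\#\{i : i/n\in A_j\}/n \to |A_j|$ (which holds precisely because $\partial A_j$ is Lebesgue-null) is cleaner, though the $(1+o(1))$ you carry already absorbs the discrepancy.
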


Montmort \cite{de1714essai} shows that the number of fixed points converges to a Poisson random variable, but the extension to convergence of the empirical distribution of fixed points to a Poisson random measure is straight-forward, see e.g \cite[Theorem 11]{CDM05} for a strong version of this result based on Stein's method. Comparing Theorems \ref{intro main} and \ref{classical}, we see that $\mathbf{231}$-avoiding permutations have many more fixed points that uniformly random permutations and these fixed points are more likely to appear near $1$ or $n$, while $\mathbf{123}$-avoiding permutations have fewer fixed points than uniformly random permutations and they are more closely concentrated around $n/2$.  

The previous strongest results on the fixed points of pattern-avoiding permutations were established in \cite{elizalde2004statistics, mp}, which we summarize in the following theorem.

\begin{theorem}\label{intro summary}
For a permutation $\pi$, let $\mathrm{fp}(\pi)$ be the number of fixed points of $\pi$.
\begin{enumerate}
\item (Theorem 6.4 \cite{mp}) Let $\sigma_n$ be a uniformly random $\mathbf{231}$-avoiding permutation of $[n]$. Then
\[\lim_{n\to\infty} n^{-1/4} \E(\mathrm{fp}(\sigma_n)) = \frac{\mathrm{Gamma}\left(\frac{1}{4}\right)}{2\sqrt{\pi}}.\]
\item (Proposition 5.3 \cite{elizalde2004statistics}, Theorems 6.3 \cite{mp}) Let $\rho_n$ be a uniformly random $\mathbf{123}$-avoiding permuation of $[n]$.  Then
\[\lim_{n\to\infty} \E(\mathrm{fp}(\rho_n)) = \frac{1}{2}\]
and for every $\epsilon>0$ 
\[\lim_{n\to\infty} \P\left(\rho_n(i)=i \textrm{ for some } i \textrm{ such that } \left| i - \frac{n}{2}\right| > \epsilon n\right) =0.\]
\end{enumerate}
\end{theorem}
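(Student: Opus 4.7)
The overall plan is to analyze each part through the bijections with Dyck paths mentioned after the statement, and then lift the analysis to the Brownian excursion using the convergence of random Dyck paths to $\exc$ established in Part I \cite{part1}.

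For part (a), I would first translate the event $\{\sigma_n(i)=i\}$ into a combinatorial condition on the associated Dyck path $D_n$ near position $i$ under the bijection from Part I. The key intermediate estimate, dictated by matching the scaling $n^{1/4}$ with the target density $\exc_t^{-3/2}$, is that this event holds for a fraction asymptotic to $c\, h_i^{-3/2}$ of the Dyck paths, where $h_i$ is the height of $D_n$ at position $i$ and $c$ is an explicit constant that reproduces the prefactor $2^{-7/4}\pi^{-1/2}$ once the $\sqrt{2n}$-rescaling is applied. This should follow from a ballot-type estimate on the piece of the Dyck path around $i$. Combined with the convergence $h_{\lfloor 2nt\rfloor}/\sqrt{2n} \to \exc_t$, a Riemann-sum argument gives
\[
\E\!\left[\, n^{-1/4} \sum_{i=1}^n \phi(i/n)\, \ind_{\{\sigma_n(i)=i\}}\right] \;\longrightarrow\; \frac{1}{2^{7/4}\pi^{1/2}}\, \E\!\left[\int_0^1 \phi(t)\, \exc_t^{-3/2}\, dt\right]
\]
for test functions $\phi \in C([0,1])$.

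To upgrade convergence of means to weak convergence of the random measures, I would follow with a second-moment computation, exploiting that being a fixed point is an essentially local event on the Dyck path: the joint probability $\P(\sigma_n(i)=i,\, \sigma_n(j)=j)$ should factor as a product over disjoint neighborhoods of $i$ and $j$ plus a negligible correction. Since the expected number of fixed points is $O(n^{1/4})$, the variance of $n^{-1/4}\sum_i \phi(i/n)\ind_{\{\sigma_n(i)=i\}}$, centered around its mean conditional on $D_n$, should be $o(1)$; together with continuity of that conditional mean as a functional of the excursion, this yields the claimed convergence in distribution. For part (b), I would use the BJS bijection together with the fact that a $\mathbf{123}$-avoiding permutation is a union of at most two decreasing subsequences, so it has at most two fixed points, one with value $\leq n/2$ and one with value $\geq n/2$. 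I would identify explicit local combinatorial conditions on the Dyck path near its midpoint $n$ encoding the existence and position of each fixed point, show that each condition has asymptotic probability $1/4$ and that the two events decouple in the limit (yielding the independent Bernoulli$(1/4)$ variables $A$ and $B$), and finally check that when a fixed point exists, its signed displacement from $n/2$ is half the height of the Dyck path at its midpoint up to lower-order terms, which under $\sqrt{2n}$-rescaling converges to $\pm \exc(1/2)/2$.

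The main obstacles I anticipate are: (i) establishing the sharp pointwise asymptotic $c\, h^{-3/2}$ uniformly in $i$ in part (a), which requires combinatorial control over the pieces of the 231-avoiding permutation tied to a specified height of $D_n$; (ii) handling the singularity of $\exc_t^{-3/2}$ as $\exc_t \to 0$, which will require truncation together with tightness estimates to ensure that the contribution from small heights (both near the boundary of $[0,1]$ and at interior near-zeros of the excursion) is negligible at both the first- and second-moment level; and (iii) in part (b), pinpointing the correct local events at the midpoint of the BJS Dyck path that yield exactly the $1/4 \times 1/4$ product structure, and verifying that the displacement is read off from the midpoint height with the right factor of $1/2$.
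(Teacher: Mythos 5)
You appear to be proving the wrong statement. Theorem \ref{intro summary} is not a result the paper establishes: it is a literal restatement of prior work by Miner--Pak (Theorem 6.4 of \cite{mp}) and Elizalde (Proposition 5.3 of \cite{elizalde2004statistics}, Theorem 6.3 of \cite{mp}), included for comparison with the paper's new Theorem \ref{intro main}. The paper neither proves it nor claims to; it simply cites the sources (with a remark correcting a factor-of-four typo in the stated constant of \cite{mp}). What you sketch --- a local ballot-type asymptotic $c\,h_i^{-3/2}$ for the fixed-point probability, a Riemann-sum limit to $\exc_t^{-3/2}$, a second-moment bound to upgrade means to measure convergence, and for $\mathbf{123}$-avoiders the independent Bernoulli$(1/4)$ structure at the midpoint --- is recognizably a blueprint for Theorem \ref{intro main} (and indeed lines up well with Lemmas \ref{problihiapprox}, \ref{varcalc}, \ref{pumpkin}, \ref{library}, and the bijection machinery). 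You then implicitly want to derive Theorem \ref{intro summary} as a corollary of this stronger result.

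For part (b) that derivation would work. For part (a) there is a real circularity with respect to the paper's own logic that you should flag: the paper's proof of Theorem \ref{geddewatanabe}, which yields Theorem \ref{intro main}(a), explicitly invokes the Miner--Pak expectation asymptotic (equation \eqref{star star}, i.e.\ exactly Theorem \ref{intro summary}(a)) to get uniform integrability and pass from compact subintervals $[a,b]\subset(0,1)$ to the full interval $[0,1]$. Your acknowledged obstacle (ii) --- controlling the contribution from small heights near the endpoints, where $\exc_t\to 0$ and the density $\exc_t^{-3/2}$ blows up --- is precisely the gap the paper avoids by citing Miner--Pak instead of closing it internally. So if your aim is to give an independent proof of Theorem \ref{intro summary}(a) via the measure convergence, you cannot use the paper's route to Theorem \ref{intro main}(a) as a black box; you would need to prove the boundary estimates directly (e.g.\ a uniform first-moment bound on $\E\,\theta_{[1,\delta n]\cup[(1-\delta)n,n]}$ of order $o(n^{1/4})$ as $\delta\downarrow 0$, uniformly in $n$), which is nontrivial work not contained in your sketch. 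Alternatively, one could prove Theorem \ref{intro summary}(a) by the original generating-function/singularity-analysis method of \cite{mp}, which is a genuinely different and more elementary route than the excursion-theoretic one.
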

We remark that there is a small mistake in \cite[Theorem 6.4]{mp}, where the limit in Part (a) is given as $2\mathrm{Gamma}\left(\frac{1}{4}\right)/\sqrt{\pi}$, but it is easily seen from the proof of \cite[Theorem 6.4]{mp} that the value we give here is correct.  From this we see that our results in Theorem \ref{intro main} are the first to give detailed information about the asymptotic distribution of fixed points of pattern-avoiding permutations.

\subsection{Connections with invariance principles}
In this section we give more detail on the relationship between Theorem \ref{intro main} and the results of Part I \cite{part1}.  As in \cite{part1}, our results here are derived from bijections between Dyck paths and pattern-avoiding permutations.
Throughout the paper we use the following definition of a Dyck path.
\begin{definition}   A {\bf Dyck path} $\gamma$ is a sequence  $\{\gamma(x)\}_{x=0}^{2n}$ that satisfy the following conditions:

\begin{itemize}
\item $\gamma(0)=\gamma(2n)=0$
\item $\gamma(x) \geq 0$ for all $x \in \{0,1,\dots, 2n\}$ and
\item $|\gamma(x+1)-\gamma(x)| = 1$ for all $x \in \{0,1,\dots, 2n-1\}.$
\end{itemize}
\end{definition}

We often want to consider the function generated by a Dyck path through linear interpolation. Throughout this paper we often use the same notation to denote a sequence and the continuous function generated by extending it through linear interpolation.

Brownian excursion is the process 
 $(\mathbbm{e}_t)_{0 \leq t \leq 1}$  which is Brownian motion conditioned to be 0 at 0 and 1 and positive in the interior \cite{morters2010brownian}.
It is well known that the scaling limit of Dyck paths are Brownian excursion \cite{Ka76}  and that Dyck Paths of length 2n are in bijection with 
\textbf{321}-avoiding and \textbf{231}-avoiding permutations \cite{Knu, M}.

\subsection*{Fixed points of \textbf{123}-avoiding permutations}

The convergence developed in \cite{part1} is strongly suggestive of the general form of the limit distribution of fixed points of $\mathbf{123}$-avoiding permutations.  To see this, we rephrase \cite[Theorem 1.2]{part1} in terms of $\textbf{123}$-avoiding permutations.

\begin{theorem}{\cite[Theorem 1.2]{part1}} \label{123 invariance}
Let $\rho_n$ be a uniformly random $\mathbf{123}$-avoiding permutation of $[n]$.  Then there exist a (random) partition of $[n]$ into $S^+$ and $S^-$ such that if $F_n^{\pm}:[0,n+1]\to \R$ is the linear interpolation of the points
\[  \left\{ \left(i,\rho_n(i)\right) : i\in S^{\pm}\right\} \cup \{(0,n)\} \cup \{(n+1,0)\}\]
Then
\[ \left( \frac{F_n^+(nt) - n(1-t)}{\sqrt{2n}} , \frac{F_n^-(nt) - n(1-t)}{\sqrt{2n}} \right)_{t\in [0,1]} \Rightarrow (\exc_t,-\exc_t)_{t\in [0,1]}.\]
\end{theorem}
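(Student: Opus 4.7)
The plan is to follow the approach of Part I: exploit a bijection between $\mathbf{123}$-avoiding permutations of $[n]$ and Dyck paths of length $2n$, such as the classical bijection of Billey--Jockusch--Stanley cited in the introduction, together with Kaigh's invariance principle \cite{Ka76} that rescaled Dyck paths converge to Brownian excursion. The partition of $[n]$ into $S^+$ and $S^-$ should be read off from the bijection itself: up-steps of the Dyck path correspond to indices in one of the two decreasing subsequences that decompose a $\mathbf{123}$-avoiding permutation (by the Erd\H{o}s--Szekeres style fact that $\mathbf{123}$-avoidance forces such a decomposition), and down-steps correspond to indices in the other. Geometrically, the two sequences hug the antidiagonal $y = n-x$, one weakly above and one weakly below; $S^+$ is the index set of the upper sequence and $S^-$ of the lower one, so after including the boundary points $(0,n)$ and $(n+1,0)$ both $F_n^+$ and $F_n^-$ are piecewise-linear decreasing polygons running from $n$ down to $0$.

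The first main step is to identify the signed vertical displacement $\rho_n(i) - (n-i)$ from the antidiagonal with the height of the associated Dyck path $\gamma_n$. A direct inspection of the bijection should show that, for $i \in S^{\pm}$, this displacement equals $\pm\gamma_n(2i)$ up to an $O(1)$ boundary correction. Because both $F_n^+$ and $F_n^-$ are then read off from the same underlying Dyck path but with opposite signs, the deterministic identity $F_n^+ + F_n^- \approx 2n(1-\cdot)$ is built into the construction, which explains the perfectly anti-correlated limit $(\exc_t,-\exc_t)$.

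The second step promotes the pointwise identification at indices in $S^{\pm}$ to the linear interpolations. Consecutive elements of $S^+$ (respectively $S^-$) are separated by runs of like-steps in $\gamma_n$, and since the longest such run in a uniformly random Dyck path is of logarithmic order, the interpolations $F_n^{\pm}(nt)$ track $\pm\gamma_n(\lfloor 2nt\rfloor)$ uniformly in $t$ up to an $o(\sqrt{n})$ error. Combining with Kaigh's invariance principle $\gamma_n(\lfloor 2nt\rfloor)/\sqrt{2n}\Rightarrow\exc_t$ and the continuous mapping theorem yields the claimed joint weak convergence. The main obstacle is the first step: while the BJS bijection is classical, its action on vertical distances from the antidiagonal requires delicate bookkeeping, since the bijection is defined through an incremental insertion rather than a closed-form expression, and one must verify that the $O(1)$ correction really is uniform in $i$. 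Once this identification is in place, the interpolation error control and the passage to the limit are routine.
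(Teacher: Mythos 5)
This theorem is a restatement of \cite[Theorem 1.2]{part1}, rephrased for $\mathbf{123}$-avoiding permutations via the complement $\rho_n(k)=n+1-\tau_n(k)$; the present paper does not prove it, so there is no local proof to compare against. Your approach is, in outline, the one underlying Part~I and the machinery in Section~\ref{offer} of this paper: pass to a $\mathbf{321}$-avoiding $\tau_n$, apply the BJS bijection to obtain a Dyck path $\gamma_n$, take $S^{\pm}$ to be the index sets $\bar\D$ and $\D$, and identify the vertical displacement from the antidiagonal with $\pm\gamma_n$ plus a small error.

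The genuine gap is the claimed ``$O(1)$ boundary correction.'' Direct inspection of the bijection does \emph{not} give $\rho_n(j)-(n-j)=\pm\gamma_n(2j)+O(1)$. What the bijection gives exactly, for $j=D_i\in\D$, is $\tau_{\gamma}(D_i)-D_i = 1 + A_i - D_i = 1 + \gamma_n(A_i+D_i)$, i.e.\ the height of the Dyck path at time $A_i+D_i$, not at time $2j=2D_i$. These two times differ by $|A_i-D_i|=|\gamma_n(A_i+D_i)|$, which is typically of order $\sqrt{n}$, so converting $\gamma_n(A_i+D_i)$ to $\gamma_n(2j)$ requires bounding the oscillation of $\gamma_n$ over a window of width on the order of $\sqrt{n}$. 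That oscillation is not $O(1)$; it is of order $n^{1/4}$, and the paper controls it with high probability as $O(n^{0.4})$ via the Petrov conditions (Definition~\ref{disparate}) and Lemma~\ref{fathersday}. Since $O(n^{0.4})=o(\sqrt{n})$, your overall plan does go through, but the obstacle is not verifying uniformity of a deterministic $O(1)$ bound as you suggest: it is a probabilistic moderate-deviation estimate with no deterministic counterpart, and the case $j\notin\D$ needs a separate comparison argument (Lemma~\ref{stanford} and its consequences) rather than a symmetric token inspection. With the $O(1)$ claim replaced by the $o(\sqrt{n})$ moderate-deviation bound, the interpolation step (gaps in $\D$, $\bar\D$ of polylogarithmic, or at worst $n^{0.3}$, size) and the appeal to Kaigh's invariance principle are sound.
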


Theorem \ref{123 invariance} shows that $\rho_n(i) = n-i + O(\sqrt{n})$, where the fluctuation is described by Brownian excursion.  Consequently, if $\rho_n(i)=i$ then $i =  n-i + O(\sqrt{n})$, so that $i = (n/2)+ O(\sqrt{n})$.  Thus if $\rho_n$ has any fixed points then they then they are within $O(\sqrt{n})$ of $n/2$.  This already gives an improvement over Part (b) of Theorem \ref{intro summary} in terms of the location of the fixed points.  To establish Part (b) of Theorem \ref{intro main}, we must carefully examine the local structure of $\rho_n$ and this is what leads to the independent Bernoulli random variables appearing in the theorem.  This will be done using the bijection with Dyck paths introduced in Section \ref{offer}.  

Although Theorem \ref{123 invariance} strongly suggests the general form of Theorem \ref{intro main} Part (b), our proof of Theorem \ref{intro main} does not depend on Theorem \ref{123 invariance}.  We also remark that \cite[Theorem 1.2]{part1} is stated for $\mathbf{321}$-avoiding permutations, however, the fixed points of {\bf 321}-avoiding permutations are concentrated near 1 and $n$ and are not well-described by Brownian excursion.

\begin{figure}
\centering
\begin{tikzpicture}
\draw[ultra thin](0, 0) --
(1/150, 1/13) --
(1/75, 0) --
(1/50, 1/13) --
(2/75, 0) --
(1/30, 1/13) --
(1/25, 0) --
(7/150, 1/13) --
(4/75, 2/13) --
(3/50, 3/13) --
(1/15, 2/13) --
(11/150, 3/13) --
(2/25, 2/13) --
(13/150, 1/13) --
(7/75, 2/13) --
(1/10, 3/13) --
(8/75, 4/13) --
(17/150, 3/13) --
(3/25, 4/13) --
(19/150, 5/13) --
(2/15, 4/13) --
(7/50, 5/13) --
(11/75, 4/13) --
(23/150, 3/13) --
(4/25, 4/13) --
(1/6, 5/13) --
(13/75, 4/13) --
(9/50, 5/13) --
(14/75, 6/13) --
(29/150, 7/13) --
(1/5, 8/13) --
(31/150, 9/13) --
(16/75, 8/13) --
(11/50, 7/13) --
(17/75, 8/13) --
(7/30, 7/13) --
(6/25, 6/13) --
(37/150, 7/13) --
(19/75, 6/13) --
(13/50, 7/13) --
(4/15, 8/13) --
(41/150, 9/13) --
(7/25, 10/13) --
(43/150, 9/13) --
(22/75, 10/13) --
(3/10, 9/13) --
(23/75, 8/13) --
(47/150, 9/13) --
(8/25, 10/13) --
(49/150, 9/13) --
(1/3, 10/13) --
(17/50, 11/13) --
(26/75, 10/13) --
(53/150, 9/13) --
(9/25, 8/13) --
(11/30, 7/13) --
(28/75, 6/13) --
(19/50, 5/13) --
(29/75, 6/13) --
(59/150, 7/13) --
(2/5, 8/13) --
(61/150, 7/13) --
(31/75, 8/13) --
(21/50, 7/13) --
(32/75, 8/13) --
(13/30, 7/13) --
(11/25, 6/13) --
(67/150, 7/13) --
(34/75, 8/13) --
(23/50, 7/13) --
(7/15, 8/13) --
(71/150, 9/13) --
(12/25, 8/13) --
(73/150, 9/13) --
(37/75, 8/13) --
(1/2, 9/13) --
(38/75, 8/13) --
(77/150, 9/13) --
(13/25, 10/13) --
(79/150, 9/13) --
(8/15, 8/13) --
(27/50, 9/13) --
(41/75, 10/13) --
(83/150, 11/13) --
(14/25, 10/13) --
(17/30, 11/13) --
(43/75, 12/13) --
(29/50, 11/13) --
(44/75, 10/13) --
(89/150, 9/13) --
(3/5, 10/13) --
(91/150, 9/13) --
(46/75, 10/13) --
(31/50, 9/13) --
(47/75, 10/13) --
(19/30, 11/13) --
(16/25, 10/13) --
(97/150, 9/13) --
(49/75, 8/13) --
(33/50, 9/13) --
(2/3, 10/13) --
(101/150, 9/13) --
(17/25, 8/13) --
(103/150, 9/13) --
(52/75, 10/13) --
(7/10, 11/13) --
(53/75, 12/13) --
(107/150, 11/13) --
(18/25, 10/13) --
(109/150, 9/13) --
(11/15, 10/13) --
(37/50, 9/13) --
(56/75, 8/13) --
(113/150, 9/13) --
(19/25, 8/13) --
(23/30, 7/13) --
(58/75, 6/13) --
(39/50, 5/13) --
(59/75, 6/13) --
(119/150, 5/13) --
(4/5, 4/13) --
(121/150, 5/13) --
(61/75, 4/13) --
(41/50, 5/13) --
(62/75, 4/13) --
(5/6, 3/13) --
(21/25, 2/13) --
(127/150, 3/13) --
(64/75, 2/13) --
(43/50, 1/13) --
(13/15, 2/13) --
(131/150, 3/13) --
(22/25, 2/13) --
(133/150, 3/13) --
(67/75, 2/13) --
(9/10, 3/13) --
(68/75, 4/13) --
(137/150, 5/13) --
(23/25, 4/13) --
(139/150, 3/13) --
(14/15, 2/13) --
(47/50, 3/13) --
(71/75, 4/13) --
(143/150, 5/13) --
(24/25, 6/13) --
(29/30, 5/13) --
(73/75, 4/13) --
(49/50, 5/13) --
(74/75, 4/13) --
(149/150, 5/13) --
(1, 6/13) --
(151/150, 7/13) --
(76/75, 6/13) --
(51/50, 5/13) --
(77/75, 4/13) --
(31/30, 5/13) --
(26/25, 4/13) --
(157/150, 5/13) --
(79/75, 4/13) --
(53/50, 3/13) --
(16/15, 2/13) --
(161/150, 1/13) --
(27/25, 2/13) --
(163/150, 3/13) --
(82/75, 4/13) --
(11/10, 5/13) --
(83/75, 6/13) --
(167/150, 7/13) --
(28/25, 8/13) --
(169/150, 9/13) --
(17/15, 10/13) --
(57/50, 9/13) --
(86/75, 10/13) --
(173/150, 11/13) --
(29/25, 12/13) --
(7/6, 11/13) --
(88/75, 12/13) --
(59/50, 1) --
(89/75, 12/13) --
(179/150, 11/13) --
(6/5, 12/13) --
(181/150, 11/13) --
(91/75, 12/13) --
(61/50, 11/13) --
(92/75, 12/13) --
(37/30, 11/13) --
(31/25, 10/13) --
(187/150, 11/13) --
(94/75, 10/13) --
(63/50, 9/13) --
(19/15, 8/13) --
(191/150, 9/13) --
(32/25, 8/13) --
(193/150, 7/13) --
(97/75, 6/13) --
(13/10, 7/13) --
(98/75, 6/13) --
(197/150, 7/13) --
(33/25, 8/13) --
(199/150, 7/13) --
(4/3, 8/13) --
(67/50, 9/13) --
(101/75, 10/13) --
(203/150, 9/13) --
(34/25, 10/13) --
(41/30, 11/13) --
(103/75, 10/13) --
(69/50, 9/13) --
(104/75, 10/13) --
(209/150, 9/13) --
(7/5, 10/13) --
(211/150, 9/13) --
(106/75, 8/13) --
(71/50, 9/13) --
(107/75, 10/13) --
(43/30, 11/13) --
(36/25, 10/13) --
(217/150, 9/13) --
(109/75, 10/13) --
(73/50, 9/13) --
(22/15, 10/13) --
(221/150, 9/13) --
(37/25, 8/13) --
(223/150, 9/13) --
(112/75, 8/13) --
(3/2, 9/13) --
(113/75, 10/13) --
(227/150, 9/13) --
(38/25, 10/13) --
(229/150, 9/13) --
(23/15, 10/13) --
(77/50, 11/13) --
(116/75, 10/13) --
(233/150, 11/13) --
(39/25, 10/13) --
(47/30, 9/13) --
(118/75, 10/13) --
(79/50, 11/13) --
(119/75, 12/13) --
(239/150, 1) --
(8/5, 12/13) --
(241/150, 11/13) --
(121/75, 10/13) --
(81/50, 11/13) --
(122/75, 12/13) --
(49/30, 11/13) --
(41/25, 12/13) --
(247/150, 1) --
(124/75, 14/13) --
(83/50, 15/13) --
(5/3, 14/13) --
(251/150, 15/13) --
(42/25, 16/13) --
(253/150, 17/13) --
(127/75, 16/13) --
(17/10, 15/13) --
(128/75, 16/13) --
(257/150, 15/13) --
(43/25, 16/13) --
(259/150, 17/13) --
(26/15, 16/13) --
(87/50, 17/13) --
(131/75, 16/13) --
(263/150, 15/13) --
(44/25, 14/13) --
(53/30, 15/13) --
(133/75, 14/13) --
(89/50, 15/13) --
(134/75, 14/13) --
(269/150, 1) --
(9/5, 14/13) --
(271/150, 15/13) --
(136/75, 14/13) --
(91/50, 1) --
(137/75, 14/13) --
(11/6, 1) --
(46/25, 14/13) --
(277/150, 15/13) --
(139/75, 16/13) --
(93/50, 15/13) --
(28/15, 14/13) --
(281/150, 15/13) --
(47/25, 14/13) --
(283/150, 15/13) --
(142/75, 16/13) --
(19/10, 15/13) --
(143/75, 16/13) --
(287/150, 15/13) --
(48/25, 16/13) --
(289/150, 17/13) --
(29/15, 18/13) --
(97/50, 17/13) --
(146/75, 18/13) --
(293/150, 19/13) --
(49/25, 20/13) --
(59/30, 19/13) --
(148/75, 18/13) --
(99/50, 19/13) --
(149/75, 18/13) --
(299/150, 19/13) --
(2, 18/13) --
(301/150, 17/13) --
(151/75, 16/13) --
(101/50, 15/13) --
(152/75, 16/13) --
(61/30, 17/13) --
(51/25, 16/13) --
(307/150, 15/13) --
(154/75, 16/13) --
(103/50, 15/13) --
(31/15, 16/13) --
(311/150, 17/13) --
(52/25, 16/13) --
(313/150, 17/13) --
(157/75, 18/13) --
(21/10, 19/13) --
(158/75, 18/13) --
(317/150, 19/13) --
(53/25, 20/13) --
(319/150, 21/13) --
(32/15, 20/13) --
(107/50, 19/13) --
(161/75, 18/13) --
(323/150, 17/13) --
(54/25, 16/13) --
(13/6, 15/13) --
(163/75, 14/13) --
(109/50, 1) --
(164/75, 12/13) --
(329/150, 1) --
(11/5, 12/13) --
(331/150, 11/13) --
(166/75, 10/13) --
(111/50, 9/13) --
(167/75, 8/13) --
(67/30, 9/13) --
(56/25, 8/13) --
(337/150, 9/13) --
(169/75, 10/13) --
(113/50, 11/13) --
(34/15, 10/13) --
(341/150, 11/13) --
(57/25, 12/13) --
(343/150, 11/13) --
(172/75, 12/13) --
(23/10, 1) --
(173/75, 12/13) --
(347/150, 11/13) --
(58/25, 10/13) --
(349/150, 11/13) --
(7/3, 10/13) --
(117/50, 11/13) --
(176/75, 10/13) --
(353/150, 9/13) --
(59/25, 8/13) --
(71/30, 7/13) --
(178/75, 8/13) --
(119/50, 7/13) --
(179/75, 8/13) --
(359/150, 9/13) --
(12/5, 8/13) --
(361/150, 9/13) --
(181/75, 8/13) --
(121/50, 9/13) --
(182/75, 10/13) --
(73/30, 9/13) --
(61/25, 8/13) --
(367/150, 7/13) --
(184/75, 8/13) --
(123/50, 7/13) --
(37/15, 6/13) --
(371/150, 5/13) --
(62/25, 4/13) --
(373/150, 5/13) --
(187/75, 6/13) --
(5/2, 7/13) --
(188/75, 8/13) --
(377/150, 7/13) --
(63/25, 8/13) --
(379/150, 9/13) --
(38/15, 10/13) --
(127/50, 9/13) --
(191/75, 10/13) --
(383/150, 9/13) --
(64/25, 8/13) --
(77/30, 7/13) --
(193/75, 6/13) --
(129/50, 5/13) --
(194/75, 6/13) --
(389/150, 7/13) --
(13/5, 6/13) --
(391/150, 5/13) --
(196/75, 6/13) --
(131/50, 7/13) --
(197/75, 6/13) --
(79/30, 5/13) --
(66/25, 4/13) --
(397/150, 5/13) --
(199/75, 4/13) --
(133/50, 3/13) --
(8/3, 4/13) --
(401/150, 5/13) --
(67/25, 6/13) --
(403/150, 7/13) --
(202/75, 6/13) --
(27/10, 5/13) --
(203/75, 6/13) --
(407/150, 7/13) --
(68/25, 6/13) --
(409/150, 5/13) --
(41/15, 4/13) --
(137/50, 5/13) --
(206/75, 4/13) --
(413/150, 5/13) --
(69/25, 6/13) --
(83/30, 7/13) --
(208/75, 8/13) --
(139/50, 9/13) --
(209/75, 8/13) --
(419/150, 7/13) --
(14/5, 8/13) --
(421/150, 7/13) --
(211/75, 6/13) --
(141/50, 7/13) --
(212/75, 8/13) --
(17/6, 7/13) --
(71/25, 6/13) --
(427/150, 5/13) --
(214/75, 6/13) --
(143/50, 7/13) --
(43/15, 6/13) --
(431/150, 7/13) --
(72/25, 6/13) --
(433/150, 7/13) --
(217/75, 6/13) --
(29/10, 5/13) --
(218/75, 4/13) --
(437/150, 5/13) --
(73/25, 4/13) --
(439/150, 5/13) --
(44/15, 6/13) --
(147/50, 7/13) --
(221/75, 8/13) --
(443/150, 7/13) --
(74/25, 6/13) --
(89/30, 5/13) --
(223/75, 4/13) --
(149/50, 5/13) --
(224/75, 6/13) --
(449/150, 7/13) --
(3, 8/13) --
(451/150, 9/13) --
(226/75, 10/13) --
(151/50, 9/13) --
(227/75, 8/13) --
(91/30, 9/13) --
(76/25, 10/13) --
(457/150, 9/13) --
(229/75, 8/13) --
(153/50, 9/13) --
(46/15, 10/13) --
(461/150, 9/13) --
(77/25, 10/13) --
(463/150, 9/13) --
(232/75, 10/13) --
(31/10, 11/13) --
(233/75, 10/13) --
(467/150, 9/13) --
(78/25, 10/13) --
(469/150, 11/13) --
(47/15, 12/13) --
(157/50, 11/13) --
(236/75, 10/13) --
(473/150, 11/13) --
(79/25, 12/13) --
(19/6, 1) --
(238/75, 14/13) --
(159/50, 1) --
(239/75, 14/13) --
(479/150, 1) --
(16/5, 12/13) --
(481/150, 11/13) --
(241/75, 12/13) --
(161/50, 11/13) --
(242/75, 12/13) --
(97/30, 1) --
(81/25, 12/13) --
(487/150, 1) --
(244/75, 14/13) --
(163/50, 15/13) --
(49/15, 16/13) --
(491/150, 17/13) --
(82/25, 18/13) --
(493/150, 17/13) --
(247/75, 18/13) --
(33/10, 19/13) --
(248/75, 20/13) --
(497/150, 21/13) --
(83/25, 20/13) --
(499/150, 19/13) --
(10/3, 18/13) --
(167/50, 17/13) --
(251/75, 16/13) --
(503/150, 17/13) --
(84/25, 16/13) --
(101/30, 15/13) --
(253/75, 14/13) --
(169/50, 1) --
(254/75, 12/13) --
(509/150, 11/13) --
(17/5, 12/13) --
(511/150, 1) --
(256/75, 12/13) --
(171/50, 11/13) --
(257/75, 10/13) --
(103/30, 11/13) --
(86/25, 12/13) --
(517/150, 11/13) --
(259/75, 10/13) --
(173/50, 9/13) --
(52/15, 8/13) --
(521/150, 9/13) --
(87/25, 10/13) --
(523/150, 11/13) --
(262/75, 10/13) --
(7/2, 9/13) --
(263/75, 8/13) --
(527/150, 7/13) --
(88/25, 8/13) --
(529/150, 7/13) --
(53/15, 6/13) --
(177/50, 7/13) --
(266/75, 8/13) --
(533/150, 9/13) --
(89/25, 8/13) --
(107/30, 7/13) --
(268/75, 8/13) --
(179/50, 7/13) --
(269/75, 8/13) --
(539/150, 9/13) --
(18/5, 10/13) --
(541/150, 11/13) --
(271/75, 10/13) --
(181/50, 11/13) --
(272/75, 12/13) --
(109/30, 1) --
(91/25, 14/13) --
(547/150, 1) --
(274/75, 12/13) --
(183/50, 1) --
(11/3, 12/13) --
(551/150, 1) --
(92/25, 14/13) --
(553/150, 15/13) --
(277/75, 16/13) --
(37/10, 15/13) --
(278/75, 16/13) --
(557/150, 17/13) --
(93/25, 18/13) --
(559/150, 17/13) --
(56/15, 16/13) --
(187/50, 17/13) --
(281/75, 18/13) --
(563/150, 19/13) --
(94/25, 18/13) --
(113/30, 17/13) --
(283/75, 18/13) --
(189/50, 19/13) --
(284/75, 20/13) --
(569/150, 19/13) --
(19/5, 20/13) --
(571/150, 21/13) --
(286/75, 22/13) --
(191/50, 23/13) --
(287/75, 24/13) --
(23/6, 23/13) --
(96/25, 22/13) --
(577/150, 23/13) --
(289/75, 22/13) --
(193/50, 21/13) --
(58/15, 22/13) --
(581/150, 23/13) --
(97/25, 24/13) --
(583/150, 23/13) --
(292/75, 24/13) --
(39/10, 23/13) --
(293/75, 24/13) --
(587/150, 25/13) --
(98/25, 24/13) --
(589/150, 25/13) --
(59/15, 24/13) --
(197/50, 23/13) --
(296/75, 24/13) --
(593/150, 23/13) --
(99/25, 22/13) --
(119/30, 21/13) --
(298/75, 22/13) --
(199/50, 23/13) --
(299/75, 24/13) --
(599/150, 23/13) --
(4, 24/13) --
(601/150, 25/13) --
(301/75, 2) --
(201/50, 25/13) --
(302/75, 2) --
(121/30, 25/13) --
(101/25, 24/13) --
(607/150, 25/13) --
(304/75, 24/13) --
(203/50, 23/13) --
(61/15, 24/13) --
(611/150, 25/13) --
(102/25, 24/13) --
(613/150, 25/13) --
(307/75, 2) --
(41/10, 27/13) --
(308/75, 28/13) --
(617/150, 27/13) --
(103/25, 2) --
(619/150, 27/13) --
(62/15, 2) --
(207/50, 25/13) --
(311/75, 24/13) --
(623/150, 23/13) --
(104/25, 24/13) --
(25/6, 23/13) --
(313/75, 22/13) --
(209/50, 21/13) --
(314/75, 20/13) --
(629/150, 19/13) --
(21/5, 18/13) --
(631/150, 19/13) --
(316/75, 18/13) --
(211/50, 19/13) --
(317/75, 20/13) --
(127/30, 21/13) --
(106/25, 20/13) --
(637/150, 21/13) --
(319/75, 20/13) --
(213/50, 19/13) --
(64/15, 18/13) --
(641/150, 17/13) --
(107/25, 16/13) --
(643/150, 15/13) --
(322/75, 16/13) --
(43/10, 15/13) --
(323/75, 16/13) --
(647/150, 17/13) --
(108/25, 16/13) --
(649/150, 17/13) --
(13/3, 18/13) --
(217/50, 17/13) --
(326/75, 16/13) --
(653/150, 15/13) --
(109/25, 16/13) --
(131/30, 17/13) --
(328/75, 18/13) --
(219/50, 19/13) --
(329/75, 20/13) --
(659/150, 19/13) --
(22/5, 18/13) --
(661/150, 19/13) --
(331/75, 18/13) --
(221/50, 17/13) --
(332/75, 16/13) --
(133/30, 17/13) --
(111/25, 18/13) --
(667/150, 19/13) --
(334/75, 18/13) --
(223/50, 19/13) --
(67/15, 20/13) --
(671/150, 21/13) --
(112/25, 20/13) --
(673/150, 19/13) --
(337/75, 20/13) --
(9/2, 21/13) --
(338/75, 22/13) --
(677/150, 21/13) --
(113/25, 20/13) --
(679/150, 19/13) --
(68/15, 18/13) --
(227/50, 19/13) --
(341/75, 20/13) --
(683/150, 19/13) --
(114/25, 20/13) --
(137/30, 21/13) --
(343/75, 20/13) --
(229/50, 19/13) --
(344/75, 20/13) --
(689/150, 21/13) --
(23/5, 22/13) --
(691/150, 21/13) --
(346/75, 22/13) --
(231/50, 23/13) --
(347/75, 24/13) --
(139/30, 23/13) --
(116/25, 24/13) --
(697/150, 23/13) --
(349/75, 22/13) --
(233/50, 21/13) --
(14/3, 22/13) --
(701/150, 21/13) --
(117/25, 22/13) --
(703/150, 21/13) --
(352/75, 22/13) --
(47/10, 21/13) --
(353/75, 20/13) --
(707/150, 19/13) --
(118/25, 18/13) --
(709/150, 17/13) --
(71/15, 16/13) --
(237/50, 15/13) --
(356/75, 16/13) --
(713/150, 17/13) --
(119/25, 18/13) --
(143/30, 17/13) --
(358/75, 18/13) --
(239/50, 19/13) --
(359/75, 20/13) --
(719/150, 19/13) --
(24/5, 18/13) --
(721/150, 17/13) --
(361/75, 18/13) --
(241/50, 19/13) --
(362/75, 20/13) --
(29/6, 19/13) --
(121/25, 18/13) --
(727/150, 19/13) --
(364/75, 20/13) --
(243/50, 19/13) --
(73/15, 20/13) --
(731/150, 19/13) --
(122/25, 18/13) --
(733/150, 17/13) --
(367/75, 18/13) --
(49/10, 19/13) --
(368/75, 18/13) --
(737/150, 19/13) --
(123/25, 20/13) --
(739/150, 19/13) --
(74/15, 20/13) --
(247/50, 19/13) --
(371/75, 18/13) --
(743/150, 17/13) --
(124/25, 18/13) --
(149/30, 19/13) --
(373/75, 20/13) --
(249/50, 21/13) --
(374/75, 22/13) --
(749/150, 21/13) --
(5, 20/13) --
(751/150, 21/13) --
(376/75, 20/13) --
(251/50, 21/13) --
(377/75, 22/13) --
(151/30, 21/13) --
(126/25, 20/13) --
(757/150, 21/13) --
(379/75, 22/13) --
(253/50, 21/13) --
(76/15, 22/13) --
(761/150, 21/13) --
(127/25, 20/13) --
(763/150, 21/13) --
(382/75, 20/13) --
(51/10, 21/13) --
(383/75, 22/13) --
(767/150, 21/13) --
(128/25, 22/13) --
(769/150, 23/13) --
(77/15, 24/13) --
(257/50, 23/13) --
(386/75, 22/13) --
(773/150, 23/13) --
(129/25, 22/13) --
(31/6, 23/13) --
(388/75, 22/13) --
(259/50, 23/13) --
(389/75, 24/13) --
(779/150, 25/13) --
(26/5, 2) --
(781/150, 27/13) --
(391/75, 2) --
(261/50, 25/13) --
(392/75, 24/13) --
(157/30, 25/13) --
(131/25, 2) --
(787/150, 27/13) --
(394/75, 28/13) --
(263/50, 29/13) --
(79/15, 30/13) --
(791/150, 29/13) --
(132/25, 30/13) --
(793/150, 29/13) --
(397/75, 28/13) --
(53/10, 27/13) --
(398/75, 28/13) --
(797/150, 27/13) --
(133/25, 28/13) --
(799/150, 27/13) --
(16/3, 2) --
(267/50, 27/13) --
(401/75, 2) --
(803/150, 25/13) --
(134/25, 24/13) --
(161/30, 23/13) --
(403/75, 22/13) --
(269/50, 23/13) --
(404/75, 24/13) --
(809/150, 23/13) --
(27/5, 22/13) --
(811/150, 23/13) --
(406/75, 22/13) --
(271/50, 21/13) --
(407/75, 22/13) --
(163/30, 21/13) --
(136/25, 22/13) --
(817/150, 21/13) --
(409/75, 20/13) --
(273/50, 21/13) --
(82/15, 20/13) --
(821/150, 21/13) --
(137/25, 20/13) --
(823/150, 21/13) --
(412/75, 20/13) --
(11/2, 19/13) --
(413/75, 18/13) --
(827/150, 17/13) --
(138/25, 16/13) --
(829/150, 15/13) --
(83/15, 16/13) --
(277/50, 15/13) --
(416/75, 14/13) --
(833/150, 1) --
(139/25, 14/13) --
(167/30, 15/13) --
(418/75, 14/13) --
(279/50, 15/13) --
(419/75, 16/13) --
(839/150, 15/13) --
(28/5, 16/13) --
(841/150, 15/13) --
(421/75, 16/13) --
(281/50, 17/13) --
(422/75, 18/13) --
(169/30, 19/13) --
(141/25, 18/13) --
(847/150, 19/13) --
(424/75, 18/13) --
(283/50, 17/13) --
(17/3, 16/13) --
(851/150, 17/13) --
(142/25, 18/13) --
(853/150, 19/13) --
(427/75, 20/13) --
(57/10, 19/13) --
(428/75, 18/13) --
(857/150, 17/13) --
(143/25, 18/13) --
(859/150, 17/13) --
(86/15, 16/13) --
(287/50, 17/13) --
(431/75, 16/13) --
(863/150, 15/13) --
(144/25, 16/13) --
(173/30, 15/13) --
(433/75, 16/13) --
(289/50, 17/13) --
(434/75, 16/13) --
(869/150, 17/13) --
(29/5, 16/13) --
(871/150, 15/13) --
(436/75, 14/13) --
(291/50, 15/13) --
(437/75, 16/13) --
(35/6, 17/13) --
(146/25, 18/13) --
(877/150, 17/13) --
(439/75, 16/13) --
(293/50, 15/13) --
(88/15, 14/13) --
(881/150, 15/13) --
(147/25, 16/13) --
(883/150, 15/13) --
(442/75, 14/13) --
(59/10, 1) --
(443/75, 12/13) --
(887/150, 11/13) --
(148/25, 10/13) --
(889/150, 11/13) --
(89/15, 10/13) --
(297/50, 11/13) --
(446/75, 12/13) --
(893/150, 11/13) --
(149/25, 10/13) --
(179/30, 9/13) --
(448/75, 10/13) --
(299/50, 11/13) --
(449/75, 10/13) --
(899/150, 11/13) --
(6, 12/13) --
(901/150, 11/13) --
(451/75, 10/13) --
(301/50, 9/13) --
(452/75, 10/13) --
(181/30, 11/13) --
(151/25, 10/13) --
(907/150, 9/13) --
(454/75, 10/13) --
(303/50, 11/13) --
(91/15, 10/13) --
(911/150, 9/13) --
(152/25, 8/13) --
(913/150, 9/13) --
(457/75, 10/13) --
(61/10, 9/13) --
(458/75, 8/13) --
(917/150, 9/13) --
(153/25, 10/13) --
(919/150, 9/13) --
(92/15, 8/13) --
(307/50, 9/13) --
(461/75, 8/13) --
(923/150, 9/13) --
(154/25, 8/13) --
(37/6, 9/13) --
(463/75, 8/13) --
(309/50, 9/13) --
(464/75, 10/13) --
(929/150, 9/13) --
(31/5, 10/13) --
(931/150, 9/13) --
(466/75, 8/13) --
(311/50, 7/13) --
(467/75, 8/13) --
(187/30, 7/13) --
(156/25, 8/13) --
(937/150, 9/13) --
(469/75, 8/13) --
(313/50, 9/13) --
(94/15, 10/13) --
(941/150, 11/13) --
(157/25, 12/13) --
(943/150, 11/13) --
(472/75, 12/13) --
(63/10, 11/13) --
(473/75, 10/13) --
(947/150, 11/13) --
(158/25, 12/13) --
(949/150, 11/13) --
(19/3, 12/13) --
(317/50, 11/13) --
(476/75, 12/13) --
(953/150, 11/13) --
(159/25, 12/13) --
(191/30, 11/13) --
(478/75, 10/13) --
(319/50, 9/13) --
(479/75, 10/13) --
(959/150, 9/13) --
(32/5, 10/13) --
(961/150, 9/13) --
(481/75, 10/13) --
(321/50, 9/13) --
(482/75, 8/13) --
(193/30, 7/13) --
(161/25, 6/13) --
(967/150, 7/13) --
(484/75, 6/13) --
(323/50, 5/13) --
(97/15, 4/13) --
(971/150, 5/13) --
(162/25, 4/13) --
(973/150, 3/13) --
(487/75, 4/13) --
(13/2, 3/13) --
(488/75, 2/13) --
(977/150, 1/13) --
(163/25, 2/13) --
(979/150, 3/13) --
(98/15, 4/13) --
(327/50, 3/13) --
(491/75, 2/13) --
(983/150, 3/13) --
(164/25, 2/13) --
(197/30, 1/13) --
(493/75, 2/13) --
(329/50, 3/13) --
(494/75, 2/13) --
(989/150, 3/13) --
(33/5, 2/13) --
(991/150, 1/13) --
(496/75, 2/13) --
(331/50, 3/13) --
(497/75, 2/13) --
(199/30, 1/13) --
(166/25, 2/13) --
(997/150, 1/13) --
(499/75, 2/13) --
(333/50, 1/13) --
(20/3, 0); 

\draw[ultra thin](526/75, 0) --
(527/75, 0) --
(176/25, 0) --
(177/25, 2/13) --
(532/75, 2/13) --
(107/15, 3/13) --
(536/75, 1/13) --
(179/25, 4/13) --
(538/75, 4/13) --
(36/5, 4/13) --
(181/25, 3/13) --
(544/75, 6/13) --
(109/15, 8/13) --
(182/25, 7/13) --
(547/75, 6/13) --
(22/3, 6/13) --
(551/75, 9/13) --
(184/25, 9/13) --
(553/75, 5/13) --
(554/75, 9/13) --
(37/5, 8/13) --
(556/75, 10/13) --
(186/25, 3/13) --
(559/75, 7/13) --
(112/15, 7/13) --
(187/25, 7/13) --
(563/75, 7/13) --
(113/15, 8/13) --
(566/75, 8/13) --
(189/25, 8/13) --
(568/75, 7/13) --
(569/75, 9/13) --
(571/75, 6/13) --
(572/75, 10/13) --
(191/25, 9/13) --
(574/75, 11/13) --
(23/3, 9/13) --
(192/25, 9/13) --
(577/75, 8/13) --
(578/75, 10/13) --
(193/25, 7/13) --
(116/15, 9/13) --
(581/75, 4/13) --
(194/25, 7/13) --
(583/75, 9/13) --
(584/75, 11/13) --
(39/5, 9/13) --
(586/75, 8/13) --
(587/75, 5/13) --
(196/25, 4/13) --
(589/75, 4/13) --
(596/75, 4/13) --
(599/75, 3/13) --
(8, 5/13) --
(601/75, 4/13) --
(201/25, 4/13) --
(604/75, 6/13) --
(121/15, 4/13) --
(202/25, 4/13) --
(41/5, 9/13) --
(206/25, 11/13) --
(619/75, 10/13) --
(124/15, 12/13) --
(207/25, 11/13) --
(622/75, 11/13) --
(623/75, 11/13) --
(208/25, 10/13) --
(25/3, 8/13) --
(626/75, 6/13) --
(628/75, 7/13) --
(42/5, 3/13) --
(631/75, 9/13) --
(632/75, 8/13) --
(211/25, 10/13) --
(634/75, 9/13) --
(127/15, 9/13) --
(212/25, 4/13) --
(637/75, 8/13) --
(638/75, 10/13) --
(213/25, 9/13) --
(128/15, 9/13) --
(641/75, 8/13) --
(643/75, 9/13) --
(644/75, 9/13) --
(43/5, 7/13) --
(646/75, 10/13) --
(647/75, 10/13) --
(649/75, 8/13) --
(26/3, 10/13) --
(217/25, 12/13) --
(653/75, 11/13) --
(219/25, 14/13) --
(659/75, 14/13) --
(44/5, 16/13) --
(661/75, 15/13) --
(662/75, 1) --
(221/25, 16/13) --
(664/75, 16/13) --
(133/15, 14/13) --
(222/25, 14/13) --
(667/75, 12/13) --
(668/75, 14/13) --
(223/25, 1) --
(671/75, 1) --
(224/25, 15/13) --
(673/75, 14/13) --
(9, 15/13) --
(676/75, 15/13) --
(679/75, 17/13) --
(136/15, 1) --
(227/25, 17/13) --
(682/75, 19/13) --
(683/75, 18/13) --
(228/25, 18/13) --
(137/15, 14/13) --
(686/75, 16/13) --
(229/25, 15/13) --
(689/75, 16/13) --
(46/5, 11/13) --
(691/75, 1) --
(692/75, 18/13) --
(231/25, 16/13) --
(694/75, 18/13) --
(139/15, 20/13) --
(232/25, 12/13) --
(697/75, 8/13) --
(28/3, 10/13) --
(701/75, 7/13) --
(234/25, 11/13) --
(703/75, 10/13) --
(704/75, 12/13) --
(47/5, 10/13) --
(706/75, 10/13) --
(707/75, 7/13) --
(236/25, 3/13) --
(709/75, 8/13) --
(142/15, 8/13) --
(237/25, 7/13) --
(712/75, 9/13) --
(713/75, 7/13) --
(239/25, 7/13) --
(718/75, 4/13) --
(719/75, 6/13) --
(48/5, 9/13) --
(721/75, 9/13) --
(722/75, 4/13) --
(241/25, 6/13) --
(724/75, 4/13) --
(29/3, 6/13) --
(242/25, 4/13) --
(243/25, 4/13) --
(146/15, 6/13) --
(731/75, 4/13) --
(244/25, 6/13) --
(733/75, 4/13) --
(736/75, 3/13) --
(737/75, 6/13) --
(246/25, 8/13) --
(739/75, 7/13) --
(148/15, 5/13) --
(247/25, 7/13) --
(743/75, 6/13) --
(248/25, 6/13) --
(149/15, 6/13) --
(746/75, 4/13) --
(748/75, 3/13) --
(749/75, 5/13) --
(10, 7/13) --
(151/15, 7/13) --
(252/25, 9/13) --
(757/75, 7/13) --
(758/75, 9/13) --
(152/15, 9/13) --
(761/75, 9/13) --
(254/25, 8/13) --
(763/75, 10/13) --
(51/5, 9/13) --
(766/75, 11/13) --
(256/25, 8/13) --
(769/75, 10/13) --
(154/15, 1) --
(257/25, 1) --
(772/75, 11/13) --
(258/25, 12/13) --
(779/75, 11/13) --
(52/5, 17/13) --
(781/75, 14/13) --
(782/75, 16/13) --
(261/25, 18/13) --
(784/75, 20/13) --
(157/15, 16/13) --
(262/25, 10/13) --
(787/75, 12/13) --
(788/75, 9/13) --
(263/25, 11/13) --
(158/15, 6/13) --
(791/75, 8/13) --
(264/25, 10/13) --
(793/75, 7/13) --
(53/5, 6/13) --
(796/75, 8/13) --
(797/75, 7/13) --
(267/25, 10/13) --
(268/25, 11/13) --
(161/15, 1) --
(806/75, 12/13) --
(54/5, 15/13) --
(812/75, 15/13) --
(271/25, 17/13) --
(163/15, 16/13) --
(272/25, 18/13) --
(273/25, 19/13) --
(274/25, 18/13) --
(823/75, 21/13) --
(824/75, 23/13) --
(11, 22/13) --
(276/25, 23/13) --
(829/75, 23/13) --
(166/15, 21/13) --
(277/25, 24/13) --
(832/75, 24/13) --
(833/75, 23/13) --
(836/75, 23/13) --
(279/25, 19/13) --
(838/75, 22/13) --
(839/75, 25/13) --
(56/5, 25/13) --
(841/75, 24/13) --
(281/25, 24/13) --
(844/75, 20/13) --
(169/15, 22/13) --
(282/25, 25/13) --
(847/75, 27/13) --
(848/75, 2) --
(283/25, 23/13) --
(34/3, 18/13) --
(851/75, 15/13) --
(284/25, 17/13) --
(853/75, 20/13) --
(854/75, 20/13) --
(57/5, 15/13) --
(856/75, 12/13) --
(857/75, 16/13) --
(286/25, 15/13) --
(859/75, 17/13) --
(287/25, 12/13) --
(862/75, 14/13) --
(863/75, 17/13) --
(288/25, 19/13) --
(173/15, 18/13) --
(868/75, 18/13) --
(869/75, 1) --
(58/5, 18/13) --
(871/75, 20/13) --
(872/75, 17/13) --
(291/25, 19/13) --
(874/75, 21/13) --
(292/25, 19/13) --
(877/75, 18/13) --
(878/75, 20/13) --
(881/75, 21/13) --
(294/25, 18/13) --
(883/75, 20/13) --
(884/75, 23/13) --
(59/5, 23/13) --
(886/75, 21/13) --
(887/75, 21/13) --
(296/25, 21/13) --
(297/25, 17/13) --
(892/75, 15/13) --
(893/75, 17/13) --
(298/25, 19/13) --
(179/15, 12/13) --
(896/75, 17/13) --
(299/25, 19/13) --
(898/75, 16/13) --
(899/75, 19/13) --
(12, 19/13) --
(901/75, 1) --
(902/75, 18/13) --
(301/25, 16/13) --
(904/75, 19/13) --
(181/15, 19/13) --
(303/25, 19/13) --
(182/15, 21/13) --
(911/75, 20/13) --
(304/25, 19/13) --
(913/75, 21/13) --
(914/75, 18/13) --
(61/5, 21/13) --
(916/75, 21/13) --
(917/75, 20/13) --
(919/75, 21/13) --
(307/25, 21/13) --
(922/75, 23/13) --
(923/75, 22/13) --
(308/25, 22/13) --
(309/25, 22/13) --
(928/75, 24/13) --
(929/75, 2) --
(311/25, 24/13) --
(934/75, 2) --
(187/15, 29/13) --
(312/25, 29/13) --
(937/75, 27/13) --
(938/75, 27/13) --
(313/25, 2) --
(188/15, 21/13) --
(941/75, 23/13) --
(314/25, 22/13) --
(943/75, 21/13) --
(944/75, 21/13) --
(63/5, 20/13) --
(946/75, 20/13) --
(947/75, 20/13) --
(316/25, 15/13) --
(38/3, 14/13) --
(952/75, 15/13) --
(953/75, 15/13) --
(191/15, 1) --
(956/75, 15/13) --
(319/25, 18/13) --
(958/75, 18/13) --
(959/75, 12/13) --
(64/5, 15/13) --
(961/75, 17/13) --
(962/75, 19/13) --
(321/25, 17/13) --
(964/75, 16/13) --
(193/15, 15/13) --
(322/25, 1) --
(967/75, 16/13) --
(968/75, 16/13) --
(323/25, 11/13) --
(194/15, 1) --
(971/75, 15/13) --
(324/25, 17/13) --
(973/75, 1) --
(974/75, 15/13) --
(13, 10/13) --
(976/75, 9/13) --
(977/75, 11/13) --
(326/25, 6/13) --
(979/75, 10/13) --
(196/15, 9/13) --
(327/25, 11/13) --
(982/75, 8/13) --
(983/75, 10/13) --
(328/25, 8/13) --
(197/15, 10/13) --
(986/75, 7/13) --
(329/25, 9/13) --
(988/75, 7/13) --
(989/75, 9/13) --
(66/5, 8/13) --
(991/75, 8/13) --
(992/75, 8/13) --
(331/25, 6/13) --
(994/75, 9/13) --
(199/15, 9/13) --
(332/25, 7/13) --
(998/75, 8/13) --
(1001/75, 8/13) --
(334/25, 11/13) --
(1003/75, 11/13) --
(1004/75, 6/13) --
(67/5, 11/13) --
(1006/75, 11/13) --
(1007/75, 11/13) --
(336/25, 11/13) --
(1009/75, 9/13) --
(202/15, 9/13) --
(337/25, 9/13) --
(1012/75, 6/13) --
(1013/75, 4/13) --
(338/25, 3/13) --
(339/25, 3/13) --
(1018/75, 2/13) --
(68/5, 2/13) --
(1021/75, 2/13) --
(1022/75, 0) --
(341/25, 2/13) --
(1024/75, 1/13) --
(41/3, 1/13) --
(41/3, 0)
;

\draw (0,3) -> (0,0) -> (6.75,0 );
\draw (7,3) -> (7,0) -> (13.75,0 );
\end{tikzpicture}
\caption{ $\Gamma^n(2nt)/\sqrt{2n}$ along with $-F_n$}

\label{fig 231}
\end{figure}
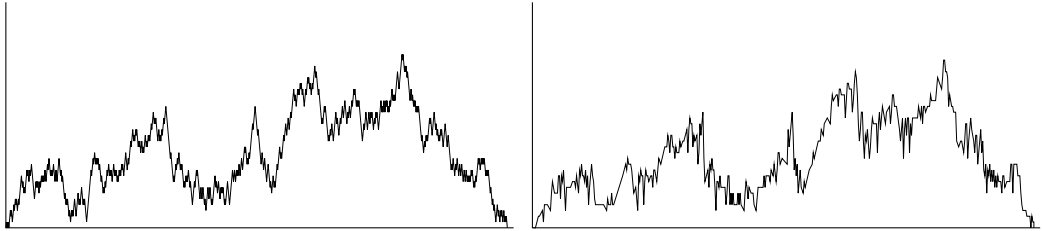

\subsection*{Fixed points of \textbf{231}-avoiding permutations}
The situation for $\mathbf{231}$-avoiding permutations is quite different from the case of $\mathbf{123}$-avoiding permutations.  In order to explain the connection with the invariance principle from \cite{part1}, we make use of the bijection we define below in \eqref{231 bijection}.  The details of this bijection are not needed for the present discussion, but will be used later in the paper.  The following result is the invariance principle we obtain from \cite{part1}, see also Figure \ref{fig 231}.

\begin{theorem}{\cite[Theorem 1.3]{part1}} \label{london}
Let $\Gamma^{n}$ be a uniformly chosen Dyck path of length $2n$ and let $\sigma_{\Gamma^n}$ be the image of $\Gamma^n$ under the bijection \eqref{231 bijection}, so that $\sigma_{\Gamma^n}$  is a uniformly random $\textbf{231}$-avoiding permutation.
For any $\epsilon>0$ there exists a sequence of sets $\shortexc_{\Gamma^n}$ such that
$$\P\left(|\shortexc_{\Gamma^n}|>n-n^{.75+\epsilon}\right) \to 1$$ and 
$$ \left(\frac{\Gamma^n(2nt)}{\sqrt{2n}}, \frac{F_{n}(nt) - nt}{\sqrt{2n}}\right)_{t\in [0,1]} \Rightarrow
\left( \mathbbm{e}_t,-\mathbbm{e}_t\right)_{t\in [0,1]},$$
where $F_n$ is the linear interpolation of the points 
\[  \left\{ \left(i,\sigma_{\Gamma^n}(i)\right) : i\in \shortexc_{\Gamma^n} \right\} \cup \{(0,0)\} \cup \{(n+1,n+1)\}\]
\end{theorem}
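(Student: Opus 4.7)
The plan is to derive the joint invariance principle by combining Kaigh's scaling limit for uniform Dyck paths with a careful analysis of the bijection \eqref{231 bijection}. The first marginal convergence $\Gamma^n(2n\cdot)/\sqrt{2n} \Rightarrow \exc$ is Kaigh's classical invariance principle, so the entire content of the theorem lies in showing that $F_n(nt) - nt$ tracks $-\Gamma^n(2nt)$ uniformly on a ``good'' index set $\shortexc_{\Gamma^n}$ that covers all but $O(n^{3/4+\epsilon})$ of $[n]$ with high probability.

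The first concrete step is to unpack the bijection \eqref{231 bijection} to obtain an explicit description of $\sigma_{\Gamma^n}(i)$ in terms of $\Gamma^n$. One should verify that there is a non-negative integer statistic $\thingthree(i)$ of the Dyck path, encoding the local structure near step $2i$, such that $\sigma_{\Gamma^n}(i) = i - \thingthree(i)$, and moreover $\thingthree(i) = \Gamma^n(2i)$ for every index $i$ not trapped inside an anomalously long subexcursion. Intuitively, in the stack-based reading of a $\mathbf{231}$-avoiding permutation from a Dyck path, $\thingthree(i)$ equals the current stack depth, which coincides with the height $\Gamma^n(2i)$ except at ``boundary'' positions associated with as-yet-unclosed large subexcursions.

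Next I would define $\shortexc_{\Gamma^n}$ to be the set of indices $i \in [n]$ whose surrounding subexcursion in $\Gamma^n$ has length at most $n^{3/4+\epsilon}$. The main obstacle is the probabilistic estimate $\P(|\shortexc_{\Gamma^n}| > n - n^{3/4+\epsilon}) \to 1$. The natural approach is to decompose $\Gamma^n$ into subexcursions above its successive minima and control the aggregate length of the ``long'' ones: by It\^o's excursion theory for Brownian excursion the analogous continuous quantity vanishes as the cut-off tends to zero, but transferring this to the discrete setting at the scale $n^{3/4+\epsilon}$ requires a quantitative tail bound on the sizes of sub-Dyck paths under the uniform measure. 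Standard tools --- the cycle lemma, the ballot problem, and the fact that a subexcursion of $\Gamma^n$ of length $2m$ conditioned on its length is itself a uniform Dyck path of length $2m$ --- should suffice to produce such a tail estimate, and this quantitative step is where the bulk of the work lies.

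Once the bound on $|\shortexc_{\Gamma^n}|$ is established, the joint convergence follows readily. On $\shortexc_{\Gamma^n}$, $F_n$ linearly interpolates through the points $(i, i - \Gamma^n(2i))$, and the neglected gaps together cover a fraction $O(n^{-1/4+\epsilon})$ of $[0,1]$ after rescaling. Thus
\[
\sup_{t \in [0,1]} \left| \frac{F_n(nt) - nt}{\sqrt{2n}} + \frac{\Gamma^n(2nt)}{\sqrt{2n}} \right| \longrightarrow 0
\]
in probability, and combining this uniform approximation with Kaigh's marginal convergence via the continuous mapping theorem yields the joint weak convergence $\left(\Gamma^n(2n\cdot)/\sqrt{2n},\, (F_n(n\cdot) - n\cdot)/\sqrt{2n}\right) \Rightarrow (\exc_t, -\exc_t)$ in $C[0,1] \times C[0,1]$.
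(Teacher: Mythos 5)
This theorem is quoted from Part~I of the series (\cite[Theorem 1.3]{part1}); the present paper gives no proof of it, so your proposal can only be reviewed on its own merits, not against a proof in this document.

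Your overall architecture is the natural one: use the explicit formula for the bijection, $\sigma_{\Gamma^n}(i) = i + l_i/2 - h_i$, observe that $h_i = \Gamma^n(v_i)$ with $v_i = 2i - h_i$, so that when $l_i$ is small one has $\sigma_{\Gamma^n}(i) - i \approx -\Gamma^n(2i)$ up to a modulus-of-continuity error of order $n^{1/4}$; then throw away the indices where $l_i$ is too large, argue there are few of them, and appeal to continuous mapping after Kaigh. That much is right.

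However, there is a concrete gap in the way you calibrate $\shortexc_{\Gamma^n}$. You define $\shortexc_{\Gamma^n}$ by the cutoff $l_i \leq n^{3/4+\epsilon}$, apparently to match the cardinality bound $|\shortexc_{\Gamma^n}| > n - n^{3/4+\epsilon}$ in the statement. But the cutoff level and the number of discarded indices are two different quantities, and you have conflated them. With your cutoff, a retained index $i \in \shortexc_{\Gamma^n}$ may have $l_i$ as large as $n^{3/4+\epsilon}$, and then
\[
\frac{F_n(i) - i + \Gamma^n(2i)}{\sqrt{2n}} = \frac{l_i/2 - h_i + \Gamma^n(2i)}{\sqrt{2n}} = \frac{l_i/2 + O(n^{1/4})}{\sqrt{2n}} \sim n^{1/4+\epsilon},
\]
which blows up rather than vanishing. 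So the key claim $\sup_{t}\left|\frac{F_n(nt)-nt}{\sqrt{2n}} + \frac{\Gamma^n(2nt)}{\sqrt{2n}}\right| \to 0$ fails already at the retained lattice points, before one even worries about the gaps between them.

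The correct calibration is to cut at $l_i \leq n^{1/2-\delta}$ for a small $\delta>0$; this makes the pointwise error $l_i/2 = O(n^{1/2-\delta}) = o(\sqrt{n})$ uniformly on $\shortexc_{\Gamma^n}$. The exponent $3/4$ in the size bound then comes not from the cutoff itself but from a tail estimate on excursion lengths: for bulk indices, $\P(l_i \geq m) \asymp m^{-1/2}$ (the first-passage tail for a lazy walk, which your ``cycle lemma / ballot problem / subexcursions are uniform Dyck paths'' toolkit will indeed deliver, with some care about conditioning on $\eta = 2n$), giving $\E\bigl[\,\#\{i : l_i \geq n^{1/2-\delta}\}\,\bigr] \lesssim n \cdot n^{-(1-2\delta)/4} = n^{3/4 + \delta/2}$, and Markov's inequality then yields $\P\bigl(|\shortexc_{\Gamma^n}^{\,c}| > n^{3/4+\epsilon}\bigr)\to 0$ for any $\epsilon > \delta/2$. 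The crude bound $\#\{l_i \geq m\} \leq (\sum_i l_i)/m \lesssim n^{3/2}/m$ that one might reach for first is far too weak at $m \approx \sqrt{n}$ (it gives $n$, not $n^{3/4}$), so the refined $m^{-1/2}$ tail really is needed. Once the cutoff is corrected and this tail estimate is supplied, the rest of your outline---the gap-interpolation bound of order $\sqrt{\text{gap length}}/\sqrt{n} \to 0$ and the final continuous-mapping step---does go through.
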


This theorem shows that the bulk of the points in a uniformly random $\textbf{231}$-avoiding permutation closely follow a Brownian excursion.  However, most of the fixed points of $\sigma_{\Gamma^n}$ are in the set ($[n]\setminus \shortexc_{\Gamma^n}$) of exceptional points that Theorem \ref{london} does not provide much information about.
Nonetheless, we can still describe the asymptotic distribution of fixed point in terms of the limiting excursion of the Dyck path $\Gamma^n$.  Our description of these points will  allow us to greatly generalize the results in \cite{mp} about distribution of a random $\textbf{231}$-avoiding permutation close to the diagonal. 
We count the number of fixed points in an interval by
$$\fp_{[an,bn]}(\sigma)=|\{i \in[an,bn]:\ \sigma(i)=i\}|$$ where $0\leq a <b\leq1.$
Miner and Pak proved that the expected number of fixed points in an interval of the form $[1,n]$ is of order $n^{1/4}$ \cite{mp}. This next theorem shows that a typical \textbf{231}-avoiding permutation has on the order of $n^{1/4}$ fixed points. Moreover it allows us to calculate the distribution of
$$\frac{1}{n^{1/4}}\fp_{[an,bn]}(\sigma_{\Gamma^n}). $$

We now state of version of Theorem \ref{intro main} Part (a) that gives joint convergence of the Dyck path and the fixed points of the associated $\mathbf{231}$-avoiding permutation.

\begin{theorem}\label{geddewatanabe}
Let $\Gamma^n$ be a uniformly random Dyck path of length $2n$ and let $(\Gamma^n(t),0\leq t\leq 2n)$ be its linear interpolation.  We then have the joint convergence
\[\left(\frac{\Gamma^n(2ns)}{\sqrt{2n}}, \frac{\fp_{[0,tn]}(\sigma_{\Gamma^n})}{n^{1/4}}\right)_{(s,t)\in [0,1]^2} \overset{dist}{\longrightarrow} \left( \mathbbm{e}_s , \int_0^t \frac{1}{2^{7/4}\pi^{1/2}\mathbbm{e}_u^{3/2}} du\right)_{(s,t)\in [0,1]^2}\]
in distribution on $D([0,1],\R)\times D([0,1],\R)$, where $(\mathbbm{e}_t,0\leq t\leq 1)$ is Brownian excursion and $D([0,1],\R)$ is the space of right continuous functions with left limits equipped with the Skorokhod topology.
\end{theorem}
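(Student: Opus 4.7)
The strategy is to use the bijection \eqref{231 bijection} to rewrite the event $\{\sigma_{\Gamma^n}(i)=i\}$ as a local condition on the Dyck path $\Gamma^n$, and then combine a local density estimate with the invariance principle of Theorem \ref{london} to pass jointly to the excursion limit. The underlying observation is that under \eqref{231 bijection} the value $\sigma_{\Gamma^n}(i)$ is determined by a short (random but controllable) window of $\Gamma^n$ around time $2i$ together with the height $\Gamma^n(2i)$, so fixed points are encoded by purely local data of the path up to lower-order corrections that Theorem \ref{london} lets us ignore.

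First, using the explicit description of \eqref{231 bijection}, I would characterize $\{\sigma_{\Gamma^n}(i)=i\}$ as a prescribed local configuration of $\Gamma^n$ near time $2i$: a matched up/down pair at the current height together with an excursion-return condition on either side. This makes $\P(\sigma_{\Gamma^n}(i)=i \mid \Gamma^n(2i)=h)$ depend on the path essentially only through $h$, and a gambler's-ruin combined with a local-limit computation for the simple random walk should yield
\[
\P(\sigma_{\Gamma^n}(i)=i \mid \Gamma^n(2i)=h) \sim \frac{c}{h^{3/2}}, \qquad c=\frac{1}{2^{7/4}\pi^{1/2}},
\]
uniformly for $h=\Theta(\sqrt{n})$ and $i$ bounded away from the endpoints. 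The $h^{-3/2}$ scaling should arise as $h^{-1}\cdot h^{-1/2}$, with $h^{-1}$ coming from a Bessel(3)-type return and $h^{-1/2}$ from local-limit normalization. A parallel two-point computation should give near-independence of $\{\sigma(i)=i\}$ and $\{\sigma(j)=j\}$ whenever $|i-j|$ exceeds a poly-logarithmic window.

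Combining the local density with the path convergence supplied by Theorem \ref{london} (and a Skorokhod representation), the conditional first moment
\[
\E\!\left[\, n^{-1/4}\fp_{[0,tn]}(\sigma_{\Gamma^n}) \,\big|\, \Gamma^n\,\right] \longrightarrow \int_0^t \frac{c}{\exc_u^{3/2}}\,du
\]
a.s., while the conditional variance vanishes by the two-point bound, giving marginal convergence in probability at each fixed $t$. Joint convergence in $(s,t)$ on $D([0,1],\R)\times D([0,1],\R)$ then follows from three ingredients: the monotonicity of $t\mapsto\fp_{[0,tn]}(\sigma_{\Gamma^n})$ (yielding tightness of the second coordinate), the continuity of the map $f\mapsto \int_0^{\cdot} f_u^{-3/2}\,du$ on the almost-sure set of excursions on which this integral is finite, and the continuous mapping theorem applied to the joint Skorokhod coupling for $\Gamma^n$ from Theorem \ref{london}.

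The main obstacle is controlling the contributions near the endpoints. Since $\exc_u \asymp \sqrt u$ as $u\to 0$, the integrand $\exc_u^{-3/2}\asymp u^{-3/4}$ is integrable but not uniformly so in the discrete approximation. I therefore need $n^{-1/4}\fp_{[0,\delta n]}(\sigma_{\Gamma^n})\to 0$ in probability as $\delta\to 0$, uniformly in $n$, together with the symmetric right-endpoint statement. This requires extending the one-point bound $\P(\sigma_{\Gamma^n}(i)=i\mid \Gamma^n(2i)=h)\le C h^{-3/2}$ uniformly down to small heights and integrating it against the law of the Dyck path's initial and final sub-excursions. Pinning down the exact constant $c=\frac{1}{2^{7/4}\pi^{1/2}}$ (matching the expectation computation in \cite{mp}) together with the uniform endpoint control comprise the most technically delicate portion of the argument.
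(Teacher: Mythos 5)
Your plan correctly identifies the core of the paper's argument: under the bijection \eqref{231 bijection} the event $\{\sigma_{\Gamma^n}(i)=i\}$ is equivalent to $L^n_i/2 = H^n_i$, a local condition on $\Gamma^n$ near time $2i$; conditional on coarse height data one gets $\P(\sigma_{\Gamma^n}(i)=i)\approx \frac{1}{2\pi^{1/2}h^{3/2}}$ (Lemma \ref{problihiapprox}, whose $h^{-3/2}$ comes from the Catalan first-return asymptotic $C_{h-1}/4^{h-1}$); and a first- and second-moment computation conditional on a mesoscopic partition of $[an,bn]$ gives concentration of $n^{-1/4}\fp_{[an,bn]}(\sigma_{\Gamma^n})$ around $\int_a^b \frac{1}{2\pi^{1/2}}(n^{1/2}/\Gamma^n(2nt))^{3/2}\,dt$ for fixed $0<a<b<1$ — this is exactly Theorem \ref{thm.231fplim}. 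However, you deviate from the paper in two places. First, you propose handling the endpoints by directly extending the one-point bound $\P(\sigma(i)=i\mid\Gamma^n(2i)=h)\lesssim h^{-3/2}$ down to small heights and integrating against the law of the early and late portions of the Dyck path; the paper instead proceeds softly, computing $\E[F_0]=\frac{\mathrm{Gamma}(1/4)}{2\pi^{1/2}}$ from the explicit excursion-height density, matching this against the Miner--Pak asymptotic $n^{-1/4}\E\,\theta_{[1,n]}\to\frac{\mathrm{Gamma}(1/4)}{2\pi^{1/2}}$, and then using Fatou's lemma for distributional convergence together with monotone convergence to obtain $\lim_{\delta\downarrow0}\limsup_n \E|n^{-1/4}\theta_{[1,n]}-n^{-1/4}\theta_{[\delta n,(1-\delta)n]}|=0$ and hence uniform integrability. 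Your direct route is plausible and would give the same endpoint control (heuristically $n^{-1/4}\E\,\theta_{[0,\delta n]}\lesssim \delta^{1/4}$), but it requires careful treatment of the small-height regime where the $h^{-3/2}$ density degenerates, whereas the paper sidesteps all of this by importing the known first moment. Second, you invoke Theorem \ref{london} (the joint invariance principle for the permutation graph), but the paper only needs the plain Kaigh result $(2n)^{-1/2}\Gamma^n(2n\cdot)\Rightarrow \exc$; the sharper invariance principle is unnecessary here, and the joint limit in $(s,t)$ follows because, conditionally on $\Gamma^n$, the second coordinate is (up to a vanishing error) a deterministic continuous functional of the first. One small correction: the near-independence in the variance computation needs separation on the scale of a typical excursion length, i.e.\ of order $n^{1/2+\epsilon}$ (the paper uses $2n^{0.6}$), not merely polylogarithmic, since the length of the excursion containing a fixed point is of order the height $h\asymp\sqrt n$.
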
 

\begin{remark}
We may replace $D([0,1],\R)$ with $C([0,1],\R)$ in the preceding theorem if we replace $\fp_{[0,sn]}(\sigma_{\Gamma^n})$ with the linear interpolation of the points $(\fp_{[0,i]}(\sigma_{\Gamma^n}), i=0,\dots,n)$.
\end{remark}

\begin{remark}
Theorem \ref{geddewatanabe} does more than just tell us the distribution of fixed points on an interval. It relates (with high probability) the number of fixed points of $\sigma_{\Gamma^n}$ on an interval $[an,bn]$ with the shape of $\Gamma^n$  on the same interval.
Roughly speaking if we know a Brownian excursion that approximates a scaled Dyck path then with high probability we can quite closely determine the number of fixed points for the corresponding $\textbf{231}$-avoiding permutation. Also if we know the location of fixed points for a $\textbf{231}$-avoiding permutation then we can use that to find a Brownian excursion that does a good job approximating the corresponding scaled Dyck path. 
\end{remark}

\subsection*{``Almost fixed points"}
Perhaps the most interesting result in \cite{mp} is a phase transition it shows in 
$$\P\left(\sigma(i)=\left \lfloor i-\left(\frac{(i(n-i)}{n}\right)^{\alpha}\right \rfloor\right)$$
that occurs at $\alpha=3/8$. 
In particular they show that
\begin{equation} \label{paris}
\P\left(\sigma(i)=\left \lfloor i-\left(\frac{(i(n-i)}{n}\right)^{\alpha}\right \rfloor \right) \sim
\begin{cases} Cn^{-3/4} & \text{if $\alpha \in (0,3/8)$} \\
Cn^{-(3/2-2\alpha)}& \text{if $\alpha \in [3/8,.5)$} 
\end{cases} 
\end{equation}

This result is particularly intriguing because it is not clear what is driving the phase transition. Miner and Pak say that their results on \textbf{231}-avoiding permutations ``are extremely unusual, and have yet to be explained even on a qualitative level" \cite{mp}. In this paper we use a generalization of Theorem \ref{geddewatanabe} to give an explanation of these results. 

 First we show that the difference is not due to the number of ``almost" fixed points on a typical path. To make this precise we define
$$\fp^{K,\alpha}_{[an,bn]}(\sigma)= \left|\left\{i:\ \sigma(i)=i-\left \lfloor K\left( \frac{i(n-i)}{n}\right)^\alpha\right \rfloor\right\} \cap[an,bn]\right|.$$
Then we follow the proof of Theorem \ref{geddewatanabe} very closely to show

\begin{corollary}\label{sixteencandles}
Let $\Gamma^n$ be a uniformly random Dyck path of length $2n$ and let $(\Gamma^n(t),0\leq t\leq 2n)$ be its linear interpolation.  Fix any $0<a<b<1$. We then have the joint convergence
\[\left(\frac{\Gamma^n(2ns)}{\sqrt{2n}}, \frac{\fp^{K,\alpha}_{[an,tn]}(\sigma_{\Gamma^n})}{n^{1/4}}\right)_{(s,t)\in [a,b]^2} \overset{dist}{\longrightarrow} \left( \mathbbm{e}_s , \int_a^t \frac{1}{2^{7/4}\pi^{1/2}\mathbbm{e}_u^{3/2}} du\right)_{(s,t)\in [a,b]^2}\]
in distribution on $D([a,b],\R)\times D([a,b],\R)$, where $(\mathbbm{e}_t,0\leq t\leq 1)$ is Brownian excursion and $D([a,b],\R)$ is the space of right continuous functions with left limits equipped with the Skorokhod topology.
\end{corollary}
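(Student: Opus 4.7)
The plan is to follow the proof of Theorem \ref{geddewatanabe} essentially verbatim, with the only substantive change being the translation of the event $\{\sigma_{\Gamma^n}(i) = i - \lfloor K(i(n-i)/n)^\alpha \rfloor\}$ into a condition on the Dyck path $\Gamma^n$ via the bijection \eqref{231 bijection}. The key observation is that for $\alpha \in (0,1/2)$ and $i \in [an,bn]$, the shift $\lfloor K(i(n-i)/n)^\alpha \rfloor$ is of order $O(n^\alpha) = o(\sqrt{n})$, which is asymptotically negligible compared to the $\sqrt{n}$ scale on which $\sigma_{\Gamma^n}(i)$ deviates from the diagonal in the bulk (by Theorem \ref{london}). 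Consequently the local feature of $\Gamma^n$ that detects an ``almost fixed point" of this form should be asymptotically indistinguishable from the local feature that detects a true fixed point.

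More concretely, in the proof of Theorem \ref{geddewatanabe} one identifies, for each $i \in \shortexc_{\Gamma^n}$, a local pattern in $\Gamma^n$ in a window of size $O(\sqrt{n})$ around position $2i$ whose occurrence is equivalent to $\sigma_{\Gamma^n}(i) = i$. Summing these indicators and rescaling by $n^{-1/4}$ produces convergence to the integral of $1/(2^{7/4}\pi^{1/2}\exc_u^{3/2})$, with the exponent $-3/2$ reflecting that the local probability at height $\exc_u$ scales like $\exc_u^{-3/2}$. For the shifted condition, the analogous identification via \eqref{231 bijection} gives the same local pattern, merely translated by $O(n^\alpha)$. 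Because this translation is of strictly smaller order than the window width $\sqrt{n}$, the rescaled count of almost fixed points converges jointly with $\Gamma^n/\sqrt{2n}$ to the same limit density. The joint convergence then follows by the same tightness/finite-dimensional convergence argument as in Theorem \ref{geddewatanabe}.

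The restriction to $0 < a < b < 1$ is used to stay away from the regions where $\exc_u^{-3/2}$ develops a nonintegrable singularity and where the profile $K(i(n-i)/n)^\alpha$ collapses. The main obstacle I anticipate is establishing the uniform local asymptotic equivalence: one must show that, uniformly for $i \in [an,bn]$ and $k = o(\sqrt{n})$ of the specific form $\lfloor K(i(n-i)/n)^\alpha \rfloor$, the number of Dyck paths producing $\sigma_{\Gamma^n}(i) = i - k$ agrees to leading order with the number producing $\sigma_{\Gamma^n}(i) = i$, and that varying $k$ with $i$ does not introduce correlations surviving in the limit. This requires a careful look at the local window analysis already done for Theorem \ref{geddewatanabe}, checking that the bijection \eqref{231 bijection} is continuous under shifts by $o(\sqrt{n})$ at every $i$, and that the error terms in the first and second moment calculations remain $o(n^{1/2})$ and $o(n^{1/2})$ respectively when each fixed-point event is replaced by its shifted counterpart. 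Once this uniform local estimate is in hand, the remaining moment computations and the reduction to convergence of the rescaled Dyck paths can be imported from the proof of Theorem \ref{geddewatanabe} without further modification.
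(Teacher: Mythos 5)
Your proposal takes essentially the same route as the paper's proof: both reduce the problem to observing that, under the bijection \eqref{231 bijection}, the shifted event $\{\sigma_{\Gamma^n}(i)=i-\lfloor K(i(n-i)/n)^\alpha\rfloor\}$ is the same local Dyck-path event as $\{L_i^n/2 = H_i^n\}$ but translated by $o(\sqrt n)$, which is negligible against the $\sqrt n$-scale height, so the first- and second-moment estimates (Lemma \ref{problihiapprox} and Lemma \ref{fixsmall}, replaced here by Corollary \ref{problihiapprox2} and Corollary \ref{fixedsmall}) carry over unchanged. One small wrinkle you did not flag but should: the paper's technical lemmas use hard-coded exponent margins (e.g.\ $0.49$, $0.499$, $0.501$), so for $\alpha$ close to $1/2$ these margins must be shrunk to $0.5\pm\delta$ to keep $n^\alpha$ strictly below them; otherwise your "uniform local asymptotic equivalence" step does not follow from the estimates as written.
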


Thus for all $K$ and $\alpha$ the distribution of the number of ``almost" fixed points is asymptotically the same as the distribution of the number of fixed points and we do not see the same phase transition that Miner and Pak observed. 

But there is no inconsistency between our results and \cite{mp} in the regime $K>0$ and $\alpha \in [3/8,.5)$. This is because a small number of permutations drive the probability that Miner and Pak calculate in (\ref{paris}). This is missed by our convergence in distribution.
This small number of permutations are the ones $\sigma_{\gamma}$ whose corresponding Dyck paths $\gamma$ have height $\gamma(i)=\lfloor K(i(n-i)/n)^\alpha\rfloor$ for some $i \in [2an,2bn]$. 
As the density of these permutations becomes vanishingly small as $n \to \infty$, these permutations do not affect the limiting distribution of $n^{-1/4}\fp^{K,\alpha}_{[an,bn]}(\sigma)$ that we calculate. But these are the permutations that dominate the probabilities that Miner and Pak calculate.

\section{321-avoiding permutations}
\label{offer}

We now describe a bijection (which is often known as the Billey-Jockusch-Stanley or BJS bijection) from Dyck paths of length $2n$ to \textbf{321}-avoiding permutations of length $n$ \cite{callan}.
Fix a Dyck path $\gamma:\{0,1,\dots, 2n\} \to\N$ of length $2n$. Given $\gamma$ define the following.  Let $m$ be the number of runs of increases (or decreases) in $\gamma$. Let $a_i$ be the number of increases in the $i$th run of increases in $\gamma$. Let $A_i=\sum_{j=1}^ia_j$ and
let $\A = \cup_{i=1}^{m-1}\{A_i\}$ and $\bar \A=\{1,2,\dots,n\} \setminus (1+\A)$.
Similarly we define $d_i$ and $D_i=\sum_{j=1}^id_j$ based on the length of the descents. Then define 
$\D=\cup_{i=1}^{m-1}\{D_i\}$ and $\bar \D= \{1,2,\dots,n\} \setminus \D$.
We also set $A_0=D_0=0$.
Let $\tau_{\gamma}$ be the corresponding  \textbf{321}-avoiding permutation with the BJS bijection. This is defined by
$\tau_{\gamma}(D_i)=1+A_i$ on $\D$ and such that $\tau|_{\bar \D}=\bar \A$ 
and is increasing on $\bar \D$.

\begin{figure}
\centering
\begin{tikzpicture}

\draw [step=.5, help lines] (0,0) grid (10,4);

\draw (0,0)--(0, 0)--(1/2, 1/2)--(1, 0)--(3/2, 1/2)--(2, 1)--(5/2, 3/2)--(3, 2)--(7/2, 3/2)--(4, 1)--(9/2, 3/2)--(5, 2)--(11/2, 5/2)--(6, 3)--(13/2, 5/2)--(7, 2)--(15/2, 5/2)--(8, 2)--(17/2, 3/2)--(9, 1)--(19/2, 1/2)--(10, 0);

[place/.style={circle,draw,minimum size=6mm},
transition/.style={rectangle,draw,minimum size=4mm}]

\node at (3,2) [circle,draw]{};
\node at (.5,.5) [circle,draw]{};
\node at (6,3) [circle,draw]{};

\node at (1,0) [rectangle,draw] {};
\node at (7,2) [rectangle,draw] {};
\node at (4,1) [rectangle,draw] {};

\end{tikzpicture} 

\ \\

\begin{tikzpicture}
\draw [step=1,] (0,0) grid (11,2);

\node at (.5,.5) {$\tau(i)$};
\node at (.5,1.5) {$i$};
\foreach \i in {1,...,10}
	\node at (.5 + \i,1.5) {\i};	

\draw (1.25,1.25) rectangle (1.75,1.75);
\draw (3.25,1.25) rectangle (3.75,1.75);
\draw (5.25,1.25) rectangle (5.75,1.75);
\draw (1.5,.5) circle (.35cm);
\draw (3.5,.5) circle (.35cm);
\draw (5.5,.5) circle (.35cm);
\node at (1.5,.5) {2};
\node at (3.5,.5) {6};
\node at (5.5,.5) {10};

\node at (2.5,.5) {1};
\node at (4.5,.5) {3};
\node at (6.5,.5) {4};
\node at (7.5,.5) {5};
\node at (8.5,.5) {7};
\node at (9.5,.5) {8};
\node at (10.5,.5) {9};

\end{tikzpicture}
\caption{Dyck path of length 20 with correspond \text{bf 321}-avoiding permutation.}
\end{figure}
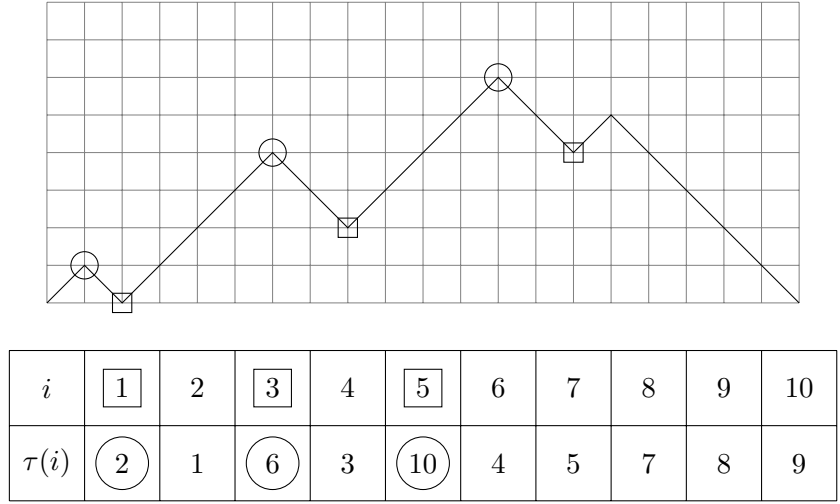

For the rest of the section we let $y_0=0$ and 
\begin{equation}
\label{neal}
y_i=A_i-D_i=\gamma(A_i+D_i).
\end{equation}

We will make use of the following property of this bijection whose proof we leave to the reader.
\begin{lemma} \label{downandout}
For any $\gamma \in \dn$ and $j \in \{1,2,\dots,n\}$
$$\tau_{\gamma}(j)>j \ \ \ \text{ if } j \in \D$$
and
$$\tau_{\gamma}(j) \leq j \ \ \ \text{ if } j \not\in \D$$
\end{lemma}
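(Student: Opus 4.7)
The plan is to read off both inequalities directly from the construction of the BJS bijection, using one key input: since $\gamma$ is a Dyck path, the heights $y_i = A_i - D_i = \gamma(A_i+D_i)$ are all nonnegative, i.e.\ $A_i \geq D_i$ for every $i$.

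For $j \in \D$, this is immediate. Write $j = D_i$ for some $1 \leq i \leq m-1$; then by definition $\tau_\gamma(j) = 1 + A_i \geq 1 + D_i > j$, as desired.

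The case $j \notin \D$ is the one requiring a short combinatorial argument. I would let $i = i(j)$ be the smallest index with $D_i \geq j$ (using the conventions $D_0 = 0$, $D_m = n$), so that $D_{i-1} < j \leq D_i$; the constraint $j \notin \D$ forces $i = m$ in the equality case. Since $\tau_\gamma|_{\bar\D}$ is the unique increasing bijection $\bar\D \to \bar\A$, the value $\tau_\gamma(j)$ is the $r$th smallest element of $\bar\A$, where $r = |\bar\D \cap [1,j]| = j - (i-1)$. Consequently $\tau_\gamma(j) \leq j$ is equivalent to $|\bar\A \cap [1,j]| \geq r$, which in turn reduces to the inequality
\[ |\{k \in \{1,\dots,m-1\} : A_k < j\}| \leq i - 1. \]
To establish this I would invoke the key input above: for any $k \geq i$ (with $k \leq m-1$) we have $A_k \geq D_k \geq D_i \geq j$, so no such index contributes to the set on the left. (In the boundary case $j = D_i$ the index $i$ equals $m$, so the range $k \geq i$ within $\{1,\dots,m-1\}$ is empty and the bound is automatic.) The set on the left is therefore contained in $\{1,\dots,i-1\}$, which gives the required bound.

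There is no substantial obstacle here; the only point that needs care is handling the boundary case $j = D_m = n$ correctly in the definition of $i(j)$, but the two cases merge into the same inequality once the conventions $D_0 = 0$ and $D_m = n$ are fixed.
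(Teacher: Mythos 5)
Your proof is correct and complete; the paper itself leaves this lemma to the reader with no argument given, so there is nothing to compare against. The first case is read off directly from $\tau_\gamma(D_i)=1+A_i$ and the Dyck-path inequality $A_i\geq D_i$, and in the second case your rank computation in $\bar\D$ and $\bar\A$, together with the observation that $A_k\geq D_k\geq D_i\geq j$ for $k\geq i$, correctly reduces $\tau_\gamma(j)\leq j$ to a containment of index sets; the boundary case $j=D_m=n$ is handled properly by the conventions $D_0=0$, $D_m=n$.
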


 We now undertake a more detailed analysis to show that for most \textbf{321}-avoiding permutations if $i,j$ are such that 
 $D_{i-1}<j< D_i$ then $$\tau_{\gamma}(j) \approx j-y_i$$ and if
 $j= D_i$ then $$\tau_{\gamma}(j) \approx j+y_i.$$
 Our first step is the following lemma.
\begin{lemma} \label{stanford}
Fix a Dyck path $\gamma \in \dn$ and $j \not \in \D$. There exists $i$ such that 
$D_{i-1}<j<D_i$. Then for $k \in\{1,\dots,m-1\}.$
\begin{enumerate}
\item If $A_k-(k-1)>D_i-i$ \hspace{.555in}then $\tau_{\gamma}(j)<A_k$.
\item If $A_k-(k-1)<D_{i-1}-(i-1)$ then $\tau_{\gamma}(j)>A_k+1$.
\end{enumerate}
\end{lemma}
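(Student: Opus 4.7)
The plan is to recast both conclusions as comparisons between a single rank $r$ and the counting function of $\bar\A$. First I would observe that for $j\in\bar\D$ with $D_{i-1}<j<D_i$, the set $\bar\D\cap\{1,\dots,j\}$ is obtained from $\{1,\dots,j\}$ by removing exactly the $i-1$ elements $D_1,\dots,D_{i-1}$, and hence has size $r:=j-(i-1)$. Because $\tau_\gamma$ restricted to $\bar\D$ is the increasing bijection onto $\bar\A$, this identifies $\tau_\gamma(j)=b_r$, where $b_1<b_2<\cdots$ is the increasing enumeration of $\bar\A$.

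The next step is to evaluate $|\bar\A\cap\{1,\dots,x\}|$ at the two relevant values $x=A_k$ and $x=A_k+1$. The values excluded from $\bar\A$ are $\{1+A_\ell:1\leq\ell\leq m-1\}$; within $\{1,\dots,A_k\}$ the excluded ones are precisely $1+A_1,\dots,1+A_{k-1}$, using the strict monotonicity of $\ell\mapsto A_\ell$, for a total of $k-1$ exclusions, while $A_k+1=1+A_k$ is also excluded. Consequently
\[|\bar\A\cap\{1,\dots,A_k\}|=|\bar\A\cap\{1,\dots,A_k+1\}|=A_k-(k-1).\]

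With these two ingredients the two claims become immediate. For part (a), the inequality $j\leq D_i-1$ gives $r\leq D_i-i$, and combining with the hypothesis $A_k-(k-1)>D_i-i$ yields $|\bar\A\cap\{1,\dots,A_k\}|\geq r+1$; hence $b_{r+1}\leq A_k$, so $\tau_\gamma(j)=b_r<A_k$. For part (b), the inequality $D_{i-1}\leq j-1$ gives $D_{i-1}-(i-1)\leq r-1$, and combining with the hypothesis $A_k-(k-1)<D_{i-1}-(i-1)$ yields $|\bar\A\cap\{1,\dots,A_k+1\}|<r$; thus fewer than $r$ elements of $\bar\A$ lie in $\{1,\dots,A_k+1\}$, so $b_r\geq A_k+2$, i.e., $\tau_\gamma(j)>A_k+1$.

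I do not anticipate any substantial obstacle here: once the BJS bijection is translated into an order-statistic statement about $\bar\A$, the entire argument is elementary counting. The only point requiring care is the precise identification of the exclusions in $\{1,\dots,A_k+1\}$, but this follows immediately from the strict monotonicity of $\ell\mapsto A_\ell$, and in particular no edge cases (such as $a_k=1$) cause trouble because both inequalities being compared are strict.
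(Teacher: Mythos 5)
Your proof is correct and is essentially the same argument as the paper's. Both rest on comparing $|\bar\A\cap\{1,\dots,A_k\}|=A_k-(k-1)$ with the rank of $j$ in $\bar\D$ and then invoking monotonicity of $\tau_\gamma$ on $\bar\D$; the paper routes the comparison through the extremal element $x=\max(\bar\A\cap\{1,\dots,A_k\})$ and its preimage $\tau_\gamma^{-1}(x)$, while you identify $\tau_\gamma(j)$ directly as the $r$-th order statistic of $\bar\A$ with $r=j-(i-1)$, which is a slightly cleaner packaging of the identical counting argument.
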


\begin{proof}
Let $x=\max\bigg(\bar \A \cap\{1,2,\dots,A_k\}\bigg).$
In the first case we note that 
$$|\bar \A \cap\{1,2,\dots,A_k\}|=A_k-(k-1)>D_i-i=|\{1,2,\dots,D_i\}\cap \bar \D|.$$
Thus $$\tau_{\gamma}^{-1}(x)>D_i>j.$$
As $\tau_{\gamma}$ is monotone on the complement of $\D$ we get that 
$$A_k\geq x =\tau_{\gamma}(\tau_{\gamma}^{-1}(x))>\tau_{\gamma}(j).$$

In the second case
$$|\bar \A \cap\{1,2,\dots,A_{k}\}|=A_{k}-(k-1)<D_{i-1}-(i-1)
=|\{1,2,\dots,D_{i-1}\}\cap \bar \D|.$$
Thus $\tau_{\gamma}^{-1}(x)<D_{i-1}<j$ and
as $\tau_{\gamma}$ is monotone on the complement of $\D$
$$x = \tau_{\gamma}(\tau_{\gamma}^{-1}(x))<\tau_{\gamma}(j).$$
As $\tau_{\gamma}(j)>x$ and $\tau_{\gamma}(j) \in \bar \A$  we get that 
$$\tau_{\gamma}(j)>1+A_k.$$
\end{proof}

\begin{definition} \label{disparate}
We say that a Dyck path $\gamma \in \dn$ with associated sequences $A_i$ and $D_i$ satisfies the Petrov conditions if
\begin{enumerate}[(a)]
\item $\max_{x \in \{0,1,...,2n\}} \gamma(x)<.4n^{.6}$ \label{htcone}
\item $|\gamma(x)-\gamma(y)|<.5n^{.4}$ for all $x,y$ with $|x-y|<2n^{.6}$ \label{porter}

\item  $|A_i-A_j-2(i-j)|<.1|i-j|^{.6}$ for all $i,j$ with $|i-j|\geq n^{.3}$ and \label{dorsett}
\item  $|D_i-D_j-2(i-j)|<.1|i-j|^{.6}$ for all $i,j$ with $|i-j|\geq n^{.3}$ \label{tony}
\end{enumerate}
\end{definition}

\begin{lemma} \label{petrov}
With high probability the Petrov conditions are satisfied. The probability that they are not satisfied is decaying exponentially in $n^{c}$ for some $c>0$.
\end{lemma}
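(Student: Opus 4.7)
The plan is to verify each of the four Petrov conditions separately, showing each fails with probability at most $\exp(-n^{c_0})$ for some absolute $c_0>0$, then take a union bound. The key reduction is that a uniform Dyck path of length $2n$ is distributed as a simple symmetric random walk $(S_k)_{0\leq k\leq 2n}$ conditioned on the event $E_n=\{S_{2n}=0,\,\min_k S_k\geq 0\}$. Since $\P(E_n)\asymp n^{-3/2}$, any unconditional bound of the form $n^{-3/2}\exp(-n^{c_0})$ becomes a conditional bound of order $\exp(-n^{c_0})$, so one may freely absorb polynomial factors when working under the unconditional i.i.d.\ $\pm 1$ walk.

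For condition (a), the reflection principle combined with Hoeffding's inequality gives $\P(\max_k|S_k|\geq 0.4\,n^{0.6})\leq 4\exp(-c\,n^{0.2})$, handling (a) with room to spare. For condition (b), a union bound over the $O(n^{1.6})$ pairs $(x,y)$ with $|x-y|\leq 2n^{0.6}$, together with the Hoeffding bound $\P(|S_x-S_y|\geq 0.5\,n^{0.4})\leq 2\exp(-c\,n^{0.2})$ for each such pair, absorbs the polynomial loss.

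For (c) and (d), the crucial observation is that under the unconditional walk the ascent run lengths $(a_i)$ and descent run lengths $(d_i)$ are each i.i.d.\ sequences of Geometric random variables with mean $2$: at each step the walk reverses direction independently with probability $1/2$. Since Geometric($1/2$) has finite exponential moments, Petrov's exponential inequality for sums of i.i.d.\ variables yields, for fixed $i,j$ with $|i-j|\geq n^{0.3}$,
\[
\P\bigl(|A_i-A_j-2(i-j)|\geq 0.1\,|i-j|^{0.6}\bigr)\leq \exp(-c|i-j|^{0.2})\leq\exp(-c\,n^{0.06}).
\]
A union bound over the $O(n^2)$ valid pairs yields total failure probability at most $n^2\exp(-c\,n^{0.06})$, and (d) is handled identically using $(d_i)$. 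A brief preliminary step is to verify that the number $m$ of ascent runs concentrates near $n/2$ with the same exponentially small failure probability (a direct application of Petrov to the sum of i.i.d.\ Bernoulli($1/2$) indicators of direction changes), so that the index set for $A_i$ and $D_i$ is guaranteed to be large enough.

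Combining the four bounds and multiplying by $n^{3/2}$ to pass from the unconditional walk to the Dyck-path measure preserves decay of the form $\exp(-n^c)$ for any $c<0.06$, which suffices. The main technical obstacle is (c)/(d), both because the exponent $n^{0.06}$ is the smallest and because one must cleanly identify the i.i.d.\ geometric structure of the run lengths; the reduction via conditioning on $E_n$ is precisely what lets us sidestep the complicated dependence inherent in the uniform Dyck-path measure, at the affordable cost of the $n^{3/2}$ factor.
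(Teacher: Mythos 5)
Your proof takes essentially the same approach as the paper's: the paper's own argument is a pointer to its Appendix A, where it records exactly the two Petrov inequalities you invoke (a maximal inequality and an exponential moderate-deviation bound for sums of i.i.d.\ variables with an exponential moment), the same reduction from the uniform Dyck-path measure to the unconditioned simple random walk by absorbing a polynomial factor of order $n^{3/2}$ (this is spelled out in Corollary~\ref{corollary p1}), and the same i.i.d.\ Geometric($1/2$) structure for waiting times between up-steps (Corollary~\ref{corollary V-shift}). You make the treatment of conditions~(c) and~(d) slightly more explicit than the appendix does, by working directly with the cumulative run-length sums $A_i, D_i$ rather than the up-step times $V_m$, but the mechanism — centered sums of i.i.d.\ geometrics, Petrov's bound giving decay $\exp(-c|i-j|^{0.2})$, a union bound over $O(n^2)$ pairs, then absorbing the $n^{3/2}$ conditioning cost — is the same one the paper is leaning on.
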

\begin{proof}
These results are standard Petrov style moderate deviation results \cite{Petrov1}. The general type of conditioning argument we need appears in \cite{MaMo03, PiRi13}. However we have not seen the exact results that we need anywhere in the literature so we include proofs of these statements in Appendix \ref{appendixb}.
\end{proof}

From these conditions we can derive many other moderate deviation results. We now list the ones that we will need.
The proofs of these lemmas are contained in \cite{part1}.

\begin{lemma} \label{voucher}
If a Dyck path $\gamma \in \dn$ with associated sequences $A_i$ and $D_i$ satisfies the Petrov conditions then $y_i <n^{.4}$ for all $i<n^{.6}$ and for all $i>n-n^{.6}$.  Also, $|A_i-A_{i-1}|, |D_i-D_{i-1}|<n^{.18}$ for all $i$.  This implies $|y_i-y_{i-1}|<n^{.18}$ for all $i$. 
Finally every consecutive sequence of length at least $n^{.3}$ has at least one element of $\D$ and at least one element of $\bar \D$.
\end{lemma}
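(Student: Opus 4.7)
I will deduce the four assertions of the lemma from the Petrov conditions (a)--(d). The central observation is that (c) and (d) give $A_i\approx 2i$ and $D_i\approx 2i$ with error $O(n^{.18})$ for pairs of indices separated by at least $n^{.3}$, while (b) controls the total variation of $\gamma$ on intervals of length $<2n^{.6}$.

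To prove $|A_i-A_{i-1}|,|D_i-D_{i-1}|<n^{.18}$, I will use a telescoping trick. Set $L=\lceil n^{.3}\rceil+1$. For $1\le i\le m-L$, applying (c) to the pairs $(i+L,i)$ and $(i+L,i-1)$ and subtracting cancels the $A_{i+L}$ term and yields
\[
|a_i-2|=|A_i-A_{i-1}-2|<.1L^{.6}+.1(L+1)^{.6}<n^{.18}
\]
for $n$ large. A symmetric argument using the pairs $(i,i-L)$ and $(i-1,i-L-1)$ handles $L+1\le i\le m-1$. Condition (c) applied to $(m,0)$ (together with a short bootstrap using (a), (b) to first guarantee $m\geq n^{.3}$) gives $m=n/2+O(n^{.18})$, so the two ranges cover all of $\{1,\dots,m\}$. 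The identical argument with (d) bounds $d_i$, and $|y_i-y_{i-1}|=|a_i-d_i|<n^{.18}$ follows immediately.

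For $y_i<n^{.4}$ when $i<n^{.6}$, I will use the identity $y_i=\gamma(A_i+D_i)$ from \eqref{neal}. For $i\geq n^{.3}$, (c) with $j=0$ gives $A_i<2i+.1i^{.6}<2n^{.6}+o(n^{.6})$; for $i<n^{.3}$, the step bound just established gives $A_i<i\cdot n^{.18}<n^{.48}$. In either case, $A_i<2n^{.6}+o(n^{.6})$, and the same bound applies to $D_i$, so $A_i+D_i<4n^{.6}$ for $n$ sufficiently large. I will split $[0,A_i+D_i]$ into two sub-intervals of length $<2n^{.6}$ and apply (b) on each to conclude $\gamma(A_i+D_i)<n^{.4}$. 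The statement for $i>n-n^{.6}$ follows from the same argument applied to the time-reversed Dyck path $\gamma'(x)=\gamma(2n-x)$, which also satisfies the Petrov conditions.

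For the consecutive-sequence claim, the bound $d_i<n^{.18}<n^{.3}$ implies that successive elements of $\D$ differ by less than $n^{.3}$, so every consecutive integer sequence of length $n^{.3}$ contains some $D_i$. Conversely, if such a sequence lay entirely in $\D$, there would exist $n^{.3}$ consecutive indices $j$ with $d_j=1$, giving $D_{i+\lceil n^{.3}\rceil}-D_i=\lceil n^{.3}\rceil$; but (d) forces this difference to equal $2\lceil n^{.3}\rceil+O(n^{.18})$, a contradiction. The main obstacle will be the tightness of the constants in (b): three hops would yield only $1.5n^{.4}>n^{.4}$, so the $y_i$ argument depends on the combined slack in $i<n^{.6}$ and the $o(n^{.6})$ error in (c), (d) to keep the number of required hops exactly two.
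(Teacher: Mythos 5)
Your telescoping argument for $|A_i-A_{i-1}|<n^{.18}$ is sound in spirit (and indeed the standard way to pass from the long-range condition (c) to a bound on a single increment), though you should pair the indices as $(i,i-L)$ against $(i-1,i-L)$ rather than $(i-1,i-L-1)$ so that the telescoped term is $a_i-2$ and not $a_i-a_{i-L}$. The final paragraph about $\D$ and $\bar\D$ is also correct, modulo small off-by-one issues that are easily patched.

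However, there is a genuine gap in your argument that $y_i<n^{.4}$ for $i<n^{.6}$, and it is exactly the one you flag but then wave away. You write ``$A_i<2n^{.6}+o(n^{.6})$, and the same bound applies to $D_i$, so $A_i+D_i<4n^{.6}$ for $n$ sufficiently large.'' This inference is false: the $o(n^{.6})$ error is positive (it is $+.1i^{.6}$, not $-.1i^{.6}$), so $A_i+D_i$ can exceed $4n^{.6}$ by as much as $.2n^{.36}$ when $i$ is near $n^{.6}$. The interval $[0,A_i+D_i]$ then cannot be covered by two windows of length strictly less than $2n^{.6}$, so condition (b) does not deliver the two-hop bound, and your closing sentence about ``combined slack'' does not rescue this because the slack $2n^{.6}-2i$ can be $O(1)$ while the error is $\Theta(n^{.36})$. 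The correct route bypasses $\gamma(A_i+D_i)$ entirely for $i\ge n^{.3}$: apply (c) and (d) with $j=0$ to get $|A_i-2i|<.1i^{.6}$ and $|D_i-2i|<.1i^{.6}$, whence $y_i=A_i-D_i<.2i^{.6}\le.2n^{.36}<n^{.4}$ with no appeal to (b) at all. For $i<n^{.3}$ your step bound gives $A_i+D_i<2n^{.48}<2n^{.6}$, so a \emph{single} application of (b) to the pair $(0,A_i+D_i)$ yields $y_i<.5n^{.4}<n^{.4}$. This cleaner two-regime split eliminates the tightness problem you identified.
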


\begin{lemma} \label{ducks}
For any Dyck path $\gamma \in \dn$ and any $j$ such that $D_{i-1}<j<D_i$ we get the following.
If the Petrov conditions are satisfied then

$$|\tau_{\gamma}(j)-j+y_i|<7n^{.4}$$
\end{lemma}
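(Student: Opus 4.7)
The strategy is to bracket $\tau_\gamma(j)$ between two explicit quantities using Lemma \ref{stanford}, and then to show that both brackets lie within $O(n^{0.4})$ of $j-y_i$ by combining the Petrov conditions with Lemma \ref{voucher}.

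Since $a_l\ge 1$ for every $l$, the sequence $A_k-(k-1)$ is non-decreasing in $k$, so we may define
\[
k^+ := \min\{k: A_k-(k-1)>D_i-i\}, \qquad k^- := \max\{k: A_k-(k-1)<D_{i-1}-(i-1)\},
\]
with the natural boundary conventions. Lemma \ref{stanford} then yields $A_{k^-}+1<\tau_\gamma(j)<A_{k^+}$. The defining extremality of $k^+$, combined with $a_{k^+}<n^{0.18}$ from Lemma \ref{voucher}, yields the tight localization $A_{k^+}-k^+\in[D_i-i,\,D_i-i+n^{0.18}]$, and an analogous statement for $A_{k^-+1}-(k^-+1)$ relative to $D_{i-1}-i$.

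The heart of the proof is to show $k^+-i+y_i = O(n^{0.4})$, which together with the previous localization gives $A_{k^+}=D_i-y_i+O(n^{0.4})$. Introducing $\bar a_l := 2l-A_l$, Petrov condition (c) reads $|\bar a_l-\bar a_k|<0.1\,|l-k|^{0.6}$ for $|l-k|\ge n^{0.3}$, and the localization above rearranges to
\[
\bar a_{k^+}-\bar a_i \;\le\; k^+-i+y_i \;<\; \bar a_{k^+-1}-\bar a_i+1,
\]
so it suffices to control $\bar a_{k^+}-\bar a_i$. In the main regime $|k^+-i|\ge n^{0.3}$, Petrov (c) applies directly and gives an error of order $O(|k^+-i|^{0.6})=O(n^{0.36})$, using a preliminary crude application of the argument together with Petrov (a) to guarantee $|k^+-i|\le 0.4\,n^{0.6}$. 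In the small-gap regime $|k^+-i|<n^{0.3}$, Petrov (c) cannot be applied between $k^+$ and $i$ directly, but the same bound $|A_i-A_{k^+}-2(i-k^+)|=O(n^{0.18})$ is obtained by comparing $A_{k^+}$ and $A_i$ through an auxiliary index $k^+-\lceil n^{0.3}\rceil$ (or $i+\lceil n^{0.3}\rceil$ if $k^+$ is too close to $0$), applying Petrov (c) twice and using the triangle inequality. The boundary regions $i<n^{0.3}$ or $i>n-n^{0.3}$ are handled with Lemma \ref{voucher}, which directly yields $y_i<n^{0.4}$ there, giving $|k^+-i+y_i|\le|k^+-i|+y_i<2n^{0.4}$ trivially.

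Putting this together with the bound $|j-D_i|\le d_i<n^{0.18}$ from Lemma \ref{voucher} produces $A_{k^+}=j-y_i+O(n^{0.4})$. A parallel analysis of $k^-$, using additionally that $|y_i-y_{i-1}|<0.5\,n^{0.4}$ (via Petrov (b) applied at the neighbouring points $A_i+D_i$ and $A_{i-1}+D_{i-1}$) to convert the natural estimate $A_{k^-}+1 = D_{i-1}-y_{i-1}+O(n^{0.4})$ into $A_{k^-}+1=j-y_i+O(n^{0.4})$; inserting both bounds into the Lemma \ref{stanford} sandwich and tracking the numerical constants produced by Petrov (a)-(d) yields the asserted estimate $|\tau_\gamma(j)-j+y_i|<7n^{0.4}$. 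The main obstacle is the small-gap regime: the naive jump bound $|A_i-A_{k^+}|\le(i-k^+)n^{0.18}$ only yields an $O(n^{0.48})$ error, so the intermediate-index trick is essential for recovering the target $O(n^{0.4})$ estimate.
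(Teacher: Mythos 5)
The paper defers the proof of Lemma~\ref{ducks} to Part~I, so there is no in-text proof to compare against; however, the surrounding setup makes the intended route clear, since Lemma~\ref{stanford} is stated and proved immediately beforehand precisely as the tool for this estimate, and your proposal uses it in exactly the expected way. Your plan---bracket $\tau_\gamma(j)$ between $A_{k^-}+1$ and $A_{k^+}$ via Lemma~\ref{stanford}, localize $A_{k^\pm}-k^\pm$ to within $O(n^{.18})$ of $D_i-i$ (resp.\ $D_{i-1}-(i-1)$) using $a_k<n^{.18}$ from Lemma~\ref{voucher}, and then convert this to $k^\pm-i+y_i=O(n^{.4})$ via Petrov~(c) and a bootstrap on $|k^\pm-i|$ using $y_i<0.4n^{.6}$ from Petrov~(a)---is sound, and lands comfortably inside the claimed $7n^{.4}$ margin.

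Two points deserve repair. First, in the small-gap regime $|k^+-i|<n^{.3}$, the auxiliary index $k'=k^+-\lceil n^{.3}\rceil$ is \emph{not} guaranteed to be at distance $\ge n^{.3}$ from $i$ (take $k^+>i$ with $k^+-i$ close to $n^{.3}$; then $|k'-i|$ can be $O(1)$), so Petrov~(c) can fail between $k'$ and $i$. Choosing $k'=i-2\lceil n^{.3}\rceil$ instead (or going the other direction when $i$ is small) puts $k'$ at distance $\ge n^{.3}$ from both $i$ and $k^+$, after which two applications of Petrov~(c) and the triangle inequality give the desired $|A_{k^+}-A_i-2(k^+-i)|=O(n^{.18})$. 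Second, ``natural boundary conventions'' glosses over the degenerate case where $k^+$ (or $k^-$) fails to exist, e.g.\ when $A_{m-1}-(m-2)\le D_i-i$; there $\tau_\gamma(j)$ may exceed $A_{m-1}$ and Lemma~\ref{stanford} yields no upper bracket, so a short separate argument is needed (one can show $m-1-i=O(n^{.4})$ in that case, whence both $\tau_\gamma(j)$ and $j-y_i$ sit within $O(n^{.4})$ of $n$). Both are local fixes, and the overall argument is correct.
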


\begin{lemma} \label{fathersday}
For any Dyck path $\gamma \in \dn$ that satisfies the Petrov conditions and any $j=D_i \in \D$
$$|\tau_{\gamma}(j)-j-\gamma(2j)|<10n^{.4} .$$
Also for any such $\gamma$, $j$ and $i$ with $D_{i-1}<j<D_i$
$$|\tau_{\gamma}(j)-j+\gamma(2j)|<10n^{.4} .$$
\end{lemma}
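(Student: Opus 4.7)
The plan is to derive both bounds by first expressing $\tau_\gamma(j)$ in terms of $A_i, D_i$ using the BJS bijection, then showing that $y_i$ is close to $\gamma(2j)$ via the modulus of continuity provided by Petrov condition \eqref{porter}.

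For the first bound ($j=D_i\in\D$), the bijection gives $\tau_\gamma(j)=1+A_i$, so $\tau_\gamma(j)-j=1+(A_i-D_i)=1+y_i$. Recalling from \eqref{neal} that $y_i=\gamma(A_i+D_i)$, it suffices to control $|\gamma(A_i+D_i)-\gamma(2D_i)|$. The two arguments differ by exactly $|A_i+D_i-2D_i|=|A_i-D_i|=y_i$, and by Petrov condition \eqref{htcone} we have $y_i\le \max_x\gamma(x)<.4n^{.6}<2n^{.6}$. Therefore Petrov condition \eqref{porter} yields $|\gamma(A_i+D_i)-\gamma(2D_i)|<.5n^{.4}$, which combined with the constant $1$ is easily within the allowed $10n^{.4}$ slack.

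For the second bound ($D_{i-1}<j<D_i$), I would start from Lemma \ref{ducks}, which already gives $|\tau_\gamma(j)-j+y_i|<7n^{.4}$. The task then reduces, via the triangle inequality, to showing $|y_i-\gamma(2j)|<3n^{.4}$. Writing $y_i=\gamma(A_i+D_i)$, the argument-gap is
\[
|A_i+D_i-2j| \;=\; |2(D_i-j)+y_i| \;\le\; 2(D_i-D_{i-1})+y_i.
\]
By Lemma \ref{voucher}, $D_i-D_{i-1}<n^{.18}$, and by Petrov condition \eqref{htcone}, $y_i<.4n^{.6}$, so the gap is less than $2n^{.18}+.4n^{.6}<2n^{.6}$ for $n$ large. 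Another application of Petrov condition \eqref{porter} gives $|\gamma(A_i+D_i)-\gamma(2j)|<.5n^{.4}$, and adding this to the $7n^{.4}$ from Lemma \ref{ducks} stays safely under $10n^{.4}$.

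I expect no serious obstacle: both bounds are essentially a continuity argument for $\gamma$ along sites separated by at most $O(n^{.6})$, with the height bound \eqref{htcone} guaranteeing we stay in the regime where \eqref{porter} applies. The only minor care needed is in the second part, where one must combine the bound on $|D_i-D_{i-1}|$ from Lemma \ref{voucher} with the global height bound from \eqref{htcone} to verify the $2n^{.6}$ threshold; edge cases where $i$ or $j$ is small or close to $n$ are absorbed by the generous slack in the constants.
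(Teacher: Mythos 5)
Your proof is correct. The paper itself defers the proof of this lemma to \cite{part1}, so there is no in-text proof to compare against, but your approach is the natural one: for $j=D_i$, use the bijection formula $\tau_\gamma(D_i)=1+A_i$ so that $\tau_\gamma(j)-j=1+y_i=1+\gamma(A_i+D_i)$, then observe that the site $A_i+D_i$ differs from $2j=2D_i$ by exactly $y_i<.4n^{.6}$, so Petrov condition \eqref{porter} closes the gap; for $D_{i-1}<j<D_i$, reduce to Lemma \ref{ducks} and control $|y_i-\gamma(2j)|$ by noting $|A_i+D_i-2j|=2(D_i-j)+y_i<2n^{.18}+.4n^{.6}<2n^{.6}$, again invoking \eqref{porter}. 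The arithmetic ($1+.5n^{.4}<10n^{.4}$ and $7n^{.4}+.5n^{.4}<10n^{.4}$) is correct, and the only prerequisites you use — the BJS formula, \eqref{neal}, Lemma \ref{voucher}, Lemma \ref{ducks}, and Petrov conditions \eqref{htcone} and \eqref{porter} — are all established earlier in the paper, so nothing is circular.
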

%
%
%
\section{Fixed points of {\bf 123}-avoiding permutations}
In this section we use our analysis of {\bf 321}-avoiding permutations from Section \ref{offer} to study the fixed points of a random {\bf 123}-avoiding permutation. A permutation with three distinct fixed points has the pattern 123.
Thus a {\bf 123}-avoiding permutation can have at most 2 fixed points. At most one of them can be in the interval $[1,n/2]$ and at most one of them in the interval $(n/2,n]$. Elizalde  showed that as $n \to \infty$ the expected number of fixed points in a random {\bf 123}-avoiding permutation is converging to $1/2$ \cite{elizalde2004statistics}. Miner and Pak refined this by showing that the number of fixed points outside of the interval $[(1-\epsilon)n/2,(1+\epsilon)n/2]$ is converging to 0 \cite{mp}. In this section we give an asymptotic description of the distribution of fixed points in terms of Brownian excursion. 

We start with our main combinatorial lemma.
\begin{lemma} \label{pumpkin}
Let $\thingtwo$ be a {\bf 123}-avoiding permutation of length $n$, let 
$\thingone$ defined by $\thingone(k)=n+1-\thingtwo(k)$
and
et $\gamma \in \dyck{2n}$ be the image of $\thingone$ under the BJS bijection. There is a local minimum of $\gamma$ at $n$ if and only if there exists $k$ such that $p(k)=k \leq n/2.$ If there exists a fixed point at some $k \leq n/2$ then the fixed point $k$ satisfies
\begin{equation} \label{palestinian citizens}
k=\frac{n-\gamma(n)}2.\end{equation}
\end{lemma}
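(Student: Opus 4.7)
The plan is to reduce the claim to a statement about $\thingone$ via the BJS bijection. Since replacing each value by $n+1$ minus itself converts $\mathbf{123}$-avoidance into $\mathbf{321}$-avoidance, $\thingone$ is a $\mathbf{321}$-avoiding permutation and $\gamma$ is well-defined. The key reformulation driving both directions is
\[ \thingtwo(k)=k \iff \thingone(k)=n+1-k. \]

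For the forward direction, assume $\thingtwo(k)=k$ with $k\leq n/2$. Then $\thingone(k)=n+1-k > k$, so Lemma \ref{downandout} places $k$ in $\D$, i.e., $k=D_i$ for some $i$. The BJS rule $\thingone(D_i)=1+A_i$ combined with $\thingone(k)=n+1-k$ forces $A_i=n-k$, hence $A_i+D_i=n$. This identifies position $n$ as the right endpoint of the $i$-th descending run of $\gamma$; since $A_m+D_m=2n\neq n$, the run is not terminal, so $\gamma$ must go up at step $n$ and have arrived at $n$ by a down step, making $n$ an interior local minimum of $\gamma$. Equation (\ref{neal}) then yields $\gamma(n)=y_i=A_i-D_i=n-2k$, which rearranges to (\ref{palestinian citizens}).

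For the converse, assume $\gamma$ has a local minimum at $n$, so $n=A_i+D_i$ for some $i\in\{1,\dots,m-1\}$. Setting $k=D_i$ gives $A_i=n-k$, and the BJS rule yields $\thingone(k)=1+A_i=n+1-k$, so $\thingtwo(k)=k$. Since $\gamma(n)\geq 0$ gives $k=(n-\gamma(n))/2\leq n/2$, this $k$ is the desired fixed point and automatically satisfies the closed form.

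The main obstacle is essentially bookkeeping: one must track the three ways of indexing the same position (as $k=D_i$, as $A_i+D_i=n$, and as a local extremum of $\gamma$) and confirm that $n$ is a genuine interior local minimum rather than the endpoint $2n$—but this last point is automatic because $A_i+D_i=n<2n$. No probabilistic input or moderate-deviation machinery is needed; the lemma is a purely combinatorial identity between the BJS bijection and the reverse-complement symmetry.
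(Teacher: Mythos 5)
Your proof is correct and follows essentially the same argument as the paper: reduce to the $\mathbf{321}$-avoiding $\thingone$, note $\thingtwo(k)=k \iff \thingone(k)=n+1-k$, use Lemma \ref{downandout} to place $k=D_i$, deduce $A_i+D_i=n$ from the BJS rule $\thingone(D_i)=1+A_i$, and read off the height via \eqref{neal}. The only cosmetic difference is that you justify $k\le n/2$ in the converse via $\gamma(n)\ge 0$, while the paper phrases it as $D_i\le A_i$; these are the same fact.
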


\begin{proof}
It is clear by symmetry that $\thingtwo$ is {\bf 123}-avoiding if and only if $\thingone$ is {\bf 321}-avoiding. We note that there is a fixed point $k=\thingtwo(k)$ if and only if there is a $k$ such that  
$(k, \thingone(k))$ is on the anti-diagonal of the graph of $\thingone$, i.e.\ $\thingone (k)=n+1-k.$

If $k\leq n/2$ and $\thingtwo(k)=k$ then we have $\thingone(k)=n+1-k>k$ and $(k,\thingone(k))$ 
lies above the diagonal and on the upper sequence. Thus we must have that $k=D_i=D_i(\gamma)$ 
for some $i$ and $$\thingone (k)=1+A_i=n+1-D_i.$$ 
Rearranging we get that
$A_i+D_i=n.$ This implies that $$(A_i+D_i,A_i-D_i)=(n,A_i-D_i)$$ is a local minimum on the graph of  $\gamma$. 

Similarly if $(n,n-2j)$ is a local minimum of $\gamma$ then there exists $k$ such that 
$A_k+D_k=n.$ Then 
$$\thingone (D_k) = 1+A_k=1+n-D_k$$
and $(D_k,\thingone(D_k))$ lies on the anti-diagonal. As $D_k \leq A_k=n-D_k$ we get $D_k \leq n/2$. Solving $A_i+D_i=n$ and $A_i-D_i=\gamma(n)$ for $D_i$ we get that 
$$D_i=\frac{n-\gamma(n)}{2}$$
which is the location of the fixed point.
\end{proof}

\begin{figure}
\centering
\begin{tikzpicture}
\draw[step=.3, help lines] (0,0) grid (7.21,3); 
\draw[step=.3, help lines] (7.49,0) grid (7.5 + 7.21,3) ;

\draw (0,0)--
(0, 0)--
(3/10, 3/10)--
(3/5, 3/5)--
(9/10, 9/10)--
(6/5, 6/5)--
(3/2, 9/10)--
(9/5, 3/5)--
(21/10, 9/10)--
(12/5, 6/5)--
(27/10, 3/2)--
(3, 9/5)--
(33/10, 3/2)--
(18/5, 6/5)--
(39/10, 3/2)--
(21/5, 6/5)--
(9/2, 3/2)--
(24/5, 9/5)--
(51/10, 3/2)--
(27/5, 9/5)--
(57/10, 3/2)--
(6, 6/5)--
(63/10, 9/10)--
(33/5, 3/5)--
(69/10, 3/10)--
(36/5, 0);

\draw[ultra thick] 
(33/10, 3/2)--
(18/5, 6/5)--
(39/10, 3/2);
\draw 
(15/2, 0)--
(39/5, 3/10)--
(81/10, 3/5)--
(42/5, 9/10)--
(87/10, 6/5)--
(9, 9/10)--
(93/10, 3/5)--
(48/5, 9/10)--
(99/10, 6/5)--
(51/5, 3/2)--
(21/2, 9/5)--
(54/5, 3/2)--
(111/10, 9/5)--
(57/5, 3/2)--
(117/10, 6/5)--
(12, 3/2)--
(123/10, 9/5)--
(63/5, 3/2)--
(129/10, 9/5)--
(66/5, 3/2)--
(27/2, 6/5)--
(69/5, 9/10)--
(141/10, 3/5)--
(72/5, 3/10)--
(147/10, 0);

\draw[ultra thick]
(54/5, 3/2)--
(111/10, 9/5)--
(57/5, 3/2);

\draw (18/5, 6/5) -- (18/5,0);

\draw (18/5+.75, .5) node {$h=4$};
\draw (18/5, -.25) node {$n$};
\draw (111/10, -.25) node {$n$};
\draw (.04, -.25) node {$0$};
\draw (7.54, -.25) node {$0$};
\draw (35/5, -.25) node {$2n$};
\draw (7.5+35/5, -.25) node {$2n$};

\draw (6.71,1.7) node {$\gamma$}; 
\draw(14.22,1.7) node {$\gamma'$} ;

\draw[step=1/2, help lines] (.49,-7) grid (6.51,-1); 
\draw[step=1/2, help lines] (7.99,-7) grid (14,-1);

\foreach \i in {0,...,5}
{
	\draw[ultra thick] (\i/2+.5,-\i/2-1)--(\i/2 + .5+.5,-\i/2-1)--(\i/2 + .5+.5,-\i/2-1-.5)--(\i/2 + .5,-\i/2-1-.5)--cycle;
	\draw[ultra thick] (7.5+ \i/2+.5,-\i/2-1)--(7.5+ \i/2 + .5+.5,-\i/2-1)--(7.5+\i/2 + .5+.5,-\i/2-1-.5)--(7.5+ \i/2 + .5,-\i/2-1-.5)--cycle;
}

\draw (3.5, -7.5) node {$\tau_\gamma$};
\draw (3+8, -7.5) node {$\tau_{\gamma'}$};

\draw[fill] (.25 + .5, .25-1  + -6 ) circle (5pt);
\draw[fill] (.25 + .5*2, .25-1  + -6 + 4*.5 ) circle (5pt);
\draw[fill] (.25 + .5*3, .25-1  + -6 +.5) circle (5pt);
\draw[fill] (.25 + .5*4, .25-1  + -6 +8*.5) circle (5pt);
\draw[fill] (.25 + .5*5, .25-1  + -6 +9*.5) circle (5pt);
\draw[fill] (.25 + .5*6, .25-1  + -6 +11*.5) circle (5pt);
\draw[fill] (.25 + .5*7, .25-1  + -6 +2*.5) circle (5pt);
\draw[fill] (.25 + .5*8, .25-1  + -6 +3*.5) circle (5pt);
\draw[fill] (.25 + .5*9, .25-1  + -6 +5*.5) circle (5pt);
\draw[fill] (.25 + .5*10, .25-1  + -6 +6*.5) circle (5pt);
\draw[fill] (.25 + .5*11, .25-1  + -6 +7*.5) circle (5pt);
\draw[fill] (.25 + .5*12, .25-1  + -6 +10*.5) circle (5pt);

\draw[fill] (.25 + .5+7.5, .25-1  + -6 ) circle (5pt);
\draw[fill] (.25 + .5*2+7.5, .25-1  + -6 + 4*.5 ) circle (5pt);
\draw[fill] (.25 + .5*3+7.5, .25-1  + -6 +8*.5) circle (5pt);
\draw[fill] (.25 + .5*4+7.5, .25-1  + -6 +.5) circle (5pt);
\draw[fill] (.25 + .5*5+7.5, .25-1  + -6 +9*.5) circle (5pt);
\draw[fill] (.25 + .5*6+7.5, .25-1  + -6 +11*.5) circle (5pt);
\draw[fill] (.25 + .5*7+7.5, .25-1  + -6 +2*.5) circle (5pt);
\draw[fill] (.25 + .5*8+7.5, .25-1  + -6 +3*.5) circle (5pt);
\draw[fill] (.25 + .5*9+7.5, .25-1  + -6 +5*.5) circle (5pt);
\draw[fill] (.25 + .5*10+7.5, .25-1  + -6 +6*.5) circle (5pt);
\draw[fill] (.25 + .5*11+7.5, .25-1  + -6 +7*.5) circle (5pt);
\draw[fill] (.25 + .5*12+7.5, .25-1  + -6 +10*.5) circle (5pt);

\end{tikzpicture}

\caption{ The pictures above show two Dyck paths $\gamma,\gamma'\in \text{Dyck}^{2n}$ and corresponding {\bf321}-avoiding permutations $\tau_\gamma,\tau_{\gamma'}$.  The path $\gamma$ has a local minimum at $(n,\gamma(n))$.  This corresponds with the point $\left(\frac{n- \gamma(n)}{2},\frac{n+\gamma(n)}{2}+1\right)$ on the anti-diagonal in $\tau_\gamma$.  This becomes a fixed point for $\rho_\gamma=n+1-\tau_\gamma$.  The other path $\gamma'$ does not have a local minimum at $n$ and correspondingly $\tau_{\gamma'}$ has no point on the anti-diagonal with $x$-coordinate less than or equal to $n/2$ and no fixed point of $\rho$ less than or equal to $n/2$.}
\end{figure}
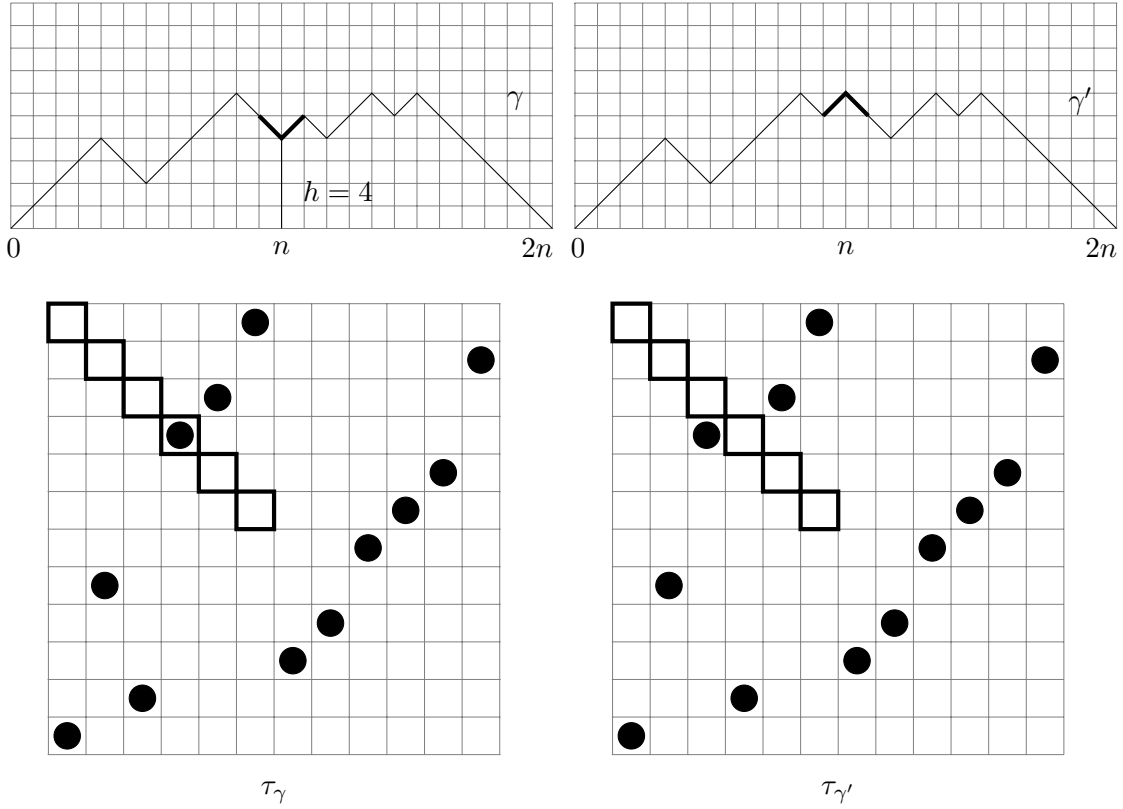

We now translate Lemma \ref{pumpkin} into a statement about the distribution of fixed points.
To perform this analysis we define several random variables on the set of {\bf 123}-avoiding permutations. 
\begin{itemize} 
\item For any $ x\in \R \cup \infty$ let $\delta_x$ be the point mass at $x$. 
\item $\tilde A_n(\thingtwo)=\#\{i \in [1,n/2]:\ \thingtwo(i)=i\}$
\item $\tilde B_n(\thingtwo)=\#\{i \in (n/2,n]:\ \thingtwo(i)=i\}$
\item $\hat X_n(\thingtwo)$ to be the fixed point in $[1,n/2]$ if it exists and $\infty$ if there are none.
\item $\hat Y_n(\thingtwo)$ to be the fixed point in $(n/2,n]$ if it exists and $\infty$ if there are none.
\end{itemize}

From these two we define the random measures 
 $$\tilde X_n(\thingtwo) =\tilde A_n(\thingtwo)\delta_{\frac{\hat X_n-n/2}{\sqrt{2n}}}
 =\sum_{i=1}^{\lfloor n/2\rfloor}\delta_{\frac{i-\frac{n}2}{\sqrt{2n}}} \ind_{\thingtwo(i)=i}$$

and
$$ \tilde Y_n(\thingtwo)= \tilde B_n(\thingtwo) \delta_{\frac{\hat Y_n-n/2}{\sqrt{2n}}}
=\sum_{i=\lfloor n/2\rfloor+1}^n\delta_{\frac{i-\frac{n}2}{\sqrt{2n}}} \ind_{\thingtwo(i)=i}.$$
So $\tilde X_n(\thingtwo)+ \tilde Y_n(\thingtwo)$ encodes the number and location of the fixed points and is appropriately scaled. For the above random variables we often drop the $\thingtwo$ when we are referring to uniformly random $\textbf{123}$-avoiding permutation.

Now we identify the limit of  $\tilde X_n +\tilde Y_n$ which is the main result of this section.

\begin{theorem} \label{ale}
Let $A$ and $B$ be independent Bernoulli(1/4) random variables and let $X$  be a random variable that is independent of $A$ and $B$ and distributed like $\frac12\exc_{1/2}$, half the height of a Brownian excursion at $1/2$. Then
$$ \tilde X_n+ \tilde Y_n \xrightarrow{\text{dist}} A \delta_{-X}+B \delta_X. $$
\end{theorem}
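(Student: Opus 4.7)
The plan is to combine Lemma~\ref{pumpkin} with the asymptotic estimates developed in Section~\ref{offer}, in particular Lemmas~\ref{voucher} and~\ref{fathersday}, together with the classical convergence of scaled Dyck paths to Brownian excursion.

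For the lower atom, Lemma~\ref{pumpkin} gives $\tilde A_n(\thingtwo)=\ind\{\gamma \text{ has a local minimum at } n\}$ and, on that event, $\hat X_n=(n-\gamma(n))/2$. The event ``local min at $n$'' is the two-step pattern down-then-up at step $n$ of the random Dyck path~$\gamma$; a short Cycle-Lemma / exchangeability computation shows it has asymptotic probability~$1/4$ and is asymptotically independent of the macroscopic quantity $\gamma(n)/\sqrt{2n}$, which converges to $\exc_{1/2}$ by the standard invariance principle for Dyck paths. Hence $(\tilde A_n,(n/2-\hat X_n)/\sqrt{2n})\Rightarrow (A,X)$ with $A\sim\mathrm{Bernoulli}(1/4)$ independent of $X:=\exc_{1/2}/2$.

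For the upper atom, an upper fixed point $k\in(n/2,n]$ corresponds exactly to $k\in\bar{\mathbf{D}}$ with $\tau_\gamma(k)=n+1-k$ (using Lemma~\ref{downandout} to confirm $k\notin\mathbf{D}$). Applying the second bound of Lemma~\ref{fathersday}, $|\tau_\gamma(k)-k+\gamma(2k)|<10n^{0.4}$, the anti-diagonal condition forces
\[
\gamma(2k)=2k-n-1+O(n^{0.4}).
\]
Combined with the slow-variation estimate of Lemma~\ref{voucher}, this pins $2k$ to be within $O(n^{0.4})$ of the unique leading-order solution of $\gamma(t)=t-n-1$ near the midpoint, namely $t\approx n+\gamma(n)$, and gives $k=(n+\gamma(n))/2+O(n^{0.4})$. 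Whether an integer $k\in\bar{\mathbf{D}}$ actually realizes the anti-diagonal condition reduces to a local two-step pattern of~$\gamma$ at the random time $2k\approx n+\gamma(n)$ (essentially a parity / step-pattern condition for $\bar{\mathbf{D}}$ to meet the ascending line $y=t-n-1$ on-lattice); an exchangeability argument analogous to the one for the lower atom shows that this event has asymptotic probability~$1/4$. Thus $(\tilde B_n,(\hat Y_n-n/2)/\sqrt{2n})\Rightarrow (B,X)$ with $B\sim\mathrm{Bernoulli}(1/4)$, and the $X$ appearing here is the \emph{same} random variable as in the lower atom, because both atom locations are determined, up to lower-order terms, by $\gamma(n)/\sqrt{2n}$.

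The key independence observation is that the lower-atom local pattern lives in a window around time $n$, while the upper-atom local pattern lives in a window around time $n+\gamma(n)$. Since $\gamma(n)=\Theta(\sqrt n)$ on the event of interest, these two microscopic windows are asymptotically disjoint and hence asymptotically independent of one another and of the macroscopic quantity $\gamma(n)/\sqrt{2n}$. This yields the desired joint law $(A,B,X)$ with $A,B,X$ mutually independent, $A,B\sim\mathrm{Bernoulli}(1/4)$ and $X=\exc_{1/2}/2$, and produces the asserted convergence $\tilde X_n+\tilde Y_n\Rightarrow A\delta_{-X}+B\delta_X$. The most delicate step is the upper atom: one must rigorously extract, from the $O(n^{0.4})$ error in Lemma~\ref{fathersday} and the slow-variation bound of Lemma~\ref{voucher}, that the upper fixed point is located at $(n+\gamma(n))/2$ up to $o(\sqrt n)$ error and that its existence is controlled by a genuinely local two-step pattern at the random time $n+\gamma(n)$. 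Once this is in hand, the separation-of-scales argument delivers the asymptotic independence and completes the proof.
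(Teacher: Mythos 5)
Your lower-atom argument is correct and matches the paper: $\tilde A_n=1$ is exactly the local-minimum-at-$n$ pattern, and the probability $1/4$ plus the joint law with $\gamma(n)/\sqrt{2n}$ follow from the Dyck-path invariance principle essentially as you say.

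The upper atom is where the argument has a genuine gap. You assert that the existence of the upper fixed point ``reduces to a local two-step pattern of $\gamma$ at the random time $2k \approx n + \gamma(n)$,'' but this is not established, and it is not actually true in the clean form you need. The upper fixed point condition is that the nondecreasing integer sequence $j\mapsto \tau_\gamma(j)+j$ (with $j$ ranging over $\bar{\D}$) hits the value $n+1$ exactly. Lemmas~\ref{voucher} and~\ref{fathersday} locate the crossing near $j\approx (n+\gamma(n))/2$, but whether the sequence lands \emph{exactly} on $n+1$ depends on the sequence of increments of both $j$ in $\bar{\D}$ and $\tau_\gamma(j)$ in $\bar{\A}$ near the crossing, and these are governed by the local structure of $\gamma$ near \emph{two} separated locations, roughly $n+\gamma(n)$ and $n-\gamma(n)$. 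This is not a single two-step pattern, and the ``exchangeability argument analogous to the one for the lower atom'' is not spelled out and would need a genuinely different combinatorial identification. The paper sidesteps this entirely for the marginal law of $\tilde B_n$ by using the reflection symmetry $\bar\thingtwo(k)=n+1-\thingtwo(n+1-k)$, which maps upper fixed points of $\thingtwo$ to lower fixed points of the equi-distributed $\bar\thingtwo$, so the Bernoulli($1/4$) marginal is free.

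The independence claim also needs more than ``the microscopic windows are asymptotically disjoint.'' The Dyck-path conditioning (return to $0$ at $2n$) introduces global correlations, and the upper fixed point is not a priori measurable with respect to a fixed small window. The paper's route is a precise combinatorial statement (Lemma~\ref{tildeB}) that $\tilde B_n$ and $\hat Y_n$ are invariant under local surgeries of $\gamma$ on $[n,n+n^{.4}]$ that preserve the endpoints and the count of local extrema, together with a quantitative conditional-expectation estimate (Lemma~\ref{moyes}) showing the local-minimum-at-$n$ probability stays $1/4+o(1)$ within each such equivalence class. That machinery is what makes the ``separation of scales'' rigorous; your sketch appeals to it intuitively but does not supply it.
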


Note that this is an equivalent formulation of Theorem \ref{intro main} Part (b). The first step in the proof is the following lemma.

\begin{lemma} \label{vonnegut said}
$$\tilde X_n \xrightarrow{\text{dist}} A \delta_{-X} \qquad \text{and} \qquad
\tilde Y_n \xrightarrow{\text{dist}} B \delta_X.$$
\end{lemma}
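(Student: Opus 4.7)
The plan is to apply Lemma \ref{pumpkin} to reduce $\tilde X_n$ to a simple functional of the uniformly random Dyck path $\gamma$ associated to $\thingtwo_n$ via complementation and the BJS bijection, then extract the limit from asymptotic independence of the local step pattern of $\gamma$ at position $n$ and the rescaled midpoint height $\gamma(n)/\sqrt{2n}$. The second assertion will follow by a short reverse-complement symmetry argument.

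By Lemma \ref{pumpkin}, $\tilde A_n = \ind_{L_n}$, where $L_n = \{\gamma(n-1) = \gamma(n+1) = \gamma(n)+1\}$ is the event that $\gamma$ has a local minimum at $n$, and on $L_n$ the unique fixed point equals $(n-\gamma(n))/2$. Thus
\[
\tilde X_n \ = \ \ind_{L_n}\, \delta_{-\gamma(n)/(2\sqrt{2n})}.
\]
Since complementation is a bijection from $\textbf{123}$-avoiding to $\textbf{321}$-avoiding permutations of $[n]$ and BJS identifies the latter with $\dn$, $\gamma$ is uniformly distributed on $\dn$. The marginal convergence $\gamma(n)/\sqrt{2n} \Rightarrow \exc_{1/2}$ is then the standard midpoint marginal of Kaigh's invariance principle. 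The crux of the argument is to upgrade this to the joint convergence
\[
\left(\ind_{L_n},\ \frac{\gamma(n)}{\sqrt{2n}}\right) \ \xrightarrow{\text{dist}}\ (A, \exc_{1/2})
\]
with $A\sim\mathrm{Bernoulli}(1/4)$ independent of $\exc_{1/2}$. I would do this by conditioning on $\gamma(n) = 2h$: by the Markov property, $\gamma|_{[0,n]}$ and $\gamma|_{[n,2n]}$ are then conditionally independent uniform nonnegative lattice bridges, and $L_n$ is the intersection of the two conditionally independent events ``the left bridge ends with a down-step'' and ``the right bridge starts with an up-step''. Each of these reduces to a ballot ratio of the form $N_{n-1,2h+1}/N_{n,2h}$, where $N_{n,h}=\binom{n}{(n+h)/2}-\binom{n}{(n+h)/2+1}$, and a short Stirling/Taylor calculation shows this ratio converges to $1/2$ uniformly for $h/\sqrt{n}$ in compact subsets of $(0,\infty)$. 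Hence $\P(L_n\mid \gamma(n)=2h)\to 1/4$ uniformly, which gives the asymptotic independence. Applying the continuous-mapping theorem to $(a,t)\mapsto a\,\delta_{-t/2}$ then yields $\tilde X_n \xrightarrow{\text{dist}} A\,\delta_{-X}$.

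For $\tilde Y_n$, I would use the reverse-complement $\thingtwo\mapsto\thingtwo'$ defined by $\thingtwo'(k)=n+1-\thingtwo(n+1-k)$. This map preserves $\textbf{123}$-avoidance and the uniform distribution, and through the bijection $k\leftrightarrow n+1-k$ it identifies fixed points of $\thingtwo'$ in $[1,n/2]$ with fixed points of $\thingtwo$ in $(n/2,n]$. The induced action on scaled locations is reflection through the origin together with a deterministic shift of size $1/\sqrt{2n}$, so $\tilde Y_n(\thingtwo)$ coincides, up to this vanishing shift, with the reflection through $0$ of $\tilde X_n(\thingtwo')$. Applying the first part to $\thingtwo'$ then gives $\tilde Y_n\xrightarrow{\text{dist}} B\,\delta_X$ for an (independent copy) $B\sim\mathrm{Bernoulli}(1/4)$.

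The hard part will be the uniform control of $\P(L_n\mid \gamma(n)=2h)$. The ballot-ratio calculation is elementary once $h$ is of order $\sqrt{n}$, but because $\exc_{1/2}$ has positive density throughout $(0,\infty)$ one needs uniformity over windows $h\in[\epsilon\sqrt{n},\epsilon^{-1}\sqrt{n}]$ together with tail bounds on the small-$h$ and large-$h$ regions; the latter contribute negligibly by tightness of $\gamma(n)/\sqrt{2n}$ and should be handled by a truncation argument combined with standard Stirling estimates.
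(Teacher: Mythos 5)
Your proposal is correct, and it reaches the same conclusion by a genuinely different route from the paper. Both proofs start from the same observation (via Lemma \ref{pumpkin}) that $\tilde X_n = \ind_{L_n}\,\delta_{-\gamma(n)/(2\sqrt{2n})}$, where $L_n$ is the local-minimum event at position $n$. Where they diverge is in how they handle $L_n$ together with $\gamma(n)$. The paper exploits an exact bijection: deleting the down-up pair at position $n$ maps $\{\gamma\in\dyck{2n}: L_n \text{ holds}\}$ bijectively onto $\dyck{2n-2}$ (with the caveat $\gamma'(n-1)\geq 1$, which matters only for $n$ odd). This single bijection simultaneously yields $\P(\tilde A_n=1)=C_{n-1}/C_n\to 1/4$ and the exact conditional identity that $\gamma(n)$ given $L_n$ is distributed as $\Gamma^{n-1}(n-1)-1$ for $\Gamma^{n-1}$ uniform on $\dyck{2n-2}$, from which the limit $-\exc_{1/2}/2$ follows directly by Kaigh's invariance principle with no further estimates. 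You instead condition on $\gamma(n)=2h$, invoke the Markov decomposition into two independent nonnegative bridges, and prove $\P(L_n\mid\gamma(n)=2h)\to 1/4$ as a product of two ballot ratios $N_{n-1,2h+1}/N_{n,2h}\to 1/2$, uniformly over compact windows of $h/\sqrt{n}$, with a tightness argument for the tails. This is correct (a short computation shows the ratio equals $\tfrac{2h+2}{2h+1}\bigl(\tfrac12-\tfrac{h}{n}\bigr)$, which does converge uniformly), and it has the merit of making the asymptotic independence of the local step pattern at $n$ from the macroscopic midpoint height explicit; the cost is that you must do a uniform-in-$h$ estimate plus a truncation, whereas the paper's bijection replaces all of that with a single exact combinatorial identity valid for every $n$. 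The symmetry argument for $\tilde Y_n$ is the same in both.
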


\begin{proof}
Let $\thingtwo$, $\thingone$ and $\gamma$ be as in Lemma \ref{pumpkin}.
If $\thingtwo$ is a {\bf 123}-avoiding permutation then $\thingone$ 
is {\bf 321}-avoiding. 
By Lemma \ref{pumpkin} we have that if $D_i\leq n/2$ is a fixed point for $\thingtwo$ then $(n,A_i-D_i)$ is a local minimum for $\gamma$. The set of Dyck paths of length $2n$ that have a local minimum of $(n,h)$ is in 1-1 correspondence with the set of Dyck paths of length $2n-2$ that go through $(n-1,h+1).$ 
If $n$ is even we get
\begin{equation} \label{tatiana}
\P(\tilde A_n=1)=\frac{C_{n-1}}{C_n} \to \frac14.
\end{equation}
If $n$ is odd we get
\begin{equation} \label{toro}
\P(\tilde A_n=1)=\frac{C_{n-1}-C_{\frac{n-1}{2}}C_{\frac{n-1}{2}}}{C_n} \to \frac14.
\end{equation}

Because of \eqref{palestinian citizens} in Lemma \ref{pumpkin} we get that  
$$\frac{\hat X_n -n/2}{\sqrt{2n}} \ \textrm{ given } (\tilde A=1) \overset{dist}{=\joinrel=} \frac{-\Gamma^{n}(n)}{2\sqrt{2n}} \ \
\text{ given $(n,\Gamma^n(n))$ is a local minimum,}$$ which is equal in distribution (if $n$ is even) to
$$\frac{-\Gamma^{n-1}(n-1)+1}{2\sqrt{2n}} $$
where $\Gamma^{n-1}$ is a uniformly chosen Dyck path of length $2n-2$. This last quantity is 
converging in distribution to $-X=-\mathbbm{e}_{1/2}/2$. The case when $n$ is odd is virtually identical. This proves the first claim in the lemma.

The second follows by symmetry as the permutation defined by
$$\bar \thingtwo(k)=n+1-\thingtwo(n+1-k)$$
is also {\bf 123}-avoiding. The fixed points of $\bar \thingtwo$ are $n+1$ minus the fixed points of $\thingtwo$. Thus the convergence of $\tilde Y_n \to B\delta_X$ follows by symmetry.
\end{proof}

Our next goal is to prove the following lemma.
\begin{lemma} \label{we are put on earth}
$$(\tilde A_n,\tilde B_n) \xrightarrow{\text{dist}} (A,B).$$
\end{lemma}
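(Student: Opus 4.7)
The plan is to prove $\P(\tilde A_n = 1, \tilde B_n = 1) \to 1/16$; combined with Lemma~\ref{vonnegut said} (each marginal $\to 1/4$), this yields joint convergence to $(A,B)$ independent Bernoulli$(1/4)$, since the remaining joint probabilities are then determined by inclusion--exclusion.

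By Lemma~\ref{pumpkin} applied directly to $\rho$, $\tilde A_n(\rho) = 1$ iff $\gamma := \text{BJS}(n+1-\rho)$ has a local minimum at $n$. Applied to the symmetric permutation $\bar\rho(k) := n+1-\rho(n+1-k)$, which is $\mathbf{123}$-avoiding with fixed points at $j \leq n/2$ in bijection with fixed points of $\rho$ at $n+1-j > n/2$, the lemma gives $\tilde B_n(\rho) = 1$ iff $\gamma' := \text{BJS}(\rho^R)$ has a local minimum at $n$, where $\rho^R$ denotes the reversal of $\rho$ (a $\mathbf{321}$-avoiding permutation, since $\rho$ is $\mathbf{123}$-avoiding). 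Thus the joint event $\{\tilde A_n = 1, \tilde B_n = 1\}$ corresponds to both $\gamma$ and $\gamma'$ having local minima at $n$; marginally each is uniformly random on Dyck paths of length $2n$, but they are correlated through $\rho$.

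I then count $\mathbf{123}$-avoiding $\rho$ with fixed points at some $k_1 \leq n/2$ and some $k_2 > n/2$. Such permutations satisfy the structural constraints $\rho([1, k_1-1]) \subseteq [k_1+1, n]$ and $\rho([k_2+1, n]) \subseteq [1, k_2-1]$, forced by $\mathbf{123}$-avoidance combined with the two fixed points, together with $\mathbf{123}$-avoidance on the middle segment $[k_1+1, k_2-1]$ and compatibility across segments. Using a double contraction that generalizes the single contraction yielding~\eqref{tatiana}, I associate such a $\rho$ to a Dyck path of length $2n-4$ together with data encoding $k_1, k_2$; summing over valid $(k_1, k_2)$ yields a joint count of $C_{n-2}(1+o(1))$, and since $C_{n-2}/C_n \to 1/16$, the conclusion follows. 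The main obstacle is executing this double contraction carefully: the events ``$\gamma$ has local min at $n$'' and ``$\gamma'$ has local min at $n$'' are not exactly independent at finite $n$, so the argument requires controlling boundary corrections analogous to the $C_{(n-1)/2}^2$ correction appearing in~\eqref{toro}, either by a direct combinatorial analysis or by restricting to typical Dyck paths via the Petrov estimates from Lemma~\ref{petrov}.
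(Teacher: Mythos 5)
Your high-level plan coincides with the paper's: prove $\E(\tilde A_n \tilde B_n) = \P(\tilde A_n=\tilde B_n=1) \to 1/16$ and combine with Lemma~\ref{vonnegut said}. But the method you propose to reach $1/16$ is unexecuted, and I don't think it works as easily as suggested.

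The crux is the claimed ``double contraction'' giving a count of $C_{n-2}(1+o(1))$. The single contraction behind \eqref{tatiana} works because $\tilde A_n=1$ is exactly a local condition on $\gamma$ at $n$ (a local minimum), and that local minimum can be excised. But $\tilde B_n=1$ is \emph{not} a local condition on the same Dyck path $\gamma$: as you note, it is a local condition at $n$ on $\gamma'=\mathrm{BJS}^{-1}(\rho^R)$, and the BJS bijection does not intertwine reverse-complement of the permutation with any simple operation on the path (it is not path reversal --- try $\gamma=UDUDUD$ with $\tau=(2,3,1)$, whose reverse-complement $(3,1,2)$ comes from $\gamma'=UUDUDD$). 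The paper deals with this via Lemma~\ref{library} (the second fixed point is near $n/2+\gamma(n)/2$) and Lemma~\ref{tildeB} ($\tilde B_n$ is a function of the path outside a short window at $n$ plus the number of local extrema in that window), and then establishes the $1/16$ by a conditional-expectation argument: Lemma~\ref{moyes} shows $\E(\tilde A_n \mid s) \to 1/4$ uniformly over equivalence classes $s$ in the good set $\mathcal G_n$, which is where the Petrov conditions and the explicit counts from \cite{labarbe} do the real work. Nothing like that appears in your sketch; ``executing this double contraction carefully'' \emph{is} the proof, and the hint at the end about falling back to Petrov estimates is precisely the route the paper takes, which is conceptually distinct from a bijective contraction and would require you to import the $\sim$-equivalence-class machinery wholesale.

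So: right target, but the proposal leaves the central step --- showing that having a local minimum at $n$ is asymptotically independent of having the (nonlocal, BJS-transported) condition corresponding to a right-of-center fixed point --- as an unsubstantiated assertion. To close the gap you would either have to prove a genuinely new bijective identity for 123-avoiders with two fixed points (and control the error terms analogous to $C_{(n-1)/2}^2$ in \eqref{toro}, which for two fixed points becomes considerably messier), or reconstruct the paper's conditioning argument via Lemmas~\ref{moyes}, \ref{tildeB}, and \ref{cauliflower}.
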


To prove that $\tilde A_n$ and $\tilde B_n$ are asymptotically independent consider $\gamma \in \dyck{2n}$ and its image under the BJS bijection $\thingtwo_\gamma$. As we saw in Lemma \ref{pumpkin} $\tilde A_n=1$ is the event
$$\gamma(n-1)-\gamma(n)=\gamma(n+1)-\gamma(n)=1$$
which is determined by the increments of $\gamma$ in the region $[n-1,n+1].$
We will now show that $\tilde B_n$ is essentially determined by $\gamma$ in 
$$[0,2n]\setminus [n-1,n+n^{.4}].$$
As the increments of a Dyck path in $[n-1,n+1]$ are roughly independent of the values of $\gamma$ in  $[0,2n]\setminus [n-1,n+n^{.4}]$ we will get that $\tilde A_n$ is asymptotically independent of $\tilde B_n$.

To make this formal we define an equivalence relation on $\dyck{2n}$. 
\begin{definition}
 For $\gamma,\gamma' \in \dyck{2n}$ we write $\gamma \sim \gamma'$ if 
\begin{itemize}
\item $\gamma(m)=\gamma'(m)$ for all $m \not \in [n,n+l-2]$ where $l=\lfloor n^{.4}\rfloor$ and
\item $\gamma$ and $\gamma'$ have the same number of local miximums and local minimums in the interval
$$[n-2,n+l].$$
\end{itemize}
Let $S$ denote the set of all equivalence classes for $\sim$. 
\end{definition}

We now define a good set of equivalence classes. Then we show that almost all the Dyck paths are in their union, which we call $G_{n}$.
\begin{definition}
Define ${\mathcal G}_{n}$ to be the set of all equivalence classes $s \in S$ such that 
\begin{enumerate}
\item some element $\gamma \in s$ satisfies the Petrov conditions (defined in Definition \ref{disparate}) and
\item  $\gamma(n-2)>n^{.45}$ for some (all) $\gamma \in s.$
\end{enumerate}
Also define 
\begin{equation} \label{divestment}
G_{n}=\bigcup_{s \in \mathcal G_{n}} s
\end{equation}
\end{definition}

\begin{lemma}\label{cauliflower}
$\P(G_{n}) \to 1 \text{ as } n \to \infty.$
\end{lemma}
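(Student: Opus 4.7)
The plan is to use a union bound over the two defining properties of $\mathcal{G}_n$. Let $P_n$ denote the event that $\Gamma^n$ satisfies the Petrov conditions of Definition \ref{disparate}, and let $H_n$ denote the event that $\Gamma^n(n-2) > n^{.45}$. Since the value of $\gamma$ at $n-2$ and the property of satisfying Petrov conditions are both invariant on (respectively, compatible with) the equivalence class of $\gamma$, it suffices to show
\[
\P(P_n \cap H_n) \to 1,
\]
because on this event the equivalence class of $\Gamma^n$ lies in $\mathcal{G}_n$ and hence $\Gamma^n \in G_n$.

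The first event is handled immediately: Lemma \ref{petrov} gives $\P(P_n) \to 1$ at a rate that is exponential in $n^c$ for some $c>0$. For the second event $H_n$, the plan is to invoke the scaling-limit result that $\Gamma^n(2n \cdot)/\sqrt{2n}$ converges in distribution to Brownian excursion $(\mathbbm{e}_t)_{0 \leq t \leq 1}$ in the Skorokhod topology (Kaigh \cite{Ka76}). In particular, the evaluation map at $t=1/2$ is continuous, so
\[
\frac{\Gamma^n(n-2)}{\sqrt{2n}} \xrightarrow{\text{dist}} \mathbbm{e}_{1/2}.
\]
Since $\mathbbm{e}_{1/2} > 0$ almost surely and the threshold $n^{.45}/\sqrt{2n} = (2n)^{-1/2} n^{.45} \to 0$, the portmanteau theorem gives
\[
\P(H_n) = \P\!\left( \frac{\Gamma^n(n-2)}{\sqrt{2n}} > \frac{n^{.45}}{\sqrt{2n}} \right) \to \P(\mathbbm{e}_{1/2} > 0) = 1.
\]

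Combining the two estimates by a union bound yields $\P(P_n \cap H_n) \geq \P(P_n) + \P(H_n) - 1 \to 1$, and hence $\P(G_n) \to 1$. The main subtlety I would be careful about is the equivalence-class compatibility of condition (2): the interval $[n, n+l-2]$ in the definition of $\sim$ does not contain $n-2$, so the value $\gamma(n-2)$ is indeed constant on each equivalence class, making the ``some (all)'' in the definition of $\mathcal{G}_n$ well-posed and allowing the above argument to pass from the representative $\Gamma^n$ to its equivalence class. No other step requires quantitative control beyond what Lemma \ref{petrov} and weak convergence to Brownian excursion already provide.
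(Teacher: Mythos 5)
Your proof is correct and the overall architecture matches the paper's: decompose $G_n$ via the two conditions in the definition of $\mathcal{G}_n$, handle the Petrov conditions by Lemma \ref{petrov}, and handle the height condition separately. Your observation that $\gamma(n-2)$ is constant on equivalence classes (because $n-2 \notin [n,n+l-2]$) and that witnessing the Petrov conditions at $\Gamma^n$ itself suffices for condition (1) is exactly the right reduction to $\P(P_n \cap H_n) \to 1$.

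The one place you diverge from the paper is the height condition. The paper asserts directly that $\P(\Gamma^n(n-2) \leq n^{.45}) = O(n^{-c})$ for some $c>0$, a quantitative bound available from exact ballot-count/Catalan estimates for the marginal of a Dyck path at a fixed time. You instead use the functional convergence $\Gamma^n(2n\cdot)/\sqrt{2n} \Rightarrow \mathbbm{e}$ and the fact that $\mathbbm{e}_{1/2}>0$ a.s. That route is softer (no rate) but the lemma needs none, so it's fine. Two small points you should make explicit: first, since the threshold $n^{.45}/\sqrt{2n} \to 0$, you cannot apply portmanteau to a single fixed open set; rather, for each fixed $\epsilon>0$ you have $\liminf_n \P(\Gamma^n(n-2)/\sqrt{2n}>\epsilon) \geq \P(\mathbbm{e}_{1/2}>\epsilon)$, then let $\epsilon \downarrow 0$. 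Second, the evaluation time $(n-2)/(2n)$ is also moving toward $1/2$; this is harmless because the limit $\mathbbm{e}$ is continuous (so Skorokhod convergence is uniform), but it deserves a sentence. With those two clarifications your argument is airtight and essentially as efficient as the paper's.
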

\begin{proof}
 The probability that the Petrov conditions are not satisfied is decaying exponentially in $n^c$ for some $c > 0$ by Lemma \ref{petrov}.
The second condition in the definition of ${\mathcal G}_n$ is true for all but a set of $\gamma$ of order $O(n^{-c})$ for some $c>0$.
\end{proof}

\begin{lemma} \label{moyes}
$$ \lim_{n \to \infty} \max_{s \in {\mathcal G}_{n}} \left| \E( \text{ $\tilde A_n$ } |\  s)- \frac14 \right| \to 0 .$$

\end{lemma}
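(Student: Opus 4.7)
The plan is to convert the event $\{\tilde A_n=1\}$ into a combinatorial question about the free portion of the path, derive an exact closed-form ratio, and then verify the ratio tends to $1/4$ uniformly on $\mathcal G_n$ using the Petrov-style bounds. By Lemma \ref{pumpkin}, $\tilde A_n=1$ holds if and only if $\gamma$ has a local minimum at $n$, equivalently the two free steps $n-1\to n$ and $n\to n+1$ are $D$ then $U$. Conditional on $s\in\mathcal G_n$, the Dyck path is uniformly distributed over configurations that agree with a fixed representative outside the window $[n,n+l-2]$ and realize the prescribed total number of local maxima and minima in $[n-2,n+l]$. The height condition $\gamma(n-2)>n^{.45}$ combined with Petrov condition~(b) of Definition~\ref{disparate}, which bounds the fluctuation of $\gamma$ over any window of length at most $2n^{.6}$ by $0.5\,n^{.4}$, keeps the path strictly positive throughout the window of length $l=\lfloor n^{.4}\rfloor$. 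Hence the non-negativity constraint is inactive and the conditional measure is a uniform random-walk bridge between the fixed boundary heights $\gamma(n-1)$ and $\gamma(n+l-1)$, subject to the fixed peak/valley budget.

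To count, I would encode each window path by its alternating maximal monotone runs, parameterized by the number $r_U$ of $U$-runs and $r_D$ of $D$-runs, with $u+d=l$ and $u-d=\gamma(n+l-1)-\gamma(n-1)$ determined by $s$. Splitting into the four cases indexed by whether the window starts and ends with $U$ or $D$, the total peak/valley count in $[n-2,n+l]$ combined with the fixed exterior steps $n-2\to n-1$ and $n+l-1\to n+l$ pins down $(r_U,r_D)$ within each case, and the number of window paths in that case is $\binom{u-1}{r_U-1}\binom{d-1}{r_D-1}$ by elementary composition counting. The event $\tilde A_n=1$ forces the first run to be a $D$-run of length exactly one and is compatible only with the two start-with-$D$ cases. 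In the representative subcase where the fixed exterior steps are $s_{\text{left}}=U$, $s_{\text{right}}=D$ (which forces the total peaks $P$ and total valleys $V$ in $[n-2,n+l]$ to satisfy $P=V+1$), applying Pascal's rule twice to combine the two start-with-$D$ counts gives the clean closed form
\[
\P(\tilde A_n=1 \mid s)=\frac{\binom{u-1}{V-1}\binom{d-1}{V-1}}{\binom{u}{V}\binom{d}{V}}=\frac{V^2}{u\,d},
\]
and the other three exterior-step configurations yield analogous expressions, differing only by $O(1)$ shifts of the binomial indices.

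The Petrov conditions (c)--(d) of Definition~\ref{disparate}, together with Lemma~\ref{voucher} applied to the window, give $u,d=l/2+O(l^{.6})$ and $V=l/4+O(l^{.6})$ uniformly in $s\in\mathcal G_n$. Substituting into the closed form gives $V^2/(u d)=1/4+O(l^{-.4})=1/4+o(1)$, uniformly in $s$, which is the assertion of the lemma. The main obstacle is the four-case bookkeeping: because the peak/valley counts in $[n-2,n+l]$ do not directly pin down $(r_U,r_D)$ without also knowing the first and last window steps, one must carry out the enumeration carefully and apply Pascal's rule inside each case before forming the ratio. Once the enumeration is organized and the remaining binomial ratios are estimated by the uniform Petrov bounds, the uniform convergence to $1/4$ over $\mathcal G_n$ drops out.
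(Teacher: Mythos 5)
Your proof is correct and follows essentially the same strategy as the paper's. The paper defers the enumeration to Proposition~9 of Labarbe--Marckert and then "leaves the details to the reader"; you actually carry out that enumeration via the composition (stars-and-bars) count $\binom{u-1}{r_U-1}\binom{d-1}{r_D-1}$, which is precisely what the cited formula amounts to. Two small remarks on the details, both consistent with what you do. First, the paper's four-way split is by $(\gamma(n-1)-\gamma(n),\,\gamma(n)-\gamma(n+1))$, i.e.\ the first two window steps, and the conclusion is drawn by showing the four cardinalities are asymptotically equal; you instead split the denominator by $(w_1,w_\ell)$ (first and last window steps) so that the run structure is determined within each case, and compute the numerator directly from the constraint that the first run is a $D$-run of length one. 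Both decompositions lead to the same Pascal-rule collapse and the ratio $V^2/(ud)$, and I checked that your formula $\bigl.\binom{u-1}{V-1}\binom{d-1}{V-1}\bigr/\bigl(\binom{u}{V}\binom{d}{V}\bigr)=V^2/(ud)$ is correct for the exterior-step case $(U,D)$. Second, a point worth stating more explicitly: Petrov condition (b) alone only gives $|u-d|<0.5\,\ell$, which is useless; it is Petrov (c)--(d) applied over the roughly $\ell/4\geq n^{.3}$ runs inside the window, plus the run-length bound $<n^{.18}$ from Lemma~\ref{voucher} to control boundary effects, that deliver $u,d=\ell/2+O(\ell^{.6})$ and $V=\ell/4+O(\ell^{.6})$. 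You do cite the right conditions, so the argument goes through and yields $V^2/(ud)=1/4+O(n^{-.16})$ uniformly over $\mathcal G_n$, as required.
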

\begin{proof}
This is a straightforward but tedious calculation. 
Fix $a,b \in \{\pm 1\}$ and $h, h', j \in \N$. Let $s$ be an equivalence class such that 
$$\gamma(n-2)=h+a,\text{ and }  \gamma(n-1)=h, \quad \text{ and } \quad \gamma(n+l-1)=h'+b \ 
\text{ and }\gamma(n+l)=h'$$ 
and there are $j$ peaks in the interval $[n-2,n+l]$. We break $s$ up into four sets based on whether $\gamma(n-1)-\gamma(n)$ and $\gamma(n)-\gamma(n+1)$ are positive or negative. (Note that the set where $\gamma$ has a local minimum  at $n$ is one of those four sets.) 
The cardinality of these four sets are explicitly calculated in Proposition 9 of \cite{labarbe}. It is easy to show that if some element of $s$ satisfies the Petrov conditions then the ratio of the sizes of any of these sets is $1+o(1).$  We leave the details to the reader. 
\end{proof}

\begin{lemma} \label{tildeB}
For any $\gamma, \gamma' \in G_{n}$ with $\gamma \sim \gamma'$ we have 
\begin{equation*} \label{yams}
\tilde B_n(\thingtwo_\gamma)=\tilde B_n(\thingtwo_{\gamma'})
\qquad \text{and} \qquad
\hat Y_n(\thingtwo_\gamma)=\hat Y_n(\thingtwo_{\gamma'})
\end{equation*}
\end{lemma}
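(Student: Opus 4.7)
The plan is to give an exact combinatorial criterion for fixed points of $\rho_\gamma:=n+1-\tau_\gamma$ in $(n/2,n]$ analogous to Lemma \ref{pumpkin}, and then show via the Petrov bounds built into $G_n$ that this criterion is insensitive to perturbations of $\gamma$ within the window $[n,n+l-2]$.

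First I would prove the following. If $\rho_\gamma(k)=k$ with $k>n/2$ then $\tau_\gamma(k)=n+1-k<k$, so by Lemma \ref{downandout} $k\in\bar\D$. Writing $\bar\D=\{\beta_1<\beta_2<\cdots\}$ and $\bar\A=\{\alpha_1<\alpha_2<\cdots\}$ and using the BJS rule that $\tau_\gamma$ is the order-preserving bijection $\bar\D\to\bar\A$, the fixed-point condition is equivalent to
\[
k\in\bar\D,\quad n+1-k\in\bar\A,\quad |\bar\D\cap[1,k]|=|\bar\A\cap[1,n+1-k]|,
\]
or more succinctly $\alpha_r+\beta_r=n+1$ with $k=\beta_r>n/2$ (which forces $\alpha_r\le n/2$).

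Next I would use the identity $\gamma(A_i+D_i)=A_i-D_i$ from \eqref{neal} together with the hypothesis $\gamma(n-2)>n^{.45}$ and the oscillation bound \eqref{porter} to conclude that $\gamma\ge\tfrac12 n^{.45}$ throughout $[n-2,n+l]$. Every ascent-run endpoint $A_i$ with $A_i+D_i\in[n-2,n+l]$ therefore satisfies $A_i-D_i>\tfrac12 n^{.45}$ and hence $A_i>n/2+\tfrac14 n^{.45}$; similarly every descent-run endpoint $D_i$ whose associated position $A_{i-1}+D_i$ (or $A_i+D_i$) falls in the window satisfies $D_i<n/2-\tfrac14 n^{.45}$. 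Thus every element of $\A$ contributed by the window lies strictly above $n/2$, and every element of $\D$ contributed by the window lies strictly below $n/2$.

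Finally I would combine these with the definition of $\sim$. Since $\gamma$ and $\gamma'$ agree outside $[n,n+l-2]$ and have matching peak/valley counts in $[n-2,n+l]$, the $\A$- and $\D$-values contributed outside the window coincide; combined with the previous step, this yields $\A\cap[1,n/2]=\A'\cap[1,n/2]$ and $\D\cap(n/2,n]=\D'\cap(n/2,n]$, and consequently $\bar\A\cap[1,n/2]=\bar\A'\cap[1,n/2]$ and $\bar\D\cap(n/2,n]=\bar\D'\cap(n/2,n]$. Each of the three conditions above for $k\in(n/2,n]$ to be a fixed point depends only on these sub-pieces of $\bar\A$ and $\bar\D$, so the fixed-point sets in $(n/2,n]$ for $\rho_\gamma$ and $\rho_{\gamma'}$ coincide, giving both equalities. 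The main obstacle is the first step: unlike lower-half fixed points, upper-half fixed points live in $\bar\D$ rather than at distinguished descent-end indices, so the characterization is a global matching condition that requires careful bookkeeping through the BJS bijection.
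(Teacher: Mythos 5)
Your strategy is essentially the paper's: use the height floor built into $G_n$ to show that the window can only change $\D$ below $n/2$ and $\A$ above $n/2$, then deduce that the data governing upper-half fixed points is shared by $\gamma$ and $\gamma'$. Your rank-matching characterization of upper-half fixed points (via Lemma \ref{downandout} and the order-preserving map $\bar\D\to\bar\A$) is correct. The gap is in the final sentence: the rank condition $|\bar\D\cap[1,k]|=|\bar\A\cap[1,n+1-k]|$ does \emph{not} depend only on $\bar\D\cap(n/2,n]$ and $\bar\A\cap[1,n/2]$, because $|\bar\D\cap[1,k]| = |\bar\D\cap[1,n/2]| + |\bar\D\cap(n/2,k]|$ and the first summand is a count taken below $n/2$. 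You also need $|\D_\gamma\cap[1,n/2]|=|\D_{\gamma'}\cap[1,n/2]|$, which the paper isolates as the displayed equation \eqref{the new class}. It follows from the set equality $\D_\gamma\cap(n/2,n]=\D_{\gamma'}\cap(n/2,n]$ combined with $|\D_\gamma|=|\D_{\gamma'}|$; the latter holds because the equivalence relation forces the same number of runs (steps agree outside the window and local-extrema counts in $[n-2,n+l]$ match). You cite the matching peak/valley counts, so you have the ingredient, but this cardinality deduction must be made explicit before the criterion becomes transferable.

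Two smaller points worth fixing. Your appeal to the Petrov oscillation bound \eqref{porter} is unnecessary and slightly awkward since only \emph{some} element of each equivalence class is guaranteed to satisfy the Petrov conditions; the trivial step bound $|\gamma(x)-\gamma(n-2)|\le l+2$ together with $\gamma(n-2)>n^{.45}$ already gives $\gamma>n^{.45}-(l+2)$ on $[n-2,n+l]$, which suffices and holds for every $\gamma\in G_n$. Also, $A_i+D_i$ is the position of the valley at the end of descent run $i$, not of a peak; the peak carrying the label $A_i$ sits at $A_i+D_{i-1}$. This indexing slip does not change the conclusion (peaks inside the window are still high, so $A_i>n/2$), but the statement should be corrected.
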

\begin{proof}
Remember the definitions of $\D, \A$ and $\bar \A$ from the start of Section \ref{offer}.
First we claim that
\begin{equation} \label{polenta}
\D_\gamma \Delta \D_{\gamma'}\subset  [1,n/2] \qquad \text{and} \qquad \bar \A_\gamma \Delta \bar \A_{\gamma'} \subset [1+n/2,n].
\end{equation}
To see this note that both of these sets are defined by the points which are a local minimum for one Dyck path but not the other. Based on the definition of the equivalence relation the local minima of 
$\gamma$ and $\gamma'$ can only differ in the interval $I=(n-2,n+n^{.4}).$ By the second condition in the definition of ${\mathcal G}_n$ each of the local minima in the interval $I$ is preceded by at least $n/2+n^{.45}/2-2>1+n/2$ up-steps. This proves the second claim in \eqref{polenta}. Also by the second condition in the definition of ${\mathcal G}_n$ each of the local minima in the interval $I$ is preceded by at most $n/2-n^{.45}/2+n^{.4}<n/2$ down-steps. This proves the first claim in \eqref{polenta}. Also by the previous argument and the second condition in the equivalence relation
\begin{equation}\label{the new class}
|\D_\gamma\cap [1,n/2]|= | \D_{\gamma'}\cap  [1,n/2]|.
\end{equation}

If $\tilde B_n(\thingtwo(\gamma))=1$ then there exists $j> n/2$ which is a fixed point of $\thingtwo_\gamma$ and it lies on the anti-diagonal of $\thingone_\gamma$. As $j> n/2$ we get 
\begin{equation} \label{bhg}
\thingone_\gamma(j)=n+1-j \leq j \qquad \text{and} \qquad \thingone_\gamma(j)=n+1-j < 1+n/2.
\end{equation}
So by the first part of \eqref{bhg} we have $(j,\thingone_\gamma(j))$ lies on the lower sequence for $\thingone_\gamma$. Thus $j \not \in \D_\gamma$. By the first part of \eqref{polenta} and the fact that $j>n/2$ we also have $j \not \in \D_{\gamma'}$. Thus $\thingone_{\gamma}(j) \in \bar \A_\gamma$ and $\thingone_{\gamma'}(j) \in \bar \A_{\gamma'}$.
By \eqref{the new class} and the fact that $\D_{\gamma}$ and $\D_{\gamma' }$ are equal after $n/2$ there exists $k$ such that $$D_k=D'_k <j<D_{k+1}=D'_{k+1}.$$ So $\thingone_\gamma(j)$ is the $j-k$th element of $\bar \A_\gamma$ and $\thingone_{\gamma'}(j)$ is the $j-k$th element of $\bar \A_{\gamma'}.$  By the second half of \eqref{polenta} and the second part of \eqref{bhg} we know that  
$$\thingone_\gamma(j)\in \bar \A_\gamma|_{[1,1+n/2)}=\bar \A_{\gamma'}|_{[1,1+n/2)}.$$
Thus $ \thingone_\gamma(j)$ must be equal to $\thingone_{\gamma'}(j)$ as they are both the $j-k$th term in the same set.
Thus $(j,\thingone_\gamma(j))$ and$(j,\thingone_{\gamma'}(j))$ lie on the anti-diagonal and $j=\thingtwo_{\gamma'}(j)$ is a fixed point of $\thingtwo_{\gamma'}$. As the roles of $\gamma$ and $\gamma'$ are symmetric this establishes the claim of the lemma.
\end{proof}

\begin{pfoflem}{\ref{we are put on earth}}
From \eqref{tatiana} and \eqref{toro} we have that $\tilde A_n \to A$ and by symmetry we have that $\tilde B_n \to B$. Thus we just need to show that $\E(\tilde A_n \tilde B_n) \to \frac1{16}.$ 
\begin{eqnarray}
\left| \E(\tilde A_n \tilde B_n) -\frac{1}{16}\right| 
& = & \left| \E(\tilde B_n) \E(\tilde A_n|\tilde B_n=1) -\frac{1}{16}\right| \nonumber\\ 
& \leq & \left| \E(\tilde B_n)  -\frac{1}{4}\right| +  \left|  \E(\tilde A_n|\tilde B_n=1)-\frac14 \right| \nonumber\\
& \leq & \left| \E(\tilde B_n)  -\frac{1}{4}\right| +  \left|  \E(\tilde A_n|\tilde B_n=1) -\E(\tilde A_n) \right| +\left|\E(\tilde A_n)-\frac14 \right| \nonumber\\
& \leq & \left| \E(\tilde B_n)  -\frac{1}{4}\right| +  \left|  \E(\tilde A_n|\{\tilde B_n=1\}\cap G_{n})-\E(\tilde A_n) \right|+\P\left(G_{n}^C\right) +\left|\E(\tilde A_n)-\frac14 \right| \nonumber\\
& \leq & \left| \E(\tilde B_n)  -\frac{1}{4}\right| + \max_{s \in {\mathcal G}_{n}} \left| \E(\tilde A_n)- \E(\tilde A_n| s) \right|+\P\left(G_{n}^C\right)+\left|\E(\tilde A_n)-\frac14 \right| . \label{finalline}
\end{eqnarray}
The last inequality  is valid because of Lemma  \ref{tildeB}. The first and last terms on the right hand side of \eqref{finalline} go to zero by Lemma  \ref{we are put on earth}.  
The second term goes to zero by Lemmas \ref{we are put on earth} and  \ref{moyes} and the third term goes to zero by Lemma \ref{cauliflower}.
\end{pfoflem}

In contrast to Lemma \ref{pumpkin} the event $\{\tilde B_n=1\}$ and the location $\hat Y_n$ of the fixed point after $n/2$ is more complicated to describe.

\begin{lemma} \label{library}
For all $n$ sufficiently large and all $\gamma \in G_{n}$ with $\tilde B_n=1$
$$|\hat Y_n-n/2-\gamma(n)/2|\leq 100n^{.4} $$
\end{lemma}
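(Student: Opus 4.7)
The plan is to use the bijective correspondence between $\rho$ and the $\textbf{321}$-avoiding permutation $\tau$ (via $\tau(k)=n+1-\rho(k)$) together with the estimates from Lemmas \ref{downandout} and \ref{fathersday} applied to the Dyck path $\gamma$. Let $j=\hat Y_n$, so $j>n/2$ and $\rho(j)=j$, equivalently $\tau_\gamma(j)=n+1-j$. Since $j>n/2$, we have $\tau_\gamma(j)=n+1-j\leq j$, so by Lemma \ref{downandout} we must have $j\notin \D$. Consequently there exists $i$ with $D_{i-1}<j<D_i$.

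Next, I would apply the second half of Lemma \ref{fathersday} (valid because $\gamma\in G_n$ satisfies the Petrov conditions) to this $j$, giving
\[
\bigl|\tau_\gamma(j)-j+\gamma(2j)\bigr|<10n^{.4}.
\]
Substituting $\tau_\gamma(j)=n+1-j$ yields
\[
\bigl|\,n+1-2j+\gamma(2j)\,\bigr|<10n^{.4}. \tag{$\star$}
\]
In particular, $|2j-n-1|\leq \gamma(2j)+10n^{.4}$, and by Petrov condition (\ref{htcone}) of Definition \ref{disparate} we have $\gamma(2j)<.4n^{.6}$, so $|2j-n|<2n^{.6}$ for all sufficiently large $n$.

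Having located $2j$ within $2n^{.6}$ of $n$, I would then invoke Petrov condition (\ref{porter}) to conclude
\[
|\gamma(2j)-\gamma(n)|<.5n^{.4}.
\]
Substituting this into ($\star$) gives
\[
|n+1-2j+\gamma(n)|\leq |n+1-2j+\gamma(2j)|+|\gamma(2j)-\gamma(n)|<10n^{.4}+.5n^{.4},
\]
and dividing by $2$ and absorbing the constant $1$ produces
\[
\bigl|\,j-\tfrac{n}{2}-\tfrac{\gamma(n)}{2}\,\bigr|\leq 100n^{.4}
\]
for $n$ large, as required.

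There is no serious obstacle: the argument is essentially a two-step application of the estimates already collected in Section \ref{offer}. The only subtlety is the bootstrap structure, namely one first uses ($\star$) together with the crude bound $\gamma\leq .4n^{.6}$ to confine $2j$ to a window of width $O(n^{.6})$ around $n$, and only then applies the modulus of continuity estimate in Petrov condition (\ref{porter}) to replace $\gamma(2j)$ by $\gamma(n)$ up to an error of order $n^{.4}$. Both pieces are needed: the continuity bound is useless without first knowing $|2j-n|$ is not too large, while the gross bound alone is too weak to give the $n^{.4}$ conclusion.
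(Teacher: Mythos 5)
Your argument is essentially sound, and in fact a bit more direct than the paper's. The paper first observes that $j \mapsto \tau_\gamma(j) + j$ is monotone increasing on $\bar\D$, then shows the two thresholds $(n + \gamma(n) \pm 100n^{.4})/2$ bracket every possible $j$ with $\tau_\gamma(j)+j = n+1$; you instead apply Lemma \ref{fathersday} at the fixed point $j=\hat Y_n$ itself and read off the constraint directly, which shortens the argument and even gives a sharper numerical constant. Both approaches rest on exactly the same three ingredients: Lemma \ref{downandout} to place $j$ in $\bar\D$, Lemma \ref{fathersday} to control $\tau_\gamma(j)-j+\gamma(2j)$, and Petrov conditions (a) and (b) for the bootstrap.

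There is one genuine gap at the start. You assert that the application of Lemma \ref{fathersday} is ``valid because $\gamma\in G_n$ satisfies the Petrov conditions,'' but that is not what membership in $G_n$ gives you. By construction $G_n$ is a union of $\sim$-equivalence classes $s\in\mathcal{G}_n$, and the defining property of $\mathcal{G}_n$ is only that \emph{some} element of $s$ satisfies the Petrov conditions; a particular $\gamma\in G_n$ may itself fail them, since $\gamma$ can be altered on $[n,n+l-2]$ without leaving the class. The missing step is the reduction the paper makes explicitly: by Lemma \ref{tildeB}, replacing $\gamma$ by an equivalent $\gamma'\sim\gamma$ that does satisfy the Petrov conditions leaves $\tilde B_n$ and $\hat Y_n$ unchanged, and since $\gamma,\gamma'$ agree off $[n,n+l-2]$ (in particular at $n-1$), one has $|\gamma(n)-\gamma'(n)|\leq 2$, so the target quantity $\gamma(n)/2$ moves by at most $1$, which is absorbed by the slack in the constant $100$. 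Once you insert that reduction, your two-stage bootstrap — crude height bound from Petrov (a) to confine $2j$ to within $2n^{.6}$ of $n$, then the modulus of continuity from Petrov (b) to replace $\gamma(2j)$ by $\gamma(n)$ up to $O(n^{.4})$ — goes through exactly as written.
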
 

\begin{proof}
Suppose $\gamma \in G_n$. By the definition of $G_n$ there exists $\gamma'$ such that $\gamma \sim \gamma'$ and $\gamma'$ satisfies the Petrov conditions. By Lemma \ref{tildeB} it causes no loss of generality to assume that $\gamma$ satisfies the Petrov conditions. 

Restricted to the lower sequence $\bar \D$ we have that $\tau_\gamma(j)+j$ is an increasing sequence as each component is increasing.
We will show that if $j  \in \bar \D$ and 
\begin{equation} \label{steel plate} j< \frac{n+\gamma(n)-100n^{.4}}{2} \end{equation}
then $\tau_\gamma(j)+j<n.$ 
Similarly we will show that if $j  \in \bar \D$ and 
\begin{equation}
j> \frac{n+\gamma(n)+100n^{.4}}{2} \label{electric bugaloo} \end{equation}
then $\tau_\gamma(j)+j>n+1.$ 
Then for any $j$ with $(j, \tau_\gamma(j))$ on the anti-diagonal and the lower sequence we must have $j+\tau_\gamma(j)=n+1$ and thus 
$$j \in \left(\frac{n+\gamma(n)-100n^{.4}}{2},\frac{n+\gamma(n)+100n^{.4}}{2}\right).$$
As the probability we are considering a set of $\gamma$ of almost full probability this is sufficient to prove the lemma.

Let $j$ be the smallest value in $\bar \D$ not satisfying \eqref{steel plate}. Then by Lemma \ref{voucher}
\begin{equation*} \label{steel plate star} j< \frac{n+\gamma(n)-98n^{.4}}{2} \end{equation*}
 Since $\gamma$ satisfies the Petrov conditions by Lemma \ref{fathersday}
we have 
$$\tau_\gamma(j)-j+\gamma(2j) \leq 10n^{.4}. $$
Then manipulating this we get
\begin{eqnarray}
\tau_\gamma(j)+j 
&\leq& 2j+10n^{.4}-\gamma(2j)  \nonumber\\
&\leq&n+\gamma(n)-98n^{.4}+10n^{.4}-
                            \gamma(n)+(\gamma(n)-\gamma(2j)) \nonumber\\
&\leq&n-88n^{.4}+n^{.4} \label{mason}\\
&<&n. \nonumber
\end{eqnarray}
The inequality in \eqref{mason} is true because by Petrov condition \eqref{htcone} 
 $n$ and $2j$ are within $n^{.6}$ and thus by Petrov condition \eqref{porter} $\gamma(n)$ and $\gamma(2j)$ are within
 $n^{.4}$. 

Let $j$ be the largest value in $\bar \D$ not satisfying \eqref{electric bugaloo}. Then by Lemma \ref{voucher}
\begin{equation*} \label{breaking two}j> \frac{n+\gamma(n)+98n^{.4}}{2} \end{equation*}
Since $\gamma$ satisfies the Petrov conditions by Lemma \ref{fathersday}
$$ \tau_\gamma(j)-j+\gamma(2j) \geq -10n^{.4}$$
Then manipulating and making the same estimates  we get
\begin{eqnarray*}
 \tau_\gamma(j)+j
 &\geq & 2j-10n^{.4} -\gamma(2j) \\
 &\geq & n+\gamma(n)+98n^{.4}-10n^{.4} -\gamma(n)+(\gamma(n)-\gamma(2j))\\
 &\geq & n+88n^{.4} +(\gamma(n)-\gamma(2j))\\
 &\geq & n+80n^{.4}.
 \end{eqnarray*}
The last line follows in the same way as the first computation.\end{proof}

\begin{lemma} \label{naan}
Conditional on $\tilde A_n=\tilde B_n=1$ we have that
$$ \tilde Y_n \xrightarrow{\text{dist}}  \delta_{X}.$$
\end{lemma}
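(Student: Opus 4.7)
The plan is to reduce via Lemma \ref{library} to a statement about $\gamma(n)/\sqrt{2n}$, then deploy the equivalence-class machinery used in the proof of Lemma \ref{we are put on earth} to transfer the conditioning $\{\tilde A_n = \tilde B_n = 1\}$ to just $\{\tilde B_n = 1\}$, where Lemma \ref{vonnegut said} applies directly.

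First I would observe that $\P(\tilde A_n = \tilde B_n = 1) \to 1/16$ by Lemma \ref{we are put on earth} and $\P(G_n^c) \to 0$ by Lemma \ref{cauliflower}, so $\P(G_n \mid \tilde A_n = \tilde B_n = 1) \to 1$. On $G_n \cap \{\tilde B_n = 1\}$, Lemma \ref{library} gives
\[ \left| \frac{\hat Y_n - n/2}{\sqrt{2n}} - \frac{\gamma(n)}{2\sqrt{2n}} \right| \leq \frac{100\, n^{.4}}{\sqrt{2n}} = O(n^{-0.1}), \]
so, since $X = \mathbbm{e}_{1/2}/2$, it suffices to prove that conditional on $\tilde A_n = \tilde B_n = 1$ we have $\gamma(n)/\sqrt{2n} \to \mathbbm{e}_{1/2}$ in distribution.

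Second I would partition $\dyck{2n}$ according to the equivalence relation $\sim$ and restrict attention to $s \in \mathcal{G}_n$. The relevant properties are: (i) $\ind_{\tilde B_n = 1}$ is constant on $s$ by Lemma \ref{tildeB}; (ii) $\gamma(n)/\sqrt{2n}$ is constant on $s$ up to $O(n^{-1/2})$, because $\gamma(n-1)$ is fixed on $s$ (as $n-1 \notin [n,n+l-2]$) and $|\gamma(n)-\gamma(n-1)|=1$ since $\gamma$ is a Dyck path; and (iii) $\P(\tilde A_n = 1 \mid s) = 1/4 + o(1)$ uniformly in $s$ by Lemma \ref{moyes}. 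Using (ii) with uniform continuity of any bounded continuous $f$, summing (i)--(iii) over $s \in \mathcal G_n$ with $\tilde B_n(s) = 1$ and absorbing $\P(G_n^c)$ into the error yields
\[ \E\bigl[ f(\gamma(n)/\sqrt{2n}) \bigm| \tilde A_n = \tilde B_n = 1 \bigr] = \frac{(1/4+o(1))\,\E\bigl[f(\gamma(n)/\sqrt{2n})\,\ind_{\tilde B_n=1}\bigr] + o(1)}{1/16+o(1)}. \]

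Third, by Lemma \ref{vonnegut said} combined with Lemma \ref{library}, conditional on $\tilde B_n = 1$ we have $\gamma(n)/\sqrt{2n} \to \mathbbm{e}_{1/2}$; since $\P(\tilde B_n = 1) \to 1/4$, the numerator on the right converges to $(1/16)\,\E[f(\mathbbm{e}_{1/2})]$, so the whole expression converges to $\E[f(\mathbbm{e}_{1/2})]$. Combined with the first reduction this gives $(\hat Y_n - n/2)/\sqrt{2n} \to \mathbbm{e}_{1/2}/2 = X$ conditional on $\tilde A_n = \tilde B_n = 1$, which is the claim. The main obstacle is step two: making precise that the asymptotic independence of $\tilde A_n$ from the equivalence class (exploited in Lemma \ref{we are put on earth}) transfers to asymptotic independence of $\tilde A_n$ from $\gamma(n)/\sqrt{2n}$ once we condition on $\tilde B_n = 1$; the key point is (ii), that within a good class $\gamma(n)/\sqrt{2n}$ varies only by $O(n^{-1/2})$.
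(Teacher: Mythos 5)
Your proof is correct and follows the same strategy as the paper's: decompose over the equivalence classes of $\sim$, use Lemma~\ref{moyes} to pull out the factor $1/4$ for $\tilde A_n$ uniformly over $s \in \mathcal{G}_n$, and invoke Lemma~\ref{vonnegut said} for the limit under the conditioning $\{\tilde B_n = 1\}$ alone. The only cosmetic difference is that the paper skips your detour through $\gamma(n)$ and Lemma~\ref{library} by instead using the second assertion of Lemma~\ref{tildeB}, which makes $\hat Y_n$ exactly constant on each good class, so the event $\{\tilde B_n=1,\ \hat Y_n \in (a\sqrt{2n},b\sqrt{2n})\}$ is measurable with respect to the partition; your observation that $\gamma(n)$ varies by at most $2$ on a class (combined with uniform continuity of $f$) achieves the same end.
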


\begin{proof}
Since $X$ is continuous using Lemma \ref{we are put on earth} suffices to show that for any $0<a<b$
\begin{equation} \label{laliga}
\P\left(\tilde A_n =\tilde B_n =1 \text{ and } \hat Y_n \in \left(a\sqrt{2n},b\sqrt{2n}\right)\right) \xrightarrow{\text{}} \frac{1}{16}\P(X \in (a,b)).\end{equation}
By Lemma \ref{vonnegut said} 
\begin{equation} \label{curry}
\P\left(\tilde B_n =1 \text{ and } \hat Y_n \in \left(a\sqrt{2n},b\sqrt{2n}\right)\right) \xrightarrow{\text{}} \frac{1}{4}\P(X \in (a,b)).\end{equation}
By Lemma \ref{tildeB} for any $s \in {\mathcal G}_n$  the event on the left hand side of \eqref{curry} either happens for all $\gamma \in s$ or for no $\gamma \in s.$   To calculate
$$\P\left(\tilde A_n=\tilde B_n =1 \text{ and } \hat Y_n \in \left(a\sqrt{2n},b\sqrt{2n}\right)\right)$$
we proceed as in the proof of Lemma \ref{we are put on earth}. By  Lemma \ref{moyes} we have that for each $s \in {\mathcal G}_{n}$
\begin{multline} 
\label{pedantic}\P\left(s \text{ and } \tilde A_n =  \tilde B_n =1 \text{ and } \hat Y_n \in \left(a\sqrt{2n},b\sqrt{2n}\right)\right)\\= \frac{1}{4}\P\left(s \cap \tilde B_n =1 \text{ and } \hat Y_n \in \left(a\sqrt{2n},b\sqrt{2n}\right)\right)(1+\Delta(s))
\end{multline}
where $\sup_{s \in  {\mathcal G}_{n}}\Delta(s)=o(1).$ 
Then we sum the terms on the left hand side over all $s \in S.$ By Lemma \ref{cauliflower} 
we have that $\P(G_n)$ is close to 1.
This completes the proof.
\end{proof}

\begin{lemma} \label{to fart around}
Conditional on $\tilde A_n=\tilde B_n=1$ we have that
$$ \tilde X_n+ \tilde Y_n \xrightarrow{\text{dist}}  \delta_{-X}+ \delta_X.$$
\end{lemma}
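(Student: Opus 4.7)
The plan is to combine Lemma \ref{naan} with the observation that, conditional on both fixed points existing, they are near-reflections of each other across $n/2$, with error that vanishes after the $\sqrt{n}$ scaling. First, on the event $\{\tilde A_n = 1\}$, the exact identity \eqref{palestinian citizens} of Lemma \ref{pumpkin} gives $\hat X_n = (n-\gamma(n))/2$, where $\gamma$ is the Dyck path associated to $\thingtwo$ via the bijection in Lemma \ref{pumpkin}; equivalently, $(n/2-\hat X_n)/\sqrt{2n} = \gamma(n)/(2\sqrt{2n})$. On the event $\{\tilde B_n = 1\} \cap G_n$, Lemma \ref{library} gives $(\hat Y_n - n/2)/\sqrt{2n} = \gamma(n)/(2\sqrt{2n}) + O(n^{-0.1})$. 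Therefore, on $G_n \cap \{\tilde A_n = \tilde B_n = 1\}$,
$$ \left| \frac{\hat Y_n - n/2}{\sqrt{2n}} - \frac{n/2 - \hat X_n}{\sqrt{2n}} \right| \leq \frac{100\, n^{0.4}}{\sqrt{2n}} \to 0. $$

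Next, Lemma \ref{we are put on earth} gives $\P(\tilde A_n = \tilde B_n = 1) \to 1/16 > 0$ and Lemma \ref{cauliflower} gives $\P(G_n^c) \to 0$, so $\P(G_n \mid \tilde A_n = \tilde B_n = 1) \to 1$. Combined with Lemma \ref{naan}, which provides $(\hat Y_n - n/2)/\sqrt{2n} \xrightarrow{\text{dist}} X$ conditional on $\tilde A_n = \tilde B_n = 1$, Slutsky's theorem applied to the reflection identity above yields the joint convergence
$$ \left( \frac{n/2-\hat X_n}{\sqrt{2n}},\ \frac{\hat Y_n-n/2}{\sqrt{2n}} \right) \xrightarrow{\text{dist}} (X, X) $$
under the same conditioning. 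The map $(u,v) \mapsto \delta_{-u} + \delta_{v}$ is continuous from $\R^2$ into the space of finite measures on $\R$ equipped with weak convergence, so by the continuous mapping theorem, $\tilde X_n + \tilde Y_n \xrightarrow{\text{dist}} \delta_{-X} + \delta_X$ conditional on $\tilde A_n = \tilde B_n = 1$, as claimed.

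The main obstacle is essentially bookkeeping rather than probabilistic: the substantive work has already been done in Lemmas \ref{pumpkin}, \ref{library}, \ref{cauliflower}, \ref{we are put on earth}, and \ref{naan}. What remains is to verify that the lower fixed point inherits its limiting distribution from the upper fixed point because the two are forced to coincide (up to a subleading additive error of order $n^{0.4}$) on the event $G_n$, which has asymptotic conditional probability $1$; this is what allows one to upgrade the marginal limit of $\tilde Y_n$ given by Lemma \ref{naan} to the joint limit of $\tilde X_n + \tilde Y_n$ required here.
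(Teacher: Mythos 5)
Your proposal is correct and follows essentially the same route as the paper: it combines the exact identity $\hat X_n = (n-\gamma(n))/2$ from Lemma \ref{pumpkin} with the near-reflection estimate of Lemma \ref{library} to conclude that $(\hat X_n - n/2) + (\hat Y_n - n/2)$ is $O(n^{0.4})$, then invokes Lemma \ref{naan} and the fact that $\P(G_n\mid \tilde A_n=\tilde B_n=1)\to 1$. You make the Slutsky and continuous-mapping steps more explicit than the paper does, but the substance is the same.
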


\begin{pfoflem}{\ref{to fart around}}
By Lemma \ref{pumpkin} for any $\gamma \in G_{n}$ with $\tilde A(\gamma)=1$ we have that
$$ \hat X(\gamma) =\frac{n-\gamma(n)}{2}.$$
Similarly by Lemma \ref{library} we have that
for any $\gamma \in G_{n}$ with $\tilde B_n(\gamma)=1$  
$$ \left|-\frac{n+\gamma(n)}{2} + \hat Y(\gamma) \right| \leq 100n^{.4}.$$
Combining these two lines gives us
\begin{equation*}
\left| \frac{\hat X- \frac{n}{2}}{\sqrt{2n}}+\frac{\hat Y-\frac{n}2}{\sqrt{2n}}\right| =
 \left|-\frac{\gamma(n)}{2\sqrt{2n}}  +\frac{\hat Y-\frac{n}2-\frac{\gamma(n)}2}{\sqrt{2n}} +\frac{\gamma(n)}{2\sqrt{2n}}\right|\leq \frac{100n^{.4}}{\sqrt{2n}}\leq 100n^{-.1}.
\end{equation*}

Combining this  with Lemma \ref{naan} establish the lemma.
\end{pfoflem}

\begin{pfofthm}{\ref{ale}}
Because of Lemma \ref{we are put on earth} it is sufficient to show that 
\begin{itemize}
\item conditional on $\tilde A_n=\tilde B_n=1$ then $\qquad \quad \tilde A_n \tilde X_n + \tilde B_n \tilde Y_n \to  \delta_{-X}+ \delta_X$,
\item conditional on $\tilde A_n=1-\tilde B_n=1$ then $\ \quad \tilde A_n \tilde X_n + \tilde B_n \tilde Y_n \to \delta_{-X}$ and
\item conditional on $1-\tilde A_n=\tilde B_n=1$ then $\ \quad \tilde A_n \tilde X_n + \tilde B_n \tilde Y_n \to \delta_X$.
\end{itemize}
The first of these is the statement of Lemma \ref{to fart around}.
Combining Lemma \ref{to fart around} with Lemma \ref{vonnegut said} shows that the second and third statements are true. This completes the proof.
\end{pfofthm}

%
%
%

\section{A Bijection between Dyck Paths and 231-avoiding permutations} \label{dyckpathsection}

The total number of Dyck paths from $0$ to $2n$ is given by $C_n$, the $n$th Catalan number.  The number of \textbf{231}-avoiding permutations in $S_n$ is also given by the $n$th Catalan number.  Hence there is a bijection between the two sets.  We now define a particular bijection that uses geometric properties of the path. Although we suspect this bijection does exist in the literature we are not sure where it does. For the sake of completeness we include a proof that it is a bijection here.
For our purposes the most important geometric aspect of a Dyck path is an excursion.
\begin{definition}
An {\bf excursion} in a Dyck Path starting at $x$ with height $h$ and length $l$ is a path interval $\gamma([x,x+l])$ such that
\begin{enumerate}
\item $\gamma(x)=\gamma(x+l)=h-1$ 
\item $\gamma(x+1)=\gamma(x+l-1)=h$ and
\item $l=\min\{j\geq1:\ \gamma(x+j)=h-1\}.$
\end{enumerate}
\end{definition}

Note that there are $n$ excursions in a Dyck Path of length $2n$ as there is one excursion that begins with every up-step.  Based on this correspondence we say the $i$th excursion, $Exc(i)$ is the one that begins with the $i$th up-step.

\begin{figure}
\centering

 \begin{tikzpicture} 
 \draw[step=.5, help lines] (0,0) grid (10,4);
 \draw (0,0) --(1.5,1.5);
 \draw (1.5,1.5)--(2,1);
 \draw (2,1)--(2.5,1.5)--(3,2)--(3.5,1.5) --(5,3)--(6,2)--(6.5,2.5)--(8.5,.5)--(9,1)--(10,0);
 \draw[ultra thick] (3.5,1.5) --(5,3)--(6,2)--(6.5,2.5)--(7.5,1.5);
 \draw[ultra thick] (4,0)--(4,2);
 \draw[ultra thick] (3.5,1.5) -- (7.5,1.5);
 \draw (9,1)--(10,0);
\end{tikzpicture}
\caption{A Dyck path in $\mathcal{D}_{10}$ with $v_6 = 8$, $h_6 = 4$, and $l_6 = 8$.}
 \label{fig.dyckex1}
\end{figure}
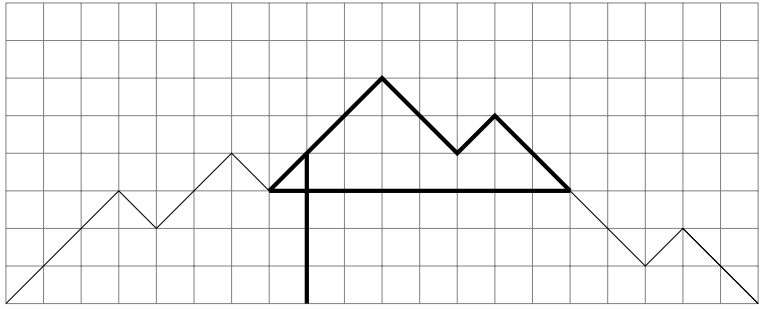

\begin{definition} \label{def.xihili}
For a Dyck path $\gamma$, define the following:

\begin{itemize}
\item $Exc(i):=$ the $i$th excursion.
\item $v_i:=$ the position after the $i$th up-step, or 1 + the start of $Exc(i).$
\item $h_i:=\gamma(v_i) = $ the height of the path after the start of $Exc(i).$
\item $l_i:=$ the length of the same excursion.  
\end{itemize}
\end{definition}
Figure \ref{fig.dyckex1} illustrates these definitions for a particular $\gamma.$

For a path $\gamma\in \dn$ we define pointwise the map $\sigma_\gamma = \sigma: [n] \to \Z$ by 
\begin{equation}\label{231 bijection}\sigma(i) = i + l_i/2  - h_i.
\end{equation}

\begin{theorem} \label{bijection} For $\gamma \in \dn$ let $\sigma = \sigma_\gamma$ be defined as above.  Then $\sigma\in \sn(231).$  Moreover, $\gamma\mapsto \sigma_\gamma$ is a bijection from $\dn \to \sn(231).$
\end{theorem}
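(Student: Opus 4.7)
The plan is to proceed by induction on $n$, exploiting the standard unique decomposition of any $\gamma \in \dn$ with $n \geq 1$ as $\gamma = U \gamma^{(1)} D \gamma^{(2)}$ with $\gamma^{(1)} \in \dyck{2m}$ and $\gamma^{(2)} \in \dyck{2(n-1-m)}$ for some $0 \leq m \leq n-1$. The first step is to relate the triples $(v_i, h_i, l_i)$ for $\gamma$ to the analogous triples for $\gamma^{(1)}$ and $\gamma^{(2)}$. The initial excursion has $h_1 = 1$ and $l_1 = 2m + 2$, giving $\sigma_\gamma(1) = m + 1$. The up-steps labeled $2, \ldots, m+1$ in $\gamma$ are precisely those of $\gamma^{(1)}$, shifted in position by $1$ and raised in height by $1$, so $h_i = h^{(1)}_{i-1} + 1$ and $l_i = l^{(1)}_{i-1}$, which substituted into \eqref{231 bijection} yields $\sigma_\gamma(i) = \sigma_{\gamma^{(1)}}(i-1)$. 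The up-steps $m+2, \ldots, n$ come from $\gamma^{(2)}$ with $h_i = h^{(2)}_{i - m - 1}$ and $l_i = l^{(2)}_{i - m - 1}$, giving $\sigma_\gamma(i) = (m+1) + \sigma_{\gamma^{(2)}}(i - m - 1)$.

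These block formulas immediately imply, by induction, that $\sigma_\gamma$ is a permutation of $[n]$: the first block bijects $\{2, \ldots, m+1\}$ onto $\{1, \ldots, m\}$ and the second bijects $\{m+2, \ldots, n\}$ onto $\{m+2, \ldots, n\}$, while $\sigma_\gamma(1) = m+1$ fills the remaining value. For $\mathbf{231}$-avoidance, suppose $i_1 < i_2 < i_3$ formed such a pattern, so $\sigma_\gamma(i_3) < \sigma_\gamma(i_1) < \sigma_\gamma(i_2)$. If $i_3 \geq m + 2$ then $\sigma_\gamma(i_3) \geq m + 2$, which forces $\sigma_\gamma(i_1), \sigma_\gamma(i_2) \geq m + 2$ as well, so all three indices lie in the second block and the pattern is inherited by $\sigma_{\gamma^{(2)}}$, contradicting induction. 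Otherwise $i_3 \leq m+1$ places all three indices in the first block; the case $i_1 = 1$ is excluded because $\sigma_\gamma(1) = m+1$ is the maximum of $\{1, \ldots, m+1\}$ and cannot be exceeded by $\sigma_\gamma(i_2)$, so $i_1 \geq 2$ and the pattern is inherited by $\sigma_{\gamma^{(1)}}$, again contradicting induction.

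To establish bijectivity I would construct the inverse explicitly rather than rely on counting. Given $\tau \in \sn(\mathbf{231})$, set $k = \tau(1)$; for any positions $2 \leq i < j \leq n$ with $\tau(i) > k$ and $\tau(j) < k$ the triple $(1, i, j)$ would form a $\mathbf{231}$ pattern, so among positions $\geq 2$ all values less than $k$ must precede all values greater than $k$. This pins down $\tau(\{2, \ldots, k\}) = \{1, \ldots, k-1\}$ and $\tau(\{k+1, \ldots, n\}) = \{k+1, \ldots, n\}$, both restrictions being $\mathbf{231}$-avoiding after order-preserving relabeling. The inductive hypothesis produces unique Dyck paths $\gamma^{(1)}, \gamma^{(2)}$ mapping to these restrictions; setting $m = k-1$ and $\gamma = U \gamma^{(1)} D \gamma^{(2)}$, the block formulas of the first paragraph confirm $\sigma_\gamma = \tau$, showing surjectivity, and they also give injectivity since $m = \sigma_\gamma(1) - 1$ and the two sub-permutations can be read off directly. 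The only real obstacle is getting the height and length bookkeeping in the opening paragraph correct; once that arithmetic is verified the rest of the argument is a straightforward two-block induction.
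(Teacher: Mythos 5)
Your proof is correct, and it takes a genuinely different route from the paper's. The paper's argument is direct and geometric: it observes that if $Exc(j)\subset Exc(i)$ then $\sigma_\gamma(j)<\sigma_\gamma(i)$, while if $Exc(j)\cap Exc(i)=\emptyset$ (with $i<j$) then $\sigma_\gamma(i)<\sigma_\gamma(j)$, and deduces $\mathbf{231}$-avoidance from these two facts; bijectivity is then obtained by noting that $\gamma$ is determined by its peaks, that peaks correspond to left-to-right minima of $\sigma_\gamma$, and (implicitly) that both sides are counted by the Catalan number $C_n$. Your approach instead uses the first-return decomposition $\gamma=U\gamma^{(1)}D\gamma^{(2)}$ and derives the block formulas $\sigma_\gamma(1)=m+1$, $\sigma_\gamma(i)=\sigma_{\gamma^{(1)}}(i-1)$ for $2\leq i\leq m+1$, and $\sigma_\gamma(i)=(m+1)+\sigma_{\gamma^{(2)}}(i-m-1)$ for $i\geq m+2$, which match the standard skew/direct-sum block structure of $\mathbf{231}$-avoiding permutations (all entries after position $\tau^{-1}(\text{first value})$, etc.). I checked the height and length bookkeeping and it is right: for $2\leq i\leq m+1$ the excursion $Exc_\gamma(i)$ closes strictly before the first return to $0$ since $h_i-1=h^{(1)}_{i-1}\geq 1>\gamma(2m+2)=0$, so $l_i=l^{(1)}_{i-1}$; the second block is a verbatim shift. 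Your route buys an explicit, self-contained inverse (no appeal to the Catalan count for surjectivity) and makes the recursive block structure of the bijection transparent, which can be useful for further arguments. The paper's route is shorter to state and exposes the nesting/disjointness dichotomy of excursions, which is the structural fact the rest of the paper leans on.
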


\begin{proof}

A detailed version of this proof can be found in \cite{part1}.  We include a sketch of the arguments here.

For any Dyck path $\gamma$ and any $i<j$  if
\beqlbl \label{overlap}
Exc(j)\subset Exc(i) \ \ \ \text{ then } \  \ \  \sigma_{\gamma}(j)<\sigma_{\gamma}(i)
\eeqlbl
 and if
\beqlbl \label{disjoint}
Exc(j)\cap Exc(i) =\emptyset \ \ \ \text{ then } \  \ \  \sigma_{\gamma}(i)<\sigma_{\gamma}(j).
\eeqlbl

If $\sigma\notin \sn(231),$ then there exists $i<j<k$ such that $\sigma(k)<\sigma(i)<\sigma(j)$.  Note that $\sigma(k)<\sigma(i)$ implies the $k$th up-step occurs before the end of the $i$th excursion.  Therefore the $j$th up-step also occurs before the end of the $i$th excursion which implies $\sigma(j)<\sigma(i)$ by \ref{overlap}, and $\sigma$ must be {\bf 231}-avoiding.

A dyck path, $\gamma,$ is uniquely defined by its peaks.  There is a one-to-one correspondence between peaks of $\gamma$ and left minimums of $\sigma$ $(\{(i,\sigma(i)), \sigma(i) < \sigma(j) \text{ for all } j > i\}).$  The left minimums of $\sigma$ are unique for 231-avoiding permutations, so $\sigma_\gamma$ is unique for each $\gamma$.   

\end{proof}

\section{Fixed Points for 231-avoiding permutations}

For a 231-avoiding permutation $\sigma \in S_n(231)$, let $\theta_{I}(\sigma)$ denote the number of fixed points of $\sigma$ contained in the subset $I\subset [n].$  Based on our bijection from Section \ref{dyckpathsection}, for a $\gamma\in \dyck{2n}$ and $\sigma = \sigma_\gamma$ and $\sigma(i)=i$ precisely when $l_i/2= h_i.$

\begin{theorem} \label{thm.231fplim}  

Fix $0< a<b<1$ and $\epsilon>0$. Let $\Gamma^n$ be chosen uniformly at random from $\dyck{2n}.$  Then

$$\lim_{n\to\infty}\prob\left(\left|\frac{1}{n^{1/4}}\theta_{[an,bn]}(\sigma_{\Gamma^n}) - \frac{1}{2\pi^{1/2}}\int_a^b \left(\frac{n^{1/2}}{\Gamma^n(2nt)}\right)^{3/2}dt \right| > \epsilon\right) = 0. $$

\end{theorem}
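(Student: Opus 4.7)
The plan is to use the characterization from Theorem~\ref{bijection} that $\sigma_\gamma(i)=i$ precisely when $l_i = 2h_i$, so that
\[ \theta_{[an,bn]}(\sigma_{\Gamma^n}) \;=\; \sum_{i \in [an,bn]} \mathbf{1}\{l_i = 2h_i\}. \]
The theorem will then follow once this sum is shown to be concentrated, with fluctuations of order $o(n^{1/4})$, around a quantity that matches the Riemann sum of the integral on the right side. I would proceed via a first-moment identification followed by a variance bound.

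For the first moment, I would condition on the height $h_i = h$ and use an excursion decomposition: the number of Dyck paths of length $2n$ with $h_i = h$ and $l_i = 2h$ factors as $N_{2i-h-1}(0,h-1)\cdot C_{h-1} \cdot N_{2n-2i-h+1}(h-1,0)$, where $N_L(a,b)$ is the number of non-negative lattice paths of length $L$ from $a$ to $b$ and $C_{h-1}$ counts the sub-Dyck path inside a length-$2h$ excursion. Dividing by the analogous count without the length constraint gives
\[ \P\bigl(l_i = 2h \,\big|\, h_i = h\bigr) \;=\; C_{h-1} \cdot \frac{N_{2n-2i-h+1}(h-1,0)}{N_{2n-2i+h}(h,0)}. \]
Stirling's formula $C_{h-1} \sim 4^{h-1}/(\sqrt{\pi}\,h^{3/2})$, together with standard local asymptotics for $N_L$ (uniform for $h \leq n^{0.6}$ and $i/n \in [a,b]$), reduces the right side to the random-walk first-passage probability $\tfrac{1}{2\sqrt{\pi}\,h^{3/2}}$. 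On the Petrov-condition event from Lemma~\ref{petrov}, the identity $v_i = 2i - h_i$ combined with Lemma~\ref{fathersday} gives $h_i \approx \Gamma^n(2i)$, and so
\[ \sum_{i \in [an,bn]} \frac{1}{2\sqrt{\pi}\,h_i^{3/2}} \;\approx\; \frac{n^{1/4}}{2\sqrt{\pi}} \int_a^b \left(\frac{\sqrt{n}}{\Gamma^n(2nt)}\right)^{3/2} dt \]
as a Riemann sum with step $1/n$, which is the normalization of the integral in the theorem.

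To upgrade this first-moment identity to convergence in probability, I would compute $\mathrm{Var}(\theta_{[an,bn]})$ conditional on the Petrov-scaffold data, setting $p_i := \P(l_i = 2h_i)$. The diagonal contribution $\sum_i p_i(1-p_i) = O\bigl(\sum_i h_i^{-3/2}\bigr) = O(n^{1/4})$ yields a standard deviation of $O(n^{1/8})$, which after normalization by $n^{-1/4}$ is $o(1)$. For the off-diagonal covariances I would split the pairs $i<j$ into those with $Exc(i)\cap Exc(j) = \emptyset$, for which conditioning on the path outside both excursions makes the events independent and the covariance collapses to a negligible conditioning error, versus those with $Exc(j)\subset Exc(i)$, where the inner excursion lives inside a forced outer excursion of length $2h_i$.

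The main obstacle is the nested-pair case: one must verify that conditional on the outer excursion having length exactly $2h_i$, the probability $\P(l_j = 2h_j)$ for the inner excursion is still $\tfrac{1}{2\sqrt{\pi}\,h_j^{3/2}}(1+o(1))$. The key observation is that the interior of a length-$2h_i$ excursion is, conditionally, a uniformly random Dyck path of length $2h_i - 2$, so the same local Stirling analysis applies and the joint probability factorizes as $p_i p_j$ to leading order. Making this estimate uniform across nesting depths requires careful bookkeeping using the Petrov bounds on excursion heights and positions, but introduces no new ideas beyond those already used in the first-moment calculation.
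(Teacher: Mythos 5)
Your high-level strategy (first moment via excursion decomposition and Stirling, then a variance bound to upgrade to concentration) matches the shape of the paper's argument, but there is a concrete error in the step you yourself flag as ``the main obstacle,'' and a substantive gap in the treatment of the disjoint-but-nearby pairs.

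\textbf{The nested case is wrong in an interesting way.} You propose to show that when $Exc(j)\subset Exc(i)$, conditioning on the outer excursion having length $2h_i$ still yields $\P(l_j=2h_j)\approx \tfrac{1}{2\sqrt\pi h_j^{3/2}}$, so that the joint probability factorizes as $p_ip_j$. This cannot be right. If $Exc(j)\subset Exc(i)$ then necessarily $h_j>h_i$ and $l_j<l_i$. If in addition $\theta_i=1$, i.e.\ $l_i=2h_i$, then $l_j<l_i=2h_i<2h_j$, so $l_j\neq 2h_j$ and $\theta_j=0$ \emph{deterministically}. Two fixed points never nest. So $\E[\theta_i\theta_j\ind_{\{Exc(j)\subset Exc(i)\}}]=0$, not $p_ip_j(1+o(1))$. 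This is exactly the structural observation the paper exploits in Lemma~\ref{fixsmall} (to cap the number of fixed points in short windows) and implicitly in the $B_4$ region of the variance bound. So the ``main obstacle'' you identify dissolves, but the argument you offer for it is based on a false claim and would not survive scrutiny. The covariance contribution from nested pairs is in fact \emph{negative} (roughly $-\sum p_ip_j$ over such pairs), which helps the bound, but one has to say so correctly.

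\textbf{The disjoint case is where the real work is, and it is under-specified.} You say that for $Exc(i)\cap Exc(j)=\emptyset$ ``conditioning on the path outside both excursions makes the events independent and the covariance collapses to a negligible conditioning error.'' The conditioning that actually makes events independent is on the heights at the excursion boundaries, and those heights are random and correlated across the path, so the ``conditioning error'' is exactly the quantity to be bounded. The paper resolves this by conditioning on a \emph{scaffold}: fix the height at a grid of $K\sim n^{0.1}$ positions (the event $\Omega^n(\alpha)$), so that the path in disjoint blocks $I_k$, $I_{k'}$ becomes \emph{exactly} conditionally independent (region $B_3$, Lemma~\ref{B3}). The delicate pairs are then those in the same block and disjoint but at distance $>2n^{0.6}$ (region $B_5$); those require the explicit Gaussian double-sum estimate of Lemmas~\ref{B5} and~\ref{uglyvarcalc}, not a qualitative independence claim. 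Your proposal does not isolate this regime, and it is not clear your outline would produce a quantitative bound there. To make the argument rigorous you should adopt some version of the scaffold conditioning (or an equivalent Markov-property-at-a-grid argument) rather than conditioning ``on the path outside both excursions'' pointwise, and you should replace the nested-pair discussion with the observation that nested fixed points are mutually exclusive.
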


Using this theorem and a result from \cite{mp} we are able to prove our main result.

\begin{pfofthm}{\ref{geddewatanabe}}
Since $((2n)^{-1/2} \Gamma^n(2nt),0\leq t\leq 1) \rightarrow_d (\mathbbm{e}_t, 0\leq t\leq 1)$ in $C([0,1],\R)$, where $\mathbbm{e}_t$ denotes a standard Brownian excursion from 0 to 1, Theorem \ref{thm.231fplim} implies that for every fixed $0<a<b<1$, we have
\begin{equation} \label{eq dist for fatou} \frac{1}{n^{1/4}}\theta_{[an,bn]}(\sigma_{\Gamma^n}) \overset{d}{\longrightarrow} \frac{1}{2^{7/4}\pi^{1/2}}\int_a^b \mathbbm{e}_t^{-3/2}dt.\end{equation}
Our first step is to extend this convergence to $a=0$ and $b=1$.
For any $\delta \in [0,\frac12)$ we define the random variables
$$F_\delta = \frac{1}{2^{7/4}\pi^{1/2}}\int_\delta^{1-\delta} \mathbbm{e}_t^{-3/2} dt.$$

In \cite{chung1976excursions} the density function for the height of Brownian excursion at time $t \in [0,1]$  is determined to be
$$ \prob( \mathbbm{e}_t \in dh ) =  \frac{2^{1/2}|h|^2}{(\pi t^3 (1-t)^3)^{1/2}}\exp\bigg(-h^2/(2t(1-t))\bigg) dh$$
We can compute $\expect [F_0]$ by taking the expectation inside the integral and get
\begin{equation*}
\expect[F_0]  = \frac{1}{2^{7/4}\pi^{1/2}}\int_0^{1} \expect \left [ \mathbbm{e}_t^{-3/2} \right] dt
= \frac{ \text{ Gamma(1/4)}}{2 \pi^{1/2}}.
\end{equation*}
In \cite[Theorem 6.4]{mp} it is shown that
\begin{equation} \label{star star} \frac{\E \theta_{[1,n]}}{n^{1/4}} \to \frac{ \text{ Gamma(1/4)}}{2 \pi^{1/2}}. \end{equation} 
(In \cite{mp} the result is stated incorrectly and is off by a factor of four.)  Consequently, we have $n^{-1/4}\E  \theta_{[1,n]} \rightarrow \E F_0$.
By the monotone convergence theorem we have both $F_\delta \rightarrow_d F_0$ as $\delta\downarrow 0$ and $\E F_\delta \to \E F_0$ as $\delta \downarrow 0$.   Furthermore, by \eqref{eq dist for fatou} and the version of Fatou's Lemma for convergence in distribution (e.g. \cite[Lemma 4.11]{kallenberg2002foundations}) we have
$$\E F_\delta \leq \liminf_{n\to\infty} \frac{1}{n^{1/4}}\E \theta_{[\delta n, (1-\delta)n]}.$$
Thus, using the fact that $ \theta_{[\delta n, (1-\delta)n]} \leq \theta_{[1, n]}$, we have
\[\lim_{\delta \downarrow 0} \limsup_{n\to\infty} \E \left|  \frac{1}{n^{1/4}} \theta_{[1, n]}- \frac{1}{n^{1/4}} \theta_{[\delta n, (1-\delta)n]}\right|  \leq \lim_{\delta\downarrow 0} (\E F_0 - \E F_\delta) =0.\]
By \cite[Theorem 4.28]{kallenberg2002foundations} this implies that $n^{-1/4}\theta_{[1,n]}(\sigma_{\Gamma^n}) \rightarrow_d F_0$.  Since by \eqref{star star} this convergence happens in expectation as well, $(n^{-1/4}\theta_{[1,n]}(\sigma_{\Gamma^n}))_{n\geq 1}$ is uniformly integrable.  Since $ \theta_{[\delta n, (1-\delta)n]} \leq \theta_{[1, n]}$, for every fixed $\delta \in [0,1/2)$, the sequence $(n^{-1/4}\theta_{[\delta n,(1-\delta)n]}(\sigma_{\Gamma^n}))_{n\geq 1}$ is also uniformly integrable.  Consequently, we have $n^{-1/4}\E\theta_{[\delta n,(1-\delta)n]}(\sigma_{\Gamma^n}) \rightarrow \E F_\delta$.  We may now essentially repeat the argument for $a=0$ and $b=1$ to show that for every $t \in [0,1]$, we have 
\[ \frac{1}{n^{1/4}}\theta_{[1,tn]}(\sigma_{\Gamma^n}) \overset{d}{\longrightarrow} \frac{1}{2^{7/4}\pi^{1/2}}\int_0^t \mathbbm{e}_t^{-3/2}dt.\]
Since $t\mapsto \theta_{[1,tn]}(\sigma_{\Gamma^n})$ is non-decreasing, the convergence in probability of Theorem \ref{thm.231fplim} can be combined with standard arguments to give the desired joint process level convergence. 
\end{pfofthm}

Now we set up the notation necessary to prove Theorem \ref{thm.231fplim}.
We break the interval $[an,bn]$ up into subintervals of size about $n^{0.9}.$    In each of these intervals we will estimate the expected number of fixed points using the height of Dyck path at the start of the interval. Then we will bound the variance to show that with high probability the number of fixed points is close to the expected value. 

Label the intervals $I_k = [ a_k  , b_k)$ for $k \in [0,\cdots, K-1],$ where 
$K=\lfloor (b-a)n^{0.1}\rfloor$ and
$$a_k = \lfloor an + (k/K)(bn-an)\rfloor\text{  and }b_k = a_{k+1}.$$

Denote a sequence of heights $\alpha= \{\alpha^n_k\}_{k=0}^{K-1}$ and define 
$$\Omega^n(\alpha):= \bigcap_{k=0}^{K-1} \left \{\gamma \in \dyck{2n} | \gamma({v_{a_k}} ) = \alpha^n_k \right\}$$
where $v_{a_k}$ is the number of steps in the $\gamma$ up to and including the $a_{k}$th up-step.  
Note that $\Omega^n(\alpha)\cap \Omega^n(\alpha') = \emptyset$ if $\alpha \neq \alpha'.$  Let $\mathcal{A}$ denote the collection of all $\alpha.$  

\begin{definition}[A Proper Subset of $\dyck{2n}$]

We say a sequence of heights $\alpha = \{\alpha^n_{k}\}$ is proper if the following are satisfied for all $k=0,\dots,K$ 

\begin{itemize}
\item $ n^{0.499} <\alpha^n_k< n^{0.501}$ and 
\item $|\alpha^n_k - \alpha^n_{k+1}|<n^{0.451}$.
\end{itemize}
We say $\Omega^n(\alpha)$ is proper if $\alpha$ is proper.

\end{definition}

\begin{definition} \label{def.xihilirandom}
Recalling Definition \ref{def.xihili}, we define the random variables for a random path $\Gamma^n\in \dyck{2n}:$

\begin{itemize} 

\item $V^n _i :=$ number of steps up to and including the $i$th up-step.

\item $H^n_i :=\Gamma^n(V^n_i).$

\item $L^n_i := $ the length of the $i$th excursion.
\end{itemize}  

\end{definition}

Let $\mathcal{B}_n$ denote the collection of proper $\alpha\in\mathcal{A}$.  Most $\Gamma^n \in \dyck{2n}$ will be in some proper $\Omega^n(\alpha).$    

\begin{lemma} \label{notproper}

For $n$ sufficiently large, and $\Gamma^n$ be chosen uniformly at random from $\dyck{2n},$

$$\prob \left( \Gamma^n \in \bigcup_{\alpha\in\mathcal{B}_n} \Omega^n(\alpha)\right ) =1- o(1).$$

Moreover $$\prob\left(\bigcap_{i\in[an,bn]} \left\{n^{0.49}<H^n_i<n^{0.51}\right\} \Big | \Omega^n(\alpha) \right) > 1- e^{-n^{0.0001}}$$ for all proper $\Omega^n(\alpha).$

\end{lemma}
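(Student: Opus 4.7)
The plan is to prove each of the two assertions separately using moderate-deviation bounds for the height process of a uniformly random Dyck path, in the Petrov style of Lemma~\ref{petrov}.

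For the first assertion, membership of $\Gamma^n$ in $\bigcup_{\alpha\in\mathcal{B}_n}\Omega^n(\alpha)$ is the conjunction of (i) each checkpoint height $\alpha^n_k=\Gamma^n(v_{a_k})$ lies in $(n^{0.499},n^{0.501})$, and (ii) each consecutive difference $|\alpha^n_k-\alpha^n_{k+1}|$ is below $n^{0.451}$. Rescaling by $\sqrt{2n}$, the window in (i) becomes $(n^{-0.001}/\sqrt{2},\,n^{0.001}/\sqrt{2})$, which covers all of $(0,\infty)$ in the limit; (i) is therefore implied by a two-sided bound on the Dyck-path range over $[an,bn]$. The upper bound $\max_{t\in[a,b]}\Gamma^n(2nt)\le n^{0.501}$ follows from the Gaussian-like tail of the excursion maximum, giving failure probability $\exp(-cn^{0.002})$. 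The lower bound $\min_{t\in[a,b]}\Gamma^n(2nt)\ge n^{0.499}$ I would handle via the Bessel-bridge representation of Brownian excursion, whose infimum on $[a,b]\subset(0,1)$ has polynomial lower tail $\P(\min_{t\in[a,b]}\mathbbm{e}_t<\epsilon)\lesssim\epsilon^c$, transferred to the discrete level by Petrov-style rate bounds. For (ii), the height increment over the $\Theta(n^{0.9})$ lattice steps between consecutive checkpoints has natural fluctuation scale $n^{0.45}$; a Petrov/Azuma estimate gives $\exp(-n^c)$ decay on the event $|\alpha^n_k-\alpha^n_{k+1}|\ge n^{0.451}$, which survives the union bound over the $K=\Theta(n^{0.1})$ checkpoints.

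For the second assertion, fix a proper $\alpha$ and condition on $\Omega^n(\alpha)$. The conditioned path decomposes into $K$ independent segments, each a uniform nonnegative lattice bridge from $\alpha^n_k$ to $\alpha^n_{k+1}$ of combinatorial length $\Theta(n^{0.9})$. A reflection/ballot estimate bounds the maximum deviation of such a bridge from its endpoints by $O(n^{0.451})$ except on an event of probability $\exp(-n^c)$. Since both endpoints lie in the tight window $(n^{0.499},n^{0.501})$ and the intermediate target $(n^{0.49},n^{0.51})$ is strictly wider, with half-width $\Theta(n^{0.5})$ dwarfing the typical fluctuation $n^{0.45}$, the per-segment failure probability is $\exp(-n^c)$ for some $c$ much larger than $0.0001$; a union bound over the $\Theta(n^{0.1})$ segments preserves the $\exp(-n^{0.0001})$ bound.

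The main obstacle will be making the polynomial lower-tail estimate for the minimum of the Dyck-path excursion on $[a,b]$ quantitative at the discrete level, since standard invariance principles give only convergence in distribution rather than the tail control required here. I would follow the Petrov/Pitman--Rizzolo framework already used in Lemma~\ref{petrov} and the appendix, coupling the uniform Dyck path to a simple random walk bridge conditioned to stay nonnegative, and transferring the moderate-deviation bounds from the unconditioned walk to the conditioned Dyck path.
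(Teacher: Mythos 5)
Your proposal is correct and follows essentially the same structure as the paper's proof: the first assertion decomposes exactly as in the paper's Lemmas~\ref{isinwindow} (checkpoint-height window, with the lower tail handled by the invariance principle to Brownian excursion) and~\ref{lowdeviation} (consecutive-increment control via moderate deviations), and the second assertion is handled just as the paper does, by conditioning on $\Omega^n(\alpha)$, decomposing into conditionally independent nonnegative lattice bridges between checkpoints, applying the per-segment fluctuation bound of Lemma~\ref{allhigh}, and union-bounding over the $\Theta(n^{0.1})$ segments. The one minor imprecision is attributing the discrete transfer of the polynomial lower tail on $\min_{[a,b]}\Gamma^n$ to Petrov-style rate bounds; Petrov bounds give sub-Gaussian tails and are used for the checkpoint-position concentration $V^n_i\approx 2i$ and the increment estimates, whereas the minimum's lower tail needs only weak convergence to the excursion, which is all the $1-o(1)$ conclusion requires.
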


\begin{proof} 

The first statement follows from Lemmas \ref{isinwindow} and \ref{lowdeviation}.  The second statement follows by applying  Lemma \ref{allhigh} to the intervals $I_k$ for $0\leq k<K$.
\end{proof}

For a fixed sequence of heights $\alpha,$ let $\hat{k}(x) = \sup_{k} \{2a_k - \alpha^n_k \leq x \}.$  We define the following function $\rho_\alpha: [2an,2bn]\to [0,n]$
$$\rho_\alpha(x) := \alpha^n_{\hat{k}(x)}.$$  For $\gamma \in \Omega^n(\alpha),$ ${v_{a_k}} = 2a_k-\alpha^n_k$ and $\gamma( {v_{a_k}} ) = \rho_\alpha({v_{a_k}}) = \alpha^n_k.$  For most $\gamma \in \Omega^n(\alpha)$, 
$\gamma$ will be close to $\rho_\alpha.$

\begin{lemma} \label{gammarhoclose}

Fix $0<a<b<1,$ and $\epsilon> 0.$  For all $n$ sufficiently large,

$$\max_{\alpha \in \mathcal{B}_n}\left\{ \prob\left ( \sup_{t \in [a,b]}\left| \left(\frac{n^{1/2}}{\rho_\alpha(2nt)}\right)^{3/2} - \left(\frac{n^{1/2}}{\Gamma^n(2nt)}\right)^{3/2} \right  | > n^{-0.01} \Big| \Omega^n(\alpha) \right)\right\}< e^{-n^{0.001}}.$$

\end{lemma}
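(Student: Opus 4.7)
The strategy is to argue piece-by-piece on the $K\sim n^{0.1}$ sub-intervals $[v_{a_k},v_{a_{k+1}})$, on each of which $\rho_\alpha$ is by definition the constant $\alpha_k^n$. Conditionally on $\Omega^n(\alpha)$, the path $\Gamma^n$ restricted to $[v_{a_k},v_{a_{k+1}}]$ is a uniformly chosen lattice path of length $L_k = v_{a_{k+1}}-v_{a_k}\sim 2n^{0.9}$ going from height $\alpha_k^n$ to height $\alpha_{k+1}^n$ and staying $\geq 0$, and the different segments are conditionally independent given the collection of conditioning heights.

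Since $\alpha$ is proper, we have $\alpha_k^n,\alpha_{k+1}^n\in(n^{0.499},n^{0.501})$ and $|\alpha_{k+1}^n-\alpha_k^n|<n^{0.451}$, so the linear interpolation between the two endpoint heights differs from the constant $\alpha_k^n$ by at most $n^{0.451}$. The positivity constraint is non-binding in a strong sense: the endpoints are of height $\sim n^{0.5}$ while the typical bridge fluctuation on a segment of length $L_k\sim n^{0.9}$ is $O(\sqrt{L_k})=O(n^{0.45})$, so paths that touch $0$ have super-exponentially small conditional probability. A Petrov-style moderate deviation estimate of the sort used in the proof of Lemma~\ref{petrov} then yields
\[
\prob\!\left(\max_{x\in[v_{a_k},v_{a_{k+1}}]}|\Gamma^n(x)-\alpha_k^n|>n^{0.46}\;\Big|\;\Omega^n(\alpha)\right)\;\leq\;\exp(-n^{0.02}).
\]
A union bound over the $K\leq n^{0.1}$ segments (with one analogous estimate for the boundary piece $[v_{a_{K-1}},2bn]$, where $\rho_\alpha$ remains equal to $\alpha_{K-1}^n$) gives
\[
\prob\!\left(\sup_{t\in[a,b]}|\Gamma^n(2nt)-\rho_\alpha(2nt)|>n^{0.46}\;\Big|\;\Omega^n(\alpha)\right)\;\leq\;\exp(-n^{0.001}).
\]

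To finish, set $f(x)=n^{3/4}x^{-3/2}=(n^{1/2}/x)^{3/2}$. By Lemma~\ref{notproper} we may further restrict to the event that $\Gamma^n(2nt)\geq n^{0.49}$ on $[a,b]$, which costs only $\exp(-n^{0.0001})$ in conditional probability. On $[n^{0.49},\infty)$,
$|f'(x)|=\tfrac{3}{2}n^{3/4}x^{-5/2}\leq\tfrac{3}{2}n^{3/4-5\cdot 0.49/2}=\tfrac{3}{2}n^{-0.475},$
so on the intersection of the two good events the mean value theorem yields
\[
\left|\left(\tfrac{n^{1/2}}{\rho_\alpha(2nt)}\right)^{\!3/2}-\left(\tfrac{n^{1/2}}{\Gamma^n(2nt)}\right)^{\!3/2}\right|\;\leq\;\tfrac{3}{2}n^{-0.475}\cdot n^{0.46}\;=\;\tfrac{3}{2}n^{-0.015}\;<\;n^{-0.01}
\]
for all $n$ sufficiently large, uniformly in $t\in[a,b]$. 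Since every bound above depends on $\alpha$ only through the properness thresholds, it is uniform in $\alpha\in\mathcal{B}_n$, which gives the lemma.

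The main obstacle is the moderate deviation step: one must verify that the conditional distribution of $\Gamma^n$ between two consecutive conditioning points $v_{a_k},v_{a_{k+1}}$ is sufficiently close to an unconstrained simple random walk bridge that Petrov-type tail bounds apply with the $n^{0.02}$ exponent claimed. This is essentially the same conditioning-plus-ballot-style analysis used to establish the Petrov conditions of Definition~\ref{disparate} in Appendix~\ref{appendixb}, carried out at the different scale $L_k\sim n^{0.9}$ rather than $2n$.
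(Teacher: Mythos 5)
Your proof is correct and follows essentially the same route as the paper's: partition into the $K$ sub-intervals, bound $|\Gamma^n-\rho_\alpha|$ by a moderate-deviation estimate conditionally on $\Omega^n(\alpha)$ plus a union bound, invoke Lemma~\ref{notproper} to keep heights above $n^{0.49}$, and then pass to the $3/2$-power difference. The ``main obstacle'' you flag at the end is already dispatched in the paper by Lemma~\ref{allhigh} (proved in the appendix via path counting at exactly the scale $L_k\sim n^{0.9}$), and your mean-value-theorem finish is an equivalent reformulation of the paper's direct algebraic estimate.
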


\begin{proof}

Let $\hat{k} = \hat{k}(2nt).$  By definition $v_{a_{\hat{k}}} < 2nt < v_{a_{\hat{k} + 1}}$ so 

$$|2nt - v_{a_{\hat{k}}} | < |2 a_{\hat{k}} - 2a_{\hat{k}+1} - \alpha^n_{\hat{k}} + \alpha^n_{\hat{k} +1}|< 2n^{0.9} + n^{0.451}< 3n^{0.9}.$$

Using Lemma \ref{allhigh} we obtain deviation bounds corresponding to all $i \in (a_k, a_{k+1})$ for $0\leq k < K.$  In particular we have for $t<3,$ $$\prob( |\Gamma^n( {v_{a_k}} + tn^{0.9}) - \Gamma^n({v_{a_k}})| > n^{0.46} |\Omega^n(\alpha)) <e^{-n^{0.0001}}.$$  By Lemma \ref{notproper}, we may also conclude that $\Gamma^n(2nt) > n^{0.49}-1$ with probability $1-e^{-0.0001},$ so with probability at least $1- 2e^{-n^{0.0001}}$

\begin{align*}
\left | \left(\frac{n^{1/2}}{\rho_\alpha(2nt)}\right)^{3/2} - \left(\frac{n^{1/2}}{\Gamma^n(2nt)}\right)^{3/2} \right | &< \left |\left(\frac{n^{1/2}}{\Gamma^n(2nt) ( \rho_\alpha(2nt)/ \Gamma^n(2nt))}\right)^{3/2} - \left(\frac{n^{1/2}}{\Gamma^n(2nt)}\right)^{3/2} \right |\\
 &< \left(\frac{n^{1/2}}{\Gamma^n(2nt)} \right)^{3/2} \left | 1- \frac{ 1 }{ 1 - n^{0.46}/\Gamma^n(2nt)} \right |^{3/2}\\
 & < n^{0.015} n^{-0.03} \\
&<  n^{-0.01}. 
\end{align*}

\end{proof}

\begin{lemma} \label{lemma.expectxab}
Fix $0<a<b<1.$ 
For all $n$ sufficiently large, 

$$
\max_{\alpha\in\mathcal{B}_n}  \left| n^{-1/4}\expect\left[\theta_{[an,bn]}\Big |\Omega^n(\alpha)\right] - 
\frac{1}{2\pi^{1/2}}\int_a^b \left(\frac{n^{1/2}}{\rho_\alpha(2nt)}\right)^{3/2}dt\right | < n^{-0.001}.
$$ 

\end{lemma}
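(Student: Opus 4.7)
The plan is to write the expectation as a sum of pointwise fixed-point probabilities and convert it to the stated Riemann sum. Since $\sigma_{\Gamma^n}(i) = i$ exactly when $L^n_i = 2 H^n_i$, I decompose
$$\E\bigl[\theta_{[an,bn]} \,\big|\, \Omega^n(\alpha)\bigr] = \sum_{i=\lceil an\rceil}^{\lfloor bn \rfloor} \sum_{h} \P\bigl(H^n_i = h \,\big|\, \Omega^n(\alpha)\bigr)\, \P\bigl(L^n_i = 2h \,\big|\, H^n_i = h,\, \Omega^n(\alpha)\bigr).$$
By Lemma \ref{notproper}, for $\alpha \in \mathcal{B}_n$ the height $H^n_i$ lies within $n^{0.46}$ of $\alpha^n_k$ for all $i \in I_k$ with probability $1 - O(e^{-n^{0.0001}})$, so I may restrict the inner sum to $h$ in this window at exponentially small cost.

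The heart of the argument is the local estimate
\begin{equation}\label{eqstar}
\P\bigl(L^n_i = 2h \,\big|\, H^n_i = h,\, \Omega^n(\alpha)\bigr) = \frac{1}{2\sqrt{\pi}\, h^{3/2}}\bigl(1 + o(1)\bigr),
\end{equation}
uniformly over $i \in I_k$ and $h$ in the window. To prove \eqref{eqstar}, any Dyck path in $\Omega^n(\alpha)$ with $H^n_i = h$ and $L^n_i = 2m$ factors into a prefix ending at $(v_i-1, h-1)$ whose count does not depend on $m$, the excursion itself (contributing a factor $C_{m-1}$), and a suffix from $(v_i+2m-1,\, h-1)$ to $(2n,0)$ compatible with the downstream checkpoints, of cardinality $N_3(i,m;\alpha)$. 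This gives
$$\P\bigl(L^n_i = 2h \,\big|\, H^n_i = h,\, \Omega^n(\alpha)\bigr) = \frac{C_{h-1}\, N_3(i,h;\alpha)}{\sum_{m \ge h} C_{m-1}\, N_3(i,m;\alpha)}.$$
As $m$ varies in the relevant window the suffix's starting point shifts only by $O(n^{0.51})$, negligible next to the spacing $\sim n^{0.9}$ to the next checkpoint; a Petrov-style ratio estimate analogous to Lemma \ref{notproper} therefore yields $N_3(i,m;\alpha)/N_3(i,h;\alpha) = 1 + O(n^{-\epsilon})$ uniformly. What remains is essentially the first-return-time distribution of an unconditioned simple random walk, namely $\P(\tau_0 = 2m) = C_{m-1}/(2\cdot 4^{m-1})$; Stirling gives the claimed $\tfrac{1}{2\sqrt{\pi}\,h^{3/2}}$ at $m = h$.

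With \eqref{eqstar} established, the concentration of $H^n_i$ near $\alpha^n_k$ and the elementary bound $|h^{-3/2} - (\alpha^n_k)^{-3/2}|\cdot(\alpha^n_k)^{3/2} = O(n^{-0.04})$ on the window give $\P(\sigma_{\Gamma^n}(i) = i \mid \Omega^n(\alpha)) = (2\sqrt{\pi}\,(\alpha^n_k)^{3/2})^{-1}(1 + o(1))$ uniformly in $i \in I_k$. Summing over $i$ and $k$, dividing by $n^{1/4}$, and using $|I_k| = (b-a)n/K + O(1)$ together with the definition of $\rho_\alpha$, the sum becomes the right-hand side of the lemma plus a Riemann-sum discretization error of order $O(n^{-0.1})$ (partition scale $n^{0.9}$ over total $n$). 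All errors are dominated by $n^{-0.001}$. The main obstacle is \eqref{eqstar}: one must show that conditioning on a polynomial number of globally-separated heights perturbs the distribution of a single short excursion only to leading order. This is a uniform local-limit-type statement for Dyck-path increments between consecutive checkpoints, and exploiting the scale separation between excursion length ($\lesssim n^{0.51}$) and checkpoint spacing ($\sim n^{0.9}$) is what makes the Petrov conditions of Lemma \ref{petrov} applicable.
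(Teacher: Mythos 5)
Your overall strategy — decompose $\E[\theta_{[an,bn]}\mid\Omega^n(\alpha)]$ as a sum over $i$ and over heights $h$, estimate the conditional probability locally, use concentration of $H^n_i$ near $\alpha^n_k$, then Riemann-sum — is close in spirit to the paper's (which proceeds via Lemmas \ref{expiintk}, \ref{fixioutk}, \ref{expfixab}, built on the path-counting Lemmas \ref{problihi}--\ref{problihiapprox}). But there is a genuine error in the step that carries all the analytic weight, the local estimate \eqref{eqstar}.

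You factor each path with $H^n_i=h$ and $L^n_i=2m$ as prefix $\times$ excursion $\times$ suffix and write the conditional probability as $C_{h-1}N_3(i,h;\alpha)\big/\sum_m C_{m-1}N_3(i,m;\alpha)$, then assert that a Petrov-style estimate gives $N_3(i,m;\alpha)/N_3(i,h;\alpha)=1+O(n^{-\epsilon})$ uniformly because the suffix's starting point shifts only by $O(n^{0.51})$. This is false, and in fact inconsistent with your own conclusion. Shifting the starting point to the right by two lattice steps (i.e.\ increasing $m$ by one) removes one up-step and one down-step from the suffix, so by the binomial ratio $\binom{N-2}{U-1}/\binom{N}{U}=U(N-U)/N(N-1)\approx 1/4$ the \emph{count} satisfies $N_3(i,m;\alpha)/N_3(i,h;\alpha)\approx 4^{-(m-h)}$, not $1+o(1)$. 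Indeed, if the ratio were genuinely $1+o(1)$, your normalizing sum would behave like $\sum_m C_{m-1}$, which diverges, giving conditional probability $0$. The first-return formula $\P(\tau_0=2m)=C_{m-1}/(2\cdot 4^{m-1})$ that you invoke at the end is exactly what you get \emph{after} restoring the $4^{-(m-h)}$ scaling: $\sum_m C_{m-1}4^{-(m-h)} = 2\cdot 4^{h-1}$, and then $C_{h-1}/(2\cdot 4^{h-1})\approx (2\sqrt{\pi}\,h^{3/2})^{-1}$. So the conclusion is right, but the stated ratio estimate contradicts it. The paper sidesteps this entirely by computing the joint quantity $\prob(L^n_i/2=h\mid H^n_i=h)\,\prob(H^n_i=h)$ directly in Lemma \ref{problihicondapprox}, where the $4^{\pm}$ factors cancel algebraically in the binomial ratios and one is left with a clean Gaussian in $h$; summing the Gaussian gives Lemma \ref{problihiapprox} in terms of the checkpoint height $\alpha^n_k$, with no need to prove a uniform-in-$h$ conditional estimate.

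A secondary gap: your factorization and estimate break down when $i$ is within $O(n^{0.51})$ of the right checkpoint $a_{k+1}$, since the excursion may straddle the checkpoint. The paper splits $I_k$ into $\Iint{k}$ and $\Iout{k}$ and controls the $\Iout{k}$ contribution by the deterministic bound in Lemma \ref{fixsmall} (Lemma \ref{fixioutk}); your writeup does not address this boundary layer.
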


\begin{lemma} \label{varcalc}
Fix $0<a<b<1.$ 
For all $n$ sufficiently large,

$$\max_{\alpha\in \mathcal{B}_n }\var\left[\theta_{[an,bn]} \big|\Omega^n(\alpha)\right] < n^{0.48}.$$

\end{lemma}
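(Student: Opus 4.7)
To bound $\var[\theta_{[an,bn]} \mid \Omega^n(\alpha)]$, I will use the second-moment method together with the independent-segment decomposition of the conditioned Dyck path. By Theorem \ref{bijection} and \eqref{231 bijection}, $\sigma_{\Gamma^n}(i) = i$ if and only if $L^n_i = 2 H^n_i$, so setting $X_i = \ind_{L^n_i = 2H^n_i}$ we have
\[ \var[\theta_{[an,bn]} \mid \Omega^n(\alpha)] \leq \E[\theta_{[an,bn]} \mid \Omega^n(\alpha)] + \sum_{i \neq j} \bigl|\text{Cov}(X_i, X_j \mid \Omega^n(\alpha))\bigr|. \]
The diagonal sum is $O(n^{1/4})$ by Lemma \ref{lemma.expectxab}, well below $n^{0.48}$, so the task reduces to controlling the off-diagonal covariances.

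The main structural observation is that, under the conditioning $\Omega^n(\alpha)$, the position of the $a_k$th up-step equals the deterministic value $v_{a_k} = 2a_k - \alpha^n_k$, and the Markov property splits the path into independent segments on the intervals $[v_{a_k}, v_{a_{k+1}}]$. By Lemma \ref{notproper}, off an event of conditional probability $e^{-n^{0.0001}}$ we have $H^n_i \leq n^{0.51}$ for every $i \in [an,bn]$; hence any $i$ with $X_i = 1$ has $L^n_i = 2H^n_i \leq 2n^{0.51}$, so its excursion is contained in at most two consecutive segments. Consequently, for $i \in I_k$ and $j \in I_{k'}$ with $|k - k'| \geq 2$, the indicators $X_i$ and $X_j$ are conditionally independent given $\Omega^n(\alpha)$ and contribute zero covariance. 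The remaining pairs have $|k - k'| \leq 1$, giving $O(K \cdot |I_k|^2) = O(n^{0.1} \cdot n^{1.8}) = O(n^{1.9})$ pairs to estimate. The contribution from the exceptional event $\{H^n_i > n^{0.51} \text{ for some } i\}$ is at most $n^2 \cdot e^{-n^{0.0001}}$, which is negligible.

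For each of the $O(n^{1.9})$ non-trivial pairs I plan to establish $\P(X_i X_j = 1 \mid \Omega^n(\alpha)) = O(n^{-3/2})$; summation then yields $O(n^{0.4}) < n^{0.48}$. The key combinatorial input is that if $X_i = X_j = 1$ with $i < j$, the excursions $\mathrm{Exc}(i)$ and $\mathrm{Exc}(j)$ cannot be nested -- a nested excursion has strictly greater height and strictly smaller length, contradicting $l_m = 2h_m$ for both -- so they are disjoint, forcing $v_j - v_i \geq 2H^n_i \geq 2n^{0.49}$. The two-point probability then factorizes, up to a constant, as the product of two excursion-length probabilities, each of order $n^{-3/4}$ by a variant of the counting estimate underlying Lemma \ref{lemma.expectxab}. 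The main obstacle is this two-point estimate in the adjacent-segment case ($|k - k'| = 1$), where the two excursions may straddle a single conditioning point $v_{a_{k+1}}$; I expect to handle it by inserting an additional conditioning layer that pins the path height at an intermediate position between the two excursions (using the Petrov-style bounds of Lemma \ref{petrov} to locate a suitable height), thereby restoring the independent-segment structure needed to factor the joint probability.
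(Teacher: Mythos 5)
Your overall scaffolding matches the paper's: a second-moment calculation that exploits the independence of path segments between the pinned heights at the $v_{a_k}$, plus separate treatment of ``near'' and ``far'' pairs. But there is a structural idea in the paper that your outline is missing, and its absence leaves a genuine gap in the hard part of your plan.

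The paper splits each block $I_k$ into an interior piece $\Iint{k}$ (a distance at least $2n^{0.6}$ from the endpoints) and an outer piece $\Iout{k}$ of size $O(n^{0.6})$, and it decomposes $[an,bn]^2$ into five regions $B_1,\dots,B_5$ accordingly. This split buys two things your proposal does not have. First, for \emph{interior} $i\in \Iint{k}$ with $\theta_i\ind_P=1$, the excursion at $i$ has length $2H^n_i<2n^{0.51}\ll 2n^{0.6}$ and hence terminates strictly inside segment $k$; so pairs in $\Iint{k}\times\Iint{k'}$ with $k\neq k'$ already factor (the paper's $B_3$), not just pairs with $|k-k'|\ge 2$. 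Second, and more importantly, for the pairs you are planning to estimate directly --- $i$ or $j$ near a boundary (the paper's $B_1,B_2$) or $|i-j|\le 2n^{0.6}$ (the paper's $B_4$) --- the paper \emph{does not} attempt a two-point probability estimate of the kind you propose. It uses the deterministic Lemma~\ref{fixsmall}: given $h_i>n^{0.49}$ on the interval, any window of length $n^{0.49}$ contains at most one fixed point, so $\sum_{|i-j|\le 2n^{0.6}}\theta_j\le 5n^{0.11}$ \emph{pathwise}, whence $\sum_j\E[\theta_i\theta_j\ind_P|\Omega^n]\le 5n^{0.11}\E[\theta_i\ind_P|\Omega^n]$. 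No factorization of the joint law is required. The only place the paper uses a genuine two-point lattice-path estimate is the $B_5$ regime (same block, interior, $|i-j|>2n^{0.6}$), which is exactly the regime in which the path between the two excursions is long enough for the asymptotics of Lemma~\ref{pathcount} to apply uniformly.

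The gap in your plan is concentrated in the sentence where you say you will ``establish $\P(X_iX_j=1\mid\Omega^n(\alpha))=O(n^{-3/2})$'' uniformly over all of the $O(n^{1.9})$ retained pairs. That estimate is plausible but would have to be proved in regimes where the lattice-path asymptotics degenerate: when $i$ or $j$ is within $O(n^{0.6})$ of a pinned height $v_{a_k}$, or when $j-i$ is so small that the second excursion starts essentially where the first one ends. In those regimes you cannot cite the Gaussian local-CLT counting (the analogue of Lemma~\ref{problihicondapprox}), because the relevant walk increment has length far below $n^{0.6}$ and the asymptotic formula is not uniform there. Your proposed fix --- ``inserting an additional conditioning layer that pins the path height at an intermediate position'' --- does not obviously help: it reintroduces the same difficulty one scale down, and with only $O(n^{0.6})$ sites between the two excursions there may be no intermediate position far enough from both to make the counting asymptotics uniform. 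The paper sidesteps this entirely: near-boundary and close-range pairs are discarded into $B_1,B_2,B_4$ and killed by the crude deterministic bound, and the sharp two-point estimate is only invoked where it is easy. I would encourage you to introduce the $\Iint{k}/\Iout{k}$ split and borrow the deterministic bound for the small/boundary regimes; that closes the gap and makes the argument uniform in a way your current plan is not.
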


Because these bounds are uniform over all proper $\Omega^n(\alpha)$ we will drop the $\alpha$ where no confusion should arise.  We delay the proofs of these two lemmas until after the proof of Theorem \ref{thm.231fplim} as they are long and somewhat technical.  

\vspace{4pt}

\begin{proof}[Proof of Theorem \ref{thm.231fplim}] 
Fix a proper $\Omega^n$.  For all $n$ sufficiently large and $\Gamma^n$ chosen uniformly from $\Omega^n,$  by Lemma \ref{varcalc} and Chebyshev's inequality

$$\prob\left( \left |\theta_{[an,bn]}(\sigma_{\Gamma^n}) -\expect\left[\theta_{[an,bn]} |\Omega^n\right ] \right | > n^{0.005}n^{0.24}\Big| \Omega^n \right) < n^{-0.01}.$$

With Lemma \ref{lemma.expectxab} we have

$$\prob\left( \left |n^{-1/4}\theta_{[an,bn]} -\frac{1}{2\pi^{1/2}}\int_a^b \left(\frac{n^{1/2}}{\rho(2nt)}\right)^{3/2}dt\right| > n^{-0.005} + n^{-0.001} \Big| \Omega^n\right) < n^{-0.01}.$$ 
Combined with Lemma \ref{gammarhoclose}
\begin{equation}\label{propprob}
\prob\left( \left |n^{-1/4}\theta_{[an,bn]}(\sigma_\Gamma^n) -\frac{1}{2\pi^{1/2}}\int_a^b \left(\frac{n^{1/2}}{\Gamma^n(2nt)}\right)^{3/2}dt\right| > 2n^{-0.001} \Big| \Omega^n \right)  = \Delta(\Omega^n)\end{equation} where $\Delta(\Omega^n)= o(1)$ uniformly for all proper $\Omega^n.$  

Now consider $\Gamma^n$ chosen uniformly at random from $\dyck{2n}$.  

\begin{align*}
&\prob\left( \left |n^{-1/4}\theta_{[an,bn]}(\sigma_\Gamma^n) -\frac{1}{2\pi^{1/2}}\int_a^b \left(\frac{n^{1/2}}{\Gamma^n(2nt)}\right)^{3/2}dt\right| > 2n^{-0.001}  \right)\\
&\qquad \leq \prob\left( \Gamma^n \notin \cup_{\alpha\in\mathcal{B}} \Omega^n(\alpha)\right)  + \sum_{\alpha\in\mathcal{B}}\Delta(\Omega^n(\alpha))\prob( \Gamma^n \in \Omega^n(\alpha))\\
&\qquad=  o(1) + o(1)( 1- o(1))\\
&\qquad = o(1)
\end{align*}
by Lemma \ref{notproper}.
\end{proof}

\subsection{Proof of Lemma \ref{lemma.expectxab}}

For $i\in[an,bn]$ we have that $\theta_i:=\theta_{i}(\sigma_{\Gamma^n})$ is a 0-1 valued random variable where $$\prob(\theta_i = 1) = \prob( L^n_i/2 = H^n_i).$$  Let $I_k = \Iint{k} \cup \Iout{k}$ where $\Iout{k}$ consists of the $2n^{0.6}$ values both directly after $a_k$ or directly before $a_{k+1}$ and $\Iint{k}$ is the rest of $I_k.$  

\begin{lemma} \label{expiintk}

Fix $0<a<b<1.$  For all proper $\Omega^n$ and for each $k,$ and $i\in\Iint{k}.$

$$\expect[\theta_i|\Omega^n] = \frac{1}{2\pi^{1/2} (\alpha^n_k)^{3/2}}(1+\Delta),$$ where $\Delta = \Delta(i,k,\Omega^n) = o(n^{-0.01})$ uniformly in $i, k$ and proper $\Omega^n.$

\end{lemma}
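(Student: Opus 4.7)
The plan is to compute $\expect[\theta_i \mid \Omega^n]$ by summing $S(h) := \P(H_i^n = h, L_i^n = 2h \mid \Omega^n)$ over $h$, evaluating each summand via a path-count decomposition followed by a local central limit theorem (LCLT).

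First, conditioning on $\Omega^n$ makes the portion of the Dyck path in $I_k$ a uniform lattice-path bridge from $(v_{a_k}, \alpha_k^n)$ to $(v_{a_{k+1}}, \alpha_{k+1}^n)$, with the non-negativity constraint vacuous since heights are of order $n^{1/2}$. Given $H_i^n = h$, the $i$-th up-step is at the deterministic position $v = 2i - h$. Decomposing a realization of $\{H_i^n = h,\, L_i^n = 2h\}$ into a pre-bridge from $(v_{a_k}, \alpha_k^n)$ to $(v-1, h-1)$, an excursion of length $2h$ starting at $(v-1, h-1)$ going up first (of which there are $C_{h-1}$ choices), and a post-bridge from $(v + 2h - 1, h-1)$ to $(v_{a_{k+1}}, \alpha_{k+1}^n)$, I obtain
$$S(h) = \frac{C_{h-1}\, N(v_{a_k}, \alpha_k^n;\, v-1, h-1)\, N(v+2h-1, h-1;\, v_{a_{k+1}}, \alpha_{k+1}^n)}{N(v_{a_k}, \alpha_k^n;\, v_{a_{k+1}}, \alpha_{k+1}^n)},$$
where $N(s, x; t, y)$ counts length-$(t-s)$ lattice paths from $(s,x)$ to $(t,y)$.

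Second, I would apply the LCLT to each of the three path counts. With $L_1 = v - 1 - v_{a_k}$, $L_2 = v_{a_{k+1}} - v - 2h + 1$ (both of size at least $c n^{0.6}$ since $i \in \Iint{k}$), $L = L_1 + L_2 + 2h$, and $Y = \alpha_{k+1}^n - \alpha_k^n$, a completion-of-square calculation reduces the combined LCLT exponent to $-(h - m)^2/(2\sigma^2) + O(Y^2 h / L^2)$, where $m = \alpha_k^n + 1 + L_1 Y/(L_1+L_2)$ and $\sigma^2 = L_1 L_2/(L_1+L_2)$. The cross term is $O(n^{-0.4})$ by properness. Combined with Stirling's asymptotic $C_{h-1}/2^{2h} = (1 + O(h^{-1}))/(4\sqrt{\pi}\, h^{3/2})$, this yields
$$S(h) = \frac{1 + o(n^{-0.01})}{4\sqrt{\pi}\, h^{3/2}}\, \sqrt{\frac{2L}{\pi L_1 L_2}}\, \exp\!\left(-\frac{(h-m)^2}{2\sigma^2}\right),$$
uniformly in proper $\Omega^n$ and over the range $|h - \alpha_k^n| \leq n^{0.46}$; contributions from outside this range are exponentially negligible by Lemma \ref{notproper}.

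Third, I would sum over $h$. On this range, properness gives $h^{-3/2} = (\alpha_k^n)^{-3/2}(1 + O(n^{-0.039}))$, which pulls out of the sum; the $h$-dependence of $L_1, L_2, m, \sigma$ is smooth on the scale $O(\sigma) \geq n^{0.15}$ with relative drift $O(n^{-0.45})$, so they may be treated as constant across the effective support. The Gaussian sum then evaluates as
$$\sum_h \sqrt{\frac{2L}{\pi L_1 L_2}}\, \exp\!\left(-\frac{(h-m)^2}{2\sigma^2}\right) \approx \sqrt{\frac{2L}{\pi L_1 L_2}}\, \sqrt{2\pi\sigma^2} = 2\sqrt{L/(L_1+L_2)} = 2(1 + O(n^{-0.3})),$$
since $L - (L_1 + L_2) = 2h$. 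Combining,
$$\expect[\theta_i \mid \Omega^n] = \sum_h S(h) = \frac{1}{2\sqrt{\pi}\, (\alpha_k^n)^{3/2}}(1 + o(n^{-0.01})),$$
which is the stated estimate. The principal obstacle is establishing the LCLT with uniform relative error $o(n^{-0.01})$ across the full parameter range, which requires a careful Petrov-style local limit calculation analogous to the moderate-deviation estimates in Appendix B of \cite{part1}; this carries most of the technical weight. A secondary concern is tracking the accumulated error from the $h$-dependence of $L_1, L_2, m, \sigma$ inside the Gaussian sum, but this is a routine bookkeeping exercise.
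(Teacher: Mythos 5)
Your proposal is correct and follows essentially the same route as the paper: the paper's proof of this lemma is a one-line appeal to Lemma~\ref{problihiapprox}, and the machinery you spell out (decomposition into pre-bridge, Catalan excursion, post-bridge; local CLT for each piece via Lemma~\ref{pathcountapprox}/\ref{theelemma}; Gaussian summation over $h$ with the tail killed by properness) is precisely the content of Lemmas~\ref{probxih}, \ref{problihicond}, \ref{problihi}, \ref{problihicondapprox}, and \ref{problihiapprox}. You have in effect unrolled the appendix; the identification of the conditional law under $\Omega^n$ with a uniform bridge in $\cE_{v_{a_k},\alpha_k^n}^{v_{a_{k+1}},\alpha_{k+1}^n}$ is the same observation the paper uses to verify the hypotheses of Lemma~\ref{problihiapprox}.
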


\begin{proof} 
For each $k$ the and each $i$ in $\Iint{k}$ the conditions for Lemma \ref{problihiapprox} are satisfied since $\Omega^n$ is proper.   Therefore

$$\expect[\theta_i |\Omega^n] = \frac{1}{2\pi^{1/2} (\alpha^n_k)^{3/2}}(1+\Delta)$$ as desired.

\end{proof}

For $\Iout{k}$ we look at $\expect[\theta_{\Iout{k}}|\Omega^n]$ as a whole rather than computing $\expect[\theta_i|\Omega^n]$ for each individual $i.$

\begin{lemma} \label{fixioutk}

Fix $0<a<b<1$ and proper $\Omega^n.$  For all $\gamma\in\Omega^n$ $$\sum_k \expect\left [\theta_{\Iout{k}} | \Omega^n\right] \leq 6n^{0.21}.$$

\end{lemma}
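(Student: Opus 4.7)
The strategy is to reduce to a pointwise upper bound on $\expect[\theta_i \mid \Omega^n]$ for indices in the boundary regions $\Iout{k}$, and then sum trivially over $i$ and $k$. The bound $6n^{0.21}$ is very generous, so even a somewhat lossy estimate will suffice.

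First I would establish a uniform upper bound
\[
\expect[\theta_i \mid \Omega^n] \;\leq\; C_1 (\alpha^n_k)^{-3/2}
\]
for every $i \in I_k$ and every proper $\Omega^n$, where $C_1$ is an absolute constant. Lemma \ref{problihiapprox} already delivers this as an asymptotic equality for $i \in \Iint{k}$; the content of this first step is just that the same upper bound (with a different constant) holds for $i \in \Iout{k}$ as well. The argument mirrors the proof of Lemma \ref{problihiapprox}: the event $\{\theta_i = 1\}$ is the event $\{L^n_i = 2 H^n_i\}$, which for a fixed value $h$ of $H^n_i$ is an event about a single Dyck excursion of prescribed length; its probability is of order $h^{-3/2}$ by standard Catalan asymptotics. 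Since $\Omega^n$ is proper, the second half of Lemma \ref{notproper} gives $H^n_i \in (n^{0.49}, n^{0.51})$ except on an event of probability at most $e^{-n^{0.0001}}$, so outside this negligible set we have $H^n_i$ comparable to $\alpha^n_k$ and the bound follows.

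With that bound in hand the rest is just arithmetic. Since $|\Iout{k}| \leq 4n^{0.6}$, properness gives $\alpha^n_k \geq n^{0.499}$, and $K \leq (b-a) n^{0.1}$, we obtain
\[
\sum_{k=0}^{K-1} \expect[\theta_{\Iout{k}} \mid \Omega^n]
\;\leq\; K \cdot 4 n^{0.6} \cdot C_1 n^{-0.7485} + K \cdot 4 n^{0.6} \cdot e^{-n^{0.0001}}
\;=\; O\!\left(n^{0.1 + 0.6 - 0.7485}\right) + o(1),
\]
which is $o(1)$ and therefore certainly at most $6 n^{0.21}$ for all $n$ large enough.

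The main obstacle is the first step, namely extending the pointwise estimate of Lemma \ref{problihiapprox} to indices in $\Iout{k}$. Near the endpoints of $I_k$ the conditioning on $\Omega^n$ pins $\gamma$ at the adjacent $a_j$'s, which disrupts the clean enumeration used in the proof of Lemma \ref{problihiapprox}; however, because we only need an upper bound rather than an asymptotic equality, the boundary corrections can be absorbed into the constant $C_1$ by enlarging the reference window for the excursion count. Once that bound is in place, the proof is essentially a one-line calculation.
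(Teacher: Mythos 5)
Your plan hinges on a pointwise bound $\expect[\theta_i\mid\Omega^n]\le C_1(\alpha^n_k)^{-3/2}$ for \emph{all} $i\in\Iout{k}$, not just the interior ones, and this is exactly the step you don't prove. Lemma~\ref{problihiapprox} requires $i\in(2n^{0.6},\,n^{0.9}-2n^{0.6})$ within the window, i.e.\ $i$ is at least $2n^{0.6}$ up-steps away from either pinned endpoint $a_k$, $a_{k+1}$; the set $\Iout{k}$ consists precisely of the indices that violate that hypothesis. Near the right endpoint the excursion of length $2H^n_i\sim n^{0.5}$ can overlap the pinning constraint at $v_{a_{k+1}}$, and the conditional path counts $\bigl|\cE_{v_i+2h-1,h-1}^{v_{a_{k+1}},\alpha^n_{k+1}}\bigr|\big/\bigl|\cE_{v_i,h}^{v_{a_{k+1}},\alpha^n_{k+1}}\bigr|$ no longer simplify as in Lemma~\ref{problihicondapprox}: the $\sqrt{L/(L-h)}$ blow-up that appears when the excursion nearly reaches the boundary has to be played off against the local-CLT decay of $\prob(H^n_i=h\mid\Omega^n)$, and nothing in your sketch carries out that trade-off. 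Saying the ``boundary corrections can be absorbed into the constant by enlarging the reference window'' isn't an argument, since the conditioning on $\Omega^n$ is fixed and you cannot enlarge the window without changing the event you are conditioning on.

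The paper avoids this entirely and is worth contrasting: rather than estimating $\expect[\theta_i\mid\Omega^n]$ pointwise near the boundary, it uses the \emph{deterministic} combinatorial fact (Lemma~\ref{fixsmall}) that whenever $h_i>n^{0.49}$ throughout a window of length $n^{0.49}$, that window contains at most one fixed point, because any fixed point forces an excursion of length $\ge 2n^{0.49}$ that engulfs the rest of the window. Covering $\Iout{k}$ (length $4n^{0.6}$) by fewer than $5n^{0.11}$ such windows gives $\theta_{\Iout{k}}\le 5n^{0.11}$ on the high-probability event $P$, and summing over $K<n^{0.1}$ intervals yields the stated $6n^{0.21}$. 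This is cruder than the $o(1)$ your approach would deliver if the pointwise estimate held, but it requires no path counting at the pinned endpoints and is therefore robust; the lemma only needs a polynomial bound anyway. If you want to salvage your route, you would need to actually prove a uniform upper bound for $\prob(L_i^n/2=H_i^n\mid\Omega^n)$ for $i$ within $O(n^{0.6})$ of $a_{k+1}$, tracking the $\sqrt{L/(L-h)}$ factor against the $1/\sqrt{L}$ from the bridge local CLT; that is doable but is a separate lemma in its own right, not an absorbed constant.
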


\begin{proof} 

For each $k$, $\Iout{k}$ consists of two intervals of length $2n^{0.6}$ which can be covered by less than $5n^{0.6}/n^{0.49}$ subintervals of length $n^{0.49}$.  As $\Omega^n$ is proper, Lemma \ref{notproper} says $h_i>n^{0.49}$ for $i\in [an,bn]$ with probability $e^{-n^{0.0001}}.$  Then by Lemma \ref{fixsmall} each of the subintervals has at most one fixed point.  Then $\expect\left [\theta_{\Iout{k}}|\Omega^n\right ] \leq 5n^{0.11} +2n^{0.6} e^{-n^{0.00001}}$ for each $0\leq k< K < n^{0.1}.$  Adding them up proves the lemma. 
\end{proof}

\begin{lemma}  \label{expfixab}

For fixed $0<a<b<1$ and proper $\Omega^n,$ 

$$
\expect[ \theta_{[an,bn]} | \Omega^n ] 
=  ( 1+\Delta )\sum_{j=\lfloor an \rfloor }^{\lfloor bn \rfloor} \frac{1}{2 \pi^{1/2} \rho(V^n_j)^{3/2 } },
$$ where $\Delta = o(n^{-0.01})$ uniformly in proper $\Omega^n$
\end{lemma}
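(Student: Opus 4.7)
The plan is to split the sum over $[an,bn]$ according to the partition $I_k = \Iint{k}\cup \Iout{k}$ ($0 \le k < K$), apply Lemma \ref{expiintk} to the interior terms, absorb the boundary terms using Lemma \ref{fixioutk}, and then match the resulting expression against the target sum by observing that $\rho_\alpha(V^n_j)$ is constant on each block $I_k$ and equal to $\alpha^n_k$.

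First, by linearity of expectation we write
\[
\expect[\theta_{[an,bn]}\mid \Omega^n]
= \sum_{k=0}^{K-1}\sum_{i\in \Iint{k}} \expect[\theta_i \mid \Omega^n]
 + \sum_{k=0}^{K-1} \expect[\theta_{\Iout{k}}\mid \Omega^n].
\]
By Lemma \ref{expiintk}, for each $i \in \Iint{k}$ we have $\expect[\theta_i\mid \Omega^n] = \frac{1}{2\pi^{1/2}(\alpha^n_k)^{3/2}}(1+\Delta_{i,k})$ with $|\Delta_{i,k}| = o(n^{-0.01})$ uniformly. Averaging and pulling out a uniform error gives
\[
\sum_{k=0}^{K-1}\sum_{i\in \Iint{k}} \expect[\theta_i \mid \Omega^n]
= (1+\Delta')\,\sum_{k=0}^{K-1}\frac{|\Iint{k}|}{2\pi^{1/2}(\alpha^n_k)^{3/2}},
\]
with $|\Delta'| = o(n^{-0.01})$.

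Next I would match this against the target $\sum_{j=\lfloor an\rfloor}^{\lfloor bn\rfloor}\frac{1}{2\pi^{1/2}\rho(V^n_j)^{3/2}}$. Conditionally on $\Omega^n(\alpha)$, the identity $V^n_j = 2j-H^n_j$ together with $V^n_{a_k}=2a_k-\alpha^n_k$ shows that for every $j\in I_k = [a_k,a_{k+1})$ one has $V^n_{a_k}\le V^n_j < V^n_{a_{k+1}}$, hence $\hat k(V^n_j)=k$ and $\rho_\alpha(V^n_j)=\alpha^n_k$. Therefore
\[
\sum_{j=\lfloor an\rfloor}^{\lfloor bn\rfloor} \frac{1}{2\pi^{1/2}\rho(V^n_j)^{3/2}}
= \sum_{k=0}^{K-1}\frac{|I_k|}{2\pi^{1/2}(\alpha^n_k)^{3/2}}
= \sum_{k=0}^{K-1}\frac{|\Iint{k}|+|\Iout{k}|}{2\pi^{1/2}(\alpha^n_k)^{3/2}}.
\]

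It remains to show that the two leftover contributions — the term $\sum_k \expect[\theta_{\Iout{k}}\mid\Omega^n]$ from the expectation side and the $|\Iout{k}|$-part of the target — are both $o(n^{-0.01})$ times the main term. Since $\alpha^n_k > n^{0.499}$ for proper $\Omega^n$, the main term satisfies $\sum_k |\Iint{k}|/(\alpha^n_k)^{3/2} \gtrsim n \cdot n^{-0.75} = n^{0.25}$. On the other hand, $|\Iout{k}|\le 4n^{0.6}$ and $K\le n^{0.1}$ give $\sum_k |\Iout{k}|/(\alpha^n_k)^{3/2} \le O(n^{0.1}\cdot n^{0.6}\cdot n^{-0.75}) = O(n^{-0.05})$, and Lemma \ref{fixioutk} bounds $\sum_k\expect[\theta_{\Iout{k}}\mid\Omega^n]\le 6n^{0.21}$. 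Both bounds are $o(n^{0.24}) = o(n^{-0.01})$ times the main term, and combining everything yields the claimed estimate with an overall multiplicative error $\Delta = o(n^{-0.01})$ uniformly in proper $\Omega^n$.

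The only step requiring care is the identification $\rho_\alpha(V^n_j)=\alpha^n_k$ for $j\in I_k$; once that is in hand the rest is bookkeeping of error terms against the heuristic order $n^{1/4}$ of the main sum. The arithmetic is easy because the proper conditions pin $\alpha^n_k$ inside a narrow window $(n^{0.499},n^{0.501})$, so all the per-block estimates are uniform.
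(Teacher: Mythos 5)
Your proposal is correct and follows essentially the same route as the paper: split $[an,bn]$ into the blocks $I_k = \Iint{k} \cup \Iout{k}$, use Lemma~\ref{expiintk} on the interior terms and Lemma~\ref{fixioutk} to dispose of the boundary terms, then identify $\rho_\alpha(V^n_j) = \alpha^n_k$ for $j \in I_k$ (which holds because, on $\Omega^n(\alpha)$, $V^n_{a_k}=2a_k-\alpha^n_k$, the sequence $2a_k-\alpha^n_k$ is strictly increasing for proper $\alpha$, and $V^n_j$ is monotone in $j$). The only cosmetic difference is that the paper first replaces $|\Iint{k}|$ by $|I_k|$ using $|I_k|=(1+O(n^{-0.3}))|\Iint{k}|$ and then unpacks the sum over $j$, whereas you fold the $|\Iout{k}|$ contribution directly into the target sum; these are equivalent. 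One small imprecision: writing ``$o(n^{0.24}) = o(n^{-0.01})$ times the main term'' equates two orders that are only equal after dividing by a lower bound for the main term (which is of order $n^{0.2485}$ using $\alpha^n_k < n^{0.501}$, not quite $n^{0.25}$); but since your actual bounds $6n^{0.21}$ and $O(n^{-0.05})$ are each $o(n^{-0.01})$ times that main term, the conclusion stands.
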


\begin{proof}  

By linearity of expectation:

\begin{align*}
\expect[ \theta_{[an,bn]} | \Omega^n ]& = \sum_{k= 0}^{K-1} \sum_{i' \in I_k}\expect[ \theta_{i'}|\Omega^n]=\sum_{k=0}^{K-1} \sum_{i=0}^{|I_k|-1} \expect[\theta_{a_k + i}|\Omega^n]
 \end{align*}

For each $k,$ and $a_k + i \in \Iint{k},$ we can apply Lemma \ref{expiintk} to conclude 
$$\expect \left[\theta_{a_k + i}  \big | \Omega^n\right] = \frac{1}{2\pi^{1/2} (\alpha^n_k)^{3/2}}(1+ \Delta(i,k,\Omega^n))$$   
where $ \Delta(i,k,\Omega^n) = o(n^{-0.01})$ uniformly in $i,k$ and proper $\Omega^n.$

By Lemma \ref{notproper} we know the paths are high enough to apply Lemma \ref{fixioutk} to show that $\expect[\sum_k \theta_{\Iout{k} }] <n^{0.22}.$  On the other hand $\alpha^n_k < n^{0.51}$ implies 

\begin{align*}\sum_k\expect[\theta_{\Iint{k} }|\Omega^n] &= \sum_k \sum_{i \in \Iint{k} } \frac{1}{2{\pi^{1/2} (\alpha^n_k)^{3/2}}}(1+\Delta(i,k,\Omega^n))\\
& > \sum_k \sum_{i \in \Iint{k} } n^{-0.765}\\
& > n^{0.23}\\
\end{align*} so the contribution from $\sum_k \expect[\theta_{\Iout{k}}|\Omega^n]$ is dominated by $\sum_k\expect[\theta_{\Iint{k}}|\Omega^n].$  Then by Lemmas \ref{expiintk} and \ref{fixioutk}

\begin{align*}
\expect[\theta_{[an,bn]}|\Omega^n] &=\expect[\theta_{\Iout{k}} |\Omega^n] +  \sum_k \expect[ \theta_{\Iint{k}}|\Omega^n] \\
&=5n^{0.21} + \sum_k \frac{|\Iint{k}|}{2\pi^{1/2}(\alpha^n_k)^{3/2}} (1+\Delta(k,\Omega^n))\\
&= \left(1+ \Delta(\Omega^n)\right)\sum_k \frac{|\Iint{k}|}{2\pi^{1/2}(\alpha^n_k)^{3/2}} 
\end{align*}
where $\Delta(k,\Omega^n)= o(n^{-0.01})$ uniformly in $k$ and proper $\Omega^n$ and $\Delta(\Omega^n) = o(n^{0.01})$ uniformly in $\Omega^n.$  

For each $k$, $|I_k| = (1+O(n^{-0.3})) |\Iint{k}|$  by the definitions of $I_k$ and $\Iint{k}$.  Then the above expression becomes 
\begin{align*}\expect[\theta_{[an,bn]}|\Omega^n]=&(1+\Delta'(\Omega^n)) \sum_k\frac{ |I_k|}{2\pi^{1/2}( \alpha^n_{k})^{3/2}}\\
 =& (1+\Delta'(\Omega^n) ) \sum_k \sum_{j\in I_k}\frac{1}{2\pi^{1/2} (\alpha^n_{k})^{3/2}}.
 \end{align*}  with $\Delta'(\Omega^n) = o(n^{-0.01})$ uniformly in all proper $\Omega^n$.  For $j \in I_k,$ $\rho(V^n_j)  = \alpha^n_k,$  finishing the proof. 
\end{proof}

\begin{proof}[Proof of Lemma \ref{lemma.expectxab}]

By Lemma \ref{expfixab} we can write the conditional expectation of $\theta_{[an,bn]}$ as

$$\expect[ \theta_{[an,bn]} | \Omega^n ] = ( 1+\Delta )\sum_{i= \lfloor an \rfloor}^{\lfloor  bn \rfloor} \frac{1}{2 \pi^{1/2}\rho(V^n_i)^{3/2} } $$ where $\Delta=o(n^{-0.01})$ uniformly in all proper $\Omega^n.$  Converting the sum into an integral we have
$$\expect[ \theta_{[an,bn]} | \Omega^n ] = (1+\Delta)\int_{an}^{bn} \frac{1}{2 \pi^{1/2} \rho(V^n_{\lfloor u\rfloor})^{3/2}} du.$$
The change of variables $nt = u$ gives
$$\expect[\theta_{[an,bn]} |\Omega^n] = (1+\Delta) \int_a^b \frac{ 1}{2\pi^{1/2} \rho(V^n_{\lfloor nt\rfloor})^{3/2}} ndt.$$

Since $\Omega^n$ is proper, $|2nt - V^n_{\lfloor nt \rfloor} |< n^{0.51}.$  Therefore either $\hat{k}(V^n_{\lfloor nt \rfloor }) = \hat{k}(2nt)$ or $\hat{k}(2nt)-1.$  In either case by properness of $\Omega^n,$ $|\rho(V^n_{\lfloor nt\rfloor }) - \rho(2nt)| < n^{0.451}$ and $\rho(2nt) > n^{0.499}$ for $t\in[a,b]$ so 
$$\frac{1}{\rho(V^n_{nt})^{3/2}} = (1 + o(n^{-0.001}))\frac{1}{\rho(2nt)^{3/2}}.$$
Scaling by $n^{1/4}$ completes the proof.  
\end{proof}

\subsection{Proof of Lemma \ref{varcalc}}

Now that we have the conditional expectation $\expect[\theta^{int}_{[an,bn]}|\Omega^n],$ we will bound the conditional variance, $\var[\theta_{[an,bn]}|\Omega^n].$ 

Our basic variance equation is

$$\var[\theta_{[an,bn]}|\Omega^n] =\sum_{i,j}  \expect[ \theta_i\theta_j|\Omega^n] - \expect [\theta_i|\Omega^n] \expect[\theta_j|\Omega^n].$$

The key to bounding the conditional variance for a proper $\Omega^n$ is understanding $\expect[\theta_i\theta_j|\Omega^n]$ for various ranges of $i$ and $j$.  We cover $[an,bn]^2$ with $\cup_{l=1}^5 B_l$ where each $B_l$ is defined as follows:  

\begin{itemize}

\item $B_1 = \cup_k \cup_{k'}  \Iout{k}\times  \Iout{k'},$ 
\item $B_2 = \cup_k \cup_{k'} \{\Iint{k}\times \Iout{k'}\}\bigcup \{\Iint{k'}\times \Iout{k}\},$ 
\item $B_3 = \cup_k \cup _{k'\neq k} \Iint{k}\times \Iint{k'},$
\item $B_4 = \cup _k \Iint{k}\times \{ j\in \Iint{k} s.t. |j-i| \leq 2n^{0.6}\},$
\item $B_5 = \cup _k \Iint{k}\times \{ j\in \Iint{k} s.t. |j-i| > 2n^{0.6}\}.$

\end{itemize}

Consider the property $$P  = \bigcap_{i\in [an,bn]} \{ n^{0.49} < H^n_i < n^{0.51} \}.$$  
For each $B_l$ we will show that $$\sum_{(i,j) \in B_l} \expect[\theta_i\theta_j\indc_P |\Omega^n] - \expect[\theta_i\indc_P|\Omega^n]\expect[\theta_j\indc_P|\Omega^n] = o(n^{0.48}).$$  Hence the total variance, $\var[ \theta_{[an,bn]}\indc_P|\Omega^n]$ is $o(n^{0.48}).$

The following lemma allows us extend this bound to $\var[\theta_{[an,bn]}|\Omega^n].$

\begin{lemma} \label{notP} 

Fix $0<a<b<1.$  For $n$ sufficiently large and proper $\Omega^n$

$$\sum_{(i,j) \in [an,bn]^2} \expect[\theta_i\theta_j|\Omega^n] =  (1+\Delta) \sum_{(i,j)\in[an,bn]^2}\expect[ \theta_i\theta_j\indc_P |\Omega^n ] $$
with $\Delta = o(1)$ uniformly in for all proper $\Omega^n$.

\end{lemma}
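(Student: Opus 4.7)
The plan is to decompose $\theta_i\theta_j = \theta_i\theta_j\indc_P + \theta_i\theta_j\indc_{P^c}$ inside each expectation, show that the $P^c$ contribution to the double sum is superpolynomially small, while the restricted sum $\sum_{(i,j)}\expect[\theta_i\theta_j\indc_P\mid\Omega^n]$ is at least polynomially large; the ratio then differs from $1$ by $o(1)$ uniformly over proper $\Omega^n$.

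First I would bound the ``bad'' contribution. Since each $\theta_i\in\{0,1\}$,
\[\sum_{(i,j)\in[an,bn]^2}\expect[\theta_i\theta_j\indc_{P^c}\mid\Omega^n]\leq (bn-an+1)^2\,\prob(P^c\mid\Omega^n)\leq n^2 e^{-n^{0.0001}},\]
where the last inequality is the second statement of Lemma \ref{notproper}, valid for every proper $\Omega^n$.

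Next I would lower bound the restricted sum $S:=\sum_{(i,j)}\expect[\theta_i\theta_j\indc_P\mid\Omega^n]$. By Lemma \ref{lemma.expectxab} together with $\rho_\alpha(2nt)<n^{0.501}$ (which is forced by properness of $\alpha$), the integrand $(n^{1/2}/\rho_\alpha(2nt))^{3/2}$ is bounded below by $n^{-3/400}$, so for any fixed $\eta>0$ and all $n$ large enough we have $\expect[\theta_{[an,bn]}\mid\Omega^n]\geq n^{1/4-\eta}$ uniformly over proper $\Omega^n$. Fixing $\eta=0.01$ and combining with $\prob(P^c\mid\Omega^n)\leq e^{-n^{0.0001}}$ yields $\expect[\indc_P\theta_{[an,bn]}\mid\Omega^n]\geq \tfrac12 n^{0.24}$ for large $n$. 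Cauchy--Schwarz applied to the pair $(\indc_P,\indc_P\sum_i\theta_i)$ then gives
\[S=\expect\!\Bigl[\indc_P\Bigl(\sum_i\theta_i\Bigr)^{\!2}\,\Big|\,\Omega^n\Bigr]\geq \frac{\expect[\indc_P\sum_i\theta_i\mid\Omega^n]^2}{\prob(P\mid\Omega^n)}\geq \tfrac14 n^{0.48}.\]

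Finally, dividing,
\[\frac{\sum_{(i,j)\in[an,bn]^2}\expect[\theta_i\theta_j\mid\Omega^n]}{S}=1+\frac{O(n^2 e^{-n^{0.0001}})}{\tfrac14 n^{0.48}}=1+o(1)\]
uniformly over proper $\Omega^n$, which is the claim with $\Delta=o(1)$. I do not expect a serious obstacle here; the only point requiring care is that every estimate be uniform in proper $\Omega^n$, which is handled by the uniformity stated in Lemma \ref{lemma.expectxab} and Lemma \ref{notproper} combined with the polynomial range for $\rho_\alpha$ forced by properness.
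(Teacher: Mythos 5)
Your decomposition $\theta_i\theta_j = \theta_i\theta_j\indc_P + \theta_i\theta_j\indc_{P^c}$ and the bound $\sum\expect[\theta_i\theta_j\indc_{P^c}\mid\Omega^n]\le n^2\,\prob(P^c\mid\Omega^n)\le n^2 e^{-n^{0.0001}}$ are exactly the paper's argument, and you are right that to convert this additive error into the multiplicative $(1+o(1))$ one must know that the restricted sum $S$ is not tiny --- a point the paper's write-up leaves implicit but which genuinely needs to be said.

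However, the step where you deduce $\expect[\theta_{[an,bn]}\mid\Omega^n]\ge n^{1/4-\eta}$ does not follow from Lemma~\ref{lemma.expectxab} as stated, and this is a real gap. Properness gives $\rho_\alpha(2nt)<n^{0.501}$, so the integrand $(n^{1/2}/\rho_\alpha(2nt))^{3/2}$ is bounded below only by $n^{-0.0015}$ (not $n^{-3/400}$, but the discrepancy is immaterial), whence the integral in Lemma~\ref{lemma.expectxab} can be as small as $(b-a)n^{-0.0015}$. But the lemma's error is an \emph{additive} $n^{-0.001}$, and $n^{-0.001}\gg n^{-0.0015}$, so the resulting lower bound $c\,n^{-0.0015}-n^{-0.001}$ on $n^{-1/4}\expect[\theta_{[an,bn]}\mid\Omega^n]$ is eventually negative and tells you nothing. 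To patch this, appeal instead to the multiplicative form inside Lemma~\ref{expfixab}, which directly gives $\expect[\theta_{[an,bn]}\mid\Omega^n]\ge c\,n^{0.235}$ uniformly over proper $\Omega^n$; combined with $\prob(P^c\mid\Omega^n)\le e^{-n^{0.0001}}$ this yields $\expect[\indc_P\theta_{[an,bn]}\mid\Omega^n]\ge \tfrac12 c\,n^{0.235}$. At that point the Cauchy--Schwarz detour is unnecessary: the restricted double sum $S$ contains the diagonal $i=j$, so $S\ge\expect[\indc_P\theta_{[an,bn]}\mid\Omega^n]\ge \tfrac12 c\,n^{0.235}$, which already dominates $n^2e^{-n^{0.0001}}$ by a superpolynomial margin and finishes the proof.
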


\begin{proof} 

Since $\theta_i\theta_j \indc_{P^C} < \indc_{P^C},$ by Lemma \ref{notproper} we have $$\expect[\theta_i\theta_j\indc_{P^C}|\Omega^n] \leq \expect[\indc_{P^C}|\Omega^n] \leq n^3\exp^{-n^{0.0001}}.$$  

Noting that $\theta_i\theta_j = \theta_i\theta_j\indc_P + \theta_i\theta_j\indc_{P^C}$ and taking expectation gives

$$\expect[\theta_i\theta_j| \Omega^n]  \leq \expect [\theta_i\theta_j \indc_P |\Omega^n] + \expect[\indc_{P^C}|\Omega^n] \leq \expect[\theta_i\theta_j\indc_P|\Omega^n] + n^3\exp(-n^{0.0001}).$$ 

Summing over $i$ and $j$ completes the proof.
\end{proof}

For the following series of lemmas we will assume the following standard hypotheses.

\begin{itemize}

\item Fix $0<a<b<1.$
\item $\Omega^n \subset \dyck{2n}$ is proper.  
\item Let $n$ be large enough such that $n^5e^{-n^{0.0001}} < o(1).$

\end{itemize}

\begin{lemma} \label{B1}

Assuming the standard hypotheses, 

$$\sum_{(i,j)\in B_1 }\expect[\theta_i\theta_j\indc_P|\Omega^n] < n^{0.47}.$$

\end{lemma}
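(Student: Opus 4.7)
The strategy is a deterministic counting argument that exploits the indicator $\indc_P$. The idea, already used in the proof of Lemma \ref{fixioutk}, is that on the event $P$ the number of fixed points in each $\Iout{k}$ is bounded by a small polynomial in $n$, so the entire double sum over $B_1$ collapses to a crude size estimate.

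First I would observe that each $\Iout{k}$ is the union of at most two intervals of length $2n^{0.6}$, hence can be covered by at most $5 n^{0.11}$ consecutive subintervals of length $\lceil n^{0.49}\rceil$. Under the event $P$ we have $H^n_i > n^{0.49}$ for every $i\in[an,bn]$, so Lemma \ref{fixsmall} applies and shows that each such subinterval contains at most one fixed point of $\sigma_{\Gamma^n}$. It follows that
$$\theta_{\Iout{k}}\,\indc_P \;\le\; 5\, n^{0.11} \qquad \text{for every } k.$$

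Next, summing over the $K = \lfloor (b-a)n^{0.1}\rfloor \le n^{0.1}$ intervals yields the pointwise bound $\sum_k \theta_{\Iout{k}}\,\indc_P \le 5\, n^{0.21}$. Squaring this deterministic inequality gives
$$\sum_{(i,j)\in B_1} \theta_i\theta_j\,\indc_P \;=\; \Bigl(\sum_k \theta_{\Iout{k}}\Bigr)^{2}\indc_P \;\le\; 25\, n^{0.42},$$
and taking the conditional expectation given $\Omega^n$ preserves this bound. Since $25\,n^{0.42} < n^{0.47}$ for all $n$ large enough, the lemma follows.

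Since the estimate collapses to a deterministic bound as soon as $\indc_P$ is in place, there is no genuine obstacle: one only needs to check that $5n^{0.11}\cdot n^{0.1}$, when squared, leaves an exponent strictly less than $0.47$, which it does with room to spare. This is the easiest of the five range estimates $B_1,\dots,B_5$; the harder cases will be $B_4$ (nearby indices inside the same $\Iint{k}$) and especially $B_5$, where genuine decorrelation of $\theta_i$ and $\theta_j$ is needed rather than a brute size bound.
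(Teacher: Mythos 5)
Your proof is correct and uses the same essential ingredient as the paper, namely Lemma \ref{fixsmall} applied to the short boundary intervals $\Iout{k}$ under the height constraint enforced by $\indc_P$, arriving at the same order $n^{0.42}$ with room below the target $n^{0.47}$. The one small structural improvement you made is to observe that $B_1 = \bigl(\bigcup_k \Iout{k}\bigr)^2$ so the double sum factors exactly as $\bigl(\sum_k\theta_{\Iout{k}}\indc_P\bigr)^2$, which lets you bound everything deterministically and then take a single expectation; the paper instead writes $\expect[\theta_i\theta_j\indc_P|\Omega^n]\le \expect[\theta_j\mid\Omega^n,\theta_i\indc_P=1]\,\expect[\theta_i\indc_P|\Omega^n]$ and reapplies Lemma \ref{fixsmall} under the conditioning, reaching the same conclusion with a little extra bookkeeping. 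Both routes are valid; yours is marginally cleaner because it isolates the point that once $\indc_P$ is in force the estimate is purely combinatorial.
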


\begin{proof} 
For fixed $k'$ we may use Lemma \ref{fixsmall} to show $\sum_{j\in\Iout{k'}}\expect[\theta_j| *]< 2n^{0.6-0.49} $ no matter the conditions given by $*$.  In particular we have

\begin{align*}
\sum_{j\in \Iout{k'}} \expect[ \theta_i\theta_j \indc_P| \Omega^n ] &\leq \sum_{j\in \Iout{k'}} \expect[\theta_j | \Omega^n , \theta_i\indc_P=1]\expect[\theta_i\indc_P|\Omega^n]\\
&\leq  \expect[\theta_i\indc_P|\Omega^n] 4 n^{0.6-0.49}.
\end{align*}
Then
\begin{align*}
\sum_k\sum_{k'} \sum_{i \in \Iout{k} } \sum_{j \in \Iout{k'} }\expect[\theta_i\theta_j\indc_P|\Omega^n]& \leq \sum_k\sum_{k'}\sum_{i \in \Iout{k}} \expect[\theta_i\indc_P|\Omega^n] 2n^{0.6-0.49} \\
&\leq \sum_k \sum_{k'} 4n^{0.22}\\
&\leq n^{0.42}. \\
\end{align*}

\end{proof}

\begin{lemma} \label{B2}

Assuming the standard hypotheses,

$$\sum_{(i,j) \in B_2 }\expect[\theta_i\theta_j\indc_P|\Omega^n] < 6n^{0.47}.$$

\end{lemma}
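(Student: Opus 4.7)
The plan is to adapt the argument used for Lemma \ref{B1} almost verbatim, but now feed in Lemma \ref{expfixab} to absorb the extra factor that arises from summing over all interior indices $i$. By the definition of $B_2$, we may write
\[
\sum_{(i,j)\in B_2}\expect[\theta_i\theta_j\indc_P\mid\Omega^n]
= 2\sum_k \sum_{k'}\sum_{i\in\Iint{k}}\sum_{j\in\Iout{k'}}\expect[\theta_i\theta_j\indc_P\mid\Omega^n],
\]
so it suffices to bound the inner quadruple sum by $3n^{0.47}$.

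First I would handle the inner sum over $j$ with fixed $i$. For any conditioning, by writing
\[
\expect[\theta_i\theta_j\indc_P\mid\Omega^n]\ \leq\ \expect[\theta_i\indc_P\mid\Omega^n]\cdot\expect[\theta_j\mid\Omega^n,\theta_i\indc_P=1],
\]
the same argument as in Lemma \ref{fixioutk} applies: on the event $P$ all heights in $[an,bn]$ exceed $n^{0.49}$, so $\Iout{k'}$ (a union of two intervals of length $2n^{0.6}$) is covered by at most $5n^{0.11}$ subintervals of length $n^{0.49}$, each of which contains at most one fixed point by Lemma \ref{fixsmall}. Hence, uniformly in $i$ and in the conditioning,
\[
\sum_{j\in\Iout{k'}}\expect[\theta_j\mid\Omega^n,\theta_i\indc_P=1]\ \leq\ 5n^{0.11}.
\]

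Next I would sum over $k'$, of which there are $K\leq n^{0.1}$, giving a factor $5n^{0.21}$. Then summing over $i\in\bigcup_k\Iint{k}\subset[an,bn]$ and using Lemma \ref{expfixab} to get $\sum_i\expect[\theta_i\indc_P\mid\Omega^n]\leq\expect[\theta_{[an,bn]}\mid\Omega^n]=O(n^{1/4})$, the total is bounded by $5n^{0.21}\cdot O(n^{0.25})=O(n^{0.46})$, comfortably below $3n^{0.47}$ for large $n$. Doubling for the symmetric piece $\Iint{k'}\times\Iout{k}$ yields the stated bound $6n^{0.47}$.

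There is no serious obstacle here; the only bookkeeping subtlety is that the covering argument behind Lemma \ref{fixsmall} must be applied under the additional conditioning on $\theta_i\indc_P=1$, but since $\indc_P$ is measurable with respect to the heights and is what guarantees the lower bound $H^n_j>n^{0.49}$ needed for the covering, this conditioning only helps. Everything else is a direct repetition of the method already used to prove Lemmas \ref{B1} and \ref{fixioutk}.
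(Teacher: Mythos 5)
Your proof is correct and follows essentially the same two-step structure as the paper's: first bound the sum over $j\in\Iout{k'}$ by covering $\Iout{k'}$ with $O(n^{0.11})$ subintervals of length $n^{0.49}$ and invoking Lemma~\ref{fixsmall} on the event $P$ (exactly as in Lemma~\ref{fixioutk}), then control what remains of the sum over $i$. The one place you diverge is the second step: the paper bounds each individual term $\expect[\theta_i\indc_P\mid\Omega^n]$ by $n^{-0.74}$ directly from Lemma~\ref{problihiapprox} and properness, and then counts the $O(n)$ values of $i$, whereas you instead invoke Lemma~\ref{expfixab} to bound the aggregate $\sum_i\expect[\theta_i\indc_P\mid\Omega^n]\leq\expect[\theta_{[an,bn]}\mid\Omega^n]$ in one stroke. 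Since Lemma~\ref{expfixab} is itself proved from Lemma~\ref{problihiapprox}, this is a shortcut rather than a genuinely different argument, but it is a clean and legitimate one, and it saves you from re-deriving the pointwise bound.

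One small imprecision worth noting: writing $\expect[\theta_{[an,bn]}\mid\Omega^n]=O(n^{1/4})$ is a bit loose. Properness only guarantees $\rho(V^n_j)>n^{0.499}$, so Lemma~\ref{expfixab} gives $\expect[\theta_{[an,bn]}\mid\Omega^n]\leq(1+o(1))\,n\cdot n^{-0.7485}/(2\pi^{1/2})=O(n^{0.2515})$, not $O(n^{0.25})$. This does not hurt you, since $0.21+0.2515=0.4615<0.47$, so the final comparison against $3n^{0.47}$ still holds for $n$ large. But the exponent $1/4$ only emerges in the full scaling limit where heights are genuinely of order $n^{1/2}$; conditional on a proper $\Omega^n$ you should quote the slightly larger exponent that the properness window actually buys you.
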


\begin{proof} 

This follows the proof of Lemma \ref{B1} closely.   By Lemma \ref{fixsmall} 

$$
\sum_k\sum_{k'} \sum_{i \in \Iint{k} } \sum_{j \in \Iout{k'} }\expect\left[\theta_i\theta_j\indc_P|\Omega^n\right] \leq  \sum_k\sum_{k'}\sum_{i \in \Iint{k}} \expect[\theta_i\indc_P|\Omega^n] 2n^{0.6-0.49} .
$$

For each $k$ and each $i \in \Iint{k}$,  Lemma \ref{problihiapprox} and the properness of $\Omega^n$ imply that  $\expect[\theta_i\indc_P |\Omega^n] < (n^{-0.495})^{3/2} < n^{-0.74}.$  Then

$$\sum_k\sum_{k'}\sum_{i\in \Iint{k}}3n^{0.11}\expect[\theta_i\indc_P|\Omega^n] \leq 3n^{0.1}n^{0.1}n^{0.9}n^{0.11}n^{-0.74} < 3n^{0.47}.$$

Changing the roles of $i$ and $j$ and doubling the upper bounded completes the proof.
\end{proof}

\begin{lemma} \label{B3}

Assuming the standard hypotheses,

$$\sum_{i,j\in B_3} \expect\left[\theta_i\theta_j\indc_P| \Omega^n\right ]  - \expect[\theta_i\indc_P|\Omega^n]\expect[\theta_j\indc_P|\Omega^n]=0.$$

\end{lemma}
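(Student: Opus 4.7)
The plan is to exploit the conditional independence of the Dyck path across the landmarks $\{v_{a_k}\}_{k=0}^{K-1}$. Since conditioning on $\Omega^n$ fixes both the heights $\gamma(v_{a_k}) = \alpha^n_k$ and (consequently) the positions $v_{a_k} = 2a_k - \alpha^n_k$, the Markov property of the simple random walk underlying $\gamma$ shows that the $K$ path segments $\gamma|_{[v_{a_k},\, v_{a_{k+1}}]}$ are conditionally independent given $\Omega^n$: each is distributed as a simple random walk bridge from height $\alpha^n_k$ to height $\alpha^n_{k+1}$ of length $2(a_{k+1}-a_k)+\alpha^n_k-\alpha^n_{k+1}$, conditioned to remain non-negative.

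The crucial observation is that on the event $P$, the random variable $\theta_i\indc_P$ becomes measurable with respect to a single segment when $i \in \Iint{k}$. Indeed, $P$ forces $H^n_i \le n^{0.51}$, so the $i$-th excursion has length at most $2n^{0.51}$. Since $\Iint{k}$ keeps $i$ at least $2n^{0.6}$ up-steps away from $a_k$ and $a_{k+1}$, the start $v_i$ of that excursion sits at least $2n^{0.6}$ steps away from $v_{a_k}$ and $v_{a_{k+1}}$, so the whole excursion lies strictly inside $[v_{a_k},v_{a_{k+1}}]$. Writing $P_l = \bigcap_{i\in I_l}\{n^{0.49}<H^n_i<n^{0.51}\}$ so that $\indc_P = \prod_l \indc_{P_l}$, we get the factorization
\[
\theta_i\indc_P = \bigl(\theta_i\indc_{P_k}\bigr)\prod_{l\neq k}\indc_{P_l},
\]
with $\theta_i\indc_{P_k}$ depending only on the $k$-th segment.

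For $(i,j)\in B_3$ with $i\in\Iint{k}$, $j\in\Iint{k'}$ and $k\neq k'$, conditional independence of the segments gives
\[
\expect[\theta_i\theta_j\indc_P\mid\Omega^n]=\expect[\theta_i\indc_{P_k}\mid\Omega^n]\,\expect[\theta_j\indc_{P_{k'}}\mid\Omega^n]\prod_{l\neq k,k'}\expect[\indc_{P_l}\mid\Omega^n],
\]
while the product factorizes in the same way except for an extra factor $\expect[\indc_{P_k}\mid\Omega^n]\expect[\indc_{P_{k'}}\mid\Omega^n]\prod_{l\neq k,k'}\expect[\indc_{P_l}\mid\Omega^n]=\expect[\indc_P\mid\Omega^n]$. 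Hence each summand reduces to $\expect[\theta_i\theta_j\indc_P\mid\Omega^n]\bigl(1-\expect[\indc_P\mid\Omega^n]\bigr)$, which by Lemma~\ref{notproper} is at most $e^{-n^{0.0001}}$. Summing over the at most $n^2$ pairs in $B_3$ yields a total of $O(n^2 e^{-n^{0.0001}})=o(1)$, so the sum is negligible compared to the $n^{0.48}$ budget of Lemma~\ref{varcalc} (which is how the asserted equality should be interpreted).

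The main obstacle is the rigorous justification of the conditional independence. One needs to verify that conditioning on $\Omega^n$ really does split $\gamma$ into independent bridges and that the non-negativity constraint (which couples the segments a priori) can be ignored: for proper $\alpha$ the heights $\alpha^n_k$ are of order $n^{0.5}$ while the typical bridge fluctuation on a span of $2n^{0.9}$ steps is of order $n^{0.45}$, so the bridges stay strictly positive with overwhelming probability and Lemma~\ref{notproper} controls the exceptional event. Everything else—the containment of excursions inside their respective segments, the factorization of $\indc_P$, and the bookkeeping of the constant factor $1-\expect[\indc_P\mid\Omega^n]$—is routine once the independence is established.
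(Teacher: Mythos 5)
Your argument follows the same strategy as the paper's — split the conditioned path into Markov‐independent segments at the landmarks $v_{a_k}$, and use the event $P$ to confine the excursion started at $v_i-1$ (for $i\in\Iint{k}$) to segment $k$ — but you carry the factorization through more carefully and in doing so expose a small inaccuracy in the published statement. The paper asserts the exact identity $\expect[\theta_i\theta_j\indc_P\mid\Omega^n]=\expect[\theta_i\indc_P\mid\Omega^n]\expect[\theta_j\indc_P\mid\Omega^n]$, justifying it only by the disjointness of the two excursions, but $\indc_P$ is a global constraint that appears once on the left and effectively twice on the right. Writing $\indc_P=\prod_l\indc_{P_l}$ and factoring segment by segment as you did, one finds $\expect[\theta_i\indc_P\mid\Omega^n]\expect[\theta_j\indc_P\mid\Omega^n]=\expect[\theta_i\theta_j\indc_P\mid\Omega^n]\,\expect[\indc_P\mid\Omega^n]$, so each summand equals $\expect[\theta_i\theta_j\indc_P\mid\Omega^n]\bigl(1-\expect[\indc_P\mid\Omega^n]\bigr)$ — nonnegative but not zero. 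Your bound of $e^{-n^{0.0001}}$ per pair via Lemma~\ref{notproper}, giving $O(n^2e^{-n^{0.0001}})=o(1)$ over $B_3$, is correct and is what actually suffices for the $n^{0.48}$ budget in Lemma~\ref{varcalc}; the ``$=0$'' in the statement should be read as ``$=o(1)$,'' and your derivation is the one that proves it. Your verification of the measurability claims (that on $P$ the $i$-th excursion of length $2H^n_i<2n^{0.51}$ cannot cross a landmark when $i$ is at least $2n^{0.6}$ up-steps from each endpoint of $I_k$) is also sound and makes explicit a step the paper leaves implicit.
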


\begin{proof}

The flavor of this proof is somewhat different from the previous lemmas.  Without loss of generality we may assume that $k<k'.$  

If $\theta_i\indc_P=1,$ then the corresponding $i$th excursion will end before the $a_{k'}$th excursion begins as $$L^n_i/2 = H^n_i < n^{0.51} < 2n^{0.6}$$ and $a_{k'}> i + |\Iout{k}|.$  Therefore, for $i \in I_k$ and $j \in I_{k'},$ $$\expect [\theta_i\theta_j \indc_P| \Omega^n]  = \expect [\theta_i\indc_P|\Omega^n]\expect[\theta_j\indc_P|\Omega^n]$$ and
$$
\sum_{(i,j)\in B_3}\left( \expect[\theta_i\theta_j\indc_P| \Omega^n ]  -\expect[\theta_i\indc_P|\Omega^n]\expect[\theta_j\indc_P|\Omega^n]\right) = 0.
$$

\end{proof}

\begin{lemma}  \label{B4}

Assuming the standard hypotheses,

$$\sum_{(i,j) \in B_4}  \expect[\theta_i\theta_j\indc_P |\Omega^n ] < n^{0.47}.$$

\end{lemma}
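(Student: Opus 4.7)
The plan is to exploit a deterministic structural fact: distinct fixed points of $\sigma_\gamma$ must correspond to \emph{disjoint} excursions in $\gamma$. Combined with the lower bound $H^n_i > n^{0.49}$ forced by the event $P$, this will imply that fixed points are spaced at least $n^{0.49}$ apart, which makes the diagonal contribution from $B_4$ automatically small.

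First I would establish the combinatorial claim: if $i<j$ and $\sigma_\gamma(i)=i$ and $\sigma_\gamma(j)=j$, then $Exc(i)\cap Exc(j)=\emptyset$. By \eqref{overlap} and \eqref{disjoint}, the only alternative is $Exc(j)\subset Exc(i)$. In that case the $j$th up-step lies strictly inside $Exc(i)$, where the path is at height $\geq h_i$, so $h_j\geq h_i+1$. On the other hand, a proper inclusion forces $l_j<l_i$. But the fixed-point equations $l_i=2h_i$ and $l_j=2h_j$ turn $h_j>h_i$ into $l_j>l_i$, a contradiction. Hence at fixed points the excursions are pairwise disjoint.

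Next, on the event $P\cap\{\theta_i=1\}$ we have $L^n_i=2H^n_i\geq 2n^{0.49}$, and the $i$th excursion contains the up-steps indexed $i,i+1,\dots,i+H^n_i-1$. By the previous step, any other fixed point $j>i$ satisfies $j\geq i+H^n_i\geq i+n^{0.49}$. Consequently, for each fixed $i$ the number of $j\in[i-2n^{0.6},i+2n^{0.6}]$ with $\theta_j\indc_P=1$ is at most $4n^{0.6}/n^{0.49}+1=O(n^{0.11})$, which yields the pointwise estimate
\[
\sum_{j:\,|j-i|\leq 2n^{0.6}} \theta_i\theta_j\indc_P \;\leq\; C\,n^{0.11}\,\theta_i\indc_P.
\]

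Taking conditional expectation given $\Omega^n$, summing over $i\in[an,bn]$, and using the single-site bound $\expect[\theta_i\indc_P\mid\Omega^n]\leq n^{-0.74}$ (from Lemma \ref{problihiapprox} together with properness of $\Omega^n$, exactly as in the proof of Lemma \ref{B2}), I would conclude
\[
\sum_{(i,j)\in B_4}\expect[\theta_i\theta_j\indc_P\mid\Omega^n] \;\leq\; C\,n^{0.11}\cdot n\cdot n^{-0.74} \;=\; O(n^{0.37}),
\]
which is comfortably below $n^{0.47}$ for all $n$ large. The main obstacle here is really just the structural claim in the second paragraph; once excursion-disjointness at fixed points is in hand, everything else is a crude deterministic bound followed by linearity of expectation, with no need to analyze the joint law of $(\theta_i,\theta_j)$ more carefully.
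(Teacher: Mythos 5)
Your proof is correct and takes essentially the same route as the paper. The structural claim you re-derive (fixed points of $\sigma_\gamma$ correspond to pairwise disjoint excursions, hence on the event $P$ they are spaced at least $n^{0.49}$ apart) is precisely the content of the paper's Lemma \ref{fixsmall}, which the paper's proof invokes as a black box to get $\sum_{|i-j|<2n^{0.6}}\theta_i\theta_j\indc_P \leq 5n^{0.11}\theta_i\indc_P$; you have simply unpacked that lemma inline. After that, both proofs apply the single-site estimate $\expect[\theta_i\indc_P\mid\Omega^n]\lesssim n^{-0.74}$ from Lemma \ref{problihiapprox} (valid for $i\in\Iint{k}$ on a proper $\Omega^n$) and sum over the $O(n)$ indices to land at $O(n^{0.37}\text{–}n^{0.38})$, comfortably below $n^{0.47}$. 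One pedantic remark: you sum over all $i\in[an,bn]$, but the index $i$ in $B_4$ ranges only over $\cup_k\Iint{k}$, which is exactly where the single-site bound from Lemma \ref{problihiapprox} is justified; the way you phrase it makes this look like an accident rather than a necessity, so it is worth making explicit that you only need (and only have) the bound for $i$ in the interior blocks.
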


\begin{proof}

By Lemma \ref{fixsmall} 

$$\sum_{|i-j| < 2n^{0.6} } \expect[\theta_i\theta_j\indc_P | \Omega^n ] \leq 5n^{0.6-0.49}\expect[\theta_i\indc_P|\Omega^n].$$
For each $k,$  $|\Iint{k}| \leq n^{0.9}$ so 
$$\sum_k \sum_{i\in\Iint{k}}5n^{0.11}n^{-0.73} \leq 5n^{0.1+0.9 + 0.11-0.735} < n^{0.47}.$$
\end{proof}

The last possibility is the one which requires the most care.  

\begin{lemma}  \label{B5}

Assuming the standard hypotheses,  

$$\sum_{(i,j)\in B_5} \expect[\theta_i\theta_j\indc_P|\Omega^n] <n^{0.47}.$$

\end{lemma}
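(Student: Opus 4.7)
The plan is to exploit that for pairs $(i,j) \in B_5$ with $i < j$ the two excursions live in spatially disjoint step-ranges on the event $P$, and then to derive an approximate-independence estimate of the form $\prob(\theta_i \theta_j \indc_P = 1 \mid \Omega^n) \leq C_0^2 \, n^{-1.47}$ uniformly over proper $\Omega^n$; combined with the crude count $|B_5| \leq n^{1.9}$ this yields the claimed bound. To set this up, fix $k$ and $(i,j) \in \Iint{k}^2$ with $i < j$ and $j - i > 2n^{0.6}$. On $P \cap \{\theta_i \theta_j = 1\}$ one has $H_i^n, H_j^n \in (n^{0.49}, n^{0.51})$ so $L_i^n = 2 H_i^n \leq 2 n^{0.51}$; since $V_j^n - V_i^n \geq j - i > 2 n^{0.6} > L_i^n$, the $i$th excursion terminates strictly before the $j$th excursion begins, and the two excursions occupy disjoint sub-intervals of $[0,2n]$.

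For the key pointwise bound, I would proceed by tower conditioning. Let $\mathcal{F}_{j-}$ denote the $\sigma$-algebra generated by $\Omega^n$ together with the path $\Gamma^n$ restricted to $[0,\, V_j^n - 1]$; this determines both $\theta_i$ and $H_j^n$. The main analytic input is that, for all proper $\Omega^n$ and all configurations in $\mathcal{F}_{j-}$ consistent with $H_j^n \geq n^{0.49}$,
\[
\prob\!\big(\theta_j = 1 \,\big|\, \mathcal{F}_{j-}\big) \;\leq\; C_0 \, (H_j^n)^{-3/2} \;\leq\; C_0 \, n^{-0.735}.
\]
This is a first-return-time bound for the Dyck-path bridge of remaining length $2n - V_j^n + 1$ started at height $H_j^n - 1$; because $H_j^n$ is macroscopically small relative to the remaining bridge length and the bridge endpoint constraint is negligible at scale $O(H_j^n)$, the probability is within a bounded factor of the free SRW first-return probability at time $2 H_j^n$, which is $\sim c \, (H_j^n)^{-3/2}$. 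This is the same type of quantitative local/bridge estimate that underlies the proof of Lemma \ref{problihiapprox}. Symmetrically, $\prob(\theta_i = 1,\ H_i^n \geq n^{0.49} \mid \Omega^n) \leq C_0 \, n^{-0.735}$ directly from problihiapprox. A tower step then yields
\[
\prob(\theta_i \theta_j \indc_P = 1 \mid \Omega^n) \;\leq\; \expect\!\left[\,\indc_{\{\theta_i = 1,\, H_i^n, H_j^n \geq n^{0.49}\}}\, \prob(\theta_j = 1 \mid \mathcal{F}_{j-})\,\Big|\, \Omega^n\right] \;\leq\; C_0^2 \, n^{-1.47}.
\]

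To conclude, $|B_5| \leq K \cdot \max_k |\Iint{k}|^2 \leq n^{0.1} \cdot n^{1.8} = n^{1.9}$, so
\[
\sum_{(i,j)\in B_5} \expect[\theta_i \theta_j \indc_P \mid \Omega^n] \;\leq\; n^{1.9} \cdot C_0^2 \, n^{-1.47} \;=\; C_0^2 \, n^{0.43} \;<\; n^{0.47}
\]
for all $n$ large enough, uniformly in proper $\Omega^n$. The main obstacle is establishing the first-return bound rigorously: although the forward Markov property of the Dyck-path bridge is immediate, one must verify uniformly over proper $\Omega^n$ and over the admissible past in $\mathcal{F}_{j-}$ that the remaining bridge is long enough and has sufficiently well-behaved endpoint data so that the free-SRW first-return asymptotic $\sim c \, h^{-3/2}$ survives the one-sided conditioning up to a bounded constant $C_0$. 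This is essentially a quantitative extension of the analysis that produces the constant $1/(2\pi^{1/2})$ in Lemma \ref{problihiapprox}, carried out with only the starting height $H_j^n - 1$ fixed rather than the symmetric endpoint data; no further approximate independence between the two excursions needs to be proved directly because the tower argument replaces it.
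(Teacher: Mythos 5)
Your tower-conditioning strategy is conceptually sound and ends up reproducing the same numerology as the paper ($|B_5| \leq n^{1.9}$ times a per-pair bound of order $n^{-1.47}$), but the crux of your argument, the uniform conditional first-return bound
\[
\prob\big(\theta_j = 1 \mid \mathcal{F}_{j-}\big) \leq C_0 (H_j^n)^{-3/2},
\]
is asserted rather than proved, and it cannot be obtained from Lemma~\ref{problihiapprox} as stated. The difficulty is that this bound must hold uniformly over every past consistent with $P$ and $\Omega^n$, and in particular for any $H_j^n = h$ anywhere in $(n^{0.49}, n^{0.51})$. The residual bridge then runs from $(V_j^n, h)$ to the next pin $(v_{a_{k+1}}, \alpha_{k+1}^n)$ over roughly $r = a_{k+1}-j \geq 2n^{0.6}$ up-steps with height discrepancy $m' = \alpha_{k+1}^n - h$, and on $P$ this discrepancy can be as large as order $n^{0.51}$, which is far outside the window $|m'| \leq n^{0.451}$ assumed in Lemma~\ref{problihiapprox} and also outside the regime $|m'| < r^{0.6}$ where Lemma~\ref{pathcountapprox} applies. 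So the ``quantitative extension of the analysis behind Lemma~\ref{problihiapprox}'' you invoke is real additional work: you would need a version of the ratio estimate
\[
\frac{C_{h-1}\,\big|\cE_{V_j^n+2h-1,\,h-1}^{\,v_{a_{k+1}},\,\alpha_{k+1}^n}\big|}{\big|\cE_{V_j^n,\,h}^{\,v_{a_{k+1}},\,\alpha_{k+1}^n}\big|} \leq \frac{C_0}{h^{3/2}}
\]
that also covers atypical bridges. The paper's proof avoids having to prove any such pointwise conditional bound: it writes $\expect[\theta_i\theta_j\indc_P\mid\Omega^n]$ as a double sum over the two excursion heights $(h,h')$, applies Lemma~\ref{pathcount} (good regime) or Lemma~\ref{theelemma} (bad regime, giving $e^{-n^{0.001}}$) to each of the five path segments in the decomposition, and then bounds the resulting double Gaussian sum uniformly via Lemma~\ref{uglyvarcalc}. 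That last lemma is precisely what absorbs the problematic $(h,h')$ ranges that your approach would have to handle via a strengthened conditional estimate. In short, the approach is valid in spirit and the bound you need should indeed hold, but establishing it requires essentially the same path-counting analysis the paper carries out directly; your reformulation has not sidestepped the hard step, only relocated it.
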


\begin{proof}

We proceed in a manner similar to Lemmas \ref{problihi} and \ref{problihicondapprox}.  For $(i,j) \in B_5,$ with $i \in \Iint{k}.$

$$\expect[\theta_i\theta_j\indc_P | \Omega^n ] = \sum_{n^{0.49}<h<n^{0.51}}\sum_{n^{0.49}<h'<n^{0.51}} G(i,j,h,h',a_k,a_{k+1},\alpha^n_k,\alpha^n_{k+1}).$$where 

$$G(i,j,h,h',a_k, a_{k+1},\alpha^n_k,\alpha^n_{k+1}) $$
$$= C_{h-1} C_{h'-1} \left|\cE_{{v_{a_k}},\alpha^n_k}^{2i-h,h}\right|\left|\cE_{2i-h+2h-1,h-1}^{2j-h',h'}\right| \left|\cE_{2j-h'+2h'-1,h'-1}^{v_{a_{k+1}},\alpha^n_{k+1}}\right| \left|\cE_{{v_{a_k}},\alpha^n_k}^{v_{a_{k+1}},\alpha^n_{k+1}}\right|^{-1}.$$

For fixed $i,j$ and $k$ there are two cases to consider for values $h$ and $h'$.  One where we can use Lemma \ref{pathcount} for each each section of the path, and one where we bound $G$ by $e^{-n^{0.001}}$ using Lemma \ref{theelemma}.

Define the set of pairs of heights $D_{i,j,k}$ such that for $(h,h')\in D_{i,j,k},$ Lemma \ref{pathcount} is valid for each of the path sections.  For $(h,h') \notin D_{i,j,k} $ the contribution to $\expect[\theta_i\theta_j\indc_P|\Omega^n ] $ is bounded by $e^{-n^{0.001}}.$  Otherwise

$$\expect[\theta_i\theta_j\indc_P|\Omega^n] \leq \sum_{(h,h')\in D_{i,j,k}}  \sqrt{\frac{ n^{0.9}}{ i(j-i)(n^{0.9}-j)h^3{h'}^3 }} F(i,j,h,h',\alpha^n_k,\alpha^n_{k+1})$$ where 

$$F(i,j,h,h',\alpha^n_k,\alpha^n_{k+1}) = \exp \left( -\frac{(h-\alpha^n_k)^2}{4i} -\frac{(h'-h)^2}{4(j-i)}-\frac{(\alpha^n_{k+1} - h')^2}{4(n^{0.9}-j)} + \frac{(\alpha^n_{k+1}-\alpha^n_k)^2}{4n^{0.9}} \right).$$

For $h$ and $h'\in (n^{0.49},n^{0.51})$  we may replace $\frac{1}{(hh')^{3/2}}$ with $n^{-1.47}.$  By Lemma \ref{uglyvarcalc} there exist a large constant $C$ such that 

$$\sum_{h,h'} F(i,j,h,h',\alpha^n_k,\alpha^n_{k+1}) < C\sqrt{\frac{i(j-i)(n^{0.9}-j)}{n^{0.9}}}$$ uniformly over all choices of $(i,j)\in B_5$ and $\alpha^n_k$ and $\alpha^n_{k+1}$ from a proper sequence of heights.

This gives $$\sum_{(i,j)\in B_5} \expect[\theta_i\theta_j \indc_P |\Omega^n ] \leq \sum_{(i,j) \in B_5} Cn^{-1.47} \leq Cn^{1.9}n^{-1.47} <n^{0.47}.$$
\end{proof}

\begin{proof}[Proof of Lemma \ref{varcalc}]
Lemmas \ref{B1}, \ref{B2}, \ref{B3}, \ref{B4}, and \ref{B5} combine to  show that the variance $\var[\theta_{[an,bn]}\indc_P|\Omega^n]<8n^{0.47}$.  Lemma \ref{notP} finishes the proof.
\end{proof}

\begin{pfofcor}{\ref{sixteencandles}}
We follow the proof of Theorem \ref{geddewatanabe} with very minor changes. In particular we use Corollaries \ref{problihiapprox2} and \ref{fixedsmall} in place of Lemmas \ref{problihiapprox} and \ref{fixsmall}. Everything else follows in exactly the same manner when $\alpha<.49$. For $\alpha \in [.49,.5)$ we follow the proof of Theorem \ref{geddewatanabe} changing the exponents to $.5 \pm \delta$. We leave the details to the reader.
\end{pfofcor}

The following appendix contains various technical lemmas that will be used throughout the paper.  The statements of the lemmas are similar to results found elsewhere, but modified for use in this paper.  

\section{Appendix A: Technical Lemmas}
\label {appendixb}

We begin with a useful Lemma that will help count non-negative lattice paths between points.  Let $\mathcal{A}_n$ denote the set of points $(i,m) \in \Z^2$ such that $0<n^{0.6}<i<n$ and $|m| < i^{0.6}.$

\begin{lemma}  \label{theelemma}

For $(i,m) \in \mathcal{A}_n.$

\beq{ 2i-m \choose i  } = \frac{(1+ \Delta(i,m))4^{i}}{2^m\sqrt{\pi i}}e^{-\frac{m^2}{4i }}
\eeq where $\Delta(i,m) = o(n^{-0.1})$ uniformly in $i$ and $m$ in $\mathcal{A}_n$.  

For $i> n^{0.6}$ and $|m| > i^{0.6}$

$${2i -m \choose i} \leq \frac{4^i}{2^m}e^{-n^{0.1}}.$$
\end{lemma}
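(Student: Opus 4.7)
My plan for the first part is to apply Stirling's formula in its quantitative form
\[
k! = \sqrt{2\pi k}\,(k/e)^{k}\bigl(1+O(1/k)\bigr)
\]
to each of $(2i-m)!$, $i!$, and $(i-m)!$. Since $(i,m)\in\mathcal{A}_n$ forces $i>n^{0.6}$ and $|m|<i^{0.6}$, each of the three arguments is $\Theta(i)$, so the $O(1/k)$ terms are uniformly $O(n^{-0.6})$, which is comfortably $o(n^{-0.1})$. After cancellation, the prefactor becomes
\[
\sqrt{\frac{2i-m}{2\pi i(i-m)}} = \frac{1}{\sqrt{\pi i}}\sqrt{\frac{1-u/2}{1-u}},\qquad u := m/i,
\]
and the radical on the right is $1+O(|u|)=1+O(n^{-0.24})$. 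The main term is $(2i-m)^{2i-m}/\bigl(i^{i}(i-m)^{i-m}\bigr)$, which equals $4^{i}\cdot 2^{-m}\cdot(1-u/2)^{2i-m}(1-u)^{-(i-m)}$.

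To control the last factor I would take logs and Taylor expand $\log(1-u/2)$ and $\log(1-u)$ to third order. The first-order terms cancel identically, and a short calculation gives
\[
(2i-m)\log(1-u/2)-(i-m)\log(1-u) = -\frac{m^{2}}{4i}+O\!\Bigl(\frac{m^{3}}{i^{2}}\Bigr)+O(iu^{4}).
\]
With $|m|<i^{0.6}$ the remainder is $O(i^{-0.2})\le O(n^{-0.12})$, which is $o(n^{-0.1})$, so exponentiating recovers $e^{-m^{2}/(4i)}(1+o(n^{-0.1}))$ uniformly. Combining with the prefactor gives the claimed formula with $\Delta(i,m)=o(n^{-0.1})$ uniformly on $\mathcal{A}_n$.

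For the second part I will use a probabilistic interpretation. Writing
\[
\binom{2i-m}{i} = \frac{4^{i}}{2^{m}}\cdot\P(X=i),\qquad X\sim\mathrm{Bin}(2i-m,1/2),
\]
we have $\E X=i-m/2$, so the event $\{X=i\}$ is a deviation of size $|m|/2$. Hoeffding's inequality gives $\P(X=i)\le 2\exp\bigl(-m^{2}/(2(2i-m))\bigr)\le 2\exp(-m^{2}/(8i))$ (using that $|m|<2i$, else the binomial coefficient is zero). Under the hypothesis $|m|>i^{0.6}$ and $i>n^{0.6}$ one gets $m^{2}/i>i^{0.2}>n^{0.12}$, so for all $n$ large enough $2\exp(-m^{2}/(8i))\le\exp(-n^{0.1})$, yielding the desired bound.

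The main obstacle is the uniform-in-$(i,m)$ control of the error terms in the Stirling expansion: one has to track three sources of error (the $O(1/k)$ terms from Stirling itself, the $O(u)$ term from the square-root factor, and the $O(m^{3}/i^{2})$ term from the cubic Taylor remainder) and check that each is $o(n^{-0.1})$ on all of $\mathcal{A}_n$ simultaneously. The worst of these is the cubic remainder, which is why $|m|<i^{0.6}$ (rather than, say, $|m|<i^{1/2}\log i$) is exactly what is needed; the threshold $0.6$ also matches the cut-off used in the tail bound so that both regimes fit together.
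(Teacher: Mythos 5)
Your proof is essentially correct but takes a genuinely different route from the paper's.  For the first estimate, the paper simply cites the central local limit approximation $\triangleright\,$IX.1 from Flajolet--Sedgewick, whereas you carry out the Stirling expansion by hand, tracking the three error sources (Stirling remainder, square-root prefactor, cubic Taylor remainder) and verifying that each is $o(n^{-0.1})$ uniformly on $\mathcal{A}_n$.  This is more self-contained and makes visible exactly why the threshold $0.6$ in the definition of $\mathcal{A}_n$ is what is needed.  For the tail estimate, the paper writes $m = \pm(i^{0.6}+r)$ and uses a telescoping product of consecutive ratios $\binom{N-1}{i}/\binom{N}{i}$ bounded by $1/2$, while you reinterpret the ratio $\binom{2i-m}{i}\cdot 2^m/4^i$ as a binomial point probability and apply Hoeffding; both are clean, and yours is arguably the more conceptual.

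One small slip in the second part: the parenthetical ``using that $|m|<2i$, else the binomial coefficient is zero'' is not correct.  $\binom{2i-m}{i}$ vanishes only when $m>i$; for large \emph{negative} $m$ (say $m<-2i$) it is certainly nonzero, yet then $2i-m>4i$ and the step $m^2/(2(2i-m))\ge m^2/(8i)$ fails.  The conclusion is nevertheless fine, because when $m<-2i$ one has $m^2/(2(2i-m))=m^2/(2(2i+|m|))>|m|/4>i/2>n^{0.6}/2$, which is far larger than $n^{0.1}$.  So you should replace the parenthetical with a two-case split: for $-2i\le m\le i$ use $2i-m\le 4i$; for $m<-2i$ bound the exponent directly as above.
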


\begin{proof} 
This first equality follows from $\triangleright$ IX.1 on page 615 of Flajolet and Sedgewick \cite{FlSe09}.  For the second equality we let $m = i^{0.6} + r$ or $m = -i^{0.6} -r$ for some $r>0.$

$$ {2i - m \choose i }  = {2i - i^{0.6} \choose i } \prod_{k = 0}^{r-1} \frac{ i - i^{0.6} -k }{2i - i^{0.6} - k}$$
$$ \leq \frac{4^i}{2^{i^{0.6} +r}}e^{-i^{0.2}/4} \prod_{k=0}^{r-1}\frac{ 2i - 2i^{0.6} - 2k }{2i - i^{0.6} - k}.$$
$$\leq \frac{4^i}{2^m}e^{-n^{0.12}/4}.$$
A similar computation holds for $m = -i^{0.6} -r.$
\end{proof}

Consider a lattice path starting at $(v_0,h_0).$   Recall Definition \ref{def.xihili}.  We may extend those definitions to general lattice paths with a slight modification.  The definitions $v_i$ and $h_i$ remain the same, the position and the height after the $i$th up-step from the start of the path.  For $l_i$ we do not necessarily have an excursion.  If the path never returns below $h_i$ at some time later than $v_i$ then we say that $l_i = \infty.$  

\begin{lemma} \label{pathcount}

Suppose $(i,m)\in \mathcal{A}_n$ and $h = h_0 + m.$  The number of lattice paths from $(v_0,h_0)$ to $(v_i,h)$ is given by

$${ 2(i-1) - (m-1) \choose i-1} = \frac{4^{i}}{2^{m+1}\sqrt{\pi i}}\exp\left( -\frac{m^2}{4i} \right) (1+\Delta(i,m))$$
where $\Delta(i,m)$ as defined in Lemma \ref{theelemma}.
\end{lemma}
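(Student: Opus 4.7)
The plan is to first count the paths exactly by a direct combinatorial argument, and then apply Lemma \ref{theelemma} to extract the asymptotic.

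\emph{Exact count.} First I would argue that a lattice path from $(v_0, h_0)$ whose position $v_i$ after the $i$th up-step has height $h = h_0 + m$ uses exactly $i$ up-steps and, by the net height change, exactly $i - m$ down-steps. By the very definition of $v_i$, the final step must be the $i$th up-step, so the preceding $2i - m - 1$ steps consist of an arbitrary interleaving of $i-1$ up-steps and $i - m$ down-steps. This gives the exact count
\[\binom{2i - m - 1}{i - 1} \;=\; \binom{2(i-1) - (m - 1)}{i-1},\]
matching the left-hand side of the claim. (Since the statement is about general lattice paths, with no non-negativity constraint, this plain binomial coefficient is correct.)

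\emph{Asymptotic.} Since $(i, m) \in \mathcal{A}_n$ and $n$ is large, the shifted pair $(i-1, m-1)$ again lies in $\mathcal{A}_n$ (the defining inequalities $n^{0.6} < \cdot < n$ and $|\cdot| < (\cdot)^{0.6}$ are preserved up to a negligible perturbation). Applying Lemma \ref{theelemma} with $(i-1, m-1)$ in place of $(i, m)$ yields
\[\binom{2(i-1) - (m-1)}{i-1} \;=\; (1 + \Delta(i-1, m-1)) \,\frac{4^{i-1}}{2^{m-1}\sqrt{\pi(i-1)}}\, \exp\!\left(-\frac{(m-1)^2}{4(i-1)}\right),\]
with $\Delta(i-1, m-1) = o(n^{-0.1})$ uniformly.

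\emph{Reconciling constants.} The power-of-two prefactor simplifies exactly: $4^{i-1}/2^{m-1} = 4^{i}/2^{m+1}$. The square-root contributes $\sqrt{\pi(i-1)} = \sqrt{\pi i}\,(1 + O(i^{-1})) = \sqrt{\pi i}\,(1 + O(n^{-0.6}))$ by $i > n^{0.6}$. For the exponential, one computes
\[\frac{m^2}{4i} - \frac{(m-1)^2}{4(i-1)} \;=\; \frac{-m^2 + (2m - 1)i}{4i(i-1)}.\]
On $\mathcal{A}_n$, using $|m| < i^{0.6}$ and $i > n^{0.6}$, both terms are $O(i^{-0.4}) = O(n^{-0.24})$, so the difference is $o(n^{-0.1})$ and hence
\[\exp\!\left(-\frac{(m-1)^2}{4(i-1)}\right) \;=\; \exp\!\left(-\frac{m^2}{4i}\right)(1 + o(n^{-0.1})).\]
Absorbing all three multiplicative error factors into a single $(1 + \Delta(i,m))$ with $\Delta(i,m) = o(n^{-0.1})$ yields the claimed expression.

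\emph{Main obstacle.} There is no substantive obstacle; the argument is essentially bookkeeping on top of Lemma \ref{theelemma}. The only care needed is in the last step, where one must verify that the shift-by-one in both coordinates does not degrade the error term from $o(n^{-0.1})$ to something larger — this is exactly where the hypotheses $i > n^{0.6}$ and $|m| < i^{0.6}$ are consumed, through the bounds $i^{-1} = O(n^{-0.6})$ and $i^{-0.4} = O(n^{-0.24})$.
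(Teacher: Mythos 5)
Your proposal is correct and follows essentially the same route as the paper: both arguments observe that the last step is a forced up-step, count the arrangements of the remaining $i-1$ up-steps and $i-m$ down-steps to get $\binom{2(i-1)-(m-1)}{i-1}$, and then invoke Lemma \ref{theelemma} with the shifted pair $(i-1,m-1)$. Your version is actually a bit more careful than the paper's, which silently absorbs the discrepancies $\sqrt{\pi(i-1)}$ vs.\ $\sqrt{\pi i}$ and $(m-1)^2/4(i-1)$ vs.\ $m^2/4i$ into $(1+\Delta(i,m))$; your explicit computation showing that these discrepancies are $O(n^{-0.6})$ and $O(n^{-0.24})$ respectively, hence $o(n^{-0.1})$, is the bookkeeping the paper omits.
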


\begin{proof} 

Let $i$ and $d$ denote the number of up and down steps respectively in a lattice path up to and including the $i$th up-step.  We denote the total number of steps by $v_i=i+d$.  The change in height for that path is given by $h-h_0 = m= i-d$.  Then $v_i=2i-m$ counts the total number of steps.  The second to last position of the path is $(x + 2i -m -1,h_0+m-1)$ since the $v_i$th step is assumed to be an up-step.  Therefore the total number of lattices paths from $(v_0,h_0)$ to $(v_0+2i-m-1,h_0 +m -1)$  is counted by Lemma \ref{theelemma}, giving the equation found in Lemma \ref{pathcount}

\end{proof}

Now that we can accurately count the number of lattice paths from one point to another we can count the number of non-negative paths between two points.  For a pair of points $(v_0,h_0)$ and $(v_i,h_i)$ we let $\mathcal{E}_{v_0,h_0}^{v_i,h_i}$ denote the set of non-negative lattice paths ending with an up-step between the two points.

\begin{lemma} \label{pathcountapprox}

For $(i,m) \in \mathcal{A}_n$, let $h_0> n^{0.499},$ $v_i = v_0 + 2i-m,$ and $h_i = h_0+m.$  Then

$$\left|\cE_{v_0,h_0}^{v_i,h_i}\right| =  \frac{4^{i}}{2^{m+1}\sqrt{\pi i}}\exp\left( -\frac{m^2}{4i} \right) (1+\Delta'(i,m))$$ where $\Delta'(i,m)=o(n^{-0.1})$ uniformly in $i,m \in A_n.$

\end{lemma}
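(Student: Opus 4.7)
The plan is to reduce to a straightforward application of the reflection principle and Lemma \ref{pathcount}. First I would observe that any path in $\cE_{v_0,h_0}^{v_i,h_i}$ is obtained by appending a final up-step to a non-negative lattice path from $(v_0,h_0)$ to $(v_i-1,h_i-1)$; since $h_i-1 = h_0+m-1 \geq 0$ in the relevant range (recall $h_0 > n^{0.499}$ dominates $|m|$ once one restricts to the parameters where the lemma is actually applied), this is a bijection. Thus it suffices to count non-negative paths from $(v_0,h_0)$ to $(v_i-1,h_i-1)$, a path of length $2i-m-1$ with $i-1$ up-steps.

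Next I would apply the D\'esir\'e Andr\'e reflection principle. Reflecting the starting point across the line $y=-1$ sends $(v_0,h_0)$ to $(v_0,-h_0-2)$, and the paths from $(v_0,h_0)$ to $(v_i-1,h_i-1)$ that touch $-1$ are in bijection with unrestricted paths from $(v_0,-h_0-2)$ to $(v_i-1,h_i-1)$. Those have length $2i-m-1$ and net displacement $2h_0+m+1$, hence $i+h_0$ up-steps. Therefore
\[
|\cE_{v_0,h_0}^{v_i,h_i}| \;=\; \binom{2i-m-1}{i-1} - \binom{2i-m-1}{i+h_0}.
\]
Lemma \ref{pathcount} applied to the first term (writing $2i-m-1 = 2(i-1)-(m-1)$) yields the target main term $\tfrac{4^i}{2^{m+1}\sqrt{\pi i}}\exp(-m^2/(4i))(1+\Delta)$ with $\Delta = o(n^{-0.1})$.

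The remaining step, and the actual work, is showing that the reflection term is $o(n^{-0.1})$ times the main term. Using the product form
\[
\frac{\binom{2i-m-1}{i+h_0}}{\binom{2i-m-1}{i-1}} \;=\; \prod_{j=0}^{h_0}\frac{i-m-j}{i+j},
\]
each factor is at most $\exp(-(m+2j)/(i+j))$, so summing logarithms gives a bound of the form $\exp(-c\, h_0(h_0+m)/i)$ for an absolute constant $c>0$. In the regime where this lemma is actually invoked (via the path-length estimates in the proofs of Lemmas \ref{lemma.expectxab} and \ref{varcalc}, where $i$ is controlled by the interval size $|I_k| \leq n^{0.9}$), one has $h_0 > n^{0.499}$, $|m| < i^{0.6} \ll h_0$, and hence $h_0^2/i \geq n^{0.098}$. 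Consequently the ratio is bounded by $\exp(-c\, n^{0.098})$, which is far smaller than $n^{-0.1}$.

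The main obstacle is precisely this last estimate: the lower bound $h_0 > n^{0.499}$ is tight for the argument to work. If $h_0$ were only of order $\sqrt{i}$, the reflection term would match the main term in order and one could not absorb it into $\Delta'(i,m) = o(n^{-0.1})$. Concretely, one must verify that in every regime in which the statement is used the ratio $h_0^2/i$ tends to infinity faster than $\log n$, which follows from combining the lower bound $h_0 > n^{0.499}$ with the implicit upper bound $i \leq O(n^{0.9})$ inherited from the interval decomposition.
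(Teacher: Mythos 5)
Your proof is correct and takes essentially the same route as the paper. The paper likewise obtains
\[
\left|\cE_{v_0,h_0}^{v_i,h_i}\right| \;=\; \binom{2i-m-1}{i-1} - \binom{2i-m-1}{i+h_0}
\]
by the reflection/ballot argument, identifies the leading term via Lemma~\ref{pathcount}, and absorbs the reflection term via the bound $\binom{2i-m-1}{i+h_0} < \binom{2i-m-1}{i-1}\exp(-h_0^2/2i)$. Where you add value is in the detail of that last step: the paper asserts without derivation that the ratio of the two binomials is at most $\exp(-h_0^2/2i)$ and then simply claims $h_0^2/2i > n^{0.07}$, while you derive the bound from the product $\prod_{j=0}^{h_0}\tfrac{i-m-j}{i+j}$ and, more importantly, notice that the asserted lower bound on $h_0^2/i$ does not follow from the stated hypotheses: $\mathcal{A}_n$ only requires $n^{0.6}<i<n$, so for $i$ near $n$ one can have $h_0^2/i = O(1)$ even with $h_0 > n^{0.499}$, in which case the reflection term is not negligible. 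Your resolution --- that the implicit constraint $i = O(n^{0.9})$ holds wherever the lemma is actually invoked, which yields $h_0^2/i \gtrsim n^{0.098}$ and hence a reflection term of size $\exp(-c\,n^{0.098})$, far smaller than $n^{-0.1}$ --- is the correct reading of the paper's intent and makes explicit a small imprecision in the lemma's stated domain of validity.
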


\begin{proof} 

We count using standard ballot counting arguments. 

$$\left|\cE_{v_0,h_0}^{v_0+2i-m,h_0+m}\right| = {2i-m-1 \choose i-1} - {2i -m -1 \choose  i + h_0  }.$$

For $h_0 >n^{0.49},$ $${2i-m-1 \choose i+h_0 } < {2i-m-1 \choose i-1} \exp(-(h_0)^2/2i).$$  
Moreover $(h_0)^2/2i >n^{0.07},$  so    
$$\left|\cE_{v_0,h_0}^{v_i,h_i}\right| = {2i-m-1\choose i-1} ( 1+ O( \exp(-n^{0.06}) ) ) =  \frac{4^{i}}{2^{m+1}\sqrt{\pi i}}\exp\left( -\frac{m^2}{4i} \right) (1+\Delta'(i,m))$$ as desired.
\end{proof}

For paths chosen uniformly from $\cE_{v_0,h_0}^{v_j,h_j}$ for $0<i<j$ we would like to know for various values of $i$ and $h$ how many of these path go through the point $(v_i,h)$ after the $i$th up-step.  Given $\Gamma^n \in \cE_{v_0,h_0}^{v_j,h_j}$ chosen uniformly at random what is the probability that $H^n_i = h?$

\begin{lemma} \label{probxih}

Fix $v_0, h_0, h_j$ and $h_j$.   Let $X^n$ be chosen uniformly from $\cE_{v_0,h_0}^{v_j,h_j}$. For $h>0 $ and $0<i<j$, 

$$\prob( H^n_i = h) =  \left| \cE_{v_0,h_0}^{2i - (h-h_0),h} \right | \left| \cE_{2i-(h-h_0),h}^{v_j,h_j}\right| \left| \cE_{v_0,h_0}^{v_j,h_j}\right|^{-1}.$$

\end{lemma}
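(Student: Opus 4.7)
The plan is to prove this by a bijective path decomposition at the $i$-th up-step. Since $X^n$ is uniform on $\cE_{v_0,h_0}^{v_j,h_j}$, the probability in question equals
\[ \prob(H^n_i = h) = \frac{\#\{X \in \cE_{v_0,h_0}^{v_j,h_j} : H_i(X) = h\}}{\bigl|\cE_{v_0,h_0}^{v_j,h_j}\bigr|}, \]
so everything reduces to counting the numerator.

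The first observation I would make is that the event $\{H_i = h\}$ pins down the location of the path immediately after its $i$-th up-step: with $i$ up-steps and $i - (h-h_0)$ down-steps used by that moment, the path must sit at horizontal coordinate $v_0 + 2i - (h-h_0)$ and height $h$, which is precisely the intermediate point appearing on the right-hand side of the claim. I would then define a splitting map that cuts each such $X$ at that intermediate point, producing an initial segment ending at the $i$-th up-step of $X$ and a terminal segment ending at the $j$-th up-step of $X$. Non-negativity of both pieces is inherited from $X$, and each ends with an up-step by construction, so the initial segment lies in $\cE_{v_0,h_0}^{2i-(h-h_0),h}$ and the terminal segment lies in $\cE_{2i-(h-h_0),h}^{v_j,h_j}$. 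Its inverse is simply concatenation.

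The only point to check carefully is that splitting and concatenation are mutually inverse. Non-negativity of a concatenated path is immediate from that of each factor, and the prescribed endpoints of a path in $\cE_{v_0,h_0}^{2i-(h-h_0),h}$ force it to contain exactly $i$ up-steps, the last of which is its terminal step by definition of $\cE$; hence in the concatenation that terminal step is precisely the $i$-th up-step of the whole path, so $H_i = h$ holds as required. I do not anticipate a serious obstacle here --- this is essentially a strong-Markov-type decomposition for lattice paths. The only subtlety is ensuring the split is done at the $i$-th up-step rather than at some earlier time at which the path might happen to reach height $h$, but once one notes that the $x$-coordinate $v_0 + 2i - (h - h_0)$ corresponds uniquely to the up-step count $i$ given height $h$, this subtlety resolves itself. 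Multiplying cardinalities through the bijection and dividing by $|\cE_{v_0,h_0}^{v_j,h_j}|$ then yields the stated identity.
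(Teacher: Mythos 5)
Your proposal is correct and follows essentially the same route as the paper's proof: cut the path at the $i$-th up-step, observe that the event $\{H^n_i = h\}$ forces the cut point to be at the lattice location $(v_0 + 2i - (h-h_0), h)$, and note that the splitting map is a bijection onto $\cE_{v_0,h_0}^{2i-(h-h_0),h} \times \cE_{2i-(h-h_0),h}^{v_j,h_j}$, with concatenation as inverse. The paper's argument is identical in substance, only phrased more briefly.
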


\begin{proof} 
Any path in $\cE_{v_0,h_0}^{v_j,h_j}$ can be decomposed uniquely into a concatenation of two paths, one in $\cE_{v_0,h_0}^{v_i,h_i}$  and the other in $\cE_{v_i,h_i}^{v_j,h_j}$ for some appropriate values of $v_i$ and $h_i$ that satisfy $v_i = v_0 + 2i - (h_i-h_0).$  If $h_i = h,$ then $v_i = v_0 + 2i-(h-h_0).$  The set $\left\{X^n \in \cE_{v_0,h_0}^{v_j,h_j} | h_i=h\right\}$ is in bijection with $\cE_{v_0,h_0}^{v_i,h} \times \cE_{v_i,h}^{v_j,h_j}.$  Then 

\begin{align*}
\prob( H^n_i = h) =& \left|\left\{\Gamma^n \in \cE_{v_0,h_0}^{v_j,h_j} | H^n_i=h\right\}\right|\left |\cE_{v_0,h_0}^{v_j,h_j}\right|^{-1}\\
 =& \left| \cE_{v_0,h_0}^{2i-(h-h_0),h} \right | \left| \cE_{2i-(h-h_0),h}^{v_j,h_j}\right| \left| \cE_{v_0,h_0}^{v_j,h_j}\right|^{-1}
 \end{align*}
as desired.
\end{proof}

\begin{lemma} \label{problihicond}

Fix $v_0, h_0, v_j,$ with $h_j$ such that $v_j - v_0 =2j - (h_j-h_0)$ as above.  Also fix $i,h> 0$ and $i < j-h.$   For $X^n$ chosen uniformly from $\cE_{v_0,h_0}^{v_j,h_j},$

$$\prob(L^n_i/2 = h | H^n_i = h  )= C_{h-1} \left| \cE_{2i-(h-h_0)+2h-1,h-1}^{v_j,h_j}\right| \left| \cE_{2i-(h-h_0),h}^{v_j,h_j}\right|^{-1}.$$

\end{lemma}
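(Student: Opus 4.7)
The plan is to carry out a three-piece decomposition of any path in $\cE_{v_0,h_0}^{v_j,h_j}$ with $H^n_i = h$ according to where the $i$th excursion ends, and then count each piece separately. Write $v_i = v_0 + 2i - (h-h_0)$ for the location of the path after its $i$th up-step. Since $H^n_i = h$, such a path is naturally the concatenation of: (i) a non-negative path from $(v_0,h_0)$ to $(v_i,h)$ whose last step is the $i$th up-step; (ii) the $i$th excursion itself, which under the event $L^n_i = 2h$ is a path from $(v_i,h)$ to $(v_i + 2h - 1, h-1)$ staying strictly above $h-1$ in the interior; (iii) a non-negative path from $(v_i + 2h - 1, h-1)$ to $(v_j,h_j)$ whose last step is an up-step. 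The piece in (i) is counted by $|\cE_{v_0,h_0}^{v_i,h}|$, while the piece in (iii) is counted by $|\cE_{v_i + 2h - 1,h-1}^{v_j,h_j}|$, and these counts are independent of each other and of the excursion.

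The main combinatorial step is to identify the number of admissible middle pieces as $C_{h-1}$. By the definition of excursion, after the initial up-step from $h-1$ to $h$ the remaining $2h-2$ steps form a lattice path from $(v_i,h)$ to $(v_i + 2h - 2, h)$ that never drops below $h$, and the final step from $(v_i + 2h - 2, h)$ to $(v_i + 2h - 1, h-1)$ is forced to be a down-step. Shifting this middle segment down by $h$ gives a Dyck path of length $2(h-1)$; the count of such Dyck paths is exactly $C_{h-1}$. This is the only nontrivial ingredient in the proof, and it is clean because the bijection is immediate once the correct endpoint and height constraints are written down.

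Putting the three counts together, the number of paths in $\cE_{v_0,h_0}^{v_j,h_j}$ satisfying both $H^n_i = h$ and $L^n_i/2 = h$ equals
\[
\bigl|\cE_{v_0,h_0}^{v_i,h}\bigr| \cdot C_{h-1} \cdot \bigl|\cE_{v_i + 2h - 1,h-1}^{v_j,h_j}\bigr|.
\]
Dividing by the count on the event $\{H^n_i = h\}$, which by Lemma \ref{probxih} equals $|\cE_{v_0,h_0}^{v_i,h}|\,|\cE_{v_i,h}^{v_j,h_j}|$, produces the claimed formula after the factor $|\cE_{v_0,h_0}^{v_i,h}|$ cancels. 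The hypothesis $i < j - h$ is exactly what is needed to ensure that $v_i + 2h - 1 \leq v_j$, so that the third piece makes sense; without this the probability is trivially zero.

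The only potential obstacle is purely bookkeeping: making sure the endpoints, the placement of the mandatory final up-step in the definition of $\cE$, and the role of the terminal down-step of the excursion line up correctly, so that the three pieces reassemble bijectively to a path in $\cE_{v_0,h_0}^{v_j,h_j}$. Once the decomposition is set up carefully, there is no analysis needed; the identity is an exact counting statement.
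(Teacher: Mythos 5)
Your proof is correct and follows essentially the same three-piece decomposition as the paper's proof: a path in $\cE_{v_0,h_0}^{v_i,h}$, the $i$th excursion (whose interior, after shifting down by $h$, is in bijection with Dyck paths of length $2(h-1)$, giving the $C_{h-1}$ factor), a terminal down-step, and a path in $\cE_{v_i+2h-1,h-1}^{v_j,h_j}$, followed by dividing by the count from Lemma \ref{probxih} so that the first factor cancels. The only cosmetic difference is that the paper lists the forced down-step as a separate fourth item while you fold it into the excursion piece, and your closing observation about the hypothesis $i<j-h$ is a small but accurate addition not spelled out in the paper.
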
 

\begin{proof} 

Let $X^n \in \cE_{v_0,h_0}^{v_j,h_j}$ also satisfy $H^n_i = h$.  From Lemma \ref{probxih} there are precisely 
$$|\cE_{v_0,h_0}^{2i-(h-h_0),h}||\cE_{2i-(h-h_0),h}^{v_j,h_j}|$$ such paths.  Each of these paths that satisfies $L^n_i/2 = h$ has a unique decomposition into three parts:

\begin{itemize}
\item A path $X^n_1 \in \cE_{v_0,h_0}^{2i-(h-h_0),h}$,
\item an excursion $X^n_2$ from $(2i-(h-h_0)_i,h)$ to $(2i-(h-h_0) + 2h-2,h)$, staying above $h-1,$
\item a single down-step from $(2i-(h-h_0)+2h-2,h)$ to $(2i-(h-h_0)+2h-1,h-1)$,
\item and a path $X^n_3 \in \cE_{2i-(h-h_0) + 2h-1,h-1}^{v_j,h_j}.$
\end{itemize}

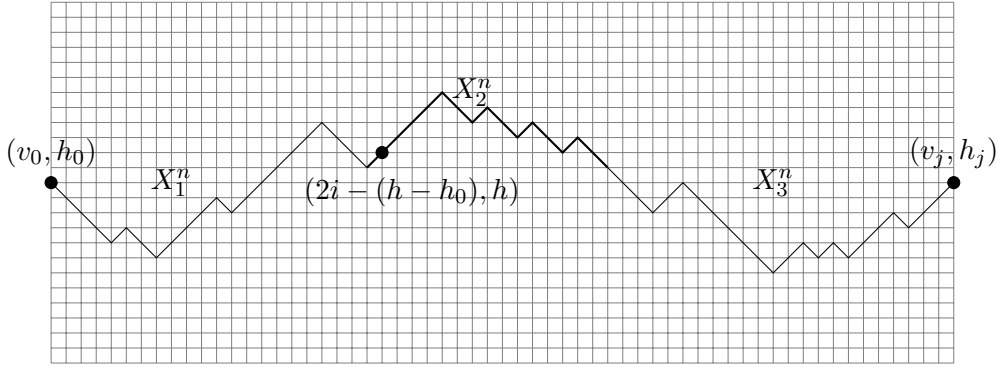
\begin{figure}
\centering
\begin{tikzpicture}[scale=.8]

\draw [step=.25, help lines,  gray] (0,0) grid (15,6) ;

\node at (0,3.5) {$(v_0,h_0)$};

\draw [fill] (0,3) circle (.1cm);

\draw (0, 3)--(1/4, 11/4)--(1/2, 5/2)--(3/4, 9/4)--(1, 2)--(5/4, 9/4)--(3/2, 2)--(7/4, 7/4)--(2, 2)--(9/4, 9/4)--(5/2, 5/2)--(11/4, 11/4)--(3, 5/2)--(13/4, 11/4)--(7/2, 3)--(15/4, 13/4)--(4, 7/2)--(17/4, 15/4)--(9/2, 4)--(19/4, 15/4)--(5, 7/2)--(21/4, 13/4);

\draw [thick] (21/4, 13/4)--(11/2,7/2);

\draw [fill] (11/2,7/2) circle (.1cm);

\node at (6,2.85) {$(2i-(h-h_0),h)$};

\draw [thick] (11/2, 7/2)--(23/4, 15/4)--(6, 4)--(25/4, 17/4)--(13/2, 9/2)--(27/4, 17/4)--(7, 4)--(29/4, 17/4)--(15/2, 4)--(31/4, 15/4)--(8, 4)--(33/4, 15/4)--(17/2, 7/2)--(35/4, 15/4)--(9, 7/2);

\draw [thick] (9,7/2) -- (37/4,13/4);

\draw (37/4, 13/4)--(19/2, 3)--(39/4, 11/4)--(10, 5/2)--(41/4, 11/4)--(21/2, 3)--(43/4, 11/4)--(11, 5/2)--(45/4, 9/4)--(23/2, 2)--(47/4, 7/4)--(12, 3/2)--(49/4, 7/4)--(25/2, 2)--(51/4, 7/4)--(13, 2)--(53/4, 7/4)--(27/2, 2)--(55/4, 9/4)--(14, 5/2)--(57/4, 9/4)--(29/2, 5/2)--(59/4, 11/4)--(15,3);

\draw [fill] (15, 3) circle (.1cm);

\node at (15,3.5) {$(v_j,h_j)$};

\node at (2,3) {$X^n_1$};
\node at (7,4.5) {$X^n_2$};
\node at (12,3) {$X^n_3$};

\end{tikzpicture}

\caption{ Decomposition of $X^n$ into $X^n_1$, $X^n_2$, and $X^n_3.$}

\end{figure}

The choice of $X^n_1, X^n_2, $ and $X^n_3$ uniquely determines $X^n.$  There are $$\left|\cE_{v_0,h_0}^{2i-(h-h_0),h} \right|, C_{h-1}, \text{ and }\left|  \cE_{2i-(h-h_0)+2h-1,h-1}^{v_j,h_j}\right|$$ such choices for $X^n_1, X^n_2,$ and $X^n_3$ respectively.  Therefore

\begin{align*}
\prob(L^n_i/2 = h | H^n_i = h) &=\left|\cE_{v_0,h_0}^{2i-(h-h_0),h} \right|C_{h-1} \left| \cE_{2i-(h-h_0)+2h-1,h-1}^{v_j,h_j}\right| \left(\left| \cE_{v_0,h_0}^{2i-(h-h_0),h} \right | \left| \cE_{2i-(h-h_0),h}^{v_j,h_j}\right|\right)^{-1}\\
&= C_{h-1} \left| \cE_{2i-(h-h_0)+2h-1,h-1}^{v_j,h_j}\right|  \left| \cE_{2i-(h-h_0),h}^{v_j,h_j}\right|^{-1}
\end{align*}

\end{proof}

\begin{lemma} \label{problihi}
For $X^n \in \cE_{v_0,h_0}^{v_j,h_j}$ chosen uniformly at random and $0<i<j,$

$$\prob(L^n_i/2 = H^n_i ) = \sum_{h=\max(0,h_j-(j-i))}^{h_0 + i} C_{h-1} \left| \cE_{v_0,h_0}^{2i-(h-h_0),h} \right | \left| \cE_{2i-(h-h_0)+2h-1,h-1}^{v_j,h_j}\right| \left| \cE_{v_0,h_0}^{v_j,h_j}\right|^{-1}.$$

\end{lemma}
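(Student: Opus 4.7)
The statement is a direct consequence of the previous two lemmas via the law of total probability, so my plan is essentially to decompose and combine. First, I would partition the event $\{L^n_i/2 = H^n_i\}$ according to the value of $H^n_i$:
\[
\{L^n_i/2 = H^n_i\} \;=\; \bigsqcup_{h} \{H^n_i = h\} \cap \{L^n_i/2 = h\},
\]
where the union is over $h$ in the appropriate range specified below. Then
\[
\prob(L^n_i/2 = H^n_i) \;=\; \sum_h \prob(H^n_i = h)\, \prob(L^n_i/2 = h \mid H^n_i = h).
\]

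Next, I would substitute Lemma \ref{probxih} and Lemma \ref{problihicond} into this expression. Lemma \ref{probxih} gives the first factor as a ratio involving $|\cE_{v_0,h_0}^{2i-(h-h_0),h}| \cdot |\cE_{2i-(h-h_0),h}^{v_j,h_j}| \cdot |\cE_{v_0,h_0}^{v_j,h_j}|^{-1}$, while Lemma \ref{problihicond} gives the second factor as $C_{h-1} \cdot |\cE_{2i-(h-h_0)+2h-1,h-1}^{v_j,h_j}| \cdot |\cE_{2i-(h-h_0),h}^{v_j,h_j}|^{-1}$. Multiplying, the factor $|\cE_{2i-(h-h_0),h}^{v_j,h_j}|$ cancels, and I am left precisely with the summand in the lemma.

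Finally, I must identify the correct range of summation. The upper bound $h \leq h_0 + i$ holds because after $i$ up-steps from height $h_0$ the height can be at most $h_0 + i$, so $\prob(H^n_i = h) = 0$ for larger $h$. The lower bound $h \geq h_j - (j-i)$ comes from the fact that after the $i$th up-step we must still be able to reach $(v_j,h_j)$ via a non-negative path with $j-i$ up-steps remaining; since the height can decrease by at most $j-i$ worth of net down-steps only after performing those up-steps in appropriate order, one checks that reachability forces $h \geq h_j - (j-i)$ (and trivially $h \geq 0$). For $h$ outside $[\max(0,h_j-(j-i)),\, h_0+i]$, both $\cE$ factors vanish, so including or excluding such terms is immaterial.

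There is no real obstacle here: the lemma is almost purely bookkeeping once Lemmas \ref{probxih} and \ref{problihicond} are in hand. The only mild subtlety is being careful that the decomposition of a path in $\cE_{v_0,h_0}^{v_j,h_j}$ into its prefix up to the $i$th up-step and its suffix thereafter is a bijection onto the product of the two excursion sets, which was already used implicitly in the proofs of the two cited lemmas and needs no new argument.
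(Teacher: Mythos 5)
Your proof is correct and follows essentially the same route as the paper's: partition the event $\{L^n_i/2 = H^n_i\}$ by the value of $H^n_i$, note the pieces are disjoint, apply the law of total probability, substitute Lemmas \ref{probxih} and \ref{problihicond} (observing the cancellation of $|\cE_{2i-(h-h_0),h}^{v_j,h_j}|$), and observe that the summand vanishes for $h$ outside the stated range.
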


\begin{proof} 
If $L^n_i/2 = H^n_i $ then there is some $h \in \mathbb{N}$ such that $\{ H^n_i = h\} \cap \{ L^n_i/2 = h\}$ occurs.  Therefore $$ \{ L^n_i/2 = H^n_i \} = \bigcup_h \left \{ \{H^n_i= h \} \cap \{L^n_i/2 = h  \} \right \}.$$  Luckily the event $\{H^n_i = h \cap L^n_i/2 = h\}$ is disjoint from $\{ H^n_i = h' \cap L^n_i/2 = h' \}$ for $h\neq h'.$  Then
$$\prob\left(\bigcup_h  \{H^n_i= h \} \cap \{L^n_i/2 = h  \} \right )  = \sum_h \prob\left(\{H^n_i= h \} \cap \{L^n_i/2 = h  \} \right).$$
If $h \notin ( \max(0,H^n_j - (j-i), h_0 + i)$ then $\prob( H^n_i = h) = 0.$  Otherwise we have 
$$\prob\left( \{ H^n_i = h \} \cap \{ L^n_i/2 = h\} \right) = \prob\left( L^n_i/2 = h | H^n_i = h\right) \prob(H^n_i = h ).$$
Combining Lemmas \ref{probxih} and \ref{problihicond} provides the result.  
\end{proof}

\begin{lemma} \label{problihicondapprox}

Let $0<i<j\leq O(n^{0.9})$ with $i>2n^{0.6}$ and $j-i > 2n^{0.6},$ and let $h_0 \in (n^{0.499},n^{0.501})$.  Define $m$ and $m_j$ such that $h_j = h_0 + m_j $ and $h = h_0 + m$  where  $|m_j|< \min(j^{0.6},n^{0.451}).$  Let $m_{max} = \min ( i^{0.6}, m_j+(j-i)^{0.6}$ and $m_{min} = \max( -i^{0.6}, m_j - (j-i)^{0.6}).$  For $m_{min}<m<m_{max}$

$$\prob( L^n_i/2=h | H^n_i = h) \prob(H^n_i = h ) =  \frac{ \sqrt{j}}{4\pi \sqrt{i(j-i)(h_0)^3} }  \exp\left(- \frac{ ( jm - im_j )^2}{4ij(j-i)}\right)(1+\Delta )   $$
where $\Delta =o(n^{-0.001} )$ uniformly in $i,j,m,m_j,h_0$ that satisfy the above conditions.  

For $m <m_{min}$ or $m>m_{max},$

$$\prob(L^n_i/2 = h|H^n_i =h)\prob(H^n_i = h ) < \exp(-n^{0.001}).$$  
\end{lemma}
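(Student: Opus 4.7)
The plan is to combine the exact identity for $\prob(L^n_i/2 = h \mid H^n_i = h)\prob(H^n_i = h)$ obtained by multiplying Lemma~\ref{probxih} and Lemma~\ref{problihicond},
\[
C_{h-1} \cdot \bigl|\cE_{v_0,h_0}^{2i-m,h}\bigr| \cdot \bigl|\cE_{2i-m+2h-1,\,h-1}^{v_j,h_j}\bigr| \cdot \bigl|\cE_{v_0,h_0}^{v_j,h_j}\bigr|^{-1},
\]
with the Gaussian approximation from Lemma~\ref{pathcountapprox} applied to each of the three $\cE$-factors, together with the standard Stirling estimate $C_{h-1} = (4^{h-1}/(h^{3/2}\sqrt{\pi}))(1+O(h^{-1}))$. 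First I would check that the definitions of $m_{min}$ and $m_{max}$ are exactly what is needed so that all three $\cE$-factors fall in the regime of Lemma~\ref{pathcountapprox}: the starting heights exceed $n^{0.499}$; the number of up-steps in each piece is at least of order $n^{0.6}$ (using $i, j-i > 2n^{0.6}$ and $h \sim h_0 \sim n^{0.5}$); and the signed displacement in each piece is bounded by the $0.6$-power of the corresponding length.

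Once the three Gaussian approximations are substituted, the algebra is mechanical. The powers of $4$ cancel exactly, because $(h-1)+i+(j-i-h+1)-j = 0$; the powers of $2$ collect to $2^{-(m+1)-(m_j-m+2)+(m_j+1)} = 2^{-2}$; and the $\sqrt{\pi}$- and length-factors combine to $\sqrt{j}/(\pi\sqrt{i(j-i-h+1)}\,h^{3/2})$. The sum of the three exponents is
\[
-\frac{m^2}{4i} - \frac{(m_j - m + 1)^2}{4(j-i-h+1)} + \frac{m_j^2}{4j}.
\]
After dropping the lower-order $+1$ and $h-1$ shifts, completing the square in $m$ gives the minimizer $m^* = im_j/j$ and minimum value $m_j^2/(4j)$, so the expression collapses to $-\frac{j}{4i(j-i)}(m - im_j/j)^2 = -(jm-im_j)^2/(4ij(j-i))$, which is precisely the exponent in the claim. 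Replacing $j-i-h+1$ by $j-i$ and $h^{3/2}$ by $h_0^{3/2}$ in the prefactor then recovers the target formula.

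The main obstacle is to show that each of these replacements is controlled uniformly by $1 + o(n^{-0.001})$. The corrections from $\sqrt{(j-i)/(j-i-h+1)}$ and from the $+1$ and $h-1$ shifts in the exponent are of relative size $O(h_0/(j-i)) = O(n^{-0.1})$, easily inside $o(n^{-0.001})$. The delicate replacement is $h^{3/2} = h_0^{3/2}(1 + m/h_0)^{3/2}$: since $|m|$ can be as large as $n^{0.54}$ while $h_0 \sim n^{0.5}$, this factor is not uniformly close to $1$. I would handle this by splitting the $m$-range: for $|m| \leq n^{0.499-\eta}$ with a small fixed $\eta$, Taylor expansion gives a relative error $O(n^{-\eta})$; for $|m|$ larger, the exponential factor $\exp(-m^2/(4i))$ (with $i \leq n^{0.9}$) is already $\leq \exp(-n^{0.001})$, so the approximation holds trivially on both sides. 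Finally, for the tail case $m \notin (m_{min},m_{max})$, the second bound of Lemma~\ref{theelemma}, applied to whichever of the three binomial coefficients has displacement exceeding the $0.6$-power of its length, furnishes the exponential bound $\exp(-n^{0.001})$ directly.
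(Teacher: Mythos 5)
Your proposal follows exactly the same route as the paper's proof: take the exact identity from Lemmas~\ref{probxih} and~\ref{problihicond}, substitute the asymptotic formulas from Lemma~\ref{pathcountapprox} (plus the Catalan number estimate) for each of the three $\cE$-factors, combine and complete the square in $m$, and use the second estimate of Lemma~\ref{theelemma} for $m$ outside $(m_{min},m_{max})$.

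The one place you go further than the paper is in flagging the $h^{3/2}\to h_0^{3/2}$ replacement and the $j-i-h\to j-i$ replacements as non-trivial; the paper simply writes down all four substitutions and then says ``combining these equations together proves the first statement,'' leaving the error analysis implicit. Your split into $|m|\le n^{0.499-\eta}$ and $|m|>n^{0.499-\eta}$ is the right instinct, but be careful: in the large-$|m|$ regime, noting that both sides are $\le e^{-n^{0.001}}$ does not by itself give the claimed multiplicative statement $\text{LHS}=\text{RHS}(1+o(n^{-0.001}))$ --- two exponentially small quantities can still differ by a constant factor. (The same gap is present in the paper's own proof, and it is harmless for the downstream application in Lemma~\ref{problihiapprox}, since there one sums over $m$ and the contribution of the tail regime is absorbed into the additive $ne^{-n^{0.001}}$ error; but if you want the lemma to hold literally as a pointwise $(1+\Delta)$ statement for all $m\in(m_{min},m_{max})$, you would need to either shrink that range or weaken the conclusion to an additive error.) Similarly, your claim that the exponent corrections from the $j-i-h$ shift are $O(h_0/(j-i))$ is only the relative size of the correction to the Gaussian exponent; when $|m_j-m|$ is near $(j-i)^{0.6}$ the absolute shift in the exponent can grow, again only in the regime where the exponential is already negligible. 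Up to these shared loose ends, the argument is correct and matches the paper's.
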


\begin{proof} 

The summand in Lemma \ref{problihi} is given by 

$$\prob(L^n_i/2 = h|H^n_i=h) \prob( H^n_i = h ) = C_{h-1} \left|\cE_{v_0,h_0}^{2i-m,h} \right | \left| \cE_{2i-m+2h-1,h-1}^{v_j,h_j}\right| \left| \cE_{v_0,h_0}^{v_j,h_j}\right|^{-1}.$$

By Lemma \ref{pathcountapprox} we can make the following substitutions:

\begin{align*}
C_{h-1} &= \frac{4^{h-1}}{\pi^{1/2}h^{3/2}} (1+\Delta_1(h))),\\
 \cE_{v_0,h_0}^{2i-m,h} &= { 2(i-1) - (m-1) \choose i-1 } = \frac{ 4^i}{2^{m+1}\pi^{1/2}i^{1/2}}e^{-m^2/4i}(1+\Delta_2(i,m)),\\
 \cE_{2i-m+2h-1,h-1}^{v_j,h_j} &= { 2(j-i-h) - (m_j-m) \choose j-i-h }\\
 & = \frac{ 4^{j-i-h}}{2^{m_j-m}\pi^{1/2}(j-i-h)^{1/2}}e^{-(m_j-m)^2/4(j-i-h)}(1+\Delta_2(j-i-h,mj-m)),\\
 & = \frac{ 4^{j-i-h}}{2^{m_j-m}\pi^{1/2}(j-i)^{1/2}}e^{-(m_j-m)^2/4(j-i)}(1+\Delta_2(j-i-h,mj-m)),\\
 \cE_{v_0,h_0}^{v_j,h_j} &= { 2(j-1) - (m_j-1) \choose j-1 } = \frac{ 4^j}{2^{m_j+1}\pi^{1/2}j^{1/2}}e^{-m_j^2/4j}(1+\Delta_2(j,m_j))\\
 \end{align*} where both $\Delta_1$ and $\Delta_2$ are bounded uniformly by $n^{-0.01}$ over all parameters satisfying the conditions of the lemmas.  Combining these equations together proves the first statement of Lemma \ref{problihicondapprox}.  For the second statement we use the second approximation in Lemma \ref{theelemma} to bound $\prob(H^n_i= h_0 + m)$ using the formula in Lemma \ref{probxih}.
\end{proof}

Let's consider the special case where $j\approx n^{0.9}.$

\begin{lemma} \label{problihiapprox}

For $X^n$ chosen uniformly from $\cE_{v_0,h_0}^{v_j,h_j}$ with $i,j,h_0,$ and $h_j$ satisfying

\begin{itemize}
\item $j = n^{0.9}(1+\Delta'), \Delta'\leq n^{-0.1}$ uniformly.   
\item $n^{0.499}<h_0 < n^{0.501}.$
\item $i  \in ( 2n^{0.6}, n^{0.9}-2n^{0.6}),$
\item $h_j = h_0 + m_j$ where $|m_j| \leq n^{0.451},$
\end{itemize}
then

$$\prob( L^n_i/2 = H^n_i-1 ) = \frac{ 1 }{2 \pi^{1/2} (h_0)^{3/2} } (1+\Delta),$$ where $\Delta = \Delta(i,h_0,h_j) = o(n^{-0.001})$ uniformly in $i,h_0,h_j$ in the ranges above.

\end{lemma}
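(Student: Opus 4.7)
The plan is to sum the pointwise asymptotic given by Lemma \ref{problihicondapprox} over the relevant range of heights and recognize the result as a Gaussian integral that evaluates to the clean constant $\frac{1}{2\pi^{1/2}(h_0)^{3/2}}$. First I would apply Lemma \ref{problihi} to write
\[\prob(L^n_i/2 = H^n_i) = \sum_{h} C_{h-1}\,\left|\cE_{v_0,h_0}^{2i-(h-h_0),h}\right|\,\left|\cE_{2i-(h-h_0)+2h-1,h-1}^{v_j,h_j}\right|\,\left|\cE_{v_0,h_0}^{v_j,h_j}\right|^{-1},\]
and substitute $m = h - h_0$ so the sum becomes indexed by $m$. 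Next I would partition the range of $m$ into a ``bulk'' $m_{\min} < m < m_{\max}$ (as defined in Lemma \ref{problihicondapprox}) and the complementary ``tail.'' Since there are at most polynomially many values of $m$ and each tail term contributes at most $e^{-n^{0.001}}$ by the second clause of Lemma \ref{problihicondapprox}, the total tail contribution is $o(n^{-0.001})$ relative to the leading order $h_0^{-3/2}\approx n^{-3/4}$, and hence absorbed into the claimed error.

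For the bulk, Lemma \ref{problihicondapprox} gives each summand as
\[\frac{\sqrt{j}}{4\pi\sqrt{i(j-i)(h_0)^3}}\exp\!\left(-\frac{(jm - im_j)^2}{4ij(j-i)}\right)(1+\Delta),\]
with $\Delta = o(n^{-0.001})$ uniformly. The exponent is a Gaussian in $m$ with mean $im_j/j$ and variance $2i(j-i)/j$, so after completing the change of variable $v = m - im_j/j$ the bulk sum is within $(1+o(n^{-0.001}))$ of the Riemann integral
\[\frac{\sqrt{j}}{4\pi\sqrt{i(j-i)(h_0)^3}}\int_{-\infty}^{\infty}\exp\!\left(-\frac{j v^2}{4i(j-i)}\right)dv = \frac{\sqrt{j}}{4\pi\sqrt{i(j-i)(h_0)^3}}\cdot 2\sqrt{\frac{\pi i(j-i)}{j}},\]
which simplifies to $\frac{1}{2\pi^{1/2}(h_0)^{3/2}}$ as desired. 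The Riemann-sum-to-integral step is justified because the standard deviation $\sqrt{2i(j-i)/j}$ is of order $n^{0.45}$, which is much larger than the unit spacing but still comfortably inside the bulk window of half-width $\min(i^{0.6},(j-i)^{0.6})\asymp n^{0.54}$, so the Euler-Maclaurin error is smaller than the required tolerance.

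The main technical obstacle I foresee is uniformity: every approximation error ($\Delta$ from Lemma \ref{problihicondapprox}, the Riemann-sum-to-integral error, the tail-versus-leading-order comparison, and the error from having finite limits on the Gaussian integral rather than infinite ones) must be controlled at the $o(n^{-0.001})$ level, uniformly over the allowed ranges of $i$, $h_0$, $h_j$. The tail Gaussian mass beyond $|v|\gtrsim n^{0.54}$ is doubly exponentially small because the variance is only $n^{0.9}$, so the Gaussian truncation error is trivial. The Euler--Maclaurin comparison requires controlling the derivative of the Gaussian density, which gives a relative error of the order $1/\sigma^2 \asymp n^{-0.45}$, comfortably better than needed. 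Finally, the multiplicative factor $(1+\Delta)$ from Lemma \ref{problihicondapprox} is uniform in the parameters by hypothesis, so it passes through the sum cleanly. Once these uniform estimates are assembled, the identities collapse to the stated asymptotic.
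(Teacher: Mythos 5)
Your proposal follows essentially the same route as the paper: apply Lemma \ref{problihi} to express the probability as a sum over heights, split the range into the bulk window $m_{\min}<m<m_{\max}$ and the tail, bound the tail using the exponential estimate in the second clause of Lemma \ref{problihicondapprox}, and then use the Gaussian form of the bulk summand from the first clause of that lemma, converting the sum to an integral to arrive at $\frac{1}{2\pi^{1/2}(h_0)^{3/2}}$. The paper's proof does precisely this, with the same sandwich bound, the same window definitions, and the same integral identity $\int_{-t}^{t}\exp\bigl(-(m-m_0)^2/(4c)\bigr)\,dm = 2c^{1/2}\pi^{1/2}+\delta(t)$. (Two minor inaccuracies in your quantitative side remarks, neither of which affects correctness: the standard deviation $\sqrt{2i(j-i)/j}$ is of order $n^{0.45}$ only when $i\asymp j/2$ and can be as small as roughly $n^{0.15}$ near the edges $i\approx 2n^{0.6}$, though the bulk window of half-width $\min(i^{0.6},(j-i)^{0.6})$ shrinks proportionately so the comparison still holds; and the Riemann-sum relative error scales like $1/\sigma$, not $1/\sigma^2$, but is still far smaller than $n^{-0.001}$.)
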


\begin{proof}

Let $m_{min}  = \max ( -i^{0.6}, m_j -(j-i)^{0.6} )$ and $m_{max} = \min( i^{0.6}, m_j+(j-i)^{0.6})$ and consider the inequality which follows from Lemma \ref{problihi}.  

\beqlbl \label{problihisum}
\sum_{m=m_{min}}^{m_{max}}\prob(L^n_i/2=h_0 + m| H^n_i=h_0+m)\prob(H^n_i=h_0 + m) 
\eeqlbl
$$\leq \prob(L^n_i/2 =H^n_i )$$
$$ \leq ne^{-n^{0.001}} + \sum_{m=m_{min}}^{m_{max} }\prob(L^n_i/2=h_0 +m| H^n_i=h_0 + m)\prob(H^n_i=h_0 +m).$$

Lemma \ref{problihicondapprox} gives 

$$\prob(L^n_i/2 = H^n_i) $$
$$= (1+o(n^{-0.01}))\sum_{m=m_{min}}^{m_{max}}  \frac{ \sqrt{j}}{4\pi \sqrt{i(j-i)(h_0)^3} }  \exp\left(- \frac{ ( jm - im_j )^2}{4ij(j-i)}\right)(1+o(n^{-0.01}) ).$$  
$$ = (1+o(n^{-0.001}))\int_{m_{min}}^{m_{max}} \frac{ 1}{4\pi \sqrt{i(1-i/j)(h_0)^3} }  \exp\left(- \frac{ ( m - im_j/j )^2}{4i(1-i/j)}\right)(1+o(n^{-0.01}) )dm.$$

By our definition $(m_{min} - \frac{i}{j}m_j ) < -n^{0.01}$ and $(m_{max} - \frac{i}{j}m_j ) > n^{0.01}$.   Therefore the integral above is computed in the standard way, with

$$
\int_{-t}^t \exp\left( -\frac{ (m- m_0)^2 }{4c}\right) dm  = 2c^{1/2}\pi^{1/2} + \delta(t).
$$ where $\delta(t)$ is an error function with exponential decay.   

$$\prob(L^n_i/2 =H^n_i ) = \frac{1}{2\pi^{1/2}(h_0)^{3/2}}(1+o(n^{-0.001})).$$

\end{proof}

\begin{corollary} \label{problihiapprox2}

For any $k \in \R$ and $\alpha \in (0,.48)$ let $\Gamma^n$ chosen uniformly from $\cE_{v_0,h_0}^{v_j,h_j}$ with $i,j,h_0,$ and $h_j$ satisfying

\begin{itemize}
\item $j = n^{0.9}(1+ \Delta'), \Delta' < n^{-0.1}$ uniformly. 
\item $n^{0.499}<h_0 < n^{0.501}.$
\item $i  \in ( 2n^{0.6}, n^{0.9}-2n^{0.6}),$
\item $h_j = h_0 + m_j$ where $|m_j| \leq n^{0.451},$
\end{itemize}
then

$$\prob( L^n_i/2 = H^n_i-k(i(n-i)/n)^\alpha ) = \frac{ 1 }{2 \pi^{1/2} (h_0)^{3/2} } (1+\Delta),$$ where $\Delta = \Delta(i,h_0,h_j) = o(n^{-0.001})$ uniformly in $i,h_0,h_j$ in the ranges above.

\end{corollary}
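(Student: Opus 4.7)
The plan is to adapt the proof of Lemma \ref{problihiapprox} with only minor bookkeeping changes. Set $c := \lfloor k (i(n-i)/n)^\alpha \rfloor$ and note that, under the hypotheses $i \in (2n^{0.6}, n^{0.9} - 2n^{0.6})$ and $\alpha < 0.48$, we have $|c| = O(n^{0.9\cdot 0.48}) = O(n^{0.432})$. In particular $|c|/h_0 = O(n^{-0.067})$ and $|c|/(j-i) = O(n^{-0.468})$, both well inside the $o(n^{-0.001})$ error budget.

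First I would redo the decomposition of Lemma \ref{problihicond}: conditional on $H^n_i = h$, the event $\{L^n_i/2 = h - c\}$ corresponds bijectively to decompositions of the path into (i) a piece in $\cE_{v_0,h_0}^{2i-(h-h_0),h}$, (ii) an excursion of length $2(h-c)$ at the height $h$, of which there are $C_{h-c-1}$, (iii) the mandatory down-step, and (iv) a piece in $\cE_{2i-(h-h_0)+2(h-c)-1,\,h-1}^{v_j,h_j}$. So
\[
\prob\bigl(H^n_i = h,\ L^n_i/2 = h-c\bigr) = C_{h-c-1}\, \bigl|\cE_{v_0,h_0}^{2i-(h-h_0),h}\bigr|\, \bigl|\cE_{2i-(h-h_0)+2(h-c)-1,\,h-1}^{v_j,h_j}\bigr|\, \bigl|\cE_{v_0,h_0}^{v_j,h_j}\bigr|^{-1}.
\]

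Next I would apply Lemma \ref{pathcountapprox} to each of the four factors, exactly as in the proof of Lemma \ref{problihicondapprox}. The critical observation is that the $c$-shift enters the two $c$-dependent counts with opposite signs in their exponential parts: the Catalan $C_{h-c-1}$ carries a factor $4^{h-c-1}$ (losing $4^c$ relative to $C_{h-1}$), while the third path count gains $c$ extra up-steps and so carries an extra factor $4^c$. These factors cancel exactly. The surviving $c$-dependence consists of (a) the polynomial prefactor $(h-c)^{-3/2}$ instead of $h^{-3/2}$ in the Catalan approximation, giving $1+O(c/h_0) = 1 + o(n^{-0.001})$; (b) the replacement $(j-i) \to (j-i-h+c+1)$ in the square-root and Gaussian denominators, giving $1 + O((h+|c|)/(j-i)) = 1 + o(n^{-0.001})$; and (c) a perturbation of the Gaussian exponent $\tilde{m}^2/(4(j-i-h+c+1))$ of the same size. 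Each is absorbed into the uniform $\Delta$.

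Finally the sum over $m = h - h_0$ in the window $[m_{\min}, m_{\max}]$ proceeds verbatim as in Lemma \ref{problihiapprox}: the total contribution from $m$ outside this window is bounded by $e^{-n^{0.001}}$ via the second half of Lemma \ref{theelemma}, and inside the window the summand becomes a Riemann sum approximating the Gaussian integral
\[
\int_{m_{\min}}^{m_{\max}} \frac{1}{4\pi\sqrt{i(1-i/j)h_0^3}} \exp\!\left(-\frac{(m - im_j/j)^2}{4i(1-i/j)}\right) dm = \frac{1}{2\pi^{1/2} h_0^{3/2}}\bigl(1 + o(n^{-0.001})\bigr),
\]
yielding the stated asymptotic. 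The main obstacle is just verifying that the cancellation of the $4^c$ factors is exact and that the remaining polynomial and Gaussian perturbations stay within the prescribed error — this is routine but tedious, and the bound $\alpha < 0.48$ is precisely the assumption that keeps these perturbations below $n^{-0.001}$.
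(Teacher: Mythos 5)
Your proposal is correct and takes essentially the same route as the paper: the paper's own proof of this corollary is a one-sentence remark that the argument of Lemma \ref{problihiapprox} carries over with $L^n_i = H^n_i$ replaced by $L^n_i = H^n_i - k(i(n-i)/n)^\alpha$, since the shift is of order less than $n^{0.49}$ and therefore does not affect the approximation. You have simply unpacked this, and in particular made explicit the one mechanical fact the paper leaves unstated — that the $4^{\pm c}$ factors from $C_{h-c-1}$ and from the modified third path count cancel exactly, so that the surviving $c$-dependence is only in polynomial prefactors and Gaussian exponents, each perturbed by $1+O(|c|/h_0)$ or $1+O(|c|/(j-i))$, well within the $o(n^{-0.001})$ budget when $\alpha < 0.48$.
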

\begin{proof}
The proof goes exactly as in Lemma \ref{problihiapprox} with $L^n_i =H^n_i$ replaced by $L^n_i =H^n_i - k(i(n-i)/n)^\alpha$.  The order of $k(i(n-i)/n)^\alpha$ is less than $n^{0.49}$ so it will not affect the approximation.
\end{proof}

\begin{lemma} \label{isinwindow}

Fix $0<a<b<1$ and let $a_k = \lfloor an + nk/K\rfloor$ where $K = \lfloor (b-a) n^{0.1} \rfloor.$  For $\Gamma^n \in \dyck{2n}$ chosen uniformly at random, 

$$\prob\left( \bigcap_{k=0}^{K}\{n^{0.499} < \Gamma^n({V^n_{a_k}})< n^{0.501}\} \right) > 1 -  o(1).$$

\end{lemma}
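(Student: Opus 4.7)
The plan is to reduce the claim about the $K+1 = O(n^{0.1})$ random positions $V^n_{a_k}$ to a statement about $\Gamma^n$ at the deterministic positions $2a_k$, and then exploit the fact that $[a,b]\subset(0,1)$ is compact while Brownian excursion is almost surely continuous and strictly positive on $(0,1)$. The window $(n^{0.499},n^{0.501})$ is wider than the typical $\Theta(\sqrt{n})$ scale of the path by a factor of $n^{0.001}$ on each side, so it suffices to show that $H^n_{a_k}=\Theta(\sqrt{n})$ uniformly in $k$.

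For the reduction, observe that by definition $V^n_{a_k}=2a_k-H^n_{a_k}$, so $|V^n_{a_k}-2a_k|\le \max_v \Gamma^n(v)$. Petrov condition~\eqref{htcone} gives $\max_v\Gamma^n(v)<0.4\, n^{0.6}<2n^{0.6}$, and Petrov condition~\eqref{porter} then applies to yield, simultaneously for all $k=0,\dots,K$,
\[
|H^n_{a_k}-\Gamma^n(2a_k)|=|\Gamma^n(V^n_{a_k})-\Gamma^n(2a_k)|<0.5\,n^{0.4}.
\]
By Lemma~\ref{petrov}, both Petrov conditions hold with probability $1-o(1)$.

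For the main estimate, I would apply Kaigh's invariance principle \cite{Ka76}: the rescaled process $(\Gamma^n(\lfloor 2sn\rfloor)/\sqrt{2n})_{s\in[0,1]}$ converges in distribution in $C([0,1],\R)$ to $(\mathbbm{e}_s)_{s\in[0,1]}$. Since $\mathbbm{e}$ is continuous and strictly positive on $(0,1)$, the continuous mapping theorem applied to the functionals $\inf_{s\in[a,b]}$ and $\sup_{s\in[a,b]}$ gives, for any $\epsilon>0$, constants $0<c<C$ (independent of $n$) such that for all large $n$,
\[
\P\Bigl(c\sqrt{2n}\le\Gamma^n(v)\le C\sqrt{2n}\ \text{for every integer}\ v\in[\lfloor 2an\rfloor,\lceil 2bn\rceil]\Bigr)>1-2\epsilon.
\]
On the intersection of this event with the Petrov event, each $H^n_{a_k}$ lies in $(c\sqrt{2n}-0.5\,n^{0.4},\,C\sqrt{2n}+0.5\,n^{0.4})$, which for $n$ large is contained in $(n^{0.499},n^{0.501})$ because $n^{0.499}\ll c\sqrt{2n}$ and $C\sqrt{2n}\ll n^{0.501}$ (the ratios grow like $n^{0.001}$). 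Sending $\epsilon\downarrow 0$ completes the argument. The only point requiring care is transferring control from the random positions $V^n_{a_k}$ to the deterministic positions $2a_k$, but this is exactly the content of the Petrov modulus-of-continuity bound, so no new estimate beyond Lemma~\ref{petrov} and the invariance principle is required; in particular, no union bound over the $O(n^{0.1})$ indices $k$ is needed, because the supremum/infimum functionals on $[a,b]$ handle all of them at once.
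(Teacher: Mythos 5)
Your proof is correct and follows the same strategy as the paper's one-line proof: control the displacement $|V^n_{a_k}-2a_k|$ and the local oscillation of $\Gamma^n$, then invoke the invariance principle to bound $\Gamma^n$ uniformly on $[2an,2bn]$. The only cosmetic difference is that you invoke Petrov conditions (a) and (b) via Lemma~\ref{petrov} (which apply directly to Dyck paths) in place of the paper's citation of Corollary~\ref{corollary V-shift}; these control the same quantity, namely $|V^n_i-2i|=H^n_i$, so the substance of the argument is identical.
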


\begin{proof}

This is an immediate consequence of 
Corollary \ref{corollary V-shift} along with the convergence of Dyck paths to Brownian excursion.  
\end{proof}

\begin{lemma} \label{lowdeviation}

Fix $0<a<b<1.$  For any $n$ large enough and $\Gamma^n \in \dyck{2n}$,

$$\prob\left( \bigcup_ {k=0}^{K-1}\left \{ |\Gamma^n({V^n_{a_k}}) - \Gamma^n( V^n_{a_{k+1}} )| > n^{0.451} \right\}  \Big | \bigcap_k \left\{ n^{0.499} < \Gamma^n({V^n_{a_k}} )< n^{0.501}\right\}\right) < e^{-n^{0.0001}}.$$

\end{lemma}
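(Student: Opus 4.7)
The plan is to combine the path-count estimate of Lemma~\ref{pathcountapprox} with a union bound over the $K = O(n^{0.1})$ segments. Write $A = \bigcap_{k=0}^K \{H^n_{a_k} \in (n^{0.499}, n^{0.501})\}$ and $B_k = \{|H^n_{a_{k+1}} - H^n_{a_k}| > n^{0.451}\}$. By Lemma~\ref{isinwindow}, $\P(A) \geq 1/2$ for large $n$, so $\P(\bigcup_k B_k \mid A) \leq 2 \sum_k \P(B_k \cap A)$, and it suffices to establish $\P(B_k \cap A) \leq e^{-n^{0.002}/5}$ uniformly in $k$; the union bound over $K = O(n^{0.1})$ terms then gives the required $e^{-n^{0.0001}}$ estimate for $n$ large.

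To bound $\P(B_k \cap A)$, I would decompose $\Gamma^n$ according to the heights $(h_0, \ldots, h_K)$ at the checkpoints. Since fixing $H^n_{a_j} = h_j$ determines $V^n_{a_j} = 2a_j - h_j$, conditional on $(h_0, \ldots, h_K)$ the $K$ segments of $\Gamma^n$ between consecutive checkpoints are independent uniform non-negative lattice paths ending in an up-step with the prescribed endpoints. For $h_j, h_{j+1}$ in the window, the parameters satisfy $L := a_{j+1} - a_j \approx n^{0.9}$, $m_j := h_{j+1} - h_j$ with $|m_j| \leq 2 n^{0.501} < L^{0.6}$, and $h_j > n^{0.499}$, so $(L, m_j) \in \mathcal{A}_n$ and Lemma~\ref{pathcountapprox} applies, giving
\[
N_j(h_j, h_{j+1}) = \frac{4^L}{2^{m_j+1}\sqrt{\pi L}}\, \exp\Bigl(-\frac{m_j^2}{4L}\Bigr)\bigl(1 + o(n^{-0.1})\bigr).
\]
Expressing $\P(B_k \cap A)$ as $C_n^{-1}$ times the sum of $|\cE_{0,0}^{V^n_{a_0}, h_0}| \cdot \prod_j N_j(h_j, h_{j+1}) \cdot |\cE_{V^n_{a_K}, h_K}^{2n, 0}|$ over configurations with every $h_j$ in the window and $|m_k| > n^{0.451}$, the constraint on $m_k$ forces the $k$-th Gaussian factor to satisfy
\[
\exp\Bigl(-\frac{m_k^2}{4L}\Bigr) \leq \exp\Bigl(-\frac{n^{0.902}}{4 n^{0.9}}\Bigr) = e^{-n^{0.002}/4}.
\]
Since this factor depends only on the pair $(h_k, h_{k+1})$, I can pull it outside the sum; the remaining sum is bounded above by $\P(A) \leq 1$ (modulo the $1 + o(1)$ multiplicative errors), yielding $\P(B_k \cap A) \leq e^{-n^{0.002}/5}$ for $n$ large.

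The main delicate step is verifying the approximation uniformly: that Lemma~\ref{pathcountapprox} is valid for every segment and every admissible $(h_j, h_{j+1})$ in the window, with controlled multiplicative error that does not corrupt the Gaussian decay. This is immediate from the size estimates above (which place $(L, m_j)$ safely inside $\mathcal{A}_n$ throughout the range of the sum), but requires a careful bookkeeping of the $1 + o(n^{-0.1})$ errors accumulated across $K = O(n^{0.1})$ segments—the total multiplicative slack is still $1 + o(1)$. The initial and final segments, from $(0,0)$ to $(V^n_{a_0}, h_0)$ and from $(V^n_{a_K}, h_K)$ to $(2n, 0)$, can be handled by the same counting estimates since $h_0, h_K$ lie in the window and $a_0, n - a_K$ are of order $n$.
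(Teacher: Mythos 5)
Your overall skeleton — union bound over $K=O(n^{0.1})$ segments, reduce to a per-segment tail bound of the form $\P(B_k\cap A)\le e^{-n^{0.002}/5}$ — is fine, and the final numerics ($n^{0.902}/4n^{0.9}=n^{0.002}/4$, beating $K$ and the $1/\P(A)$ factor) come out right. However, the key step where you "pull the Gaussian factor outside the sum; the remaining sum is bounded above by $\P(A)\le1$" does not hold. After dividing out $e^{-m_k^2/4L}$ from the $k$-th segment count $N_k$, what remains is not $\P(A)$ but
\[
C_n^{-1}\sum_{\text{configs}}\ind_A\,\ind_{|m_k|>n^{0.451}}\,N_0\cdots \tilde N_k\cdots N_{\mathrm{end}}
=\E\!\left[\ind_A\,\ind_{|m_k|>n^{0.451}}\,e^{m_k^2/4L}\right],
\]
and the crude bound $e^{m_k^2/4L}\le e^{(2n^{0.501})^2/(4n^{0.9})}=e^{n^{0.102}}$ (since all that the window gives you is $|m_k|\le 2n^{0.501}$) overwhelms the factor $e^{-n^{0.002}/4}$ you pulled out, so the argument as written proves nothing. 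To repair this along the path-counting route one has to exploit the Markov structure: conditional on $H^n_{a_k}=h_k$, the increment $m_k=H^n_{a_{k+1}}-h_k$ is governed by a ratio of path counts in which the $2^{-m_k}$ factor from the middle segment is exactly cancelled by the $2^{h_{k+1}}$ factor in the terminal count to $(2n,0)$, leaving a genuine Gaussian profile $\exp\!\bigl(-\tfrac{L+R}{4LR}(m_k-\mu)^2\bigr)$ with $R=n-a_{k+1}$, variance $\tfrac{2LR}{L+R}<2L$ and drift $\mu=-h_kL/(L+R)=O(n^{0.4})$; only with this strictly-smaller-than-$2L$ conditional variance (and the negligible drift) does $\E[e^{m_k^2/4L}\ind_{|m_k|>n^{0.451}}]$ come out merely polynomial, so that the $e^{-n^{0.002}/4}$ survives. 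That cancellation is essential and is not captured by saying the remainder is bounded by $\P(A)$. There is also a secondary issue: Lemma \ref{pathcountapprox} assumes $h_0>n^{0.499}$, so it does not apply verbatim to the initial segment from $(0,0)$ nor to the terminal segment ending at $(2n,0)$; both need ballot-type corrections, which you wave at but do not supply.

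It is worth noting that the paper proves this lemma by an entirely different route, following Corollaries \ref{corollary V-shift} and \ref{corollary p1}: identify $\Gamma^n$ with a simple random walk conditioned to first hit $-1$ at time $2n+1$, apply the Petrov moderate deviation inequality (Lemma \ref{lemma moderate deviations} via Corollary \ref{corollary fluctuations}) to the unconditioned walk over windows of length $O(n^{0.9})$ to get a bound of order $e^{-Bn^{0.002}}$, and then absorb the conditioning at a cost of the polynomial factor $1/\P(\eta(S)=2n+1)\sim n^{3/2}$. That route avoids the delicate bookkeeping of the $2^{m}$ factors entirely and is the cleaner way to obtain this estimate; if you want to proceed via explicit path counts, you should isolate the conditional increment distribution as in Lemma \ref{allhigh} rather than expanding over all $K+1$ checkpoint heights at once.
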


\begin{proof}

This follows by a similar argument to Corollaries \ref{corollary V-shift} and \ref{corollary p1}, with slight modifications to the parameters.
\end{proof}

\begin{lemma} \label{allhigh}

For sufficiently large $n$, for every $\frac{1}{2}n^{0.9} < j\leq 2n^{0.9},$ and $h_0,h_j$ both bounded between $n^{0.499}$ and $n^{0.501}$ with $|h_0-h_j| < n^{0.451}$ we have that if $X^n \in \cE_{v_0,h_0}^{v_{j},h_j}$ is chosen uniformly at random then

$$\prob\left( \sup_{t\in [0,1]} \left |X^n( v_0 + tv_j ) - h_0\right| > 2 n^{0.452} \right) < e^{-n^{0.001}}.$$

\end{lemma}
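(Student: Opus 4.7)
The plan is to reduce the problem to a moderate-deviation estimate for a simple random walk bridge of length $N := v_j - v_0 = 2j - (h_j-h_0) = O(n^{0.9})$, using that the starting height $h_0 \geq n^{0.499}$ dwarfs the target fluctuation scale $2n^{0.452}$, so the non-negativity constraint is essentially free.

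First I would use the counting behind Lemma \ref{pathcountapprox} to show that the positivity conditioning is negligible. By the reflection principle,
$$\left|\cE_{v_0,h_0}^{v_j,h_j}\right| = \binom{2j-m-1}{j-1}\bigl(1+O(e^{-h_0^2/(2j)})\bigr),$$
where $m = h_j-h_0$, and the larger set of all lattice paths from $(v_0,h_0)$ to $(v_j,h_j)$ ending with an up-step has cardinality exactly $\binom{2j-m-1}{j-1}$. Since $h_0^2/(2j) \geq n^{0.098}/8$ under our hypotheses, the uniform measures on these two sets differ in total variation by $O(e^{-n^{0.09}})$. Thus it suffices to prove the deviation bound for an unconditioned simple random walk bridge from $h_0$ to $h_j$, since the error absorbs into $e^{-n^{0.001}}$.

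Next I would write the bridge as $X^n(v_0+k) = L(k) + B(k)$, where $L(k) = h_0 + (k/N)(h_j-h_0)$ is the straight-line interpolant and $B(k)$ is a centered bridge from $0$ to $0$. Because $|L(k)-h_0| \leq |h_j-h_0| \leq n^{0.451}$, the event $\sup_{0\leq k \leq N}|X^n(v_0+k)-h_0| > 2n^{0.452}$ is contained in $\sup_{0 \leq k \leq N}|B(k)|>n^{0.452}$. For the centered bridge I would apply the standard bound
$$\prob\!\left(\sup_{0 \leq k \leq N}|B(k)| > n^{0.452}\right) \leq \frac{\prob\!\left(\sup_{0 \leq k \leq N}|S_k| > n^{0.452}\right)}{\prob(S_N = h_j-h_0)},$$
where $S$ is an unconditioned $\pm 1$ random walk started at $0$. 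The denominator is $\gtrsim c/\sqrt{N}$ by Stirling / the local CLT (or by the Gaussian approximation already recorded in Lemma \ref{theelemma}, since $|h_j-h_0| \leq n^{0.451} \ll \sqrt{N}$). The numerator is bounded by $4\exp(-n^{0.904}/(2N))$ via Doob's maximal inequality applied to the exponential martingale $e^{\lambda S_k}/\cosh(\lambda)^k$ followed by Hoeffding-style optimization. With $N \leq 4n^{0.9}$ the ratio is at most $Cn^{0.45} \exp(-n^{0.004}/32)$, which is much smaller than $e^{-n^{0.001}}$ for large $n$. Finally, $X^n$ is the piecewise-linear interpolation of its integer-time values, so the continuous supremum in the statement equals the discrete maximum treated above.

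No step is really a serious obstacle; the only thing to watch is arithmetic bookkeeping of the exponents, ensuring that the polynomial factor $\sqrt N \sim n^{0.45}$ from the local CLT denominator is harmlessly absorbed by the Gaussian tail $\exp(-c\,n^{0.004})$, and that both $|h_j-h_0| \leq n^{0.451}$ and the positivity correction $e^{-n^{0.09}}$ stay on the right side of the target rate $e^{-n^{0.001}}$. All of these hold with plenty of room under the stated hypotheses.
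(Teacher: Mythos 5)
Your proof is correct, but it follows a genuinely different route from the paper's.

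The paper argues pointwise: it fixes an up-step index $i$ and a height $h$ with $|h-h_0|>n^{0.452}$, bounds $\P(H^n_i=h)$ directly from the path-count formula of Lemma~\ref{pathcountapprox} (splitting into the cases $i>j/2$ and $i<j/2$), and then applies a union bound over $i$ and $h$. Each individual estimate is Gaussian in flavour, but the global bound is assembled combinatorially term by term.

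You instead strip away the conditioning in two steps and then apply a single maximal inequality. Removing the positivity constraint costs only a multiplicative $1+O(e^{-n^{0.09}})$ because $h_0\geq n^{0.499}$ keeps the walk far from the barrier (this is the same observation underlying Lemma~\ref{pathcountapprox}). Removing the endpoint conditioning via $\P(A\mid S_N=a)\leq\P(A)/\P(S_N=a)$ costs a polynomial factor $O(\sqrt N)\sim n^{0.45}$ by the local CLT. What remains is a free random walk, to which Doob's maximal inequality applied to the exponential martingale gives the subgaussian bound $e^{-x^2/(2N)}$ uniformly in time. The payoff is that the union bound over $i$ is replaced by a single maximal inequality, which handles all positions $k\leq N$ at once and makes the $i>j/2$ vs.\ $i<j/2$ split in the paper's proof unnecessary. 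The cost is the extra bookkeeping needed to justify the two conditioning-removal steps; the exponents work out with ample room, exactly as you note. One small presentational point: in your ratio bound the numerator should formally be $\P(\sup_k|S_k-(k/N)a|>n^{0.452})$ rather than $\P(\sup_k|S_k|>n^{0.452})$; these differ by at most $|a|\leq n^{0.451}\ll n^{0.452}$ via the triangle inequality, so the conclusion is unaffected, but it is worth stating the intermediate step.
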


\begin{proof}

First note that the maximum fluctuation of $|X^n(v_0 + tv_j) - h_0|$ is within 1 of the maximum fluctuation of $|H^n_i- h_0|$ for $i \leq j$.  Suppose $i > j/2$, $H^n_i=h$, and  $|h -h_0| > n^{0.452}$.  Lemma \ref{pathcountapprox} gives 
$$|\cE_{v_0,h_0}^{V^n_i,h}| \leq  \frac{4^{i}}{2^{h-h_0+1}}e^{-n^{0.904}/4j},$$
for sufficiently large $n$, independent of $j,h_0$, and $h_j$ satisfying the hypotheses of the lemma.  There are $2(j-i)-(h_j-h)$ steps remaining to go from $(v_i,h)$ to $(v_j,h_j).$  So 
$$|\cE_{V^n_i,h}^{v_j,h_j}| \leq \frac{ 4^{j-i}}{2^{h_j-h}}.$$
Again by Lemma \ref{pathcountapprox}, $$\left|\cE_{v_0,h_0}^{v_j,h_j}\right| \geq  \frac{4^j}{2^{h_j-h_0+2}\sqrt{\pi j}}\exp\left( -\frac{(h_j-h_0)^2}{4j}\right),$$
for sufficiently large $n$, independent of $j,h_0$, and $h_j$ satisfying the hypotheses of the lemma.  Then we can conclude that 
$$\prob\left( H_i^n =h \right) \leq 4\sqrt{\pi j} \exp\left(-\frac{n^{0.904}}{4j} + \frac{(n^{0.451})^2}{4j}\right) \leq 8\sqrt{\pi} n^{0.9} e^{-n^{0.004}/8}.$$
A similar bound can be used for $i < j/2$ with a little more work.  Note that if $|h - h_0| > n^{0.452}$ and $|h_j-h_0|<n^{0.451}$ then $|h-h_j| > \frac{1}{2}n^{0.452}.$  The same argument now applies.  Using the union bound and summing over the possible values of $h$ and $i$ now gives the result. 
\end{proof}

\begin{lemma} \label{fixsmall}

Let $I \subset[an,bn]$ denote an interval of length at most $n^{\alpha}.$  For $\gamma \in \dyck{2n},$ if $h_i > n^{0.49}$ for all $i\in I$, then 

$$\theta_I \leq n^{\alpha - 0.49}.$$ 

\end{lemma}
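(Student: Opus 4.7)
The plan is to exploit the equivalence $\sigma_\gamma(i)=i \Longleftrightarrow l_i = 2h_i$ and show that any two fixed points $i<j$ of $\sigma_\gamma$ with $h_i,h_j > n^{0.49}$ must satisfy $j-i > n^{0.49}$. The lemma will then follow by pigeonhole: listing the fixed points in $I$ as $i_1<\cdots<i_m$ and summing the consecutive gaps gives $(m-1)n^{0.49} < i_m-i_1 \leq |I| \leq n^\alpha$, so $m \leq n^{\alpha - 0.49}$.

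The geometric heart of the argument is to rule out that $Exc(j)$ is nested inside $Exc(i)$. If the $j$th up-step were strictly inside $Exc(i)$, the excursion definition forces $\gamma(v_j-1) \geq h_i$ and hence $h_j \geq h_i+1$, so $\sigma_\gamma(j)=j$ yields $l_j = 2h_j \geq 2h_i+2$. On the other hand, once $Exc(j)$ starts inside $Exc(i)$ it must also end inside $Exc(i)$: the only point in that region where $\gamma$ reaches height $h_i-1 < h_j-1$ is the right endpoint of $Exc(i)$, so $Exc(j)$ must return to its base height $h_j-1$ strictly before that. Nesting would then give $l_j \leq l_i - (v_j-v_i) \leq 2h_i$, contradicting the previous lower bound.

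With nesting excluded, $Exc(j)$ lies entirely to the right of $Exc(i)$, so $v_j \geq v_i + l_i = v_i + 2h_i$. Counting up- and down-steps between positions $v_i$ and $v_j$ (there are exactly $j-i$ up-steps, and the net height change is $h_j-h_i$) gives the identity $v_j - v_i = 2(j-i) - (h_j - h_i)$. Combining the two relations yields $2(j-i) \geq h_i + h_j > 2n^{0.49}$, i.e.\ $j-i > n^{0.49}$, which is the spacing bound we needed.

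The main obstacle is the boundary case in the non-nesting argument: when $v_j-1 = v_i-1 + l_i$, the $j$th up-step begins precisely where $Exc(i)$ closes, giving $h_j=h_i$ and mere adjacency rather than nesting, which is perfectly compatible with the spacing bound. Thus the strict inequality $h_j \geq h_i + 1$ powering the contradiction truly requires $v_j-1$ to lie in the strict interior of $Exc(i)$, and this dichotomy must be set up carefully. Once it is, the rest is straightforward arithmetic.
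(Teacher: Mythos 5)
Your proof is correct and takes essentially the same approach as the paper's. The paper argues that if $i^*$ is the first fixed point in a window of length $n^{0.49}$, then every subsequent index $j$ in that window has $Exc(j)$ nested inside $Exc(i^*)$ (since $j - i^* < n^{0.49} < h_{i^*} = l_{i^*}/2$), hence $h_j > h_{i^*}$ and $l_j/2 < l_{i^*}/2 = h_{i^*} < h_j$, so $\theta_j = 0$; then it covers $I$ by $n^{\alpha-0.49}$ such windows. You instead phrase the same nesting obstruction as a spacing bound $j - i > n^{0.49}$ between any two fixed points with large heights, and then apply pigeonhole — the identity $v_j - v_i = 2(j-i) - (h_j - h_i)$ together with disjointness of the excursions is exactly the bookkeeping the paper leaves implicit. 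Both proofs rest on the same two facts: fixed points correspond to $l_i = 2h_i$, and nested excursions cannot both be fixed points.
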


\begin{proof} 
If the $j$th excursion is contained in $i$th excursion, then both $h_j > h_i$ and $l_j/2 < l_i/2.$  If $\theta_i=1$ then for $i<j< l_i/2$, $\theta_j=0.$  

For an interval $I$ with $|I|<n^{0.49}$ suppose at least one fixed point exists.  Let $i^*$ denote the first excursion that satisfies $\theta_{i*}=1.$  Since $i^*$ corresponds to a fixed point, $$l_{i^*} = h_{i^*} > n^{0.49}.$$  Therefore, for all $j\in I$ such that $j>i^*$ the $j$th excursion is contained in the $i^*$th excursion and $\theta_j=0$.  Therefore either $\theta_{I} = 0$ if no such $i^*$ exists, or $\theta_{I} = \theta_{i^*}=1.$  For an interval of size less than $n^{\alpha}$, it can be covered by $n^{\alpha-0.49}$ intervals of size $n^{0.49}$ each of which has at most one fixed point so the total number of fixed points will be bounded by $n^{\alpha-0.49}.$
\end{proof}

\begin{corollary} \ \label{fixedsmall}
Let $I \subset[an,bn]$ denote an interval of length at most $n^{\alpha}.$  For $\gamma \in \dyck{2n},$ if $h_i > n^{0.49}$ for all $i\in I$, then 

$$\theta^{K,\alpha}_I \leq 2n^{\alpha - 0.49}.$$ 

\end{corollary}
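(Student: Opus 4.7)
The plan is to adapt the proof of Lemma \ref{fixsmall} directly, with the two modifications needed for ``almost'' fixed points. Two things change: the excursion at an almost fixed point has length $l_i/2 = h_i - \lfloor K(i(n-i)/n)^\alpha \rfloor$ rather than $l_i/2 = h_i$, and the target value $\lfloor K(i(n-i)/n)^\alpha \rfloor$ depends on $i$ rather than being zero. Since $i\in[an,bn]$, the offset is bounded uniformly by $K(n/4)^\alpha$, which in the regime $\alpha<0.49$ used in the proof of Corollary \ref{sixteencandles} is much smaller than $n^{0.49}$; for $\alpha\in[0.49,0.5)$ one shifts the exponent $0.49$ throughout to $0.5-\delta$ for suitable $\delta$, as indicated there.

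First I would cover $I$ by at most $2 n^{\alpha-0.49}$ sub-intervals $J$ of length $\tfrac12 n^{0.49}$ and show that each $J$ contains at most one almost fixed point. Suppose $i^*\in J$ satisfies $h_{i^*}-l_{i^*}/2 = \lfloor K(i^*(n-i^*)/n)^\alpha\rfloor$. The hypothesis $h_{i^*}>n^{0.49}$ gives
\[
l_{i^*}/2 \;=\; h_{i^*} - \lfloor K(i^*(n-i^*)/n)^\alpha\rfloor \;>\; n^{0.49} - K n^{\alpha}/4^{\alpha} \;>\; \tfrac{1}{2}n^{0.49}
\]
for $n$ sufficiently large. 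Any other $j\in J$ with $j>i^*$ then satisfies $j-i^* < \tfrac12 n^{0.49} < l_{i^*}/2$, so the $j$th up-step lies strictly inside the $i^*$th excursion. Exactly as in Lemma \ref{fixsmall}, this forces $h_j\geq h_{i^*}+1$ and $l_j\leq l_{i^*}-2$, yielding
\[
h_j - l_j/2 \;\geq\; h_{i^*}-l_{i^*}/2 + 2 \;=\; \lfloor K(i^*(n-i^*)/n)^\alpha\rfloor + 2.
\]

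To conclude that $j$ cannot itself be an almost fixed point, I would use that $g(x):=K(x(n-x)/n)^\alpha$ has derivative $O(n^{\alpha-1})$ uniformly on $[an,bn]$, so
\[
|g(j)-g(i^*)| \;=\; O(n^{\alpha-1}\cdot n^{0.49}) \;=\; O(n^{\alpha-0.51}) \;=\; o(1)
\]
for $\alpha<0.51$. Hence $|\lfloor g(j)\rfloor-\lfloor g(i^*)\rfloor|\leq 1$ for $n$ large, which contradicts $h_j-l_j/2 \geq \lfloor g(i^*)\rfloor+2$. Therefore $j$ is not an almost fixed point, giving the single-point claim on $J$ and hence $\theta^{K,\alpha}_I \leq 2n^{\alpha-0.49}$.

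The main (mild) obstacle is purely bookkeeping: choosing sub-intervals of length $\tfrac12 n^{0.49}$ rather than $n^{0.49}$ so that the gap of $2$ produced by the nested-excursion argument strictly dominates the at-most-$1$ fluctuation of $\lfloor g(\cdot)\rfloor$ across a sub-interval. The combinatorial heart of the argument, namely the nesting of excursions that prevents a second almost fixed point from fitting inside the first, is identical to Lemma \ref{fixsmall}.
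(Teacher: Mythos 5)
Your proof is correct and follows essentially the same approach as the paper: cover $I$ by $O(n^{\alpha-0.49})$ sub-intervals on the $n^{0.49}$ scale, observe that $\lfloor K(i(n-i)/n)^\alpha\rfloor$ is nearly constant across each sub-interval, and then run the excursion-nesting argument of Lemma \ref{fixsmall} to bound the count per sub-interval. The paper uses intervals of length $n^{0.49}$ and allows $\leq 2$ almost fixed points in each (one per value of the floor), whereas you use half-length intervals and get $\leq 1$ each; this is only bookkeeping, and your version is, if anything, slightly more explicit about tracking the $K$ factor and the one-unit jump of the floor.
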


\begin{proof}
The function $f(i)=(i(n-i)/n)^{\alpha}$ can change by at most 1 over any interval of length $n^{.49}$. This implies that over any interval $I'$ of length $n^{.49}$ has $\theta^{K,\alpha}_{I'}\leq 2$. Then the result follows as in Lemma \ref{fixsmall}.
\end{proof}

\begin{lemma}\label{uglyvarcalc}

There exists constant $C>0$ such that  or every $n$ large enough and $i,j,w$ that satisfy the following:

\begin{enumerate}

\item $2n^{0.6}<i<j<n^{0.9}-2n^{0.6},$ 
\item $|i-j| > 2n^{0.6},$
\item and $w < n^{0.451}$

\end{enumerate}

$$\Psi(i,j,w,n) = \sum_{m' = -\infty}^{\infty} \sum_{ m = -\infty }^{\infty} \exp\left ( - \frac{1}{4} \left (\frac{m^2}{i} + \frac{(m-m')^2}{j-i} + \frac{(w-m')^2}{n^{0.9}-j} - \frac{w^2}{n^{0.9}} \right) \right) $$
$$< C \sqrt{ \frac{i (j-i) (n^{0.9} - j) }{n^{0.9}}}.$$

\end{lemma}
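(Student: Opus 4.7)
Write $\ell = n^{0.9}$ and let
\[
Q(m,m') = \frac{m^2}{i} + \frac{(m-m')^2}{j-i} + \frac{(w-m')^2}{\ell-j} - \frac{w^2}{\ell},
\]
so that $\Psi = \sum_{m,m'\in\Z} \exp(-Q(m,m')/4)$. Expanding, $Q$ is a positive-definite quadratic form in $(m,m')$ plus linear and constant terms. The key observation is that the function $\exp(-Q/2)$ is (up to a known constant) the two-point marginal density of a Brownian bridge. Indeed, using the Markov property, the joint density at times $i<j$ of a Brownian bridge running from $(0,0)$ to $(\ell,w)$ equals
\[
p(m,m') = \frac{1}{2\pi}\sqrt{\frac{\ell}{i(j-i)(\ell-j)}}\,\exp(-Q(m,m')/2),
\]
and since $p$ integrates to $1$ on $\R^2$, we obtain the identity
\[
\int\!\!\int_{\R^2} \exp(-Q(x,y)/2)\,dx\,dy = 2\pi\sqrt{\frac{i(j-i)(\ell-j)}{\ell}}.
\]

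Next I would compute the corresponding integral with the desired $-Q/4$ weighting. The change of variables replacing each variance parameter $i, j-i, \ell-j, \ell$ by twice itself transforms $-Q/2$ into $-Q/4$ while doubling both the Jacobian and the square-root prefactor, giving
\[
\int\!\!\int_{\R^2} \exp(-Q(x,y)/4)\,dx\,dy = 4\pi\sqrt{\frac{i(j-i)(\ell-j)}{\ell}}.
\]
(Alternatively, one can verify this by completing the square directly: the coefficient matrix of the quadratic part has determinant $\ell/(i(j-i)(\ell-j))$, and the constant from completing the square in $m'$ cancels exactly with the $-w^2/\ell$ term, as is forced by the identity above.)

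The final step is to compare the sum $\Psi$ with this integral. Under the hypotheses $2n^{0.6}<i<j<\ell-2n^{0.6}$ and $|j-i|>2n^{0.6}$, every entry of the quadratic-form matrix of $Q$ has magnitude $O(n^{-0.6})$, so $Q$ is slowly varying at unit scale. Concretely, on any unit square $[m,m+1]\times[m',m'+1]$ with $|m|,|m'|\leq n^{0.6}$ (say), the first partial derivatives of $Q$ are $O(n^{-0.15})$, so $Q$ varies by $O(n^{-0.15})$ there; outside this range the Gaussian is already super-exponentially small and contributes negligibly to both the sum and the integral. Consequently there is an absolute constant $c>0$ with $\exp(-Q(x,y)/4)\geq c\exp(-Q(m,m')/4)$ for $(x,y)$ in the unit square centered at $(m,m')$, and summing this pointwise estimate yields
\[
\Psi \leq \frac{1}{c}\int\!\!\int_{\R^2}\exp(-Q(x,y)/4)\,dx\,dy \leq C\sqrt{\frac{i(j-i)(\ell-j)}{\ell}}.
\]

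The main (routine) technical step is the Riemann-sum comparison, in particular controlling the tails where the linearization of $Q$ is no longer uniform; this is handled by the Gaussian decay itself and contributes only a harmless absolute constant to $C$.
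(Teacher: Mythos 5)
Your argument is correct and gives the right bound, but it takes a genuinely different route from the paper. The paper evaluates the double sum \emph{iteratively}: it first sums over $m$ alone, completes the square in $m$, and bounds that inner sum by a one-dimensional Gaussian integral, obtaining
\[
G(i,j,m') \leq C_1\,\exp\!\left(-\frac{m'^2}{4j}\right)\sqrt{\frac{i(j-i)}{j}};
\]
it then inserts this into the remaining sum over $m'$, completes the square again, and compares that to a second one-dimensional Gaussian integral. You instead evaluate the full two-dimensional integral $\int\!\!\int e^{-Q/4}\,dx\,dy$ in closed form — via the nice observation that $e^{-Q/2}$, up to normalization, is the two-time marginal of a Brownian bridge from $(0,0)$ to $(n^{0.9},w)$, which forces the normalization constant and also explains why subtracting $w^2/n^{0.9}$ makes $Q$ non-negative — and then compare the two-dimensional lattice sum to this integral by a Riemann-sum argument. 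Both approaches land on the same constant multiple of $\sqrt{i(j-i)(n^{0.9}-j)/n^{0.9}}$. Your version is more conceptual and shows where the formula ``comes from''; the paper's iterated version is more mechanical but sidesteps having to justify a two-dimensional Riemann-sum comparison in one shot.

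One small slip worth fixing: you assert that on unit squares with $|m|,|m'|\leq n^{0.6}$ the partial derivatives of $Q$ are $O(n^{-0.15})$. That is not quite right at the stated scale — with $m=n^{0.6}$ and $i=2n^{0.6}$, the term $2m/i$ is of order $1$, and the cross term $2(m-m')/(j-i)$ is likewise $O(1)$ under the hypotheses. What is true, and all your argument actually needs, is that $|\nabla Q|=O(1)$ on that range, so that $Q$ varies by $O(1)$ over a unit square and $e^{-Q/4}$ by an absolute constant factor. (The sharper $O(n^{-0.15}\log n)$ bound does hold on the smaller region where $Q=O(\log^2 n)$, i.e.\ where the Gaussian mass actually lives, since there $|v-v_0|=O(n^{0.45}\log n)$ and the matrix of the quadratic form has norm $O(n^{-0.6})$.) For $|m|>n^{0.6}$ (resp.\ $|m'|>n^{0.6}$) one has $m^2/i>n^{0.3}$ while $w^2/n^{0.9}<n^{0.002}$, so $e^{-Q/4}<e^{-n^{0.3}/5}$; summing the remaining one-dimensional Gaussian over the other variable contributes $O(n^{0.45})$ per term, and the geometric decay in the outer variable makes the total tail $o(1)$, negligible against the main term which is itself $\gtrsim n^{0.45}$. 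With these small repairs your proof is complete.
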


\begin{proof}

Our goal will be to convert this double sum into a recognizable form. 

\begin{align*}
\Psi(i,j,w,n) \leq& \sum_{m'=-\infty}^{\infty}\sum_{m = -\infty}^{\infty} \exp\left ( - \frac{1}{4} \left (\frac{m^2}{i} + \frac{(m-m')^2}{j-i} +\frac{(w-m')^2}{n^{0.9}-j} - \frac{w^2}{n^{0.9}}   \right) \right) \\
\leq&\sum_{m' =-\infty}^{\infty} \left[\exp\left( -\frac{1}{4} \left( \frac{(w-m')^2}{n^{0.9}-j} - \frac{w^2}{n^{0.9}} \right) \right) G( i,j,m')\right]
\end{align*}
where 
$$G( i,j,m' ) =  \sum_{m= -\infty}^{\infty}\exp \left(-\frac{1}{4} \left( \frac{m^2}{i} + \frac{(m-m')^2}{j-i} \right)\right ).$$

With some algebra we see

\begin{align*}
G(i,j,m') =& \exp\left ( -\frac{1}{4}  \frac{ {m'}^2}{j} \right) \sum_{m=-\infty}^{\infty} \exp\left ( -\frac{1}{4}  \frac{j(m-m'i/j)^2}{i(j-i)} \right )\\
\leq & C_1 \exp\left ( -\frac{1}{4}  \frac{ {m'}^2}{j} \right)  \int_{-\infty}^{\infty} \exp\left ( -\frac{1}{4}  \frac{j}{i(j-i)} (t-m'i/j)^2\right )dt\\
\leq &   C_1 \exp\left ( -\frac{1}{4}  \frac{ {m'}^2}{j} \right) \sqrt{ \frac{i(j-i)}{j} }  
\end{align*}
for some positive constant $C_1$ that does not depend on $i,j$ or $m'$.  Inserting this into the upper bound for $\Psi(i,j,w,n)$ gives

\begin{align*}
\Psi(i,j,w,n) \leq& C_1\sqrt{ \frac{i(j-i)}{j} }  \sum_{m'=-\infty}^{\infty}\exp \left( -\frac{1}{4}\left( \frac{{m'}^2}{j} +  \frac{(m'-w)^2}{n^{0.9}-j} - \frac{w^2}{n^{0.9}} \right)\right)\\
\leq&C_1\sqrt{ \frac{i(j-i)}{j} }\sum_{m' = -\infty}^{\infty} \exp\left( -\frac{1}{4}\left( \frac{n^{0.9} ( m' - wj/n^{0.9} )^2}{j(n^{0.9}-j)}\right) \right)\\
\leq &C \sqrt{ \frac{i(j-i)(n^{0.9}-j)}{n^{0.9}} }
\end{align*}
where $C>0$ and does not depend on $i,j,w,$ and $n$.
\end{proof}

The next two results are special cases of \cite[Theorem III.12, Theorem III.15]{Petrov1} respectively (see also \cite[Lemma A1, Lemma A2]{MaMo03}). 

\begin{lemma}\label{lemma maximal inequality}
Let $X_1,X_2,\dots$ be i.i.d with $\E X_1=0$ and let $S_n =X_1+\cdots + X_n$. Suppose that $\sigma^2= \E(X_1^2) < \infty$.  For all $x$ and $n$ we have
\[\P\left(\max_{1\leq k\leq n} S_k \geq x\right) \leq 2 \P\left(S_n\geq x - \sqrt{2n\sigma^2}\right).\]
\end{lemma}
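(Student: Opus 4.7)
The plan is to carry out the standard ``first passage" decomposition that underlies the Lévy-Ottaviani and Etemadi maximal inequalities. Let $\tau = \min\{k : S_k \geq x\}$, with $\tau = \infty$ if no such $k$ exists. Then the event $\{\max_{1\leq k\leq n} S_k \geq x\}$ partitions as $\bigsqcup_{k=1}^n \{\tau = k\}$, and on $\{\tau = k\}$ we have $S_k \geq x$ by definition.

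Next, I would write $S_n = S_k + (S_n - S_k)$ and observe that $S_n \geq x - \sqrt{2n\sigma^2}$ whenever $S_n - S_k \geq -\sqrt{2n\sigma^2}$. The point is that $S_n - S_k$ is independent of $(X_1,\dots,X_k)$, hence of the event $\{\tau = k\}$, has mean zero, and has variance $(n-k)\sigma^2 \leq n\sigma^2$. Chebyshev then gives
\[\P\left(S_n - S_k < -\sqrt{2n\sigma^2}\right) \leq \P\left(|S_n - S_k| \geq \sqrt{2n\sigma^2}\right) \leq \frac{(n-k)\sigma^2}{2n\sigma^2} \leq \frac{1}{2},\]
so $\P(S_n - S_k \geq -\sqrt{2n\sigma^2}) \geq 1/2$ for each $k \in \{1,\dots,n-1\}$; the case $k=n$ is trivial since then $S_n - S_k = 0$.

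Putting the pieces together by conditioning on the value of $\tau$ and using independence:
\begin{align*}
\P\left(S_n \geq x - \sqrt{2n\sigma^2}\right)
&\geq \sum_{k=1}^n \P\left(\tau = k,\ S_n - S_k \geq -\sqrt{2n\sigma^2}\right) \\
&= \sum_{k=1}^n \P(\tau = k)\,\P\left(S_n - S_k \geq -\sqrt{2n\sigma^2}\right) \\
&\geq \frac{1}{2}\sum_{k=1}^n \P(\tau = k) = \frac{1}{2}\P\left(\max_{1\leq k\leq n} S_k \geq x\right),
\end{align*}
which rearranges to the claimed bound. There is essentially no obstacle here: the only place to be slightly careful is the independence step, which requires $\tau$ to be a stopping time with respect to the natural filtration generated by the $X_i$'s, but this is immediate from the definition of $\tau$.
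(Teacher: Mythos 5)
Your argument is correct and complete. One small point you should keep in mind when writing the final version: when you pass from $\P(S_n \geq x - \sqrt{2n\sigma^2})$ to the sum $\sum_{k=1}^n \P(\tau = k,\ S_n - S_k \geq -\sqrt{2n\sigma^2})$, the inequality holds because the events $\{\tau=k,\ S_n - S_k \geq -\sqrt{2n\sigma^2}\}$, $k=1,\dots,n$, are pairwise disjoint (being sub-events of the disjoint events $\{\tau=k\}$) and each is contained in $\{S_n \geq x - \sqrt{2n\sigma^2}\}$; it is worth stating that disjointness explicitly. The only other thing worth flagging is cosmetic: you write $\P(S_n - S_k < -\sqrt{2n\sigma^2}) \leq \P(|S_n - S_k| \geq \sqrt{2n\sigma^2})$ and then pass to $\P(S_n - S_k \geq -\sqrt{2n\sigma^2}) \geq 1/2$; this is fine, but it is cleaner to go directly through the complement. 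As for comparison with the paper: the paper does not prove this lemma at all --- it simply cites it as a special case of Theorem III.12 in Petrov's book on sums of independent random variables (where the hypothesis of identical distribution is in fact not needed; $\sqrt{2n\sigma^2}$ becomes $\sqrt{2\var(S_n)}$). The argument you give is precisely the classical Ottaviani/L\'evy-type argument that Petrov uses, so you have in effect supplied the proof that the paper outsources. Your proof works verbatim for independent but not identically distributed summands if one replaces $(n-k)\sigma^2$ by $\var(S_n)-\var(S_k) \leq \var(S_n)$, which recovers the general form of Petrov's theorem.
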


\begin{lemma}\label{lemma moderate deviations}
Let $X_1,X_2,\dots$ be i.i.d with $\E X_1=0$ and let $S_n =X_1+\cdots + X_n$.  Suppose that there exists $a>0$ such that $\E(e^{t |X_1|}) < \infty$.  Then there exist constants $g, T>0$, independent of $n$, such that
\[ \P(S_n\geq x) \leq \begin{cases} \exp\left(-\frac{x^2}{2gn}\right) &  \textrm{if } 0\leq x\leq ngT \\ \\ \exp\left(-\frac{Tx}{2}\right) & \textrm{if }  x\geq ngT.\end{cases}\]
\end{lemma}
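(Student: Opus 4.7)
The plan is to prove this by the standard Chernoff/exponential-Markov method, together with a careful split of the range of $x$ at the crossover value $x = ngT$ where the Gaussian bound and the linear bound match.

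First, I would set up the moment generating function $M(t) = \E(e^{tX_1})$. The hypothesis that $\E(e^{a|X_1|})<\infty$ for some $a>0$ ensures that $M$ is finite and $C^\infty$ on a neighborhood $(-a,a)$ of the origin. Since $\E X_1 = 0$, Taylor expansion at $0$ gives $\log M(t) = \tfrac{1}{2}\sigma^2 t^2 + O(t^3)$ as $t\to 0$, where $\sigma^2 = \E X_1^2$. Consequently, one can choose a constant $g>\sigma^2$ and a constant $T\in(0,a)$, both depending only on the law of $X_1$, such that
\[
\log M(t) \ \leq\ \frac{g\,t^2}{2} \qquad \text{for every } t \in [0,T].
\]
This uniform quadratic upper bound on the cumulant generating function is the key input and will drive both cases.

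Second, I would handle the sub-Gaussian regime $0 \leq x \leq ngT$ by applying Markov's inequality to $e^{tS_n}$ with the (admissible) choice $t = x/(gn) \in [0,T]$:
\[
\P(S_n \geq x) \ \leq\ e^{-tx} M(t)^n \ \leq\ \exp\!\left(-tx + \tfrac{1}{2}gnt^2\right) \ =\ \exp\!\left(-\frac{x^2}{2gn}\right),
\]
where the middle inequality uses the quadratic bound on $\log M$ together with independence.

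Third, for the linear/large-deviation regime $x \geq ngT$, I would take the boundary value $t = T$ in Markov's inequality, which gives
\[
\P(S_n \geq x) \ \leq\ e^{-Tx} M(T)^n \ \leq\ \exp\!\left(-Tx + \tfrac{1}{2}gnT^2\right).
\]
The hypothesis $x \geq ngT$ then yields $\tfrac{1}{2}gnT^2 \leq \tfrac{1}{2}Tx$, so the right side is at most $\exp(-Tx/2)$, as required. The two bounds agree at $x = ngT$, ensuring the piecewise estimate is consistent.

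The only delicate point, and the one requiring a slight Taylor-expansion computation, is the choice of $g$ and $T$: one needs a single pair $(g,T)$ that simultaneously makes $\log M(t) \leq gt^2/2$ on $[0,T]$ and makes the exponent $-Tx + gnT^2/2$ dominate the linear bound at the crossover. Both are straightforward once $\E(e^{a|X_1|})<\infty$ gives the needed smoothness, so the result will follow directly from the optimized Chernoff bound — which is precisely the content of Theorem III.15 in Petrov that we invoke.
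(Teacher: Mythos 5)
Your proof is correct, and it is the standard optimized-Chernoff argument underlying Petrov's Theorem III.15; the paper itself does not prove this lemma but merely cites Petrov (and Marckert--Mokkadem), so there is no distinct in-paper argument to compare against. Your choice of $g>\sigma^2$ and small $T$ so that $\log M(t)\le gt^2/2$ on $[0,T]$, together with the substitution $t=x/(gn)$ in the sub-Gaussian regime and $t=T$ in the linear regime with the observation $\tfrac12 gnT^2\le\tfrac12 Tx$ when $x\ge ngT$, is exactly right. (You also correctly read the paper's statement as $\E(e^{a|X_1|})<\infty$, the $t$ there being a typo.)
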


These lemmas lead immediately to the following corollary. 

\begin{corollary}\label{corollary fluctuations}
Maintaining the hypotheses of Lemma \ref{lemma moderate deviations}, fix $\epsilon, c>0$ and $0< \alpha< 2\beta$ and let $\nu  = \min(\beta, 2\beta-\alpha)$.  There exist constants $A,B >0$ such that
\[ \P\left( \max_{1\leq i\leq n} \max_{|i-j|\leq cn^{\alpha}} |S_j -S_i| \geq \epsilon n^\beta \right)\leq A \exp\left( -B n^{\nu}\right)\]
\end{corollary}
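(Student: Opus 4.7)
The plan is a standard blocking argument. Partition $\{1,\dots,n\}$ into $N=\lceil n/m\rceil = O(n^{1-\alpha})$ consecutive blocks of length $m:=\lceil cn^{\alpha}\rceil$. Every pair $i,j$ with $|i-j|\leq cn^{\alpha}$ lies in the union of at most two consecutive blocks, so the triangle inequality yields
\[
\max_{1\leq i\leq n}\ \max_{|i-j|\leq cn^{\alpha}}|S_j-S_i|\ \leq\ 2\max_{0\leq k<N}\ \max_{0\leq \ell\leq m}|S_{km+\ell}-S_{km}|.
\]
Since $(S_{km+\ell}-S_{km})_{\ell\geq 0}$ is equal in distribution to $(S_\ell)_{\ell\geq 0}$, after a union bound over the $N$ blocks it suffices to establish
\[
\P\!\left(\max_{0\leq \ell\leq m}|S_\ell|\geq \epsilon n^{\beta}/2\right)\ \leq\ A'\exp(-B'n^{\nu})
\]
for constants $A',B'>0$; the polynomial prefactor $N=O(n^{1-\alpha})$ is absorbed into the exponential because $\nu>0$.

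To control the single-block probability, apply Lemma \ref{lemma maximal inequality} separately to the partial sums of $(X_i)$ and $(-X_i)$ and combine via a union bound:
\[
\P\!\left(\max_{0\leq \ell\leq m}|S_\ell|\geq \tfrac{\epsilon n^{\beta}}{2}\right)\leq 2\P\!\left(S_m\geq \tfrac{\epsilon n^{\beta}}{2}-\sqrt{2m\sigma^{2}}\right)+2\P\!\left(-S_m\geq \tfrac{\epsilon n^{\beta}}{2}-\sqrt{2m\sigma^{2}}\right).
\]
The hypothesis $\alpha<2\beta$ gives $\sqrt{2m\sigma^{2}}=O(n^{\alpha/2})=o(n^{\beta})$, so for all $n$ large enough each term on the right is bounded by the corresponding tail of $\pm S_m$ at level $\epsilon n^{\beta}/4$.

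Now apply Lemma \ref{lemma moderate deviations} (and its mirror for $-S_m$) with $x=\epsilon n^{\beta}/4$ and sample size $m=\lceil cn^{\alpha}\rceil$. If $x\geq mgT$ (which holds for large $n$ when $\beta>\alpha$), the bound is $\exp(-Tx/2)=\exp(-T\epsilon n^{\beta}/8)$; if $x\leq mgT$ (the regime $\beta\leq\alpha$, for large $n$), it is $\exp(-x^{2}/(2gm))\leq \exp(-B''n^{2\beta-\alpha})$ for some $B''>0$. In either regime the exponent is at least a positive constant times $n^{\nu}$ with $\nu=\min(\beta,\,2\beta-\alpha)$, and combining with the union bound over blocks yields the claimed inequality for suitable $A,B>0$.

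The only delicate point is ensuring that the correction $\sqrt{2m\sigma^{2}}$ introduced by Lemma \ref{lemma maximal inequality} is dominated by $n^{\beta}$, and this is precisely what $\alpha<2\beta$ guarantees; without that hypothesis the maximal inequality alone would be too lossy to reach the scale $n^{\beta}$. Everything else is a routine combination of the maximal inequality with the moderate-deviation bound already established.
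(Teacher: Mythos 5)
Your argument is correct and is exactly the routine blocking argument the paper implicitly intends (the paper states only that ``these lemmas lead immediately to the following corollary'' without giving the proof). One small technical slip: with blocks of length $m=\lceil cn^{\alpha}\rceil$ anchored at multiples of $m$, if $i$ lies in block $k$ and $j$ in block $k+1$ with $|j-i|\leq m$, then $j-km$ can be as large as roughly $2m$, so the displayed bound should read $\max_{0\leq \ell\leq 2m}|S_{km+\ell}-S_{km}|$ rather than $\ell\leq m$. This only changes constants and does not affect the exponent $\nu=\min(\beta,2\beta-\alpha)$; the rest of the derivation (applying Lemma \ref{lemma maximal inequality} to $\pm S$, absorbing the $\sqrt{2m\sigma^2}=O(n^{\alpha/2})=o(n^{\beta})$ correction using $\alpha<2\beta$, then applying Lemma \ref{lemma moderate deviations} with sample size $m$ and argument of order $n^{\beta}$, splitting into the $x\gtrless mgT$ cases to get exponent $\beta$ or $2\beta-\alpha$, and finally absorbing the polynomial number of blocks into the exponential) is sound.
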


Let $S=(S_m,m\geq 0)$ be a simple symmetric random walk on $\Z$ with $S_0=0$.  Define $V_0=0$ and for $m\geq 1$ let $V_m = \inf\{k>V_{m-1} : S_k-S_{k-1} = 1\}$.  Let $\eta(S) = \inf\{ k : S_k=-1\}$.  Observe that $(V_m-V_{m-1})_{m\geq 1}$ is an i.i.d sequence of geometric random variables with parameter $1/2$.

\begin{corollary}\label{corollary V-shift}
Fix $\epsilon >0$ and $1/2 < \alpha\leq 1$.  There exist constants $A, B>0$ such that 
\[ \P\left( \max_{1\leq i\leq n} |V_i - 2i| \geq \epsilon n^\alpha\right) \leq A \exp\left(-B n^{2 \alpha -1}\right).\]
\end{corollary}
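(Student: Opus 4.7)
My plan is to recognize $V_i - 2i$ as a centered random walk whose summands have exponential moments, and then combine Lemma~\ref{lemma maximal inequality} with Lemma~\ref{lemma moderate deviations}. Set $X_j := (V_j - V_{j-1}) - 2$, so that the $X_j$ are i.i.d., mean zero, and (shifted) geometric with parameter $1/2$; in particular $\sigma^2 := \var(X_1) = 2$ and $\E[e^{t|X_1|}] < \infty$ for $|t| < \log 2$, so the hypothesis of Lemma~\ref{lemma moderate deviations} is satisfied. Writing $S_i := X_1 + \cdots + X_i = V_i - 2i$, the claim reduces to the maximal moderate deviation estimate
\[\P\left(\max_{1\leq i\leq n} |S_i| \geq \epsilon n^\alpha\right) \leq A e^{-B n^{2\alpha-1}}.\]

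First, I would apply Lemma~\ref{lemma maximal inequality} to both $S$ and $-S$ to pass from the supremum to the terminal value:
\[\P\left(\max_{1\leq i\leq n}|S_i| \geq \epsilon n^\alpha\right) \leq 2\P\left(S_n \geq \epsilon n^\alpha - \sqrt{4n}\right) + 2\P\left(-S_n \geq \epsilon n^\alpha - \sqrt{4n}\right).\]
The assumption $\alpha > 1/2$ enters precisely at this step: since $\sqrt{4n} = o(n^\alpha)$, for all $n \geq n_0(\epsilon,\alpha)$ one has $\epsilon n^\alpha - \sqrt{4n} \geq (\epsilon/2) n^\alpha$, so the right-hand side is at most $4\,\P(|S_n| \geq (\epsilon/2)n^\alpha)$.

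Second, I would invoke Lemma~\ref{lemma moderate deviations} with $x = (\epsilon/2)n^\alpha$. Because $\alpha \leq 1$, for all large $n$ one of two situations arises: either $x \leq ngT$, in which case the Gaussian bound yields
\[\P(S_n \geq x) \leq \exp\left(-\frac{\epsilon^2 n^{2\alpha-1}}{8g}\right),\]
or (only possible in the boundary case $\alpha = 1$ with $\epsilon > 2gT$) $x \geq ngT$, in which case the linear bound $\exp(-Tx/2) = \exp(-T\epsilon n/4)$ is itself of the form $\exp(-Bn^{2\alpha-1})$ since $2\alpha-1 = 1$ here. The same estimate applies to $-S_n$ by symmetry. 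Setting $B := \min\bigl(\epsilon^2/(8g),\, T\epsilon/4\bigr)$ and absorbing the finitely many cases $n < n_0$ into a sufficiently large constant $A$ via the trivial bound $\P(\cdot) \leq 1$ yields the statement.

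No serious obstacle is anticipated: this is essentially the same maximal-plus-moderate-deviation template underlying Corollary~\ref{corollary fluctuations} above, specialized to the shifted-geometric random walk arising from the hitting times $V_i$. The only piece of bookkeeping to watch is the boundary case $\alpha = 1$, where one must check that the linear tail regime of Lemma~\ref{lemma moderate deviations} also produces a bound of at least the claimed order; this is immediate since the corresponding exponent is $-T\epsilon n/4$, which is proportional to $n^{2\alpha-1}$ when $\alpha = 1$.
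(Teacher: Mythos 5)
Your proof is correct and is precisely the argument the paper leaves implicit: it recognizes $V_i - 2i$ as the partial sums of the centered i.i.d.\ shifted-geometric increments $(V_j - V_{j-1}) - 2$, passes from the running maximum to the terminal value via Lemma~\ref{lemma maximal inequality} (applied to $\pm S$), absorbs the $\sqrt{4n}$ correction using $\alpha > 1/2$, and finishes with Lemma~\ref{lemma moderate deviations}. The only subtlety worth noting --- the boundary case $\alpha = 1$, where the linear tail regime can be active --- is handled correctly.
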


\begin{corollary} \label{corollary p1}
Let $\Gamma^n$ be a uniformly random Dyck path of length $2n$.  For any $\delta>0$, there exist constants $A,B,\nu>0$ such that for all $n\geq 1$
\[ \P\left(\max_{1\leq i\leq 2n} \Gamma^n_i \geq 0.4 n^{0.5 +\delta}\right) \leq A\exp\left(-Bn^\nu\right) \]
and 
\[ \P\left( \max_{1\leq i\leq 2n} \max_{|i-j|\leq 2n^{0.5+\delta}} |\Gamma^n_j -\Gamma^n_i| \geq  0.5 n^{0.25 + \delta} \right)\leq A \exp\left( -B n^{\nu}\right)\]
\end{corollary}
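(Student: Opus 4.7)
My plan is to reduce both statements to the analogous moderate-deviation bounds for simple symmetric random walk stated in Lemma~\ref{lemma moderate deviations} and Corollary~\ref{corollary fluctuations}, paying only a polynomial price for the conditioning that turns an unconstrained walk into a Dyck path.

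The key comparison is the following. Let $(S_k)_{k\geq 0}$ be simple symmetric random walk with $S_0=0$. The number of walks of length $2n$ from $0$ to $0$ is $\binom{2n}{n}$, of which exactly $C_n$ remain nonnegative, and a uniformly random Dyck path is exactly a uniform SRW bridge conditioned to stay $\geq 0$. Combining this with the local CLT estimate $\P(S_{2n}=0)=\binom{2n}{n}2^{-2n}\asymp n^{-1/2}$, any event $E$ measurable with respect to $(\Gamma^n_i)_{0\leq i\leq 2n}$ satisfies
\[
\P(\Gamma^n \in E) \;=\; \frac{\P(S\in E,\ S_i\geq 0\ \forall i,\ S_{2n}=0)}{C_n/2^{2n}} \;\leq\; \frac{2^{2n}}{C_n}\,\P(S\in E) \;\leq\; C n^{3/2}\,\P(S\in E),
\]
for an absolute constant $C>0$. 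This polynomial prefactor will be absorbed into the stretched-exponential decay we obtain on the right-hand side.

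For the first inequality, I would apply this comparison with $E=\{\max_{1\leq i\leq 2n} S_i \geq 0.4 n^{0.5+\delta}\}$, then invoke Lemma~\ref{lemma maximal inequality} with $x=0.4 n^{0.5+\delta}$ and $\sigma^2=1$ to get
\[
\P(\max_{1\leq i\leq 2n} S_i \geq 0.4 n^{0.5+\delta}) \;\leq\; 2\,\P\!\left(S_{2n}\geq 0.4 n^{0.5+\delta}-2\sqrt{n}\right),
\]
and finally Lemma~\ref{lemma moderate deviations} gives a bound of order $\exp(-c n^{2\delta})$ (for $\delta<1/2$; larger $\delta$ is even easier since one is then in the linear regime). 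Multiplying by $Cn^{3/2}$ still yields a bound of the form $A\exp(-Bn^{\nu})$ with any $\nu<2\delta$.

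For the second inequality, the random-walk version is exactly the statement of Corollary~\ref{corollary fluctuations} with $\alpha=0.5+\delta$ and $\beta=0.25+\delta$, giving $\nu=\min(\beta,2\beta-\alpha)=\delta$; applying the comparison inequality above again absorbs the $Cn^{3/2}$ factor. The main subtlety to watch for is that the comparison $\P(\Gamma^n\in E)\leq Cn^{3/2}\P(S\in E)$ requires $E$ to be an event of the $2n$-step walk itself, not of the bridge; since both events in the corollary are of this form, no additional care is needed. The overall result follows by decreasing $\nu$ slightly to account for the polynomial factor and taking $A$ large enough to cover small $n$.
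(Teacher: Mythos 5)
Your proposal is correct and follows essentially the same route as the paper: the paper's proof also compares $\Gamma^n$ to an unconditioned simple random walk, paying a polynomial $\sim n^{3/2}$ price for the conditioning (the paper phrases it as $\Gamma^n \overset{d}{=} (S_k)_{0\le k\le 2n}$ given $\eta(S)=2n+1$ with $\P(\eta(S)=2n+1)\sim cn^{-3/2}$, which is the same event on the first $2n$ steps as your nonnegative bridge), and then invokes Lemma~\ref{lemma maximal inequality}, Lemma~\ref{lemma moderate deviations}, and Corollary~\ref{corollary fluctuations} exactly as you do.
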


\begin{proof}
Noting that 
\[ \Gamma^n \overset{d}{=} (S_k, 0\leq k \leq 2n) \textrm{ given } \eta(S) =2n+1,\]
the first claim is an immediate consequence of Lemmas \ref{lemma maximal inequality} and \ref{lemma moderate deviations} combined with the fact that $\P(\eta(S) =2n+1) \sim cn^{-3/2}$ for some $c>0$.  The second claim follows similarly from Corollary \ref{corollary fluctuations}
\end{proof}

\section*{Acknowledgement}
We would like to thank Lerna Pehlivan for many helpful suggestions.  We would also like to thank Igor Pak for a helpful conversation.

\bibliographystyle{plain}

\end{document}